\documentclass[11pt,letterpaper]{article}
\usepackage[latin9]{inputenc}
\usepackage{amsmath}
\usepackage{amsthm}
\usepackage{amssymb}
\usepackage{graphicx}
\PassOptionsToPackage{normalem}{ulem}
\usepackage{ulem}

\makeatletter


\providecommand{\tabularnewline}{\\}

\theoremstyle{plain}
\newtheorem{thm}{\protect\theoremname}
\theoremstyle{plain}
\newtheorem{prop}[thm]{\protect\propositionname}
\theoremstyle{plain}
\newtheorem{lem}[thm]{\protect\lemmaname}

\usepackage{EMstyle}
\usepackage{enumitem}
\usepackage{breqn}
\usepackage{bbm}
\usepackage{cite}

\DeclareMathOperator{\spa}{span} 
\DeclareMathOperator{\TP}{TP}

\allowdisplaybreaks

\global\long\def\s[#1]{\textnormal{\scriptsize #1}}
\global\long\def\st[#1]{\textnormal{\tiny #1}}

\global\long\def\P{\mathbb{P}}
\global\long\def\E{\mathbb{E}}

\global\long\def\I{\mathbbm{1}}
\global\long\def\v[#1]{\mathbf{#1}} 
\global\long\def\m[#1]{\boldsymbol{#1}} 

\global\long\def\dkl{\mathrm{d_{KL}}}
\global\long\def\dchis{\mathrm{d}_{\chi^2}}

\global\long\def\dee{\partial}
\newcommand{\ind}{\perp\!\!\!\!\perp}

\global\long\def\gl[#1]{\mathsf{#1}} 

\global\long\def\r[#1]{#1}
\global\long\def\d{\mathrm{d}}

\global\long\def\eqd{\stackrel{d}{=}}

\global\long\def\dfn{:=}

\global\long\def\trre[#1,#2]{\overset{{\scriptstyle (#2)}}{#1}} 

\author{Nir Weinberger and Guy Bresler}

\providecommand{\lemmaname}{Lemma}
\providecommand{\propositionname}{Proposition}
\providecommand{\theoremname}{Theorem}

\makeatother

\providecommand{\lemmaname}{Lemma}
\providecommand{\propositionname}{Proposition}
\providecommand{\theoremname}{Theorem}

\begin{document}

\title{The EM Algorithm is Adaptively-Optimal for Unbalanced Symmetric Gaussian
Mixtures\thanks{The first author with the Department of Electrical Engineering, Technion
- Israel Institute of Technology, and the second author is with IDSS
and LIDS, Massachusetts Institute of Technology, MA, USA. Emails:
\{nirwein@technion.ac.il, guy@mit.edu\}. This work was supported by
the MIT-Technion fellowship, the Viterbi scholarship from the Technion,
MIT-IBM Watson AI Lab, and NSF CAREER award CCF-1940205.}}
\maketitle
\begin{abstract}
This paper studies the problem of estimating the means $\pm\theta_{*}\in\mathbb{R}^{d}$
of a symmetric two-component Gaussian mixture $\delta_{*}\cdot N(\theta_{*},I)+(1-\delta_{*})\cdot N(-\theta_{*},I)$
where the weights $\delta_{*}$ and $1-\delta_{*}$ are unequal. Assuming
that $\delta_{*}$ is known, we show that the population version of
the EM algorithm globally converges if the initial estimate has non-negative
inner product with the mean of the larger weight component. This can
be achieved by the trivial initialization $\theta_{0}=0$. For the
empirical iteration based on $n$ samples, we show that when initialized
at $\theta_{0}=0$, the EM algorithm adaptively achieves the minimax
error rate $\tilde{O}\Big(\min\Big\{\frac{1}{(1-2\delta_{*})}\sqrt{\frac{d}{n}},\frac{1}{\|\theta_{*}\|}\sqrt{\frac{d}{n}},\left(\frac{d}{n}\right)^{1/4}\Big\}\Big)$
in no more than $O\Big(\frac{1}{\|\theta_{*}\|(1-2\delta_{*})}\Big)$
iterations (with high probability). We also consider the EM iteration
for estimating the weight $\delta_{*}$, assuming a fixed mean $\theta$
(which is possibly mismatched to $\theta_{*}$). For the empirical
iteration of $n$ samples, we show that the minimax error rate $\tilde{O}\Big(\frac{1}{\|\theta_{*}\|}\sqrt{\frac{d}{n}}\Big)$
is achieved in no more than $O\Big(\frac{1}{\|\theta_{*}\|^{2}}\Big)$
iterations. These results robustify and complement recent results
of Wu and Zhou \cite{wu2019EM} obtained for the equal weights case
$\delta_{*}=1/2$. 
\end{abstract}

\section{Introduction}

The expectation-maximization (EM) algorithm is a heuristic formulated
in \cite{dempster1977maximum} to approximate the maximum likelihood
estimator (MLE) in parametric models $(X,S)\sim P_{\theta}(x,s)$
when $X$ is observed, but $S$ is latent. Remarkably, despite its
simplicity, widespread use, and rich history \cite{mclachlan2007algorithm,gupta2011theory},
no theoretical guarantees on its performance for finite number of
iterations and samples were established until recently. The first
such explicit guarantees were obtained in \cite{balakrishnan2017statistical},
which stated general bounds on the statistical precision, the convergence
rate, and the ``basin of attraction'' (the distance of the initial
estimate from the ground truth sufficient to obtain a statistically
accurate solution). These bounds apply to any latent variables model,
yet require verifying several conditions for each concrete model.
As a canonical example, these conditions were explicitly verified
by \cite{balakrishnan2017statistical} for the symmetric two-component
Gaussian mixture (2-GM). The resulting guarantees are not sharp, however,
both in the strong conditions required for their validity, as well
as their distance from the accuracy guarantees of optimal algorithms.
Consequently, a dedicated analysis of EM for 2-GM was conducted by
various authors \cite{klusowski2016statistical,Wu2016OnTC,xu2016global,pmlr-v65-daskalakis17b,wu2019EM,dwivedi2018singularity,dwivedi2018theoretical,dwivedi2019challenges}.
The performance of EM for 2-GM with \emph{balanced} components, i.e.,
when both weights equal $1/2$, was by and large recently settled
in \cite{wu2019EM}.

In this paper, we proceed in the direction of \cite{wu2019EM}, and
sharply analyze a slight variation of the balanced 2-GM model --
namely, the \emph{unbalanced} symmetric 2-GM model. Some of the key
arguments in \cite{wu2019EM} strongly depend on the symmetry properties
of the EM iteration, which are a direct result of the symmetry in
the model. It seems challenging to adapt these arguments to the unbalanced
model, where symmetry breaks down due to the unequal weights. Our
analysis therefore uses \emph{indirect }arguments, which are based
on \emph{comparisons} between the EM iterations of unbalanced models
for different weights. In particular, we compare the iterations for
unbalanced models with the iterations for the \emph{balanced} model,
since the latter is already known to globally converge \cite{wu2019EM}.
For the population iteration, we prove that increasing the larger
of the two weights, that is, enhancing the model imbalance, makes
the corresponding EM iteration converge faster. By contrast, this
increase also increases our empirical error bound, i.e., the bound
on the difference between the empirical iteration and the population
iteration. As we prove, however, this does not result in deterioration
of the statistical accuracy of the estimate because this increased
error is compensated for by the improved convergence of the population
iteration. Hence, the overall statistical accuracy actually improves
when the model is more unbalanced.

\subsection{EM for two-component Gaussian mixture \label{subsec:EM for Gaussian mixture}}

The symmetric two-component Gaussian mixture (2-GM) model in $d\geq1$
dimensions is given by

\begin{equation}
P_{\theta,\rho}=\tfrac{1+\rho}{2}\cdot N(\theta,I_{d})+\tfrac{1-\rho}{2}\cdot N(-\theta,I_{d})\,.\label{eq: Gaussian mixture model}
\end{equation}
The goal is to estimate the parameter $\theta_{*}\in\mathbb{R}^{d}$
from $n$ samples $(X_{1},\ldots,X_{n})\stackrel{\tiny\mathrm{i.i.d.}}{\sim}P_{\theta_{*},\rho_{*}}$
under the $\ell_{2}$ loss function $\ell(\theta,\theta_{*})=\|\theta-\theta_{*}\|$
when $\rho_{*}\neq0$ (unbalanced model), or under $\ell_{0}(\theta,\theta_{*})=\min(\|\theta-\theta_{*}\|,\|\theta+\theta_{*}\|)$
when $\rho_{*}=0$ (balanced model). The dimension $d$ is allowed
to be large, and both $d$ and $\rho_{*}$ may scale with the number
of samples $n$. Based on the $n$ samples and the value of $\rho_{*}$,
the EM algorithm defines a mapping $f_{n}(\theta)$ which is iteratively
applied to produce a sequence of estimates $\theta_{t}=f_{n}(\theta_{t-1})$
for all $t\geq1$, given an initial guess $\theta_{0}$. This mapping
$f_{n}$ is described in detail later in the introduction. We will
refer to $f_{n}(\theta)$ as the \emph{empirical iteration}, and to
the idealized operator $f(\theta)$ obtained by replacing empirical
averages with expected values as the \emph{population iteration}.

\paragraph*{Balanced GM.}

The general results of \cite{balakrishnan2017statistical} specialized
to the balanced 2-GM (\ref{eq: Gaussian mixture model}) ($\rho_{*}=0$)
require that the separation between the means is lower bounded as
$\|\theta_{*}\|=\Omega(1)$, and that the initial estimate $\theta_{0}$
is at most $\|\theta_{*}\|/4$ in $\ell_{2}$ distance from $\theta_{*}$.
When these two conditions hold, \cite{balakrishnan2017statistical}
states that EM converges to a neighborhood of $\theta_{*}$ of radius
$O(\sqrt{d/n})$ (i.e., parametric error rate), after no more than
$O(1/\|\theta_{*}\|^{2})$ iterations. The qualifying conditions above
are problematic for several reasons: (1) Without knowing $\theta^{*}$
one has no way of knowing when the separation condition holds; (2)
EM can be slow and inaccurate when there is no separation between
the components \cite{redner1984mixture} and in this case no guarantees
are provided by \cite{balakrishnan2017statistical}; (3) One of the
main challenges in utilizing EM is the choice of initial guess. A
common method is attempting multiple random guesses \cite{karlis2003choosing}
followed by a choice of the optimal converged solution. For a high-dimensional
parameter, the guarantee of \cite{balakrishnan2017statistical} on
the volume of the basin of attraction that ensures good convergence
is negligible compared to the volume of the feasible set of parameters,
and hence randomly initializing is not proved to succeed.

These drawbacks have lead to various attempts to sharpen the above
results \cite{klusowski2016statistical,Wu2016OnTC,xu2016global,pmlr-v65-daskalakis17b,wu2019EM},
which will be discussed in more detail in Section \ref{subsec:Related-work}.
For the population iteration, the papers \cite{xu2016global,pmlr-v65-daskalakis17b,wu2019EM}
proved \emph{global} convergence to $\pm\theta_{*}$ at a geometric
rate, unless the initial guess $\theta_{0}$ is orthogonal to $\theta_{*}$
(in which case EM converges to the saddle point $\theta=0$). For
the empirical iteration, sharp high-probability guarantees were obtained
in \cite{wu2019EM} as follows: In the worst case, without any separation
condition, the EM algorithm applied to (\ref{eq: Gaussian mixture model})
achieves an error rate of $\tilde{O}((d/n)^{1/4})$ in at most $O(\sqrt{n})$
iterations. If, however, a separation of $\|\theta_{*}\|=\Omega((\frac{\log^{3}n\cdot d}{n})^{1/4})$
holds, then an error rate of $O(\frac{1}{\|\theta_{*}\|}\sqrt{\frac{\log^{3}n\cdot d}{n}})$
is achieved by EM after no more than $O(\frac{\log n}{\|\theta_{*}\|^{2}})$
iterations, and in addition, the EM iteration converges to the MLE.
Evidently, for $\|\theta_{*}\|=\Omega(1)$, this implies a parametric
error rate in the number of samples, and geometric rate in the number
of iterations. Hence, the EM algorithm\emph{ adapts} to the actual
separation between the two means (as captured by $\|\theta_{*}\|$),
to achieve error rate of $\tilde{O}(\min\{\frac{1}{\|\theta_{*}\|}\sqrt{d/n},(d/n)^{1/4}\})$.
Moreover, no other estimation technique can perform significantly
better since, up to logarithmic factors, this error rate matches the
\emph{local} minimax rate \cite[Appendix B]{wu2019EM}. Remarkably,
it was also shown in \cite{wu2019EM} that these guarantees are achieved
by a random initialization of the EM algorithm, in which $\theta_{0}$
is an isotropic random $d$-dimensional vector scaled to have appropriately
low norm.

\paragraph{Unbalanced GM and preview of results.}

In this work, we study the model (\ref{eq: Gaussian mixture model})
for $\rho_{*}\in(0,1)$. The value of $\rho_{*}$ may be fixed, or,
more interestingly, $\rho_{*}\equiv\rho_{*,n}\to0$ as $n\to\infty$
at some arbitrary rate. Note that the samples from the model (\ref{eq: Gaussian mixture model})
are equal in distribution to 
\begin{equation}
X=S\theta+Z\,,\label{eq: Guassian mixture model - random variables}
\end{equation}
where $S\in\{\pm1\}$ is such that $\P[S=1]=(1+\rho_{*})/2$ and $Z\sim N(0,I_{d})$,
with $S$ and $Z$ independent. Intuitively, moving $\rho_{*}$ away
from $0$ reduces uncertainty in the signs $\{S_{i}\}$, and one might
expect that this would lead to better error rates for estimating $\theta_{*}$.
Note that the problem is indeed trivial for the extreme case $\rho_{*}=1$
in which case (\ref{eq: Gaussian mixture model}) coincides with the
Gaussian location model. More generally, it seems helpful that for
$\rho_{*}\neq0$ the expectation $\E[X]=\rho_{*}\theta_{*}$ is a
vector in the direction of $\theta_{*}$.

While estimation seems easier for $\rho_{*}\neq0$, in this case the
model (\ref{eq: Gaussian mixture model}) is no longer balanced, and
this makes a direct analysis of the EM iteration difficult. Nonetheless,
we prove a global convergence property for the population iteration,
which shows that any initial guess $\theta_{0}$ with $\langle\theta_{0},\theta_{*}\rangle\geq0$
converges to $\theta_{*}$ (including the trivial initialization $\theta_{0}=0$).
We also show that the EM iteration might have a spurious (stable)
fixed point $\theta_{-}\neq-\theta_{*}$ which satisfies $\langle\theta_{-},\theta_{*}\rangle<0$
(whose existence depends on the value of $(\rho_{*},\theta_{*})$).
This phenomenon does not occur in the balanced case.

For the empirical iteration, we first note that a method-of-moments
estimator $\frac{1}{\rho_{*}}\E_{n}[X]\dfn\frac{1}{\rho_{*}}\sum_{i=1}^{n}X_{i}$
achieves an error rate of $O(\frac{1}{\rho_{*}}\sqrt{d/n})$. In addition,
an estimator can always ignore the reduced uncertainty in the signs,
formally, by multiplying each sample with a random sign $R_{i}\in\{\pm1\}$
such that $\P[R_{i}=1]=1/2$ for each $i\in[n]$. This reduces the
$\rho_{*}\neq0$ case to the $\rho_{*}=0$ case, and then an error
rate of $\tilde{O}(\min\{\frac{1}{\|\theta_{*}\|}\sqrt{d/n},(d/n)^{1/4}\})$
can be achieved, using the balanced EM iteration.\footnote{In the latter case, this error is actually only w.r.t. the sign-ambiguous
loss function $\ell_{0}$ (see Proposition \ref{prop: empirical d>1 known delta small rho}).} The main result of this paper is analysis of the \emph{unbalanced}
EM iteration for the estimation of $\theta_{*}$, which shows that
the EM iteration \emph{adaptively} achieves the minimum of both error
rates, i.e., $\tilde{O}(\min\{\frac{1}{\rho_{*}}\sqrt{d/n},\frac{1}{\|\theta_{*}\|}\sqrt{d/n},(d/n)^{1/4}\})$.
As for the balanced case, this error rate obtained by the EM algorithm
coincides with the local minimax rate for any $\rho_{*}$, up to logarithmic
terms.

\subsection{Main result\label{subsec:Main-result}}

It will be convenient throughout to use the weight parameter $\delta\dfn(1-\rho)/2$
interchangeably with $\rho$ according to convenience.\footnote{The notation used in this section is standard. See Section \ref{subsec:Notation-conventions}
for notational conventions.} We denote the corresponding \emph{inverse-temperature parameter }by
\begin{equation}
\beta_{\rho}\dfn\frac{1}{2}\log\frac{1+\rho}{1-\rho}=\tanh^{-1}(\rho)\label{eq: beta definition as a function of rho}
\end{equation}
and let $\rho_{\beta}$ denote the inverse relation. With a slight
abuse of notation from (\ref{eq: beta definition as a function of rho}),
we also denote $\beta_{\delta}\dfn\frac{1}{2}\log\frac{1-\delta}{\delta}$
(and sometimes just $\beta$). Let $\theta_{*}\in\mathbb{R}^{d}$
and $\rho_{*}\in[0,1]$ (or $\delta_{*}\in[0,1/2]$) denote the ground
truth of the model (\ref{eq: Gaussian mixture model}). Given $n$
i.i.d. samples $\underline{X}=(X_{1},\ldots,X_{n})\stackrel{\tiny\mathrm{i.i.d.}}{\sim}P_{\theta_{*},\rho_{*}}$,
the goal is to estimate the parameter $\theta_{*}$ under the $\ell_{2}$
loss function, up to the identifiability of the model. For $\rho_{*}>0$
this amounts to the standard loss function $\ell(\theta,\theta_{*})=\|\theta-\theta_{*}\|$
and when $\rho_{*}=0$ then the loss function is $\ell_{0}(\theta,\theta_{*})=\min\left\{ \|\theta-\theta_{*}\|,\|\theta+\theta_{*}\|\right\} $.

\paragraph{Assumptions.}

Our results will depend on the following \emph{global assumptions}: 
\begin{enumerate}
\item \uline{Norm assumption:} There exists $\gl[C]_{\theta}>0$ such
that $\|\theta_{*}\|\leq\gl[C]_{\theta}$. 
\item \uline{Unbalancedness assumption:} There exists $\gl[C]_{\rho}\in(0,1)$
such that $|\rho_{*}|\leq\gl[C]_{\rho}$. 
\end{enumerate}
Because $\rho\mapsto\frac{1}{2}\log\frac{1+2a}{1-2a}$ is convex and
increasing in $[0,1/2)$, an immediate consequence of the unbalancedness
assumption is that $|\beta_{\rho_{*}}|\leq\gl[C]_{\beta}$ holds for
$\gl[C]_{\beta}\dfn\frac{1}{2}\log\frac{1+\gl[C]_{\rho}}{1-\gl[C]_{\rho}}$,
and that there exist $(\gl[\underline{C}]_{\beta},\gl[\overline{C}]_{\beta})$
such that $\gl[\underline{C}]_{\beta}\rho_{*}\leq|\beta_{\rho_{*}}|\leq\gl[\overline{C}]_{\beta}\rho_{*}.$
These assumptions are based on the fact that the interesting regime
is in which $\|\theta_{*}\|$ and $\rho_{*}$ are close to zero.

\paragraph{EM iteration.}

While we focus on estimating $\theta_{*}$ for a given $\rho_{*}$,
we will also consider the opposite case of estimating $\rho_{*}$,
and briefly discuss the joint estimation problem. Thus, we will next
consider the more general \emph{joint} iteration. The evolution of
the iterates $\{(\theta_{t},\rho_{t})\}_{t=1}^{\infty}$ of the EM
algorithm can be brought to a simple closed form we describe next.
To start, the density function of observed samples $X$ from (\ref{eq: Gaussian mixture model})
is given by 
\begin{align}
p_{\theta,\rho}(x) & =\left(\frac{1+\rho}{2}\right)\varphi(x-\theta)+\left(\frac{1-\rho}{2}\right)\varphi(x+\theta)\nonumber \\
 & =e^{-\|x\|^{2}/2}\cdot\varphi(x)\cdot\left[\left(\frac{1+\rho}{2}\right)e^{-\langle\theta,X\rangle}+\left(\frac{1-\rho}{2}\right)e^{\langle\theta,X\rangle}\right]\nonumber \\
 & =e^{-\|x\|^{2}/2}\cdot\varphi(x)\cdot\cosh\left(\langle\theta,X\rangle+\beta_{\rho}\right)\,,\label{eq: density of observed samples}
\end{align}
where $\varphi(x)\dfn\frac{1}{\sqrt{2\pi}}e^{-\|x\|^{2}/2}$ is the
standard normal density in $\mathbb{R}^{d}$. Similarly, the full
observation, which also includes the latent sign $s$ (\ref{eq: Guassian mixture model - random variables})
is given by a standard Gaussian density 
\[
p_{\theta,\rho}(s,x)=\left(\frac{1+s\rho}{2}\right)\varphi(x-s\theta)\,.
\]
Assume that $\underline{X}=\underline{x}$ is given and the EM algorithm
has ran up to its $t$th iteration, and so $(\theta_{t},\rho_{t})$
is given. The next iteration of the EM algorithm is the pair $(\theta_{t+1},\rho_{t+1})$
which maximizes the following $Q$-function: 
\[
Q(\theta,\rho\mid\theta_{t},\rho_{t})\dfn\sum_{\underline{s}\in\{\pm1\}^{n}}p_{\theta_{t},\rho_{t}}(\underline{s}\mid\underline{X})\log p_{\theta,\rho}(\underline{s},\underline{X})\,.
\]
Using the i.i.d. property of $\underline{X}$, and the expression
(\ref{eq: density of observed samples}) for the density, this is
equivalent to 
\begin{align*}
(\theta_{t+1},\rho_{t+1}) & \in\argmin_{\rho}\sum_{i=1}^{n}\E_{\theta_{t},\rho_{t}}\left[\log\left(\frac{1+S_{i}\rho}{2}\right)\mid X_{i}=x_{i}\right]\\
 & +\argmin_{\theta}\left\{ n\|\theta\|^{2}-\left\langle \theta,\sum_{i=1}^{n}x_{i}\E_{\theta_{t},\rho_{t}}\left[S_{i}\mid X_{i}=x_{i}\right]\right\rangle \right\} \,,
\end{align*}
where $S_{i}\in\{\pm1\}$ for $i\in[n]$ with $\P[S_{i}=1]=(1+\rho_{t})/2$,
and are i.i.d.. Hence, given $(\theta_{t},\rho_{t})$, the optimization
over $(\theta,\rho)$ is decoupled, and its solution is given by the
pair 
\[
\theta_{t+1}=\frac{1}{n}\sum_{i=1}^{n}x_{i}\cdot\E_{\theta_{t},\rho_{t}}\left[S_{i}\mid X_{i}=x_{i}\right]\,,\qquad\rho_{t+1}=\frac{1}{n}\sum_{i=1}^{n}\E_{\theta_{t},\rho_{t}}\left[S_{i}\mid X_{i}=x_{i}\right]\,,
\]
where 
\begin{equation}
\E_{\theta_{t},\rho_{t}}\left[S\mid X=x\right]=\frac{(1+\rho)\cdot e^{\langle\theta,x\rangle}-(1-\rho)\cdot e^{-\langle\theta,x\rangle}}{(1+\rho)\cdot e^{\langle\theta,x\rangle}+(1-\rho)\cdot e^{-\langle\theta,x\rangle}}=\tanh(\langle\theta,x\rangle+\beta_{\rho})\,.\label{eq: expected value of the sign given sample}
\end{equation}
Hence the EM iteration $\{\theta_{t,}\rho_{t}\}_{t=1}^{\infty}$ of
the symmetric 2-GM model evolves according to 
\begin{align}
\theta_{t+1} & =f_{n}(\theta_{t},\rho_{t}\mid\theta_{*},\rho_{*})\label{eq: EM iteration mean}\\
\rho_{t+1} & =h_{n}(\rho_{t},\theta_{t}\mid\theta_{*},\rho_{*})\,,\label{eq: EM iteration weight}
\end{align}
where the \emph{sample mean EM iteration} is 
\begin{equation}
f_{n}(\theta,\rho\mid\theta_{*},\rho_{*})=\E_{n}\left[X\cdot\frac{(1+\rho)\cdot e^{\langle\theta,X\rangle}-(1-\rho)\cdot e^{-\langle\theta,X\rangle}}{(1+\rho)\cdot e^{\langle\theta,X\rangle}+(1-\rho)\cdot e^{-\langle\theta,X\rangle}}\right]=\E_{n}\left[X\cdot\tanh\left(\langle\theta,X\rangle+\beta_{\rho}\right)\right]\,,\label{eq: sample mean iteration}
\end{equation}
and the \emph{sample weight EM iteration} is 
\begin{equation}
h_{n}(\rho,\theta\mid\theta_{*},\rho_{*})=\E_{n}\left[\frac{(1+\rho)e^{\langle\theta,X\rangle}-(1-\rho)e^{-\langle\theta,X\rangle}}{(1+\rho)e^{\langle\theta,X\rangle}+(1-\rho)e^{-\langle\theta,X\rangle}}\right]=\E_{n}\left[\tanh\left(\langle\theta,X\rangle+\beta_{\rho}\right)\right]\,.\label{eq: sample prior iteration}
\end{equation}
In the limit of $n\to\infty$, the iterations (\ref{eq: sample mean iteration})
and (\ref{eq: sample prior iteration}) tend, respectively, to the
\emph{population mean and population weight EM iterations} 
\[
f(\theta,\rho\mid\theta_{*},\rho_{*})=\E\left[X\cdot\tanh\left(\langle\theta,X\rangle+\beta_{\rho}\right)\right],\quad X\sim P_{\theta_{*},\rho_{*}}
\]
and 
\[
h(\rho,\theta\mid\theta_{*},\rho_{*})=\E\left[\tanh\left(\langle\theta,X\rangle+\beta_{\rho}\right)\right],\quad X\sim P_{\theta_{*},\rho_{*}}\,.
\]
We will usually omit $(\theta_{*},\rho_{*})$ from the notation for
the iteration, except when it is required to avoid confusion.

\paragraph{Statement of Results.}

The balanced case $\rho_{*}=0$ was analyzed in \cite{wu2019EM}: 
\begin{thm}[Theorems 1 and 2 in \cite{wu2019EM}]
\label{thm:balanced case}Assume that $\|\theta_{*}\|\leq\gl[C]_{\theta}$
and that $n\gtrsim d\log^{3}d$, and consider the balanced EM iteration
$\theta_{t+1}=f_{n}(\theta_{t},0\mid\theta_{*},0)$. There exists
$C_{0}>0$ such that if $\hat{u}$ is drawn uniformly from the unit
sphere $\mathbb{S}^{d-1}$, and the iteration is initialized with
$\theta_{0}=C_{0}\left(\frac{d\log n}{n}\right)^{1/4}\cdot\hat{u}$
then with probability $1-o_{n}(1)$ 
\begin{equation}
\ell_{0}(\theta_{*},\theta_{t})\lesssim\left(\frac{d\log^{3}n}{n}\right)^{1/4}\label{eq: balanced low signal guarantee}
\end{equation}
holds for all $t\gtrsim\sqrt{n}$. Furthermore, if $\|\theta_{*}\|\gtrsim\left(\frac{d\log^{3}n}{n}\right)^{1/4}$then
with probability $1-o_{n}(1)$ 
\[
\ell_{0}(\theta_{*},\theta_{t})\lesssim\frac{1}{\|\theta_{*}\|}\sqrt{\frac{d\log n}{n}}
\]
holds for all $t\gtrsim\frac{\log n}{\|\theta_{*}\|^{2}}$. The constants
involved in the asymptotic inequalities depend only on $\gl[C]_{\theta}$. 
\end{thm}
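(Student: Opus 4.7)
By the sign-symmetry of the balanced iteration ($f(-\theta,0)=-f(\theta,0)$ and likewise for $f_n$), the run started from $-\theta_0$ produces $-\theta_t$, so up to the $\ell_0$ identifiability we may condition on $\langle\theta_0,\theta_*\rangle\geq 0$ and aim to show $\theta_t\to\theta_*$. The plan is the standard ``population plus concentration'' route: (i) establish quantitative monotone convergence of the population iteration $f(\cdot,0\mid\theta_*,0)$ to $\theta_*$ from any starting point with positive projection on $\theta_*$, (ii) prove a uniform concentration bound for $f_n-f$ over a bounded ball, and (iii) bootstrap the sample trajectory along the population one, treating the low- and high-signal regimes separately.

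For (i), write $\theta=\alpha\hat\theta_* + w$ with $w\perp\theta_*$. Using $X=S\theta_*+Z$ one obtains
\[
f(\theta,0)=\E\bigl[(S\theta_*+Z)\tanh\bigl(\alpha S\|\theta_*\|+\alpha\langle\hat\theta_*,Z\rangle+\langle w,Z\rangle\bigr)\bigr].
\]
Gaussian integration-by-parts plus rotational invariance of $Z$ in the subspace $\theta_*^\perp$ show that the orthogonal part of $f(\theta,0)$ stays colinear with $w$ and satisfies $\|f(\theta,0)-\langle f(\theta,0),\hat\theta_*\rangle\hat\theta_*\|\leq \gamma(\alpha,\|w\|)\|w\|$ for some $\gamma\in(0,1)$, while $\alpha'\dfn\langle f(\theta,0),\hat\theta_*\rangle>\alpha$ whenever $\alpha\in(0,\|\theta_*\|)$. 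Quantitatively, near the saddle $\alpha'-\alpha\asymp\|\theta_*\|^2\alpha$ and $1-\gamma\asymp\|\theta_*\|^2$, whereas once $\alpha\asymp\|\theta_*\|$ both are of order $\min(1,\|\theta_*\|^2)$, giving a geometric rate of $1-c\min(1,\|\theta_*\|^2)$ in the basin.

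For (ii), each summand $X_i\tanh(\langle\theta,X_i\rangle)$ is sub-exponential coordinate-wise and $O(\|X_i\|^2)$-Lipschitz in $\theta$, so a $1/\mathrm{poly}(n)$ covering of $\{\|\theta\|\leq \gl[C]_\theta\}$ combined with Bernstein's inequality and a union bound yields
\[
\sup_{\|\theta\|\leq\gl[C]_\theta}\|f_n(\theta,0)-f(\theta,0)\|\lesssim\sqrt{\frac{d\log n}{n}}
\]
with probability $1-o_n(1)$; the same tail estimate controls the parallel and perpendicular components separately.

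Combining (i) and (ii), the random initialization on $\mathbb{S}^{d-1}$ gives $|\alpha_0|\gtrsim\|\theta_0\|/\sqrt{d}$ with probability $1-o_n(1)$, which already exceeds the noise floor. In the high-signal regime $\|\theta_*\|\gtrsim (d\log^3n/n)^{1/4}$, the parallel drift is strong, the iteration reaches the basin of $\theta_*$ in $O(\log n/\|\theta_*\|^2)$ steps, and geometric contraction together with the concentration bound yields the stated $O(\tfrac{1}{\|\theta_*\|}\sqrt{d\log n/n})$ error. The hard part is the low-signal regime, where the population drift $\alpha'-\alpha\asymp\|\theta_*\|^2\alpha$ is comparable to the empirical noise $\sqrt{d/n}$, so signal and noise compete throughout the escape from the saddle. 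The key step is a potential-function argument (e.g.\ tracking $\alpha_t^2-c\|w_t\|^2$ under both the population drift and the worst-case perturbation) showing that the drift just barely dominates noise long enough for $\alpha_t$ to grow to $\Omega((d/n)^{1/4})$ in $O(\sqrt{n})$ iterations, while $\|w_t\|$ contracts just enough that accumulated noise cannot inflate it past $(d\log^3n/n)^{1/4}$. At the end of this phase the estimate lies at the stated $\ell_0$-distance from $\pm\theta_*$, which matches the local minimax rate so no further improvement is possible.
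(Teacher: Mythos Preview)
This theorem is not proved in the present paper at all: it is quoted verbatim as ``Theorems 1 and 2 in \cite{wu2019EM}'' and used as a black box. So there is no proof in the paper to compare your proposal against.

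That said, judged against what \cite{wu2019EM} actually does (as summarized in Section~\ref{subsec: proof methods} of this paper), your sketch has a real gap in step (ii). You state the uniform concentration bound
\[
\sup_{\|\theta\|\leq \gl[C]_\theta}\|f_n(\theta,0)-f(\theta,0)\|\lesssim\sqrt{\frac{d\log n}{n}},
\]
but this is precisely the ``weak'' bound (\ref{eq: balanced weak error bound}) that the paper explicitly flags as insufficient: with a noise floor of order $\sqrt{d/n}$ independent of $\|\theta\|$, the empirical perturbation swamps the population drift $\alpha'-\alpha\asymp\|\theta_*\|^2\alpha$ near the saddle unless $\|\theta_*\|=\Omega(1)$. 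The actual proof in \cite{wu2019EM} relies on the \emph{localized} bound (\ref{eq: balanced localized error bound}),
\[
\|f_n(\theta,0)-f(\theta,0)\|\lesssim\|\theta\|\sqrt{\frac{d\log n}{n}},
\]
which scales with $\|\theta\|$ and is what allows the drift to dominate the noise throughout the escape from $\theta=0$, yielding the $(d/n)^{1/4}$ rate without any separation assumption. Your potential-function sketch for the low-signal phase cannot go through with the non-localized bound you stated; once you replace it with the localized version (and track $a_t,b_t$ through the $F,G$ decomposition as in Lemma~\ref{lem:Decomposition to signal and orthogoanl iteration}), the rest of your outline is broadly in line with the argument of \cite{wu2019EM}.
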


Our main result complements Theorem \ref{thm:balanced case} in the
unbalanced case, $\rho_{*}\neq0$: 
\begin{thm}[Simplified version of Theorem \ref{thm: empirical d>1 known delta large rho}]
\label{thm:Main result}Assume that $\|\theta_{*}\|\leq\gl[C]_{\theta}$
and that $|\rho_{*}|\leq\gl[C]_{\rho}$, as well as $n\gtrsim d\log n$.

If \textbf{$\rho_{*}\gtrsim\left(\frac{d\log n}{n}\right)^{1/4}$}
then the unbalanced EM iteration $\theta_{t+1}=f_{n}(\theta_{t},\rho_{*}\mid\theta_{*},\rho_{*})$
initialized with either $\theta_{0}=0$ or $\theta_{0}=\frac{1}{\rho_{*}}\E_{n}(X)$
satisfies that with probability $1-o_{n}(1)$ 
\[
\ell(\theta_{*},\theta_{t})\lesssim\frac{1}{\max\left\{ \rho_{*},\|\theta_{*}\|\right\} }\sqrt{\frac{d\log n}{n}}
\]
hold for all $t\geq\gl[T]$, where upper bounds on $\gl[T]$ are specified
in Table \ref{tab: Iterations until convergence}. The constants involved
in the asymptotic inequalities depend only on $(\gl[C]_{\theta},\gl[C]_{\rho})$.

If \textbf{$\rho_{*}\lesssim\left(\frac{d\log n}{n}\right)^{1/4}$}
the balanced EM iteration as in Theorem \ref{thm:balanced case} guarantees
(\ref{eq: balanced low signal guarantee}). If, in addition 
\begin{equation}
\|\theta_{*}\|\gtrsim\left(\frac{d\log n}{n}\right)^{1/4}\gtrsim\rho_{*}\gtrsim\frac{1}{\|\theta_{*}\|}\sqrt{\frac{d\log n}{n}}\label{eq: condition for sign amibguity}
\end{equation}
holds, then by setting $s_{t}=\sgn\langle\theta_{t},\E_{n}[X]\rangle$
it holds that 
\[
\ell(\theta_{*},s_{t}\cdot\theta_{t})\lesssim\frac{1}{\|\theta_{*}\|}\sqrt{\frac{d\log n}{n}}
\]
for all $t\gtrsim\frac{\log n}{\|\theta_{*}\|^{2}}$. 
\begin{table}
\begin{centering}
\begin{tabular}{|c|c|c|c|}
\hline 
 & $\|\theta_{*}\|\lesssim\frac{1}{\rho_{*}}\sqrt{\frac{d\log n}{n}}$  & $\frac{1}{\rho_{*}}\sqrt{\frac{d\log n}{n}}\lesssim\|\theta_{*}\|\lesssim\rho_{*}$  & $\rho_{*}\lesssim\|\theta_{*}\|$\tabularnewline
\hline 
\hline 
$\theta_{0}=0$  & $\gl[T]\lesssim1$  & $\gl[T]\lesssim\frac{1}{\rho_{*}^{2}}$  & $\gl[T]\lesssim\frac{1}{\rho_{*}\|\theta_{*}\|}$\tabularnewline
\hline 
$\theta_{0}=\frac{1}{\rho_{*}}\E_{n}(X)$  & $\gl[T]\lesssim1$  & $\gl[T]\lesssim1$  & $\gl[T]\lesssim\frac{1}{\|\theta_{*}\|^{2}}$\tabularnewline
\hline 
\end{tabular}
\par\end{centering}
\centering{}~\caption{$\gl[T]$:\label{tab: Iterations until convergence} Number of iterations
until convergence of unbalanced EM algorithm.\label{tab: Convergence times}}
\end{table}
\end{thm}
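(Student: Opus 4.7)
The plan is to split the error at each step into an optimization error (from the population iteration) and a statistical error (empirical minus population), in the spirit of \cite{balakrishnan2017statistical,wu2019EM}, according to the template
\[
\|\theta_{t+1}-\theta_{*}\| \;\leq\; \|f(\theta_{t},\rho_{*})-\theta_{*}\| \;+\; \|f_{n}(\theta_{t},\rho_{*})-f(\theta_{t},\rho_{*})\|,
\]
and to bound the two terms separately. The main point of departure from \cite{wu2019EM} is that the first term must be handled without the reflection symmetry $f(-\theta)=-f(\theta)$, which is broken by $\rho_{*}\neq 0$. I will therefore rely on a comparison argument between the unbalanced population iteration at parameter $\rho_{*}$ and the balanced one at $\rho=0$, whose global convergence is already known.

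For the population part, I would first establish that along iterates $\theta_{t}$ with $\langle\theta_{t},\theta_{*}\rangle\geq 0$ the operator $f(\cdot,\rho_{*})$ contracts toward $\theta_{*}$ with a factor $\kappa=\kappa(\theta_{*},\rho_{*})$ that \emph{improves} as $|\rho_{*}|$ grows. The workhorse would be a monotonicity lemma showing that, fixing $\theta$, the map $\rho\mapsto\|f(\theta,\rho)-\theta_{*}\|$ is nonincreasing on the relevant half-space, so that one bootstraps from the balanced contraction of \cite{wu2019EM} to an unbalanced contraction whose rate is at least $\log(1/\kappa)\asymp\rho_{*}\max(\rho_{*},\|\theta_{*}\|)$. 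Iterating this contraction and tracking the initial distance $\|\theta_{0}-\theta_{*}\|$ for the two initializations (for $\theta_{0}=\E_{n}[X]/\rho_{*}$ one uses $\E[X]=\rho_{*}\theta_{*}$, which already places $\theta_{0}$ within $\tilde{O}(\sqrt{d/n}/\rho_{*})$ of $\theta_{*}$) reproduces the $\gl[T]$-entries of Table \ref{tab: Iterations until convergence}.

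For the empirical part, since $X\sim P_{\theta_{*},\rho_{*}}$ is sub-Gaussian and $\tanh$ is $1$-Lipschitz, $\|f_{n}(\theta,\rho_{*})-f(\theta,\rho_{*})\|$ concentrates at rate $\sqrt{d/n}$ pointwise; a standard $\varepsilon$-net/chaining argument over $\{\theta:\|\theta\|\lesssim 1\}$ promotes this to a $\tilde{O}(\sqrt{d\log n/n})$ uniform deviation. Combining with the contraction closes the induction: the error shrinks geometrically until it hits the statistical floor $\tilde{O}(\sqrt{d/n}/\max(\rho_{*},\|\theta_{*}\|))$, which is the stated rate. The low-$\rho_{*}$ regime $\rho_{*}\lesssim(d\log n/n)^{1/4}$ is handled by invoking Theorem~\ref{thm:balanced case} directly. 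The remaining sign ambiguity under condition (\ref{eq: condition for sign amibguity}) is resolved by $s_{t}=\mathrm{sgn}\langle\theta_{t},\E_{n}[X]\rangle$: since $\E_{n}[X]\approx\rho_{*}\theta_{*}$ and $\theta_{t}\approx\pm\theta_{*}$, the inner product has magnitude $\asymp\rho_{*}\|\theta_{*}\|^{2}$, which dominates the $O(\|\theta_{*}\|\sqrt{d/n})$ fluctuation precisely under (\ref{eq: condition for sign amibguity}). I expect the hardest step to be the monotonicity-in-$\rho$ comparison for the population operator: the analysis must be carried out jointly for the projection of $\theta_{t}$ onto $\theta_{*}$ and the orthogonal residual, and both components must be shown to contract more strongly for larger $\rho_{*}$; once that comparison principle is in place, the rest of the argument is a careful bookkeeping over the three regimes of Table \ref{tab: Iterations until convergence}.
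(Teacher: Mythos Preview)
Your high-level plan---comparison to the balanced iteration for the population part, plus uniform concentration for the empirical part---matches the paper's philosophy, and the sign-resolution argument at the end is correct. However, there are two genuine gaps that would cause the argument, as written, to fail.

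\textbf{The empirical error must be localized, not uniform.} You propose a uniform bound $\sup_{\|\theta\|\lesssim 1}\|f_{n}(\theta,\rho_{*})-f(\theta,\rho_{*})\|=\tilde{O}(\sqrt{d\log n/n})$. This is too coarse: combined with a contraction gap of order $\max(\rho_{*}^{2},\|\theta_{*}\|^{2})$ it yields a floor $\tilde{O}\big(\sqrt{d/n}\,/\max(\rho_{*},\|\theta_{*}\|)^{2}\big)$, which is worse than the claimed rate by a factor $1/\max(\rho_{*},\|\theta_{*}\|)$. The paper's key concentration input (its Theorem~\ref{thm: Error concentration}) is the \emph{localized} bound
\[
\|f_{n}(\theta,\rho_{*})-f(\theta,\rho_{*})\|\;\le\;\max\{\|\theta\|,\rho_{*}\}\cdot\omega_{d},\qquad \omega_{d}=\sqrt{\tfrac{C\,d\log n}{n}},
\]
uniformly in $\theta$. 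The extra factor $\max\{\|\theta\|,\rho_{*}\}$ is precisely what converts a contraction gap $\asymp\max(\rho_{*},\|\theta_{*}\|)^{2}$ into the correct floor $\omega_{d}/\max(\rho_{*},\|\theta_{*}\|)$. This localization is not a byproduct of a generic $\varepsilon$-net; it requires exploiting that $\tanh(\langle\theta,X\rangle+\beta_{\rho})$ is itself small when $\|\theta\|$ and $\rho$ are small (cf.\ the discussion around eqs.~(\ref{eq: balanced localized error bound})--(\ref{eq: unbalanced localized error bound})).

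\textbf{The population operator is not a contraction on the relevant region.} You write that $f(\cdot,\rho_{*})$ ``contracts toward $\theta_{*}$ with a factor $\kappa$'' and that one can ``bootstrap from the balanced contraction of \cite{wu2019EM}''. But the balanced iteration is not globally contractive either: in $d=1$, $f'(0)=(1-\rho_{*}^{2})(1+\|\theta_{*}\|^{2})>1$ once $\|\theta_{*}\|^{2}>\rho_{*}^{2}/(1-\rho_{*}^{2})$, and the paper explicitly notes that neither concavity on $\mathbb{R}_{+}$ nor global contractivity survive for $\rho_{*}\neq 0$. The paper therefore does \emph{not} run a single contraction inequality. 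Instead it (i) proves a pointwise ordering $f(\theta\mid\eta,\delta)\ge f(\theta\mid\eta,\tfrac12)$ for $\theta\in[0,\eta]$ (and the analogue for the orthogonal component $G$), (ii) sandwiches the empirical iterate between monotone envelopes $f_{\pm}(\theta)=f(\theta)\pm\max\{|\theta|,\rho_{*}\}\omega$, and (iii) analyzes the envelopes in three regimes of $\|\theta_{*}\|$, including a two-phase argument (an additive-increase phase followed by a comparison to the balanced envelope) when $\|\theta_{*}\|\gtrsim\rho_{*}$. Your monotonicity-in-$\rho$ intuition is exactly the right comparison principle, but it does not by itself deliver a contraction factor; you still need the envelope/regime analysis to convert it into the convergence times of Table~\ref{tab: Iterations until convergence}.
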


\paragraph{Interpretation of results.}

Note that in comparison to the balanced case $\rho_{*}=0$, the case
$\rho_{*}>0$ simplifies the analysis of the EM iteration in the sense
that the algorithm may be initialized at $\theta_{0}=0$ or at $\theta_{0}=\frac{1}{\rho_{*}}\E_{n}[X]$,
and no random initialization is required -- the expected value $\E[X]$
is proportional to $\theta_{*}$ and steers the iteration in the right
direction.

The convergence times specified in Table \ref{tab: Convergence times}
in case $\rho_{*}\gtrsim\left(\frac{d\log n}{n}\right)^{1/4}$ can
be interpreted as follows. While the EM iteration is $d$-dimensional,
it can be decomposed into movements in the signal direction (the direction
of $\theta_{*}$), and in its orthogonal direction \cite{pmlr-v65-daskalakis17b,wu2019EM}.
The factor dominating the number of iterations until convergence is
the time it takes the projected one-dimensional EM iteration in the
direction of $\theta_{*}$ to converge: 
\begin{itemize}
\item When $\|\theta_{*}\|\lesssim\frac{1}{\rho_{*}}\sqrt{\frac{d\log n}{n}}$
the signal is very low, and the EM estimate remains around $\theta_{*}$
for all iterations (for both types of initialization). 
\item When $\frac{1}{\rho_{*}}\sqrt{\frac{d\log n}{n}}\lesssim\|\theta_{*}\|\lesssim\rho_{*}$,
an error rate of $O(\frac{1}{\rho_{*}}\sqrt{\frac{d\log n}{n}})$
is achieved by $\theta_{0}=\frac{1}{\rho_{*}}\E_{n}(X)$ starting
from the first iteration (and the EM iterations remain at this area
of low statistical error). When $\theta_{0}=0$ the one-dimensional
EM iteration in direction of $\theta_{*}$ is contracting with slope
bounded by $1-c\rho_{*}^{2}$ for some $c>0$ and the convergence
time is $O(1/\rho_{*}^{2})$. 
\item When $\rho_{*}\lesssim\|\theta_{*}\|$, an error rate of $O(\frac{1}{\|\theta_{*}\|}\sqrt{\frac{d\log n}{n}})$
is achieved. For $\theta_{0}=\frac{1}{\rho_{*}}\E_{n}(X)$, it is
shown that the empirical iteration converges faster than the corresponding
balanced iteration starting from the first iteration. For $\theta_{0}=0$
the same effect occurs, but after an initial phase of additive increase
in $\theta_{t}$, and this early phase dominates the convergence time. 
\end{itemize}
Evidently, the worst convergence time of the balanced iteration is
also similar to the worst case convergence time of the unbalanced
iteration and given by $\tilde{O}(\sqrt{n})$, which is achieved when
$\rho_{*}\asymp\left(\frac{d\log n}{n}\right)^{1/4}$. We also remark
that as shown in \cite{pmlr-v65-daskalakis17b,wu2019EM}, the analysis
of the EM iteration in high dimension is possible when it is initialized
with a low norm, but not zero. For the unbalanced model, initializing
at $\theta_{0}=0$ is possible, and represents the longest convergence
time. It should be noted that the bounds on the convergence times
for $\theta_{0}=\frac{1}{\rho_{*}}\E_{n}(X)$ exhibit a discontinuity
at $\|\theta_{*}\|=\rho_{*}$. This is because $\gl[T]$ does not
capture the time required for convergence to a fixed point but rather
to a neighborhood around $\theta_{*}$ within the statistical error
rate.\footnote{For illustration, consider one-dimensional convergence, let the required
statistical accuracy be $\omega$, and suppose that $\theta_{0}=0$.
If $\theta_{*}\leq\omega$ then statistical accuracy is achieved already
in the first iteration, and then it is only need to be proved (and
also possible, as we shall show throughout) that the iteration remains
at this accuracy for all subsequent iterations. If, however, the order
of $\theta_{*}$ is increased, say $\theta_{*}=2\omega$, then the
iteration should increase, say, from $\theta_{0}=0$ to $\theta_{t}\geq\omega$
to achieve statistical accuracy, and the required number of iteration
for this increase depends on $\omega$. }\textbf{ }

The information-theoretic lower bounds obtained in \cite{wu2019EM}
for $\rho_{*}=0$ are generalized in Theorem \ref{thm: mean estimation minimax}
(Appendix \ref{sec:Minimax-rates}) and show that the error rate achieved
by EM in Theorem \ref{thm:Main result} equals the minimax error rates
(up to logarithmic factors) whenever $\rho_{*}\gtrsim\left(\frac{d\log n}{n}\right)^{1/4}$.
It switches from the minimax error rate $O(\frac{1}{\rho_{*}}\sqrt{d/n})$
assured for any signal strength to the local minimax error rates for
stronger signals $O(\frac{1}{\|\theta_{*}\|}\sqrt{d/n})$ at $\|\theta_{*}\|\asymp\rho_{*}$.
In the balanced case $\rho_{*}=0$, a similar switch occurs at $\|\theta_{*}\|\asymp\left(d/n\right)^{1/4}$,
improving from error rate of $O((d/n)^{1/4})$ to $O(\frac{1}{\|\theta_{*}\|}\sqrt{d/n})$.
This observation along with expected monotonicity of the error rates
in $\rho_{*}$ elucidates the condition $\rho_{*}\gtrsim\left(\frac{d\log n}{n}\right)^{1/4}$
in Theorem \ref{thm:Main result} (see the rigorous statement in Theorem
\ref{thm: mean estimation minimax}).

We complete the picture by discussing the case $\rho_{*}\lesssim\left(\frac{d\log n}{n}\right)^{1/4}$.
In this case, the minimax error rate analysis (Theorem \ref{thm: mean estimation minimax})
suggests that the error rates cannot be improved due to the unbalancedness
of the samples. However, the error rate of the balanced case can be
achieved for the $\ell_{0}$ loss function (which allows for sign
ambiguity), and when condition (\ref{eq: condition for sign amibguity})
holds, it can be achieved without sign ambiguity. The idea is simply
to use the balanced iteration which is insensitive to the actual signs
generating the samples $\underline{X}$, and upon convergence, evaluate
the angle between $\theta_{t}$ and $\E_{n}[X]$. With high probability,
this detects the correct sign required to estimate $\theta_{*}$ when
$\rho_{*}\gtrsim\frac{1}{\|\theta_{*}\|}\sqrt{\frac{d\log n}{n}}$.
If this condition fails then no correct decoding of the sign is possible,
as the signal is too low compared to the unbalancedness of the iteration
(cf. the minimax error rates of estimating $\rho$ when $\theta_{*}$
is known and $d$ is fixed of Theorem \ref{thm: weight estimation minimax}
in Appendix \ref{sec:Minimax-rates}).

We note in passing that we also analyze an EM iteration for estimating
$\rho_{*}$ given any fixed value of $\theta$ (perhaps mismatched
to $\theta_{*}$). As we will discuss in Section \ref{subsec:Joint-mean-and},
this shows that the given EM algorithm can be used for joint estimation
of $(\theta_{*},\rho_{*})$ if sufficient separation holds. Characterizing
the minimal separation required for \emph{joint} estimation remains
an open problem.

\paragraph{Significance of the unbalanced model. }
\begin{enumerate}
\item The likelihood-based EM has method-of-moments alternatives \cite{anandkumar2014tensor,heinrich2015optimal,wu2018optimal}
which may achieve the same error rates as the EM algorithm, perhaps
at a higher computational cost. Specifically, for the balanced 2-GM
model, the optimal error rate\footnote{Which, in fact, unlike EM, do not have ``spurious'' logarithmic
terms.} is achieved by a \emph{spectral} algorithm \cite{wu2019EM}. Such
an algorithm estimates $\theta_{*}$ by $\theta_{\text{SP}}=\sqrt{\max\{\lambda_{\max}-1,0\}}\cdot\hat{\theta}_{\text{sp}}$
where $\lambda_{\max}$ and $\hat{\theta}_{\text{sp}}$ are, respectively,
the maximal eigenvalue and the corresponding normalized maximal eigenvector
$\hat{\theta}_{\text{sp}}$, of the empirical covariance matrix $\E_{n}[XX^{T}]$.
The spectral algorithm can be interpreted as eliminating the sign
ambiguity by ``squaring'' the samples, since the covariance matrix
\[
\E[XX^{T}]=\theta_{*}\theta_{*}^{T}+I_{d}\,,
\]
does not depend on the unknown sign $S$ (cf. the model (\ref{eq: Gaussian mixture model})).
Hence, while EM attempts to \emph{learn} the latent signs, spectral
algorithms attempt to \emph{eliminate} them. Despite this conceptual
difference, it was observed in \cite{pmlr-v65-daskalakis17b} that
whenever $\|\theta_{t}\|$ has sufficiently low norm, the EM iteration
behaves as a power iteration on the empirical covariance matrix, and
in this regime the operation of EM is not fundamentally different
from a spectral algorithm. Nonetheless, sign elimination can only
be optimal for sufficiently small values of $\rho_{*}$, since the
distribution of the statistic $\E_{n}[XX^{T}]$ is insensitive to
the value of $\rho_{*}$, so it cannot lower its error in case $\rho_{*}>0$.
Our results thus demonstrate that EM is nearly optimal in a regime
in which the estimator must learn the latent signs. 
\item The worst case error over $\|\theta\|^{*}$ is given by $\max\{\left(d/n\right)^{1/4},\frac{1}{\rho_{*}}\sqrt{d/n}\}$
and improves as $\rho_{*}$ is increased. In practice, $\rho_{*}$
may be increased, e.g., by collecting additional information on the
latent signs generating $\lceil\tfrac{1}{2}\rho_{*}n\rceil$ of the
samples, and then align the signs of those samples by proper multiplication
by $\{\pm1\}$. As another example, consider a communication system
in which $(S_{1},\ldots,S_{n})\in\{\pm1\}^{n}$ are the input bits
to a noisy channel whose output at time $i$ is given by $X_{i}=\theta_{*}S_{i}+Z$,
as in (\ref{eq: Guassian mixture model - random variables}). In order
to decode the bits, a typical decoder will estimate $\theta_{*}$
as a preliminary step, and assume that the samples are i.i.d. .\footnote{Typically, the data bits are encoded using an error correcting code
before being sent over the channel, and so the bits $\{S_{i}\}$ are
not i.i.d.. Nonetheless, the receiver may ignore these dependencies
for the purpose of estimation. } The input distribution $\P[S_{i}=1]=(1-\rho_{*})/2$ then trades-off
between estimation and data rate, with best estimation and zero data
rate for $\rho_{*}=1$ v.s. maximal data rate and worst estimation
for $\rho_{*}=0$. 
\item The proofs of global convergence for the balanced 2-GM model ($\rho_{*}=0$)
\cite{xu2016global,pmlr-v65-daskalakis17b,wu2019EM} rely heavily
on global symmetry properties of the population iteration (see next,
Section \ref{subsec: proof methods}). This lack of symmetry is challenging
for proving global convergence. For example, we show that a stable
spurious fixed point is possible at some $\theta\in(-\theta_{*},0)$
. Nonetheless, we show that (essentially) global convergence to $\theta_{*}$
is not restricted to $\rho_{*}=0$. 
\end{enumerate}

\subsection{Discussion of proof ideas \label{subsec: proof methods}}

In order to give context for the proof ideas, we first consider the
balanced case $\rho_{*}=0$ and describe the ideas behind the results
of \cite{xu2016global,pmlr-v65-daskalakis17b,wu2019EM} and how they
compare with the general analysis of \cite{balakrishnan2017statistical}.
There are two main ideas -- one pertains to the population iteration
and the other to the empirical error.

For the population iteration, the convergence radius guaranteed in
\cite{balakrishnan2017statistical} is proved using a standard fixed-point
theorem which requires \emph{contractivity} of the iterative iteration.
The guarantee on the size of the basin of attraction is obtained from
a guarantee on the contractivity of $f(\theta)$ in this region. However,
global convergence cannot be established by such an argument since
the EM iteration for (\ref{eq: Gaussian mixture model}) with $\rho_{*}=0$
is in fact \emph{not} globally contractive. Nonetheless, contractivity
is only a sufficient, but not necessary condition for convergence,
and other global properties of the iteration may be used. For example,
in the one-dimensional case $d=1$, the balanced EM iteration has
two stable fixed points $\theta=\pm\theta_{*}$, due to the well known
consistency property of EM (both which are acceptable solutions with
$\ell_{0}(\theta,\theta_{*})=0$), and a single unstable fixed point
$\theta=0$. The fact that any other fixed point is impossible follows
from the observation that $f(\theta)$ is an odd function, which is
concave for $\theta\in\mathbb{R}_{+}$ \cite{wu2019EM}. By contrast,
in the unbalanced case ($\rho_{*}>0$), neither concavity (say, for
all $\theta\in\mathbb{R}_{+}$) nor global contractivity hold for
unbalanced iterations. It is also seems to be difficult to analytically
characterize the required distance of $\theta$ from $\theta_{*}$
for these properties to hold.

For the empirical iteration, the error guarantee of \cite{balakrishnan2017statistical}
is obtained from the following high probability uniform error bound
on the empirical error 
\begin{equation}
\sup_{\theta\colon\|\theta_{0}-\theta_{*}\|\leq\frac{1}{4}\|\theta_{*}\|}\|f_{n}(\theta)-f(\theta)\|=\tilde{O}\left(\sqrt{\frac{d}{n}}\right)\,.\label{eq: balanced weak error bound}
\end{equation}
However, it was observed in \cite{dwivedi2018singularity} and \cite{wu2019EM}
that a stronger bound on the error can be obtained which allows arbitrarily
small $\|\theta_{*}\|$ and $\|\theta\|$ by ``localizing'' the
error as follows: 
\begin{equation}
\sup_{\theta\colon\|\theta\|\leq\gl[C]_{\theta}}\|f_{n}(\theta)-f(\theta)\|=\|\theta\|\cdot\tilde{O}\left(\sqrt{\frac{d}{n}}\right)\,.\label{eq: balanced localized error bound}
\end{equation}
So, while the empirical iteration analyzed using (\ref{eq: balanced weak error bound})
requires strong separation $\|\theta_{*}\|=\Omega(1)$, no such condition
is required when the bound (\ref{eq: balanced localized error bound})
is utilized, leading to the sharp results of \cite{wu2019EM}.

The analysis of the unbalanced case $\rho_{*}\neq0$ in this paper
is based on the following intuitive idea of $\rho$\emph{-ordering
of iterations}, which allows a comparison with the $\rho_{*}=0$ case.
If $\rho_{*}=1$, the model (\ref{eq: Gaussian mixture model}) is
the Gaussian location model, for which it can be easily verified (see
Section \ref{subsec:Main-result}) that the EM iteration converges
in a single iteration to the sample mean (which is also the MLE).
Extrapolating from this extreme case, we might expect that if $\rho_{1}>\rho_{0}$
then the iteration for $\rho_{1}$ will converge faster since the
model more closely resembles the Gaussian location model. We state
global comparison results (Theorem \ref{thm: population d=00003D00003D1 known delta}
for $d=1$ and Proposition \ref{prop: Comparison theorem d>1} for
$d>1$) establishing this property for any arbitrary pair $\rho_{0},\rho_{1}\in[0,1]$.
Combining this property with the known global convergence rate of
the balanced case $\rho_{*}=0$ yields the global convergence proof
of the population iteration for unbalanced $\rho_{*}\neq0$.

For the empirical iteration, it turns out that increasing $\rho$
has an \emph{opposite} effect. We generalize the localized error bound
of \cite{wu2019EM} in (\ref{eq: balanced localized error bound})
from $\rho_{*}=0$ to a general $\rho_{*}\in[0,1]$ and obtain that
\begin{equation}
\sup_{\theta\colon\|\theta\|\leq C}\|f_{n}(\theta)-f(\theta)\|=\max\left\{ \|\theta\|,\rho_{*}\right\} \cdot\tilde{O}\left(\sqrt{\frac{d}{n}}\right)\,,\label{eq: unbalanced localized error bound}
\end{equation}
indicating that the empirical error increases with $\rho_{*}$. The
main challenge of the analysis of the empirical iteration is to prove
that the increased empirical error for larger $\rho_{*}$ is compensated
by the improved convergence rate of the population iteration. It should
be noted, however, that the empirical error may break key properties
of the population iteration. For example, for $d=1$, the convergence
of the population iteration for $\theta_{0}=0$ towards $\theta_{*}$
is based on the fact that $f(0)>0$ (assuming w.l.o.g. that $\theta_{*}>0$).
Clearly, the empirical error (\ref{eq: unbalanced localized error bound})
might result in $f_{n}(0)<0$ which would steer the iteration towards
a spurious fixed point in $\mathbb{R}_{-}$. Our analysis shows that
with high probability this occurs only if $\|\theta_{*}\|$ is low,
so that this bad convergence does not dominate the error rate.

\subsection{General background on the EM algorithm}

In this section, we briefly outline relevant background on the EM
algorithm. It is well known that it is typically computationally complex
to compute the MLE 
\[
\theta_{\text{MLE}}=\argmax_{\theta}\E_{n}\left[\log P_{\theta}(X)\right]
\]
in parametric models $(X,S)\sim P_{\theta}(x,s)$ for which only $X$
is observed but $S$ is latent. For one thing, exact marginalization
over the latent variables $S$ to obtain the likelihood $P_{\theta}(x)$
(or its gradient) is computationally heavy due the need to sum over
all possible configurations of the latent variable. Moreover, in most
interesting cases, the likelihood $P_{\theta}(x)$ is not a concave
function of $\theta$, and so standard optimization techniques do
not have strong guarantees. Various authors \cite{baum1970maximization,beale1975missing,hartley1958maximum,healy1956missing,sundberg1974maximum,woodbury1970missing,hasselblad1966estimation,hasselblad1969estimation}
have independently proposed several heuristics akin to the EM algorithm
for this problem, and the EM algorithm was later on formulated in
its well known form in the seminal paper \cite{dempster1977maximum},
which also proposed a wide range of statistical applications.

The EM is an iterative procedure, which determines an empirical operator
$f_{n}$ based on $n$ samples from the data $X\sim P_{\theta}$.
Given an initial guess $\theta_{0}$, the algorithm produces a sequence
of iterations $\theta_{t}=f_{n}(\theta_{t-1})$ for all $t\geq1$.
Owing to its name, the empirical operator is determined by solving
two steps. The first step computes a posterior probability $P_{\theta_{t}}(S\mid X)$
on the latent variable $S$ based on the current estimate $\theta_{t}$,
and then averages the log-likelihood with this posterior (``expectation'')
to obtain the $Q$-function 
\[
Q(\theta\mid\theta_{t})=\int p_{\theta_{t}}(s\mid X)\cdot\log p_{\theta}(X,s)\cdot\d s\,.
\]
The second step then sets $\theta_{t+1}=f_{n}(\theta_{t})\dfn\argmax_{\theta}Q(\theta\mid\theta_{t})$
(``maximization''). In many practical cases, the last maximization
step can be solved analytically and an explicit expression of the
operator $f_{n}(\cdot)$ is available. A different interpretation
of EM as a minorization-maximization algorithm is obtained from the
fact that the bound 
\[
\log P_{\theta}(x)-\log P_{\theta_{t}}(x)\geq Q(\theta\mid\theta_{t})-Q(\theta_{t}\mid\theta_{t})
\]
holds for any $\theta$, which immediately implies a strong general
property: The EM algorithm produces increasing likelihoods $P_{\theta_{t}}(x)$
as $t$ increases. This elegant property, along with its typically
low computational complexity has contributed to its widespread application
in numerous applications~\cite{gupta2011theory}.

Despite the above appealing properties, not long after its formulation
in \cite{dempster1977maximum}, it was recognized that the EM algorithm
may actually fail to compute the MLE. In \cite{wu1983convergence},
it was clarified that in the general case, the EM algorithm may converge
to \emph{local} maxima of the likelihood, or even get trapped in a
saddle point. Clearly, such local maxima may be far from the required
MLE, and in high dimension their number could be exponentially large.
Consequently, except in favorable cases in which the likelihood is
unimodal, the convergence of the EM algorithm heavily depends on the
initial guess. In practice, this necessitates complicated initialization
algorithms such as multiple restarts with random initial estimates\cite{karlis2003choosing},
or using a pilot estimator to obtain an initial guess. Both options
are typically costly. In the more restricted case of mixtures of exponential
families, \cite{redner1984mixture} showed that EM converges at a
geometric rate to the MLE, under positivity conditions of the Fisher
information matrix and the mixing weights, and more importantly, assuming
local initialization. However, the dependence of the guarantees on
the convergence radius and rate are only qualitative and do not specify
their dependence on the parameters of the model. Furthermore, it was
empirically observed in \cite{redner1984mixture} that the EM iterations
can become painfully slow to converge whenever the separation between
the components is low.

Later works, e.g., \cite{hero1995convergence,meng1994global,chretien2008algorithms}
displayed similar guarantees, albeit to a local maxima of the likelihood,
which, naturally, might be far from the true likelihood. The paper
\cite{xu1996convergence} has cast the EM algorithm for Gaussian mixtures
as a gradient ascent algorithm, where in each step the gradient is
pre-multiplied by a positive-definite matrix, and exemplified slow
convergence akin to first-order optimization methods. These drawbacks
of EM were then addressed by a multitude of ad hoc methods and variants,
comprehensively summarized in \cite{mclachlan2007algorithm}. The
bottom line however, that even if the MLE is known to have good statistical
properties, it is not clear weather they can be computationally achieved
by the EM algorithm.

The apparent discrepancy between the wide practicality of the EM algorithm
versus its relatively weak theoretical guarantees mentioned above,
along with the growth in size and dimension of modern data sets, resulted
in two paradigm shifts in the anticipated goals expected from its
analysis. The first one, most notably emerging in \cite{balakrishnan2017statistical},
is the explicit characterization of the statistical precision, convergence
rate, and the distance of the initialization from the ground truth
required to obtain that statistically accurate solution (``basin
of attraction''). The characterization in \cite{balakrishnan2017statistical}
is based on general smoothness and stability properties of the auxiliary
function $Q(\theta\mid\theta')$, which need to be verified independently
for any given problem. As concrete examples, these conditions were
applied in \cite{balakrishnan2017statistical} to canonical models
such as the balanced 2-GM, symmetric mixture of two regressions, and
linear regression with missing covariates. Nonetheless, as discussed
in Section \ref{subsec:EM for Gaussian mixture}, this approach, even
when combined with further refinements \cite{klusowski2016statistical,Wu2016OnTC},
did not lead to sharp results for the basic balanced 2-GM model. As
discussed in Section \ref{subsec:Main-result}, the local convergence
result of the 2-GM model was then improved to global convergence guarantees
by various authors. For the idealized population version, it was shown
in \cite{xu2016global,pmlr-v65-daskalakis17b} that EM converges at
a geometric rate to $\pm\theta_{*}$, unless the initial guess $\theta_{0}$
is orthogonal to $\theta_{*}$. A finite sample analysis was made
in \cite{pmlr-v65-daskalakis17b}, but was based on sample-splitting
-- EM was assumed to run on a fresh batch of samples at each iteration.
Optimality of EM in terms of statistical error and convergence time
was ultimately established in~\cite{wu2019EM}.

\subsection{Other known results\label{subsec:Related-work}}

The unbalanced 2-GM model studied in this paper was mostly explored
in relation to misspecification or overspecification, i.e., cases
in which the true model does not belong to the set of fitted models,
or belongs to a simpler set of models. An extreme case of 2-GM mixture
model was considered in \cite{dwivedi2018singularity}, in which the
components are not separated at all, thus reduced to a zero-mean Gaussian
$\theta_{*}=0$. The EM algorithm was designed to operate on the unbalanced
model (\ref{eq: Gaussian mixture model}) with $\rho_{*}\neq0$ that
over-fits the true model. For this case, it was shown that the population
iteration is globally contracting at a rate $\|\theta_{t+1}\|\asymp\|\theta_{t}\|(1-\frac{\rho_{*}^{2}}{2})$
and thus globally converging at a geometric rate, and has a statistical
error of $O(\frac{1}{\rho_{*}^{2}}\sqrt{\frac{d}{n}})$, which is
parametric for fixed $\rho$, but in general, worse than the minimax
rate $(\frac{1}{\rho_{*}}\sqrt{\frac{d}{n}})$ ), and from our Theorem
\ref{thm:Main result}. This behavior was contrasted with the same
setting, except for which $\rho_{*}=0$, where it was shown that convergence
of the population EM is much slower, and behaves as $\|\theta_{t+1}\|\asymp\|\theta_{t}\|\left(1-\|\theta_{t}\|^{2}\right)$,
and the error rate for the sample-based EM is $O((d/n)^{1/4})$. This
error rate was achieved by partitioning the EM iterations to multi-epochs,
where in the $l$th epoch, $\|\theta_{t}\|\in[\left(\frac{d}{n}\right)^{\alpha_{l+1}},\left(\frac{d}{n}\right)^{\alpha_{l}}]$
for judiciously chosen powers $\alpha_{l}$. With this approach, the
guarantees on the empirical error in the iterations of the $l$th
epoch improve as $l\uparrow\infty$, which allows the ``localization''
of the empirical error discussed in Section \ref{subsec: proof methods}.
In \cite{dwivedi2019challenges}, EM for 2-GM mixture model with $\theta_{*}=0$
was considered again, but in which the algorithm is also allowed to
fit the variance of the samples. The obtained behavior is distinctively
different in one and multiple dimensions. For $d\geq2$, the number
of required iterations is $O(\sqrt{d/n})$, and the error rate for
estimating the mean is $O((d/n)^{1/4})$, whereas for $d=1$ the number
of required iterations is even larger $O(n^{3/4})$, and so is the
error rate $O((1/n)^{1/8})$. Other misspecified models were considered
in \cite{dwivedi2018theoretical}, and one of them is an unbalanced
2-GM one-dimensional mixture to a balanced 2-GM one-dimensional mixture,
albeit with a smaller, unknown variance. The paper bounded the distance
between the true parameter and the parameter corresponding to the
KL projection of the true model onto the set of allowed models. Based
on this bound, the population EM operator was shown be contractive
w.r.t. the projected parameter, and geometric convergence with statistical
error rate $O(1/\sqrt{n})$ of the samples-based EM iteration was
established.

Following the general analysis of \cite{balakrishnan2017statistical},
various latent models were explored. A high-dimensional setting with
$d\geq n$ and sparsity assumptions was studied in \cite{wang2014high,yi2015regularized},
which proposed truncation and regularization approaches for modifying
EM to that setting, and provided results comparable to \cite{balakrishnan2017statistical}.
The problem of estimating mixtures of linear regressions was considered
in \cite{klusowski2019estimating} which enlarged the contraction
region assured in \cite{balakrishnan2017statistical} for this case,
and showed that any initial guess with sufficiently large angle with
the target parameter vector (rather than a small distance in \cite{balakrishnan2017statistical})
will converge to $\theta_{*}$. It also showed that a sample-splitting
version of the EM algorithm converges with high probability. Global
convergence of the sample EM iteration was later established in \cite{kwon2018global}
by controlling both the empirical error and empirical angle between
the population and empirical iterations. Results of this nature were
then generalized to the $k$ mixture of linear regression in \cite{klusowski2019estimating}.
The $k$-GM for a general $k\geq2$ was studied in \cite{yan2017convergence,zhao2018statistical},
which provided results comparable to \cite{balakrishnan2017statistical}
for gradient EM under minimal separation condition between the means,
and closeness of the initial guess to the true means. Beyond the i.i.d.
setting, estimation problems in hidden Markov models using EM were
studied in \cite{yang2015statistical,aiylam2018parameter}.

\subsection{Notational conventions \label{subsec:Notation-conventions}}

Constant values which are used to state results or used in more than
a single place in the paper are denoted by sans-serif letters and
are summarized in Table \ref{tab:Summary-of-global}. Constants which
are used only locally are denoted by $c,C,c_{0},\ldots$. Those constants
are either universal or depend only on the parameters of the global
assumptions $\gl[C]_{\theta}$ and $\gl[C]_{\rho}$. Asymptotic relations
such as $\lesssim,\asymp$ are within these constant factors. The
expectation of a random variable $U$ is denoted by $\E[U]$, and
the empirical mean of $n$ i.i.d. samples $(U_{1},\ldots,U_{n})$
of $U$ is denoted by $\E_{n}[(U)]\dfn\frac{1}{n}\sum_{i=1}^{n}U_{i}$.
The distribution (law) of a random variable $U$ will be denoted by
${\cal L}(U)$. The $1$-Wasserstein distance between probability
measures $\mu$ and $\nu$ is given by \cite{villani2003topics} $W_{1}(V,U)=\inf\E|V-U|$
where the infimum is over all couplings of $\mu$ and $\nu$, i.e.,
a pair of random variables $(V,U)$ such that ${\cal L}(V)=\mu$ and
${\cal L}(U)=\nu$. The Euclidean norm is denoted by $\|\cdot\|$,
and the Euclidean ball of radius $r$ in dimension $d$ is denoted
by $\mathbb{B}^{d}(r)$, where for $d=1$ we omit the superscript.
A unit vector in the direction of a vector $\theta$ is denoted by
$\hat{\theta}$. For a given Orlicz function $\psi$, the Orlicz norm
of a random variable $U$ is denoted by $\|U\|_{\psi}=\inf_{t>0}\left\{ \E[\psi(|U|/t)]\leq1\right\} $,
where $U$ is called $\sigma^{2}$-sub-gaussian (resp. $\sigma$-sub-exponential)
if $\|U\|_{\psi_{2}}\leq\sigma$ where $\psi_{2}(t)=\exp(t^{2})-1$=
(resp. $\|U\|_{\psi_{1}}\leq\sigma$ where $\psi_{1}(t)=\exp(t)-1$)
. The set $\{1,\ldots,n\}$ is denoted by $[n]$, equivalence (usually
local simplification of notation) is denoted by $\equiv$, and equality
in distribution by $\eqd$. 
\begin{table}
\centering{}{\footnotesize{}}%
\begin{tabular}{|c|c|}
\hline 
{\footnotesize{}Constant } & {\footnotesize{}Description}\tabularnewline
\hline 
\hline 
{\footnotesize{}$\gl[C]_{\theta}$ } & {\footnotesize{}Global assumption (Section \ref{subsec:Main-result}):
Maximal norm of $\theta_{*}$}\tabularnewline
\hline 
{\footnotesize{}$\gl[C]_{\rho}$ } & {\footnotesize{}Global assumption (Section \ref{subsec:Main-result}):
Maximal absolute value of $\rho_{*}$}\tabularnewline
\hline 
{\footnotesize{}$\gl[C]_{\beta}$ } & {\footnotesize{}Global assumption (Section \ref{subsec:Main-result}):
Maximal absolute value of true inverse temperature parameter $\beta_{\rho_{*}}$}\tabularnewline
\hline 
{\footnotesize{}$(\gl[\underline{C}]_{\beta},\gl[\overline{C}]_{\beta})$ } & {\footnotesize{}Global assumption (Section \ref{subsec:Main-result}):
$\gl[\underline{C}]_{\beta}\rho\leq|\beta_{\rho}|\leq\gl[\overline{C}]_{\beta}\rho.$}\tabularnewline
\hline 
{\footnotesize{}$\gl[C]_{\omega}$ } & {\footnotesize{}Concentration (Section \ref{subsec:Concentration-of-the}):
Constant for empirical iteration error (w.h.p.)}\tabularnewline
\hline 
{\footnotesize{}$\{\gl[C]_{i}^{(1)}\}$ } & {\footnotesize{}Result for $d=1$ mean iteration (Section \ref{subsec:Concentration-of-the}):
Constants in Theorem \ref{thm: empirical d=00003D00003D1 known delta}}\tabularnewline
\hline 
{\footnotesize{}$\gl[T]^{(1)}$ } & {\footnotesize{}Result for $d=1$ mean iteration (Section \ref{subsec:Concentration-of-the}):
Convergence time in Theorem \ref{thm: empirical d=00003D00003D1 known delta}}\tabularnewline
\hline 
{\footnotesize{}$\gl[C]''$ } & {\footnotesize{}Proof of $d=1$ mean iteration (Lemma \ref{lem: d=00003D00003D1 simple properties}):
A constant for a bound on the second derivative of $f(\theta)$}\tabularnewline
\hline 
{\footnotesize{}$\{\gl[C]_{i}^{(d)}\}$ } & {\footnotesize{}Result for $d>1$ mean iteration (Section \ref{subsec:The-mean-iteration d>1}):
Constants in Theorem \ref{thm: empirical d>1 known delta large rho}}\tabularnewline
\hline 
{\footnotesize{}$\gl[T]_{\theta_{0}}^{(d)}$, $\gl[T]_{G}^{(d)}$ } & {\footnotesize{}Result for $d>1$ mean iteration (Section \ref{subsec:The-mean-iteration d>1}):
Convergence times in Theorem \ref{thm: empirical d>1 known delta large rho}}\tabularnewline
\hline 
{\footnotesize{}$\gl[C]_{F,0},\gl[C]''_{F},\gl[C]'''_{F}$ } & {\footnotesize{}Proof of $d>1$ mean iteration (Lemma \ref{lem: properties of F and G}):
Constants for bounds on $F(a,b)$ and its derivatives }\tabularnewline
\hline 
{\footnotesize{}$\gl[C]_{G,\rho}^{(d)},\gl[C]_{G,\eta}^{(d)}$ } & {\footnotesize{}Proof of $d>1$ mean iteration (Proposition \ref{prop: Comparison theorem d>1}):
Constants for bounds on $G(a,b)$ and its derivatives }\tabularnewline
\hline 
{\footnotesize{}$\gl[C]_{\eta}^{(d)}$ } & {\footnotesize{}Proof of $d>1$ mean iteration (Proposition \ref{prop: empirical d>1 known delta small rho}):
A constant for a condition on $\eta=\|\theta_{*}\|$}\tabularnewline
\hline 
{\footnotesize{}$\{\gl[C]_{i}^{(\rho)}\}$ } & {\footnotesize{}Result for weight iteration (Section \ref{subsec:The-weight-iteration}):
Constants in Theorem \ref{thm: population weight fixed mean}}\tabularnewline
\hline 
{\footnotesize{}$\gl[T]^{(\rho)}$ } & {\footnotesize{}Result for weight iteration (Section \ref{subsec:The-weight-iteration}):
Convergence time in Theorem \ref{thm: population weight fixed mean}}\tabularnewline
\hline 
{\footnotesize{}$\gl[C]''_{h}$ } & {\footnotesize{}Proof for weight iteration (Lemma \ref{lem: weight simple properties}):
A constant for a bound on the second derivative of $h(\rho)$}\tabularnewline
\hline 
\end{tabular}{\footnotesize{}~\caption{Summary of global constants\label{tab:Summary-of-global}}
}
\end{table}

\subsection{Organization}

Section \ref{sec:Detailed-Results} contains detailed statements of
the results, along with discussions, and proof outlines. Proofs appear
in later sections according to order. Specifically: 
\begin{itemize}
\item In Section \ref{subsec:Concentration-of-the} we generalize the uniform
error concentration bounds of \cite{wu2019EM} to the unbalanced case,
and also states such a bound for the weight iteration. The proof is
not fundamentally different from \cite{wu2019EM} and is provided
in Appendix \ref{sec: concentration proof} for completeness. 
\item In Section \ref{subsec:The-mean-iteration d=00003D00003D1} we analyze
the mean population and empirical EM iterations assuming the true
weight $\rho_{*}$ is known for $d=1$. 
\item In Section \ref{subsec:The-mean-iteration d=00003D00003D1} we extend
the analysis of the previous section to $d>1$, and prove the main
result of the paper. 
\item In Section \ref{subsec:The-mean-iteration d=00003D00003D1} we analyze
the population and empirical EM iterations for the weight assuming
a fixed mean $\theta$ (possibly mismatched to $\theta_{*}$). 
\item In Section \ref{subsec:Joint-mean-and} we briefly discuss the problem
in which both $\rho_{*}$ and $\theta_{*}$ are unknown, and the estimator
is required to jointly estimate both. 
\end{itemize}
In Appendix \ref{sec:Minimax-rates} we analyze minimax rates, and
in Appendix \ref{sec:Miscellaneous} we provide miscellaneous results
used in the paper.

\section{Detailed results\label{sec:Detailed-Results}}

\subsection{Concentration of the empirical EM iteration \label{subsec:Concentration-of-the}}

We first establish the concentration properties of the empirical iterations
to their population versions. The following Theorem is a generalization
of \cite[Theorem 4]{wu2019EM} from the $\rho=0$ case to $\rho\neq0$
case, and is proved in Appendix \ref{sec: concentration proof}. 
\begin{thm}
\label{thm: Error concentration}Assume that that $\|\theta_{*}\|\leq\gl[C]_{\theta}$
and that $|\rho_{*}|\leq\gl[C]_{\rho}$, and consider the event 
\begin{equation}
{\cal E}\dfn\left\{ \|f_{n}(\theta,\rho)-f(\theta,\rho)\|\leq\max\{\|\theta\|,\rho\}\cdot\omega_{d}\right\} \cap\left\{ \left|h_{n}(\rho,\theta)-h(\rho,\theta)\right|\leq\|\theta\|\cdot\omega_{1}\right\} \label{eq: high probability event}
\end{equation}
where 
\[
\omega_{d}\dfn\sqrt{\gl[C]_{\omega}\frac{d\log n}{n}}\,.
\]
Then, there exist a constant $\gl[C]_{\omega}$ which depends on $(\gl[C]_{\theta},\gl[C]_{\rho})$
such that $\P[{\cal E}]\geq1-\frac{1}{n^{cd}}$ for all $n\geq Cd\log n$. 
\end{thm}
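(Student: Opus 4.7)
The plan is to adapt the localized-error argument of Wu--Zhou (used for the balanced case in \cite{wu2019EM}) by exploiting the $1$-Lipschitz property of $\tanh$ together with the decomposition
\[
\tanh\bigl(\langle\theta,X\rangle+\beta_{\rho}\bigr)=\tanh(\beta_{\rho})+\Delta(\theta,\rho,X),\qquad \Delta(\theta,\rho,X)\dfn\tanh\bigl(\langle\theta,X\rangle+\beta_{\rho}\bigr)-\tanh(\beta_{\rho}).
\]
For the mean iteration, this splits $f_{n}(\theta,\rho)-f(\theta,\rho)$ into a \emph{constant-direction} term $\tanh(\beta_{\rho})\cdot(\E_{n}[X]-\E[X])$ and a \emph{signal-dependent} term $(\E_{n}-\E)[X\cdot\Delta(\theta,\rho,X)]$. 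Since $|\tanh(\beta_{\rho})|\asymp|\rho|$ under the unbalancedness assumption, standard sub-Gaussian concentration of the sample mean of $X$ bounds the first piece by $O(\rho\sqrt{d/n}\cdot\mathrm{polylog})$. For the second piece, Lipschitzness gives $|\Delta(\theta,\rho,X)|\leq|\langle\theta,X\rangle|\leq\|\theta\|\|X\|$, so each coordinate of $X\cdot\Delta$ is sub-exponential with Orlicz norm bounded by $\|\theta\|\cdot\|\|X\|^{2}\|_{\psi_{1}}\lesssim\|\theta\|\cdot d$; a vector Bernstein inequality then controls this term by $\|\theta\|\sqrt{d/n}\cdot\mathrm{polylog}$. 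Taking the maximum yields the claimed $\max\{\|\theta\|,\rho\}\cdot\omega_{d}$ bound at a fixed $(\theta,\rho)$.

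For the weight iteration $h_{n}-h$, the same decomposition is used, but now the constant $\tanh(\beta_{\rho})$ cancels \emph{exactly} between $\E_{n}$ and $\E$, so only the signal-dependent piece remains. Since $|\Delta(\theta,\rho,X)|\leq\|\theta\|\|X\|$ is sub-Gaussian with parameter $\asymp\|\theta\|$, scalar sub-Gaussian concentration gives $\|\theta\|\sqrt{\log n/n}=\|\theta\|\cdot\omega_{1}$. This explains why the $\rho$-scale contribution is absent in the $h_{n}$ bound.

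To obtain the bound uniformly over $(\theta,\rho)$, I would use a standard covering argument: on $\mathbb{B}^{d}(\gl[C]_{\theta})\times[-\gl[C]_{\rho},\gl[C]_{\rho}]$, take an $\epsilon$-net of cardinality $(c/\epsilon)^{d+1}$, apply the pointwise bounds at each net point with failure probability $e^{-cd\log n}$, and union bound to get overall failure probability $\lesssim n^{-cd}$. The discretization error is controlled via the Lipschitz constants of $\theta\mapsto f_{n}(\theta,\rho)$ and $\rho\mapsto f_{n}(\theta,\rho)$, which by $1$-Lipschitzness of $\tanh$ and the chain rule are bounded above by $\E_{n}[\|X\|^{2}]$ and $\E_{n}[\|X\|]$ respectively; both concentrate around their expectations (which are $O(d)$ and $O(\sqrt{d})$) on an event of probability $1-e^{-cd}$. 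Choosing $\epsilon\asymp n^{-C}$ absorbs the discretization error into the logarithmic factor inside $\omega_{d}$.

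The main obstacle is carefully tracking the sub-exponential behaviour of the quadratic-in-$X$ quantity $X\cdot\langle\theta,X\rangle$ uniformly, and ensuring that the maximum of $\|X_{i}\|$ (which scales as $\sqrt{d\log n}$ with high probability) does not inflate the Lipschitz constants beyond what can be absorbed into $\omega_{d}$. A secondary subtlety is that the decomposition is chosen so the $\rho$-dependent piece is a \emph{fixed} linear functional of $X$, independent of $\theta$, which lets us apply a single $d$-dimensional sub-Gaussian bound rather than a $\theta$-dependent one. This mirrors the ``localization'' idea of \cite{wu2019EM} but with a nonzero offset $\tanh(\beta_{\rho})$; the fact that this offset scales linearly with $\rho$ is what produces the $\max\{\|\theta\|,\rho\}$ factor in the final bound.
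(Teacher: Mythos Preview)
Your decomposition $\tanh(\langle\theta,X\rangle+\beta_\rho)=\tanh(\beta_\rho)+\Delta$ is a clean way to isolate the $\rho$-contribution as a deterministic scalar times the single random vector $\E_n[X]-\E[X]$, and it is genuinely different from the paper's route. The paper does not split off $\tanh(\beta_\rho)$; instead it bounds the directional $\psi_1$ norm of $\langle u,X\rangle\tanh(\|\theta\|\langle\hat\theta,X\rangle+\beta_\rho)$ directly by $(\|\theta\|+\beta_\rho)(1+\|\theta_*\|^2)$ via $|\tanh t|\le|t|$, nets separately over the direction $\hat\theta\in\mathbb S^{d-1}$, the magnitude $\|\theta\|$, and $\beta_\rho$, and handles the regime $\|\theta\|,\beta_\rho\le\epsilon$ by a second-order Taylor expansion of $\tanh$ at $0$. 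For the weight iteration the paper takes a still different route: it bounds $|h_n-h|\le\|\theta\|\cdot W_1(\nu,\nu_n)$ with $\nu=\mathcal L(\langle\hat\theta,X\rangle)$ and invokes the Fournier--Guillin concentration of the one-dimensional empirical $W_1$ distance, which is automatically uniform in $\rho$. Your direct sub-Gaussian argument for $h_n-h$ is more elementary and would also work.

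There are, however, two real gaps in your sketch. First, after correctly writing $|\Delta|\le|\langle\theta,X\rangle|$ you weaken this to $\|\theta\|\,\|X\|$, and the resulting coordinate $\psi_1$ bound $\|\theta\|\cdot\|\|X\|^{2}\|_{\psi_1}\asymp\|\theta\|\,d$ is too large: vector Bernstein with that input yields $\|\theta\|\,d\sqrt{d/n}$, not $\|\theta\|\sqrt{d/n}$. You must keep the sharper bound: for any unit $u$, $|\langle u,X\rangle\Delta|\le|\langle u,X\rangle|\,|\langle\theta,X\rangle|$ has $\psi_1$ norm $\lesssim\|\theta\|(1+\|\theta_*\|^2)$, which is dimension-free, and then netting over $u\in\mathbb S^{d-1}$ gives the correct rate. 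Second, a single net on $\mathbb B^d(\gl[C]_\theta)\times[-\gl[C]_\rho,\gl[C]_\rho]$ does not deliver the \emph{relative} bound $\max\{\|\theta\|,\rho\}\cdot\omega_d$ near $(\theta,\rho)=(0,0)$: the target shrinks to zero there, while your discretization error $\epsilon\cdot\E_n[\|X\|^2]\asymp\epsilon d$ does not, so ``$\epsilon\asymp n^{-C}$ absorbs it'' is false for $\|\theta\|$ of order $\epsilon$. The fix (which your decomposition actually enables) is to factor $\|\theta\|$ out of the $\Delta$-term, since $\Delta(0,\rho,X)=0$, and then net over the direction $\hat\theta$ on the sphere and over the scalars $(\|\theta\|,\rho)$ separately; this is exactly why the paper separates direction from magnitude and treats the small-parameter regime on its own.
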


We assume in the rest of the paper that the high probability event
(\ref{eq: high probability event}) holds, and often denote $\omega_{d}$
by $\omega$ for brevity. Note that in general, the error bound depends
on the iteration values $(\theta,\rho)$ and is uniform in the ground
truth parameters $(\theta_{*},\rho_{*})$ (as long as they satisfy
the global assumptions). For the mean iteration, it is interesting
to contrast the balanced iteration of $\rho=0$ with $\rho\neq0$.
For the balanced iteration, $f_{n}(\theta,0)=\E_{n}[X\cdot\tanh\langle\theta,X\rangle]$
and so $f_{n}(0,0)=f(0,0)=0\in\mathbb{R}^{d}$ with probability $1$.
Hence, a valid upper bound on the empirical error may tend to zero
as $\|\theta\|\to0$, and, indeed, \cite[Theorem 4]{wu2019EM} has
obtained an empirical error bound of order $O_{P}(\|\theta\|\omega)$.
Similar intuition was used in \cite{dwivedi2018singularity,dwivedi2019challenges}
to ``localize'' the error around $\|\theta\|\approx0$, although
in a more granular way. When the iteration is unbalanced, i.e., $\rho\neq0$,
the iteration $f_{n}(0,\rho)=\rho\cdot\E_{n}[X]$ is a non-degenerate
random variable, whose population version is $f(0,\rho)=\rho\cdot\E[X]=\rho^{2}\cdot\|\theta_{*}\|$.
In fact, in one-dimension, $f_{n}(0,\rho)$ might even be negative
(i.e., have opposite sign to its population version). Hence, one cannot
expect the empirical error to behave as in the balanced case, and
an additional term is required, which depends on $\rho$, as given
in (\ref{eq: high probability event}). Evidently, as $\rho$ increases,
so is the error bound. Intuition again may arise from the extreme
case, this time when $\rho=1$. In this case the iteration is simply
$\lim_{\rho\to1}f_{n}(\theta,\rho)=\E_{n}[X]$, i.e., the iteration
provides the empirical mean at a single step, which clearly must have
an empirical error of $O_{P}(\sqrt{\frac{d}{n}})$, even with $\|\theta\|$
being arbitrarily small. Figuratively speaking, the more the iteration
is ``aggressive'' in assuming prior knowledge regarding the signs
$\{S_{i}\}$ generating the samples, the larger is the empirical error
in the iteration. On the other hand, as we shall see, such (correct)
prior knowledge improves the convergence properties of the population
iteration. So, the convergence properties of the empirical iteration
for $\rho>0$ are obtained from a balance between improved population
convergence compared to $\rho=0$ which compensate for the larger
empirical error. The error in the weight iteration is proportional
to $O_{P}(\|\theta\|\cdot\omega_{1})$, which agrees with the observations
that $h_{n}(\theta,\rho)\to\rho$ as $\|\theta\|\to0$, and $\rho$
is unidentifiable, along with the observation that the weight iteration
is effectively one-dimensional, and so the error is proportional to
$\omega_{1}$ rather than to $\omega_{d}$.

\subsection{The mean iteration for known weight at $d=1$\label{subsec:The-mean-iteration d=00003D00003D1}}

In this section we consider the mean iteration in one dimension. While
the model for $d=1$ is simple, its analysis already captures some
of the complication of the analysis, and also serves as a building
block for the analysis of the $d>1$ case. We assume that $\delta_{*}=(1-\rho_{*})/2\equiv\delta$
is known and fixed, and this true parameter is used in the EM iteration.
Hence the model can be written as: 
\[
P_{\eta,\delta}=(1-\delta)\cdot N(\eta,1)+\delta\cdot N(-\eta,1)\,,
\]
where $\eta\dfn\|\theta_{*}\|>0$ is assumed w.l.o.g.. The population
version of the EM iteration for this case can be written as 
\[
f(\theta\mid\eta,\delta)\dfn\E\left[X\cdot\frac{(1-\delta)\cdot e^{X\theta}-\delta\cdot e^{-X\theta}}{(1-\delta)\cdot e^{X\theta}+\delta\cdot e^{-X\theta}}\right]=\E\left[X\cdot\tanh(X\theta+\beta)\right]\,,
\]
with $X\sim(1-\delta)N(\eta,1)+\delta N(-\eta,1)$ and where we abbreviate
to $f(\theta\mid\eta)$ or $f(\theta)$ when possible. Similarly,
the empirical iteration will be denoted by $f_{n}(\theta\mid\eta,\delta)$,
and abbreviated to $f_{n}(\theta)$. Fig. \ref{fig: unbalanced iteration}
illustrates several EM iterations (based on single runs of $n=10^{4}$
samples). 
\begin{figure}
\begin{centering}
\includegraphics[scale=0.8]{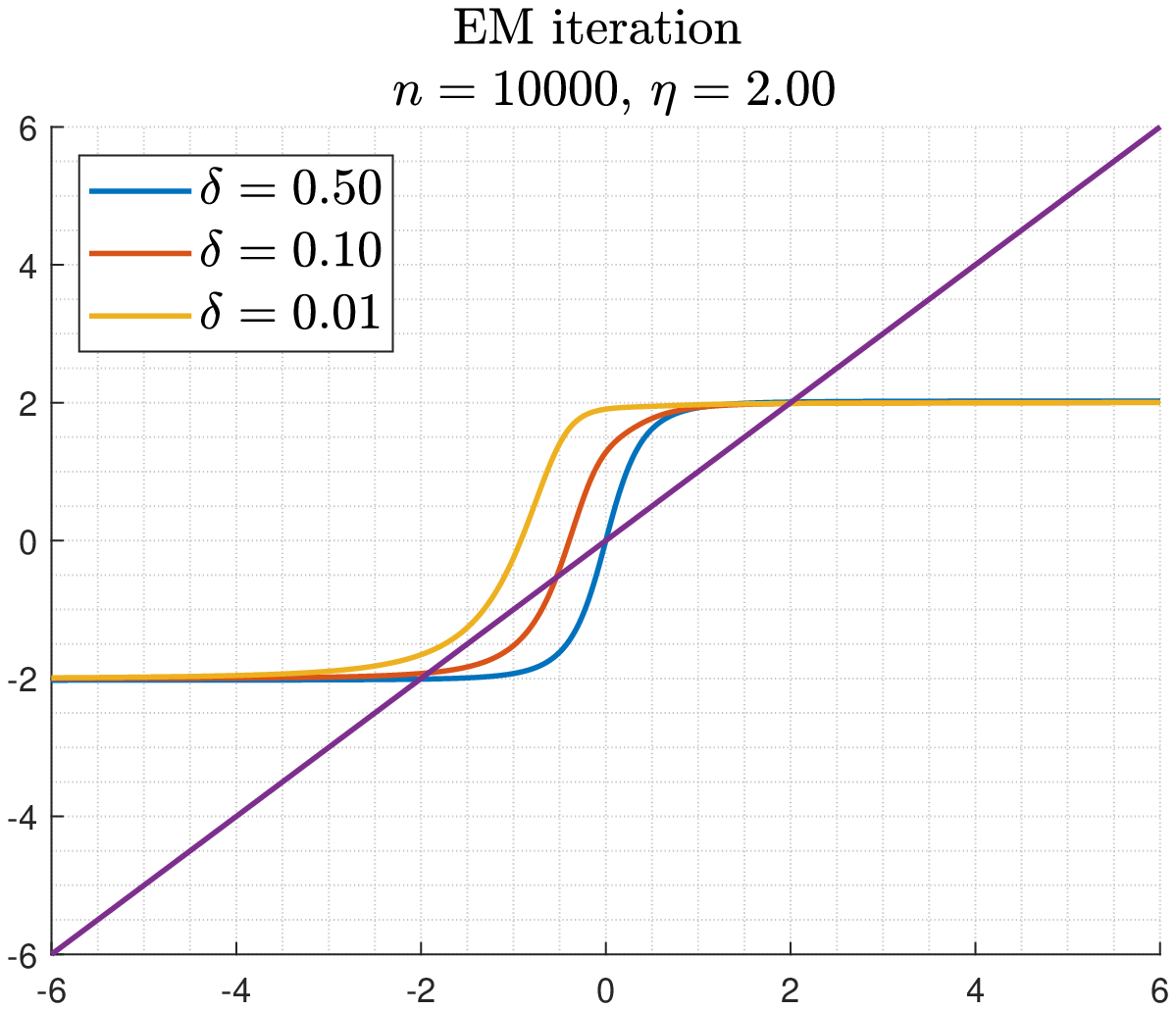} 
\par\end{centering}
\begin{centering}
\includegraphics[scale=0.8]{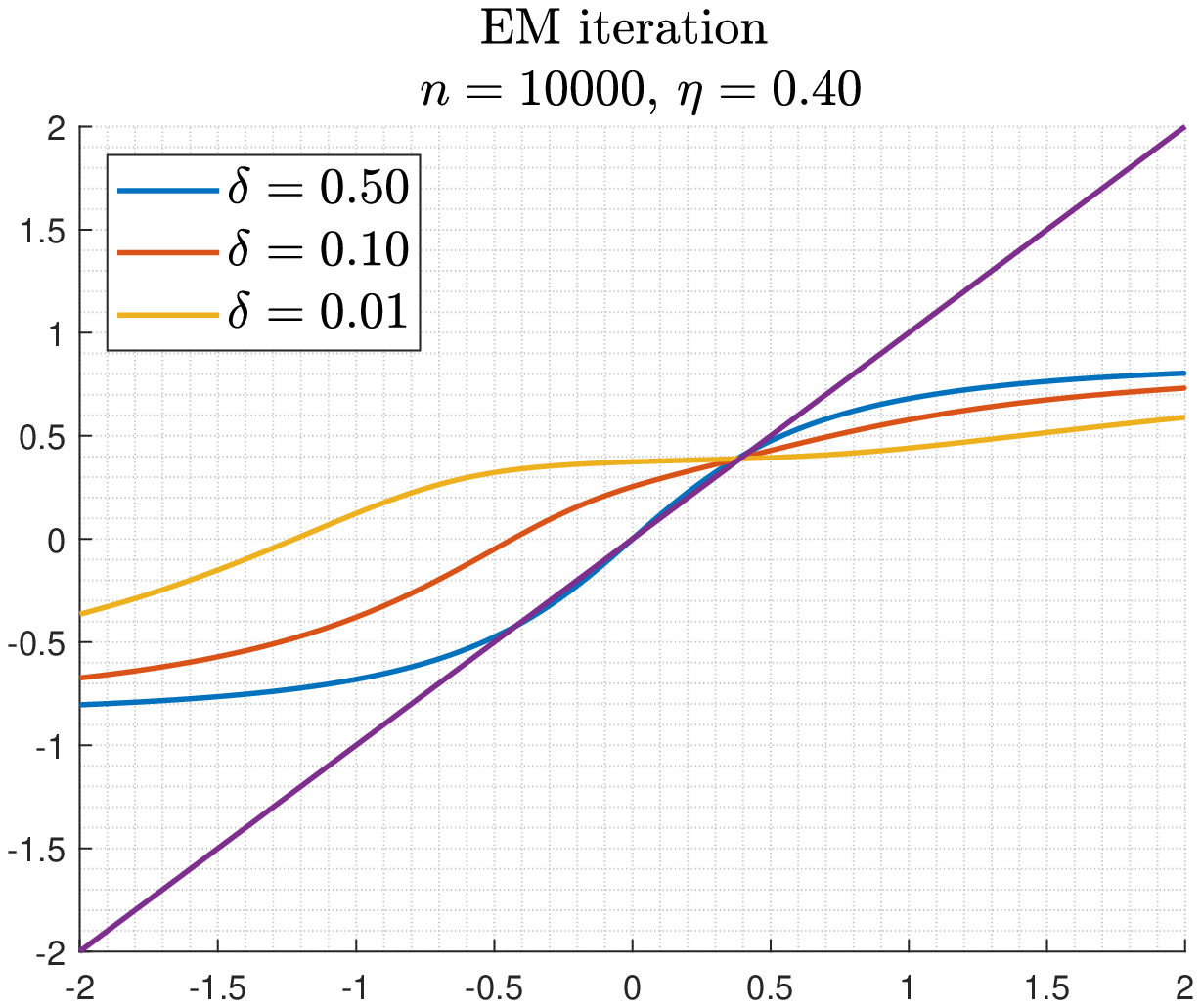} 
\par\end{centering}
\caption{Illustration of $f(\theta,\delta\mid\eta,\delta)$ for $\eta=2$ and
$\eta=0.4$. \label{fig: unbalanced iteration}}
\end{figure}

We begin with the population iteration. 
\begin{thm}[Population mean iteration, known weight, $d=1$]
\label{thm: population d=00003D00003D1 known delta} The following
holds: 
\begin{enumerate}
\item The unique fixed point of $\theta\mapsto f(\theta\mid\eta,\delta)$
in $\mathbb{R}_{+}$ is $\theta=\eta$, and its fixed points in $\mathbb{R}_{-}$
are confined to the interval $(-\eta,0)$. 
\item If $\theta_{0}\geq0$ then the iteration $\theta_{t+1}=f(\theta_{t}\mid\eta,\delta)$
converges to $\eta$. 
\item Let $\delta\leq\tilde{\delta}<1/2$ and $\theta_{0}\geq0$. Consider
the iteration $\tilde{\theta}_{t}=f(\tilde{\theta}_{t-1}\mid\eta,\tilde{\delta})$
such that $\theta_{0}=\tilde{\theta}_{0}\geq0$. Then $|\eta-\tilde{\theta}_{t}|>|\eta-\theta_{t}|$
for all $t\geq1$, i.e., the convergence is faster as $\delta$ is
lower. The same holds for $\tilde{\delta}=1/2$ if $\theta_{0}>0$. 
\end{enumerate}
\end{thm}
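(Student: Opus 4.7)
The plan is to reduce everything to a monotonicity-in-$\delta$ comparison so that we can import the already-known global convergence of the balanced iteration $f(\cdot\mid\eta,1/2)$ \cite{wu2019EM}. Two elementary properties underlie the argument: (a) $\theta \mapsto f(\theta\mid\eta,\delta)$ is strictly increasing, which follows by differentiating under the expectation to get $f'(\theta) = \E[X^{2}\,\mathrm{sech}^{2}(X\theta+\beta_{\delta})] > 0$; and (b) the consistency identity $f(\eta\mid\eta,\delta) = \eta$, since $\tanh(X\eta+\beta_{\delta})$ is exactly the posterior $\E_{\eta,\delta}[S\mid X]$, so that the tower property gives $\E[X\tanh(X\eta+\beta_{\delta})] = \E[XS] = \eta$.

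The core of the proof is the comparison $|f(\theta\mid\eta,\delta) - \eta| \leq |f(\theta\mid\eta,\tilde\delta) - \eta|$ for $\delta \leq \tilde\delta \leq 1/2$ and $\theta \geq 0$, with strict inequality whenever $\theta \neq \eta$ and $\delta < \tilde\delta$. Reparametrizing by $\rho = 1 - 2\delta$, this is equivalent to $(\eta - \theta)\,\partial_{\rho}f(\theta\mid\eta,\rho) \geq 0$ on $\{\theta \geq 0\}$; both sides vanish at $\theta = \eta$ by (b), providing a useful anchor. I would establish the sign by decomposing $f$ as the $\rho$-weighted average of the two Gaussian expectations $\E_{Z}[(\pm\eta+Z)\tanh((\pm\eta+Z)\theta+\beta_\rho)]$, differentiating in $\rho$ (using $\beta'_\rho = 1/(1-\rho^2)$), and simplifying the resulting $\E_Z[Z\cdot(\ldots)]$ terms by Stein's lemma. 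I expect this sign verification to be the main technical obstacle, since the derivative bundles the shift in mixture weight with the shift in $\beta_\rho$ and the sign must flip exactly at $\theta = \eta$.

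Given the comparison lemma, Part 3 becomes an induction combining (a) with the lemma: $|\theta_{t} - \eta| = |f(\theta_{t-1}\mid\eta,\delta) - \eta| \leq |f(\tilde\theta_{t-1}\mid\eta,\delta) - \eta| \leq |f(\tilde\theta_{t-1}\mid\eta,\tilde\delta) - \eta| = |\tilde\theta_{t} - \eta|$, strict when $\delta < \tilde\delta$. Specializing to $\tilde\delta = 1/2$ yields Parts 1 and 2: global convergence of the balanced iteration to $\pm\eta$ implies $f(\theta\mid\eta,1/2) - \theta$ has the sign of $\eta - \theta$ on $\mathbb{R}_{+}\setminus\{\eta\}$, and the lemma transfers these strict inequalities to $f(\theta\mid\eta,\delta)$; thus $\eta$ is the unique fixed point of $f(\cdot\mid\eta,\delta)$ in $\mathbb{R}_{+}$, and (a) yields $\theta_t \to \eta$ from every $\theta_{0} \geq 0$ (noting that $\theta_0 = 0$ escapes the origin because $f(0\mid\eta,\delta) = (1-2\delta)\eta\tanh(\beta_\delta) > 0$ when $\delta < 1/2$).

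Finally, the claim that any fixed point in $\mathbb{R}_-$ lies in $(-\eta, 0)$ needs a short separate argument. Since $|\tanh|\leq 1$ forces $|f(\theta)|\leq \E[|X|] \leq \eta + \sqrt{2/\pi}$, no fixed points can lie below $-\eta-\sqrt{2/\pi}$; on the remaining bounded interval $[-\eta-\sqrt{2/\pi},-\eta]$ I would expand $f(\theta) - \theta$ via the mixture decomposition and apply Stein's lemma together with the symmetry $Z \eqd -Z$ to verify $f(\theta) > \theta$ directly.
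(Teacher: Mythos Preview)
Your high-level strategy matches the paper's exactly: establish the monotonicity $\partial_{\delta} f(\theta\mid\eta,\delta)\gtrless 0$ according as $\theta\gtrless\eta$, then leverage the known global convergence of the balanced iteration together with the one-dimensional convergence machinery. Parts 1--3 do indeed follow from this comparison lemma in the way you outline (the paper also gets the confinement of negative fixed points directly from the same lemma, since $f(\theta\mid\eta,\delta)>f(\theta\mid\eta,\tfrac12)\geq\theta$ for all $\theta\leq -\eta$, which is cleaner than your separate Stein calculation on $[-\eta-\sqrt{2/\pi},-\eta]$).

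The genuine gap is in how you plan to verify the sign of $\partial_{\rho}f$. Your proposal---decompose $f$ over the two mixture components, differentiate in $\rho$, and simplify via Stein's lemma---is precisely the direct route the paper tries first. It works for $\theta\leq 0$, but for $\theta>0$ the resulting expression mixes the weight shift and the $\beta$-shift in a way that does not yield to termwise sign analysis; the paper explicitly calls this approach ``daunting'' there. The paper's actual argument is quite different and is the key idea you are missing: fix $\theta>0$ and study $\eta\mapsto\partial_{\delta}f(\theta\mid\eta,\delta)$ instead. One writes $\partial_{\delta}f=[\eta\cdot s(\theta\eta)]*\varphi(\eta)$ for an explicit function $s(u)$ built from $\tanh$ and $\mathrm{sech}^{2}$, proves via an elementary but nontrivial lemma that $s(u)<0$ on all of $\mathbb{R}$ and that $s'$ has a single (negative) zero, and then invokes the \emph{variation diminishing property} of the Gaussian kernel to conclude that $\eta\mapsto\partial_{\delta}f$ has at most one zero-crossing. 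Consistency $f(\eta\mid\eta,\delta)=\eta$ forces that crossing to sit at $\eta=\theta$, and checking the sign at $\eta=0$ and $\eta\to\infty$ finishes the proof. So the missing ingredients are: (i) switching the roles of $\theta$ and $\eta$, (ii) the variation-diminishing/total-positivity argument for Gaussian convolution, and (iii) the auxiliary lemma controlling $s(u)$. Without these, your sign verification for $\theta>0$ is not a plan but a hope.
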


Though not crucial for the analysis or later derivations, we conjecture
from exhaustive numerical evidence that there are only two spurious
fixed points of $f(\theta)$ in $\mathbb{R}_{-}$, and furthermore,
there exists $\delta_{\text{cr}}\in(0,1/2)$ such that the number
of fixed points of $f(\theta)$ in $\mathbb{R}_{-}$ is 
\[
\begin{cases}
2, & \delta\in(\delta_{\text{cr}},\frac{1}{2})\\
1, & \delta=\delta_{\text{cr}}\\
0, & \delta\in[0,\delta_{\text{cr}})
\end{cases}\,.
\]
Theorem \ref{thm: population d=00003D00003D1 known delta} establishes
global convergence properties for the unbalanced one-dimensional EM
population iteration, when initializing either with the correct sign
of the larger weight component, or with a neutral sign ($\theta=0$).
Nonetheless, the illustration in Fig. \ref{fig: unbalanced iteration}
also demonstrates that initializing with the strictly wrong sign may
lead to convergence to a spurious stable fixed point which is at a
non-zero distance from $-\eta$, even when $n\to\infty$. Thus, a
correct initialization of this iteration is both simple and crucial.

To describe the proof idea of Theorem \ref{thm: population d=00003D00003D1 known delta},
we briefly recall the properties of the balanced iteration $f(\theta\mid\eta,\frac{1}{2})$
and why this iteration globally converges. As evident from Fig. \ref{fig: unbalanced iteration}
(and proved in \cite[Section 2]{wu2019EM}), $\theta\mapsto f(\theta\mid\eta,\frac{1}{2})$
is an odd increasing function which is concave on $\mathbb{R}_{+}$.
It is not difficult to show (see also Proposition \ref{prop: Properties of one dimensional iterations}
in Appendix \ref{sec:Miscellaneous}) that in this case that the iteration
must converge to one of the three unique fixed points $\theta=0,\pm\tilde{\theta}$.
The general consistency property of EM\footnote{which can also be proved directly, see proof of Lemma \ref{lem: d=00003D00003D1 simple properties}
below.} then implies that $\tilde{\theta}=\eta$. Thus, for the balanced
iteration, the concavity property provides a global property on the
iteration which is used to establish global convergence.

The reduced uncertainty in the $\delta<1/2$ case hints that the iteration
should converge faster and better than for the $\delta=1/2$ case.
However, at the same time, the change from $\delta=1/2$ to $\delta<1/2$
breaks the symmetry in the iteration, and hence the concavity property
of the iteration in $\mathbb{R}_{+}$. For example, the yellow iteration
in Fig. \ref{fig: unbalanced iteration} corresponding to $\delta=0.01$
shows that the iteration is non-concave around $\theta\approx0.6$.
Hence a different global property is required. The consistency property
assures at $\theta=\eta$ the iteration is insensitive to $\delta$,
and specifically that $f(\eta\mid\eta,\delta)=\eta$ for \emph{all}
$\delta$. As evident from Fig. \ref{fig: unbalanced iteration},
and as is true in general, for $\theta<\eta$ and $\theta>\eta$ a
change in $\delta$ bares an opposite, yet consistent effect. The
next proposition summarizes this property, along with another global
property related to ``oddness dominance'' that will be used in the
analysis of the empirical iteration. 
\begin{prop}
\label{prop: Comparison theorem d=00003D00003D1}Assume that $\eta>0$
and $\delta\in[0,\frac{1}{2}]$. 
\begin{enumerate}
\item \label{enu: one-dimensioanl iteration oddness dominance}$f(\theta\mid\eta,\delta)\geq-f(-\theta\mid\eta,\delta)$
for all $\theta>0$. 
\item \label{enu: one-dimensioanl iteration delta derivative dominance}$\delta\mapsto f(\theta\mid\eta,\delta)$
is non-increasing (resp. constant, resp. non-decreasing) for $\theta<\eta$
(resp. for $\eta=\theta$, resp. $\theta>\eta$). 
\end{enumerate}
\end{prop}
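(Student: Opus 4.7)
The plan is to dispatch the two items separately using hyperbolic identities and symmetrization.

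For item 1, the key identity I would exploit is
\[
\tanh(u+v)+\tanh(-u+v)\,=\,\frac{\sinh(2v)}{\cosh(u+v)\cosh(u-v)}\,=\,\frac{\sinh(2v)}{\tfrac{1}{2}\bigl[\cosh(2u)+\cosh(2v)\bigr]},
\]
whose denominator $D(u,v):=\tfrac{1}{2}[\cosh(2u)+\cosh(2v)]$ is an even function of $u$. Applying this with $u=x\theta$ and $v=\beta$ gives
\[
f(\theta)+f(-\theta)\,=\,\sinh(2\beta)\int\frac{x\,p(x;\eta,\delta)}{D(x\theta,\beta)}\,dx\,=\,\sinh(2\beta)\int_{0}^{\infty}\frac{x}{D(x\theta,\beta)}\bigl[p(x)-p(-x)\bigr]dx,
\]
where folding the $x<0$ part via $x\mapsto-x$ uses the evenness of $D$. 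A direct calculation gives $p(x)-p(-x)=(1-2\delta)[\phi_{\eta}(x)-\phi_{-\eta}(x)]$, and for $x>0,\eta>0$ one has $\phi_{\eta}(x)\geq\phi_{-\eta}(x)$ because $x$ is closer to $\eta$ than to $-\eta$. Since $\delta\leq 1/2$ also makes $\sinh(2\beta)\geq 0$ and $1-2\delta\geq 0$, every factor is non-negative and the claim follows.

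For item 2, I would first verify the consistency identity $f(\eta\mid\eta,\delta)=\eta$ for every $\delta\in[0,1/2]$. Using the factorization $p(x;\eta,\delta)=[\cosh\beta]^{-1}e^{-\eta^{2}/2}\phi_{0}(x)\cosh(\eta x+\beta)$ together with the cancellation $\tanh(\eta x+\beta)\cosh(\eta x+\beta)=\sinh(\eta x+\beta)$, the integral reduces to $\int x\sinh(\eta x+\beta)\phi_{0}(x)dx=\eta e^{\eta^{2}/2}\cosh\beta$, giving $f(\eta\mid\eta,\delta)=\eta$. Differentiating in $\delta$ yields $\partial_{\delta}f(\eta\mid\eta,\delta)=0$ for all $\delta$. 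It then suffices to prove the second-order inequality $\partial_{\theta}\partial_{\delta}f(\theta\mid\eta,\delta)\geq 0$, because then $\partial_{\delta}f(\theta\mid\eta,\delta)=\int_{\eta}^{\theta}\partial_{\theta}\partial_{\delta}f(s,\eta,\delta)\,ds$ has the sign of $\theta-\eta$, precisely matching the claim.

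To prove the second-order inequality, I would use $\beta_{\delta}'=-2\cosh^{2}\beta$ and $\partial_{\delta}p=-2e^{-\eta^{2}/2}\phi_{0}\sinh(\eta x)$ to compute
\[
\partial_{\delta}f\,=\,-2e^{-\eta^{2}/2}\!\!\int\!\phi_{0}(x)\,x\bigl[\cosh\beta\,\mathrm{sech}^{2}(x\theta+\beta)\cosh(\eta x+\beta)+\tanh(x\theta+\beta)\sinh(\eta x)\bigr]dx,
\]
and then differentiate the bracket in $\theta$ to arrive at
\[
\partial_{\theta}\partial_{\delta}f\,=\,2e^{-\eta^{2}/2}\!\!\int\!\phi_{0}(x)\,x^{2}\,\mathrm{sech}^{2}(x\theta+\beta)\bigl[2\cosh\beta\tanh(x\theta+\beta)\cosh(\eta x+\beta)-\sinh(\eta x)\bigr]dx.
\]
Symmetrizing via $x\mapsto-x$ and clearing the denominators $\cosh^{3}(x\theta\pm\beta)$, the numerator splits as $2\cosh\beta\cdot[\text{asymmetric product}]+\sinh(\eta x)\,D(x\theta,\beta)\sinh(2x\theta)\sinh(2\beta)$, with $D$ the same even kernel from item 1; the second summand is manifestly non-negative. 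The asymmetric product term yields, at the special value $\theta=\eta$ and after product-to-sum simplifications, the closed form $\frac{2\cosh(\eta x)\sinh(2\beta)\cosh^{2}\beta}{\cosh^{2}(\eta x+\beta)\cosh^{2}(\eta x-\beta)}\geq 0$, verifying the inequality there. The main obstacle is the algebraic bookkeeping for general $\theta\neq\eta$: the mixture of $\sinh(x\theta\pm\beta)$, $\cosh(\eta x\pm\beta)$, and $\sinh(\eta x)$ requires several successive applications of the product-to-sum identities (the same family used to identify $D$ in item 1) and an even/odd decomposition in $x$ to expose the non-negativity of the symmetrized integrand.
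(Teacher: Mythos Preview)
Your argument for item~1 is correct and amounts to the same symmetrization as in the paper, just written through the closed-form identity for $\tanh(u+v)+\tanh(v-u)$ rather than by directly recognizing that $u\mapsto u[\tanh(u\theta+\beta)-\tanh(u\theta-\beta)]$ is odd and positive on $\mathbb{R}_{+}$.

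For item~2, however, the plan breaks down at a fundamental level: the inequality $\partial_{\theta}\partial_{\delta}f(\theta\mid\eta,\delta)\geq 0$ that you propose to prove is simply \emph{false}. To see this, observe that
\[
\lim_{\theta\to\pm\infty}\partial_{\delta}f(\theta\mid\eta,\delta)=0,
\]
since as $|\theta|\to\infty$ one has $\mathrm{sech}^{2}(x\theta+\beta)\to 0$ and $\tanh(x\theta+\beta)\to\pm\mathrm{sgn}(x)$, and the surviving integrand $\phi_{0}(x)\,|x|\sinh(\eta x)$ is odd. Combined with $\partial_{\delta}f(0\mid\eta,\delta)=-4(1-2\delta)\eta<0$ and $\partial_{\delta}f(\eta\mid\eta,\delta)=0$, this forces $\theta\mapsto\partial_{\delta}f$ to be non-monotone on both $(-\infty,0)$ and $(\eta,\infty)$. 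In particular your mixed partial is strictly negative for some $\theta>\eta$, so no amount of hyperbolic bookkeeping can rescue the pointwise symmetrized-integrand argument; the ``main obstacle'' you flagged is not algebraic but structural. (Concretely, for $a=x\theta$ large and negative one can check numerically that the symmetrized bracket $G(x)+G(-x)$ is negative, consistent with the above.)

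The paper therefore proceeds very differently. For $\theta\leq 0$ it uses a direct sign analysis of the two pieces $q_{1},q_{2}$ of $\partial_{\delta}f$. For the delicate case $\theta>0$ it switches viewpoint: it studies $\eta\mapsto\partial_{\delta}f(\theta\mid\eta,\delta)$ with $\theta$ fixed, writes this as a Gaussian convolution $[\eta\cdot s(\theta\eta)]*\varphi(\eta)$, shows via a separate lemma that $s(u)<0$ on all of $\mathbb{R}$ so that $\eta\cdot s(\theta\eta)$ has a single zero-crossing at $\eta=0$, and then invokes the variation-diminishing property of the Gaussian kernel to conclude that $\partial_{\delta}f$ has at most one zero-crossing in $\eta$. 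Consistency pins that crossing at $\eta=\theta$, and checking the boundary values $\eta=0$ and $\eta\to\infty$ fixes the signs. The key idea you are missing is this switch from $\theta$ to $\eta$ as the running variable together with the total-positivity machinery; a direct monotonicity-in-$\theta$ argument does not work.
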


The second property can be described as $\theta=\eta$ being a ``pivot-point''
for the iteration as $\delta$ is varied (see Fig. \ref{fig: unbalanced iteration}).
Given the second property of Proposition \ref{prop: Comparison theorem d=00003D00003D1},
along with the known convergence for the case $\delta=1/2$, global
convergence can be easily established for $\theta_{0}\geq0$.

In order to prove property \ref{enu: one-dimensioanl iteration delta derivative dominance}
in Proposition \ref{prop: Comparison theorem d=00003D00003D1}, one
may assume an arbitrary and fixed true parameter $\eta>0$, and attempt
to establish this property for all $\theta\in\mathbb{R}$. It turns
out that if $\theta<0$, this property can be proved by a direct reasoning
on $\frac{\dee}{\dee\delta}f(\theta\mid\eta,\delta)$. However, similar
strategy seems daunting for $\theta>0$, and our proof is built on
an indirect argument. More explicitly, for $\theta>0$ it is required
to be proved that: 
\begin{equation}
\frac{\dee f(\theta\mid\eta,\delta)}{\dee\delta}\;\;\begin{cases}
>0, & 0<\eta<\theta\\
=0, & \eta=\theta\\
<0, & \eta>\theta
\end{cases}\,.\label{eq: derivative of the iteration wrt delta for positive theta}
\end{equation}
The proof of (\ref{eq: derivative of the iteration wrt delta for positive theta})
is based on the following ideas: 
\begin{itemize}
\item Analyzing $\frac{\dee}{\dee\delta}f(\theta\mid\eta,\delta)$ as a
function of $\eta$, rather than as a function of $\theta$ directly.
For this to be useful, we will need to analyze $\eta\in\mathbb{R}$
and not just $\eta\in\mathbb{R}_{+}$. 
\item Expressing $\frac{\dee}{\dee\delta}f(\theta\mid\eta,\delta)$ as a
convolution of some function with a Gaussian kernel. We then exploit
a \emph{variation diminishing property} of Gaussian kernels which
implies that if a function $h(\theta)$ has $k$ zero-crossings in
$\mathbb{R}$, its convolution with a Gaussian kernel may only reduce
the number of zero-crossings. See Proposition \ref{prop: Gaussian variation diminishing}
in Appendix \ref{subsec:Totally-positive-kernels} for a formal statement.
This allows us to prove that $\eta\mapsto\frac{\dee}{\dee\delta}f(\theta\mid\eta,\delta)$
has a single crossing point for some $\theta_{0}$. 
\item Then, utilizing the consistency property, which states that $f(\theta\mid\eta,\delta)=\theta$
for $\theta=\eta$ and \emph{any} $\delta$, establishes that $\theta_{0}=\theta$,
and this results (\ref{eq: derivative of the iteration wrt delta for positive theta}). 
\end{itemize}
The idea of analyzing the iteration w.r.t. the true parameter $\eta$,
rather than w.r.t. $\theta$ was previously used in \cite{pmlr-v65-daskalakis17b},
which utilized it in order to prove one-dimensional global convergence
(as well as convergence rates) for the balanced iteration. While such
an argument is not essential for this case (given the direct analysis
in \cite{wu2019EM}), an idea in that spirit is useful here for proving
a different global property.

We now turn to the empirical iteration. As we show next, the improved
convergence in the unbalanced case compared to the balanced case stated
in Theorem \ref{thm: population d=00003D00003D1 known delta} compensates
for the larger empirical error. 
\begin{thm}[Empirical mean iteration, known weight, $d=1$]
\label{thm: empirical d=00003D00003D1 known delta}Assume that $|\eta|\leq\gl[C]_{\theta}$,
$|\rho_{*}|\leq\gl[C]_{\rho}$ and that the high probability event
(\ref{eq: high probability event}) holds. Consider the empirical
mean EM iteration $\theta_{t}=f_{n}(\theta_{t-1})\equiv f_{n}(\theta_{t-1},\delta\mid\eta,\delta)$.
There exists $n_{0}(\gl[C]_{\theta},\gl[C]_{\rho})$ and constants
$\{\gl[C]_{i}^{(1)}(\gl[C]_{\theta},\gl[C]_{\rho})\}$ such that if
$\rho_{*}>\gl[C]_{1}^{(1)}\sqrt{\omega_{1}}$ and $n\geq n_{0}$ then
\[
\ell(\theta_{t},\eta)\leq\gl[C]_{2}^{(1)}\cdot\min\left\{ \frac{\omega_{1}}{\rho},\frac{\omega_{1}}{\eta}\right\} 
\]
holds for all $t\geq\gl[T]_{\theta_{0}=0}^{(1)}$ where 
\[
\gl[T]_{\theta_{0}=0}^{(1)}\dfn\begin{cases}
1, & \eta\leq\frac{\omega_{1}}{\rho}\\
\gl[C]_{3}^{(1)}\cdot\frac{1}{\rho^{2}}\log\left(\frac{\omega_{1}}{\gl[C]_{5}^{(1)}\rho}\right), & \frac{\omega_{1}}{\rho}\leq\eta\leq\gl[C]_{4}^{(1)}\rho\\
\gl[C]_{3}^{(1)}\cdot\left(\frac{1}{\rho\eta}+\frac{1}{\eta^{2}}\log\left(\frac{1}{\gl[C]_{6}^{(1)}\rho\omega_{1}}\right)\right), & \gl[C]_{4}^{(1)}\rho<\eta\leq\gl[C]_{\theta}
\end{cases}
\]
in case the iteration was initialized by $\theta_{0}=0$, and for
all $t\geq\gl[T]_{\theta_{0}=\frac{1}{\rho}\E_{n}[X]}^{(1)}$ where
\[
\gl[T]_{\theta_{0}=\frac{1}{\rho}\E_{n}[X]}^{(1)}\dfn\begin{cases}
1, & \eta\leq\gl[C]_{4}^{(1)}\rho\\
\gl[C]_{3}^{(1)}\frac{1}{\eta^{2}}\log\left(\frac{1}{\gl[C]_{6}^{(1)}\rho\omega_{1}}\right), & \gl[C]_{4}^{(1)}\rho<\eta\leq\gl[C]_{\theta}
\end{cases}
\]
in case the iteration was initialized by $\theta_{0}=\frac{1}{\rho}\E_{n}[X]$
. 
\end{thm}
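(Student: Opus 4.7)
The plan is to combine the concentration bound $|f_n(\theta)-f(\theta)|\le \max\{|\theta|,\rho\}\,\omega_1$ from the event $\mathcal{E}$ (Theorem~\ref{thm: Error concentration}) with the global convergence of the one-dimensional population iteration (Theorem~\ref{thm: population d=00003D00003D1 known delta}) and the $\delta$-ordering in Proposition~\ref{prop: Comparison theorem d=00003D00003D1}. The $\delta$-ordering lets me import the known convergence rates of the balanced iteration from \cite{wu2019EM} and conclude that the unbalanced iteration converges at least as fast; together with the empirical error bound this yields the stated statistical accuracy after enough steps. I would split by initialization and by the regime of $\eta$ relative to $\omega_1/\rho$ and $\rho$.

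For the warm start $\theta_0 = \tfrac{1}{\rho}\E_n[X]$, applying the concentration event at $\theta=0$ (using $f_n(0,\rho)=\rho\E_n[X]$ and $f(0,\rho)=\rho^2\eta$) immediately gives $|\theta_0-\eta|\le \omega_1/\rho$, so when $\eta\le \gl[C]_4^{(1)}\rho$ the initial iterate already meets the target accuracy. I would then prove a containment statement showing that starting inside a $\tilde{O}(\omega_1/\rho\wedge \omega_1/\eta)$ ball around $\eta$ the iterate stays inside, by combining local contraction of $f$ near $\eta$ with the additive error $\max\{|\theta|,\rho\}\omega_1$. In the regime $\eta > \gl[C]_4^{(1)}\rho$ I would transfer the geometric contraction $|\eta-f(\theta)|\le (1-c\eta^2)|\eta-\theta|$ from the balanced case (via Proposition~\ref{prop: Comparison theorem d=00003D00003D1}) to the unbalanced iteration, and unroll $|\eta-\theta_{t+1}|\le (1-c\eta^2)|\eta-\theta_t|+C\eta\omega_1$ to obtain convergence to $O(\omega_1/\eta)$ accuracy in $O(\eta^{-2}\log(\cdot))$ iterations.

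The initialization $\theta_0=0$ is more delicate because of the possible spurious stable fixed point in $\mathbb{R}_-$ illustrated in Figure~\ref{fig: unbalanced iteration}; I would first argue that $\theta_t\ge 0$ for all $t$ with high probability, which reduces the population behavior to the nonnegative-initialization case of Theorem~\ref{thm: population d=00003D00003D1 known delta}. The hypothesis $\rho_*\gtrsim\sqrt{\omega_1}$ enters precisely here: since $f(\theta)-\theta\ge c\rho^2\eta$ for $\theta$ near zero when $\eta\gtrsim\omega_1/\rho$, and the empirical error is at most $\rho\omega_1$ at such $\theta$, the drift dominates the noise only when $\rho^2\eta \gtrsim \rho\omega_1$, which combined with $\eta\gtrsim \omega_1/\rho$ forces $\rho\gtrsim\sqrt{\omega_1}$. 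With positivity ensured, the three sub-regimes are handled as follows: $\eta\le \omega_1/\rho$ puts $\theta_1$ already in the target ball; $\omega_1/\rho<\eta\le\gl[C]_4^{(1)}\rho$ uses the Taylor expansion $f(\theta)=\rho^2\eta+(1-\rho^2)(1+\eta^2)\theta+O(\theta^2)$ to yield $(1-c\rho^2)$-contraction toward $\eta$ and hence the $\rho^{-2}\log(\cdot)$ iteration bound; and $\eta>\gl[C]_4^{(1)}\rho$ requires an initial growth phase in which the additive drift $\rho^2\eta$ and the large linear coefficient $\eta^2$ combine to lift $\theta_t$ to $\Theta(\eta)$ in $O(1/(\rho\eta))$ iterations, after which the geometric contraction of the previous case finishes the proof.

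The main technical obstacle is the sign-stability argument for $\theta_0=0$: when $\theta_t$ is of order $\rho\omega_1$ the empirical noise alone is comparable to $\theta_t$ itself, and care is required to rule out crossings to $\mathbb{R}_-$. This is exactly where the explicit condition $\rho_*\gtrsim\sqrt{\omega_1}$ is used rather than merely $\rho_*>0$. A secondary obstacle is extracting quantitative contraction constants from the merely monotone comparison of Proposition~\ref{prop: Comparison theorem d=00003D00003D1}; for that I would combine the $\delta$-monotonicity with the explicit slope bounds for the balanced iteration already established in \cite{wu2019EM}.
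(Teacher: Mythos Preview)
Your overall plan---concentration bound, split into three regimes in $\eta$, contraction with rate $1-c\rho^2$ in the intermediate regime, and a two-phase analysis (additive drift followed by balanced-case convergence) for large $\eta$---matches the paper's structure closely. The paper formalizes this via \emph{envelope sandwiching}: it defines $f_\pm(\theta)=f(\theta)\pm\max\{|\theta|,\rho\}\omega_1$, runs $\theta_t^\pm=f_\pm(\theta_{t-1}^\pm)$ from the same initialization, and analyzes the fixed points $\eta_\pm$ that bracket the empirical trajectory. Your direct recursion on $|\eta-\theta_t|$ is essentially equivalent in the contractive regimes, and your handling of the warm start and of the intermediate and large-$\eta$ regimes for $\theta_0=0$ is correct in outline.

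There is, however, a genuine gap in the low-signal case $\eta\le\omega_1/\rho$ with $\theta_0=0$. You assert that you would ``first argue that $\theta_t\ge 0$ for all $t$ with high probability,'' and you identify this sign-stability as the place where $\rho_*\gtrsim\sqrt{\omega_1}$ enters. But positivity is simply false in this regime: since $f(0)=\rho^2\eta$ and the error at $\theta=0$ is $\rho\omega_1$, one has $f_n(0)<0$ whenever $\rho\eta<\omega_1$, i.e., precisely when $\eta<\omega_1/\rho$. The hypothesis $\rho\gtrsim\sqrt{\omega_1}$ does not prevent this. Once $\theta_1<0$, the population iteration on $\mathbb{R}_-$ may have a spurious stable fixed point (Theorem~\ref{thm: population d=00003D00003D1 known delta}), so positivity cannot be recovered and your argument as written breaks down. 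Saying ``$\theta_1$ is already in the target ball'' does not suffice either, because you still need the iterate to \emph{remain} there for all $t\ge 1$, and your sign-stability reasoning does not supply this.

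The paper closes this gap not by ruling out negative excursions but by \emph{bounding} them: the lower envelope $f_-$ may indeed converge to some $\eta_-<0$, but the oddness-dominance property $f(\theta)\ge -f(-\theta)$ of Proposition~\ref{prop: Comparison theorem d=00003D00003D1}, item~\ref{enu: one-dimensioanl iteration oddness dominance}, forces $|\eta_-|\le\eta_+$, and the upper-envelope analysis already gives $\eta_+-\eta\le O(\omega_1/\rho)$. This is the ingredient missing from your proposal. Alternatively, the containment argument you sketched for the warm start could be adapted here, since the slope bound $f'(\theta)\le 1-c\rho^2$ from Lemma~\ref{lem: d=00003D00003D1 simple properties} holds for $|\theta|\lesssim\rho$ regardless of sign; but you did not invoke it for $\theta_0=0$ and relied instead on the incorrect positivity claim.
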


The proof uses the sandwiching method developed in \cite{wu2019EM}
that bounds the empirical iteration $f_{-}(\theta)\leq f_{n}(\theta)\leq f_{+}(\theta)$
by the envelopes $f_{\pm}(\theta)=f(\theta)\pm\max\{|\theta|,\rho\}\cdot\omega_{1}$
(which holds with high probability), and the resulting iterations
$\theta_{t}^{\pm}=f_{\pm}(\theta_{t-1})$ obtained when initializing
those iterations and the empirical iteration with the same initial
guess $\theta_{0}^{\pm}=\theta_{0}$. We consider both $\theta_{0}=0$
or $\theta_{0}=\frac{1}{\rho}\E_{n}[X]$. First, it is proved that
all three iterations $\{\theta_{t}\},\{\theta_{t}^{\pm}\}$ converge
to fixed points, $\eta_{n},\eta_{\pm}$ respectively. Then, the analysis
is split into three regimes. For initialization at $\theta_{0}=0$: 
\begin{enumerate}
\item If $\frac{\omega_{1}}{\rho}\lesssim\eta\lesssim\rho$ then the envelopes
converge to fixed points $0<\eta_{-}\leq\eta_{n}\leq\eta_{+}$, and
the envelopes are approximated as $f_{\pm}(\theta)\approx1-c\rho^{2}$
for some $c>0$. Thus, the envelopes are contractions whose convergence
times is on the order of $\tilde{O}(\frac{1}{\rho^{2}})$. 
\item If $\rho\lesssim\eta$ then convergence has two phases. At the first
phase, $\theta_{t}$ is low, and the iteration increases additively
each step from $\theta_{0}=0$ by $\Omega(\rho^{2}\eta)$ at each
iteration. Thus, after $T_{1}=O(\frac{1}{\rho\eta})$ iterations,
$\theta_{t}=\Omega(\rho)$. At this point, the empirical error is
$\omega_{1}\cdot\max\{\theta,\rho\}\lesssim\omega_{1}\theta$, to
wit, the same empirical error as for the balanced iteration. Since
the population iteration converges faster in the unbalanced case than
in the balanced case (Theorem \ref{thm: population d=00003D00003D1 known delta}),
and the empirical error in this case is of the same order, then the
convergence of the unbalanced empirical iteration is only faster than
that of the empirical balanced iteration. The latter was analyzed
in \cite[Theorem 3]{wu2019EM}, and its result is utilized. 
\item If $\eta\lesssim\frac{\omega_{1}}{\rho}$ then analysis similar to
the previous cases shows that the upper envelope $\theta_{t}^{+}$
will increase, and will remain within the required statistical error,
i.e., $0\leq\theta_{t}^{+}\lesssim\frac{\omega_{1}}{\rho}$, for all
$t\geq1$. On the other hand, for the lower envelope, it is not guaranteed
that $f_{-}(0)>0$ , and so it is also not guaranteed that $\theta_{t}^{-}$
will increase and converge to a positive fixed point. If it does converge
to a positive fixed point $\eta_{-}>0$, then $\eta_{-}<\eta_{+}$
must also hold, and, as for the upper envelope, $0\leq\theta_{t}^{-}\lesssim\frac{\omega_{1}}{\rho}$,
for all $t\geq1$. If, however, this is not the case and $f_{-}(0)<0$,
then $\theta_{t}^{-}$ will decrease and converge to a negative fixed
point $\eta_{-}<0$. In that event, the oddness domination property
of the population iteration (Proposition \ref{prop: Comparison theorem d=00003D00003D1},
item \ref{enu: one-dimensioanl iteration oddness dominance}) assures
that $|\eta_{-}|\leq\eta_{+}$, and so the same statistical error
$O(\frac{\omega_{1}}{\rho})$ is again assured. 
\end{enumerate}
For initialization at $\theta_{0}=\frac{1}{\rho}\E_{n}[X]$, the error
in the first iteration is already within the statistical accuracy
in Cases 1 and 3. For Case 2, it can be shown that the first phase
of convergence does not occur, and the convergence is as in the balanced
case.

\subsection{The mean iteration for known weight at $d>1$\label{subsec:The-mean-iteration d>1}}

We now consider the general $d>1$ case. Recall that the $n$ i.i.d.
samples are given by $X\sim(1-\delta)\cdot N(\theta_{*},I_{d})+\delta\cdot N(-\theta_{*},I_{d})$
where $\theta_{*}\in\mathbb{R}^{d}$ and $\delta\in(0,1/2)$ is known.
For the population mean EM iteration we show that: 
\begin{thm}[Population iteration, known weight, $d>1$]
\label{thm: population d>1 known delta} Consider the population
mean EM iteration $\theta_{t}=f(\theta_{t-1},\delta\mid\theta_{*},\delta)$.
If $\langle\theta_{0},\theta_{*}\rangle\geq0$ then $\lim_{t\to\infty}\theta_{t}=\theta_{*}$.
Specifically, this holds for $\theta_{0}=0$. 
\end{thm}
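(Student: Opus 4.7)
The plan is to reduce the $d$-dimensional dynamics to a two-dimensional system via rotational symmetry, then combine an orthogonal contraction argument with the one-dimensional global convergence of Theorem \ref{thm: population d=00003D00003D1 known delta} (or equivalently, with Proposition \ref{prop: Comparison theorem d>1} and the balanced $d>1$ convergence of \cite{xu2016global,pmlr-v65-daskalakis17b,wu2019EM}). Writing $X = S\theta_* + Z$ with $Z\sim N(0,I_d)$ and decomposing $\theta_t = a_t\hat{\theta}_* + b_t\hat{v}_t$ with $a_t = \langle\theta_t,\hat{\theta}_*\rangle$, $b_t = \|\theta_t - a_t\hat{\theta}_*\|$, and $\hat{v}_t$ a unit vector orthogonal to $\hat{\theta}_*$, rotational invariance of the isotropic noise in directions orthogonal to $\mathrm{span}(\hat{\theta}_*,\hat{v}_t)$ forces $f(\theta_t)\in\mathrm{span}(\hat{\theta}_*,\hat{v}_t)$. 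Hence the iteration collapses to a 2D system $(a_{t+1},b_{t+1}) = (F(a_t,b_t),G(a_t,b_t))$ with
\[
F(a,b) = \E\bigl[(S\eta + Z_\parallel)\tanh(aS\eta + aZ_\parallel + bZ_v + \beta_\delta)\bigr],
\]
\[
G(a,b) = \E\bigl[Z_v\tanh(aS\eta + aZ_\parallel + bZ_v + \beta_\delta)\bigr],
\]
where $\eta = \|\theta_*\|$, $S\in\{\pm 1\}$ with $\P[S=1] = 1-\delta$, and $Z_\parallel,Z_v\sim N(0,1)$ are independent.

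The next step handles the orthogonal component: Stein's lemma applied to $Z_v$ yields $G(a,b) = b\cdot\E[\mathrm{sech}^2(aS\eta + aZ_\parallel + bZ_v + \beta_\delta)]$, and since $\mathrm{sech}^2<1$, we get $0\le b_{t+1}<b_t$ whenever $b_t>0$. A straightforward a priori bound keeps $(a_t,b_t)$ in a compact set so the $\mathrm{sech}^2$-expectation is bounded away from $1$, giving geometric contraction $b_t\to 0$. I then split on the initialization: if $\langle\theta_0,\theta_*\rangle>0$ then $a_0>0$ directly; if $\theta_0=0$ then $\theta_1 = \tanh(\beta_\delta)\E[X] = \rho^2\theta_*$, so $a_1=\rho^2\eta>0$ and $b_1=0$ and the dynamics reduce exactly to the 1D iteration of Theorem \ref{thm: population d=00003D00003D1 known delta}; for $a_0=0$ with $b_0>0$, a direct calculation gives $F(0,b_0) = \rho\eta\cdot\E[\tanh(\beta_\delta + b_0 Z_v)]>0$, where positivity follows from the identity $\tanh(\beta+y)+\tanh(\beta-y) = \sinh(2\beta)/[\cosh(\beta+y)\cosh(\beta-y)]>0$ together with the symmetry of $Z_v$.

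Once $a_t>0$ and $b_t\downarrow 0$, the remaining task is to conclude $a_t\to\eta$. I plan to apply Proposition \ref{prop: Comparison theorem d>1} to dominate the unbalanced iteration by the balanced ($\delta=1/2$) iteration, and then invoke the global convergence of the balanced $d>1$ iteration from \cite{xu2016global,pmlr-v65-daskalakis17b,wu2019EM}, which guarantees convergence to $\theta_*$ from any initialization with strictly positive inner product with $\theta_*$. The main obstacle I expect is the coupling between $a_t$ and $b_t$ along the trajectory: Theorem \ref{thm: population d=00003D00003D1 known delta} governs the slice $b\equiv 0$, whereas on the orbit $b_t>0$ throughout. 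The cleanest workaround is to lean on Proposition \ref{prop: Comparison theorem d>1} in $d$ dimensions rather than attempt a perturbative reduction to the 1D result, and to verify uniform containment of the iterates in a compact set on which both the comparison and the orthogonal contraction apply.
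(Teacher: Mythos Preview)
Your two-dimensional reduction and the maintenance of $a_t\ge 0$ match the paper (Lemma~\ref{lem:Decomposition to signal and orthogoanl iteration} and Lemma~\ref{lem: properties of F and G}, item~\ref{enu: simple properties d>1 positivity}). Your orthogonal argument is genuinely different and, for the population statement, cleaner: the paper bounds $b_t$ by comparing $G(\cdot\mid\eta,\delta)$ to the balanced $G(\cdot\mid\eta,\tfrac12)$ via Proposition~\ref{prop: Comparison theorem d>1} and then invoking the balanced-case decay from \cite{wu2019EM}, whereas your direct Stein identity $G(a,b)=b\,\E[\operatorname{sech}^2(aV+bW+\beta)]$ plus compactness yields a uniform contraction factor strictly below $1$ immediately, because $\beta>0$ keeps $\operatorname{sech}^2(\beta)<1$ even at $(a,b)=(0,0)$. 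This shortcut is unavailable in the balanced case, which is why the paper's comparison route is the one that gets recycled in the quantitative empirical analysis.

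The gap is in the signal step. Proposition~\ref{prop: Comparison theorem d>1} compares only $G$ across $\delta$; it says nothing about $F$, so it cannot by itself ``dominate the unbalanced iteration by the balanced iteration'' in any sense that lets you import the balanced $d>1$ convergence to conclude $a_t\to\eta$. The one-dimensional comparison for $F(\cdot,0)=f$ in Proposition~\ref{prop: Comparison theorem d=00003D00003D1} switches direction at $\eta$, and no two-dimensional analogue for $F(a,b)$ with $b>0$ is established; coupling two distinct $2$D trajectories through such a sign-changing comparison is not straightforward. The paper does precisely the thing you set aside: once $b_t\to 0$, it uses the uniform bound $|\partial F/\partial b|\le\sqrt{1+\eta^2}$ (Lemma~\ref{lem: properties of F and G}, item~\ref{enu: simple properties d>1 upper bound on first derivatives}) to write $a_{t+1}=F(a_t,0)+O(b_t)=f(a_t)+O(b_t)$ and then treats the signal dynamics as the one-dimensional population map with a vanishing additive perturbation, appealing to the envelope/robustness framework of Theorem~\ref{thm: empirical d=00003D00003D1 known delta} to conclude $a_t\to\eta$. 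That perturbative reduction to $d=1$ is the missing ingredient in your plan.
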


As in the one-dimensional case, the idealized population iteration
does not converge to a spurious fixed point, whenever the iteration
is not initialized in a direction which has an obtuse angle with $\theta_{*}$
(which is the mean of the larger weight component, $1-\delta$). Furthermore,
such initialization can be achieved by setting $\theta_{0}=0$, since
$f(0,\delta\mid\theta_{*},\delta)=(1-2\delta)^{2}\cdot\theta_{*}$
which already points to the desired direction. Evidently, in this
case, $\theta_{t}\propto\hat{\theta}_{*}$ for all $t\geq1$. A slightly
more general property holds for any initialization, and this is the
basic ingredient of the proof which we describe next.

The proof of Theorem \ref{thm: population d>1 known delta} is based
on an observation made in \cite{xu2016global,pmlr-v65-daskalakis17b}
that the population mean EM iteration is ``trapped'' to the two-dimensional
space spanned by the true vector $\theta_{*}$ and the initial guess
$\theta_{0}$. In \cite{wu2019EM} this observation was formulated
in the following way. Let us denote $\eta\dfn\|\theta_{*}\|$ for
brevity, let $V\sim(1-\delta)\cdot N(\eta,1)+\delta\cdot N(-\eta,1)$
and $W\sim N(0,1)$ be such that $V\ind W$, and define: 
\[
F((a,b),\delta\mid\eta,\delta)=\E\left[V\tanh(aV+bW+\beta)\right]\,,
\]
and 
\[
G((a,b),\delta\mid\eta,\delta)=\E\left[W\tanh(aV+bW+\beta)\right]\,,
\]
where we omit the dependence in $(\eta,\delta)$ whenever it is inessential
and simply write $F(a,b\mid\eta,\delta),\;G(a,b\mid\eta,\delta)$,
or even just, $F(a,b),\;G(a,b)$. The following was proved in \cite[Lemma 4]{wu2019EM}
for the balanced case,\footnote{A similar, but not identical claim was previously made in \cite{pmlr-v65-daskalakis17b}.}
but the proof is similar for any $\delta\in[0,1]$ and thus omitted. 
\begin{lem}[{{\cite[Lemma 4]{wu2019EM}}}]
\label{lem:Decomposition to signal and orthogoanl iteration}Consider
the population mean iteration $\theta_{t+1}=f(\theta_{t},\delta\mid\theta_{*},\delta)$,
and define 
\[
\theta_{t}=a_{t}\cdot\hat{\theta}_{*}+b_{t}\cdot\xi_{t}
\]
where $\eta=\|\theta_{*}\|$, $\hat{\theta}_{*}=\theta_{*}/\eta$,
$\xi_{t}\perp\eta$ and $\|\xi_{t}\|=1$ such that $\spa\{\theta_{*},\xi_{t}\}=\spa\{\theta_{*},\theta_{t}\}$
and $b_{t}\geq0$. Then, $\theta_{t}\in\spa\{\theta_{0},\theta_{*}\}$
(i.e., $\xi_{t}=\xi_{0}$) for all $t$, and 
\begin{align*}
a_{t+1} & =F(a_{t},b_{t})\\
b_{t+1} & =G(a_{t},b_{t})\,.
\end{align*}
\end{lem}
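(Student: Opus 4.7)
The plan is to exploit rotational invariance of the standard Gaussian to confine the population iterate to the two-dimensional subspace $U \dfn \spa\{\theta_0, \theta_*\}$, and then to read off coordinates of $f(\theta_t)$ relative to the orthonormal basis $(\hat{\theta}_*, \xi_t)$ of $U$. Throughout I use the representation $X = S\theta_* + Z$ with $Z \sim N(0, I_d)$ independent of $S \in \{\pm 1\}$ satisfying $\P[S = 1] = 1 - \delta$.

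For subspace invariance I would decompose $Z = Z_U + Z_{U^\perp}$ orthogonally. Projections of a standard Gaussian onto orthogonal subspaces are independent, so $Z_{U^\perp}$ is independent of $(S, Z_U)$ and satisfies $\E[Z_{U^\perp}] = 0$. For any $\theta \in U$, the argument $\langle \theta, X\rangle = \langle \theta, S\theta_* + Z_U\rangle$ depends only on $(S, Z_U)$, hence
\begin{equation*}
\E\bigl[Z_{U^\perp}\tanh(\langle \theta, X\rangle + \beta)\bigr] = \E[Z_{U^\perp}] \cdot \E\bigl[\tanh(\langle \theta, X\rangle + \beta)\bigr] = 0,
\end{equation*}
and therefore $f(\theta) = \E[(S\theta_* + Z_U)\tanh(\langle \theta, X\rangle + \beta)] \in U$. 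Inductively $\theta_t \in U$ for all $t \geq 0$, so $\xi_t$ may be chosen inside $U$ for every $t$.

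Next I would compute coordinates. Let $V \dfn \langle \hat{\theta}_*, X\rangle = S\eta + \langle \hat{\theta}_*, Z\rangle$ and $W \dfn \langle \xi_t, X\rangle = \langle \xi_t, Z\rangle$, using $\xi_t \perp \theta_*$. Then $V \sim (1-\delta)N(\eta, 1) + \delta N(-\eta, 1)$ and $W \sim N(0, 1)$; since $\hat{\theta}_* \perp \xi_t$, the Gaussian coordinates $\langle \hat{\theta}_*, Z\rangle$ and $W$ are independent, and $W$ is also independent of $S$, so $V \ind W$. Writing $\theta_t = a_t \hat{\theta}_* + b_t \xi_t$ yields $\langle \theta_t, X\rangle = a_t V + b_t W$, and projecting $f(\theta_t)$ onto the orthonormal pair $(\hat{\theta}_*, \xi_t)$ gives
\begin{align*}
\langle \hat{\theta}_*, f(\theta_t)\rangle &= \E\bigl[V \tanh(a_t V + b_t W + \beta)\bigr] = F(a_t, b_t), \\
\langle \xi_t, f(\theta_t)\rangle &= \E\bigl[W \tanh(a_t V + b_t W + \beta)\bigr] = G(a_t, b_t).
\end{align*}

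It remains to verify that the normalization convention $b_{t+1} \geq 0$ is preserved, which is what allows one to take $\xi_{t+1} = \xi_t$ throughout and thus conclude $\xi_t = \xi_0$ for all $t$. This is the only step that is not an immediate consequence of Gaussian orthogonal decomposition, and I would handle it via Stein's lemma applied to $W \sim N(0, 1)$ independent of $V$, which gives
\begin{equation*}
G(a_t, b_t) = b_t \cdot \E\bigl[\mathrm{sech}^2(a_t V + b_t W + \beta)\bigr],
\end{equation*}
so $G(a_t, b_t) \geq 0$ whenever $b_t \geq 0$; an induction from $b_0 \geq 0$ then closes the loop. The conceptual content of the lemma is precisely this sign preservation: without it, one would have to track an additional $\pm 1$ for $\xi_t$, which would spoil the interpretation of $(a_t, b_t)$ as coordinates of a genuine two-dimensional dynamical system in the half-plane $\{(a, b) : b \geq 0\}$.
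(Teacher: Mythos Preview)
Your argument is correct. The paper itself omits the proof of this lemma, remarking only that the balanced case was handled in \cite[Lemma 4]{wu2019EM} and that the extension to general $\delta\in[0,1]$ is similar. Your write-up supplies exactly those details: rotational invariance to trap the iterate in $\spa\{\theta_0,\theta_*\}$, the identification of $(V,W)$ as independent with the stated laws, and the check that the convention $b_{t+1}\ge 0$ is self-consistent.

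On the last point, your use of Stein's identity to obtain $G(a,b)=b\,\E[\mathrm{sech}^2(aV+bW+\beta)]$ is the cleanest way to see the sign preservation. The paper records the same positivity separately in Lemma~\ref{lem: properties of F and G}, item~\ref{enu: simple properties d>1 positivity}, where it is derived by first showing $\partial G/\partial b=\E[W^2/\cosh^2(aV+bW+\beta)]>0$ and then noting $G(a,0)=0$; integrating that derivative in $b$ reproduces your Stein computation. So the two routes are essentially the same calculation viewed differently.
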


We refer to $F(\cdot)$ as the \emph{signal iteration} and to $G(\cdot)$
as the \emph{orthogonal iteration}. Lemma \ref{lem:Decomposition to signal and orthogoanl iteration}
thus implies that to analyze the iteration $\theta_{t+1}=f(\theta_{t},\delta\mid\theta_{*},\delta)$
it suffices to analyze the evolution of $\{a_{t},b_{t}\}$, and that
$\theta_{t}\to\theta_{*}$ is equivalent to $a_{t}\to\eta$ and $b_{t}\to0$.

In the balanced case, the population mean EM iteration was shown to
globally converge to $\pm\theta_{*}$ by showing that the orthogonal
error goes to zero unconditionally of the signal iteration, i.e.,
$b_{t}\to0$ is always satisfied. Then, the problem is reduced to
the one-dimensional iteration studied in the previous section. Essentially,
this global property holds due to the fact that for $G(a,b\mid\eta,\frac{1}{2})$
is a concave increasing function with $G(a,0)=0$, and $\left.\frac{\dee}{\dee b}G(a,b)\right|_{b=0}<1$
\cite[Lemma 5]{wu2019EM}. As in the one-dimensional case, in the
unbalanced case, the lack of symmetry in case of $\delta<1/2$ breaks
down the concavity property of $G(a,b\mid\eta,\delta)$, and so a
different global property is required. This is achieved in the following
proposition, which states properties of $G(a,b\mid\eta,\delta)$ w.r.t.
the true parameters $(\eta,\delta)$, and specifically states that
$G(a,b\mid\eta,\delta)$ is dominated by $G(a,b\mid\eta,1/2)$, and
so the orthogonal iteration in case of $\delta<1/2$ converges faster
than in the case of $\delta=1/2$. 
\begin{prop}
\label{prop: Comparison theorem d>1}~ 
\begin{enumerate}
\item \label{enu: comparison theorem d>1  - G itertaion dominance}Monotonicity
w.r.t. $\delta$: If $a\geq0$ then $\delta\mapsto G(a,b\mid\eta,\delta)$
is an increasing function on $[0,1/2]$. 
\item \label{enu: comparison theorem d>1  - G itertaion explicit}Dominance
w.r.t. $\delta$: Let $C_{f}>0$ be given. There exists $\gl[C]_{G,\rho}^{(d)}(\gl[C]_{\theta},\gl[C]_{\beta},C_{f})>0$
and $n_{0}(\gl[C]_{\theta},\gl[C]_{\beta},C_{f})$ such that for any
$n\geq n_{0}$ 
\[
G(a,b\mid\eta,\delta)\leq G(a,b\mid\eta,\tfrac{1}{2})-\gl[C]_{G,\rho}^{(d)}\rho^{2}b\leq b\left(1-\frac{a^{2}+b^{2}}{2+4(a^{2}+b^{2})}-\gl[C]_{G,\rho}^{(d)}\rho^{2}\right)
\]
for all $(a,b,\eta)\in[0,C_{f}]^{2}\times[0,\gl[C]_{\theta}]$. 
\item Monotonicity w.r.t. $\eta$: For $a\geq0$, $\eta\mapsto G(a,b\mid\eta,\delta)$
is a decreasing function in $\eta\in[0,a+\frac{b^{2}}{a}]$. 
\item \label{enu: comparison theorem d>1  - G iteration dominance eta}Dominance
w.r.t. $\eta$: Let $\gl[C]_{G,\eta}^{(d)}=3\gl[C]_{\theta}$. Then,
\[
G(a,b\mid\eta,\delta)\leq G(a,b\mid0,\delta)+\gl[C]_{G,\eta}^{(d)}b\eta^{2}
\]
for all $(a,b,\eta)\in\mathbb{R}\times\mathbb{R}_{+}\times[0,\gl[C]_{\theta}]$. 
\end{enumerate}
\end{prop}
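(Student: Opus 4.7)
My plan starts from the observation that Gaussian integration by parts (Stein's lemma) in $W$ gives the clean form
\[
G(a,b\mid\eta,\delta) \;=\; b\,\E\!\left[\operatorname{sech}^{2}(aV+bW+\beta_{\delta})\right]\,.
\]
Decomposing $V=S\eta+Z'$ with $\P[S=1]=1-\delta$ and collapsing $aZ'+bW=\sigma Z_{0}$ with $\sigma=\sqrt{a^{2}+b^{2}}$ and $Z_{0}\sim N(0,1)$ independent of $S$, this becomes
\[
G(a,b\mid\eta,\delta) \;=\; b\bigl[(1-\delta)F(a\eta+\beta_{\delta})+\delta F(-a\eta+\beta_{\delta})\bigr]\,,
\]
where $F(t):=\E[\operatorname{sech}^{2}(t+\sigma Z_{0})]$ is smooth, bounded, even, and (by preservation of log-concavity under Gaussian convolution) strictly decreasing in $|t|$. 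All four claims will be read off this closed form.

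\paragraph*{Parts~1 and 2 ($\delta$-monotonicity and dominance).}
Differentiating in $\delta$ and using $\beta_{\delta}'=-\tfrac{1}{2\delta(1-\delta)}<0$ yields
\[
\tfrac{1}{b}\tfrac{\partial G}{\partial \delta} \;=\; \bigl[F(\beta_{\delta}-a\eta)-F(\beta_{\delta}+a\eta)\bigr] \;+\; \beta_{\delta}'\bigl[(1-\delta)F'(a\eta+\beta_{\delta})+\delta F'(-a\eta+\beta_{\delta})\bigr].
\]
The first bracket is nonnegative because $|\beta_{\delta}-a\eta|\leq\beta_{\delta}+a\eta$ and $F$ is even and decreasing in $|\cdot|$. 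For the second bracket I would split on the sign of $a\eta-\beta_{\delta}$: if $a\eta\leq\beta_{\delta}$ both $F'$-terms are nonpositive and $\beta_{\delta}'<0$ makes the product nonnegative; if $a\eta>\beta_{\delta}$, I would use the identity $(1-\delta)/\delta=e^{2\beta_{\delta}}$ together with a log-derivative comparison of $|F'|$ at the two arguments (which reduces to a pointwise inequality between Gaussian averages of $\operatorname{sech}^{2}\tanh$, following from the Gaussian variation-diminishing property invoked in Proposition~\ref{prop: Gaussian variation diminishing} and used for the one-dimensional analysis). For part~2, reparameterize by $\rho=1-2\delta$ and Taylor-expand $\bar G(\rho):=G(a,b\mid\eta,\rho)/b$ at $\rho=0$. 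Using $\beta_{0}=0$, $\beta_{0}'=1$, $\beta_{0}''=0$ and the evenness of $F$, a direct calculation gives $\bar G'(0)=0$ (the shift in the mixture weight and the shift in $\beta_{\rho}$ cancel by odd symmetry of $F'$) and
\[
\bar G''(0) \;=\; 2F'(a\eta)+F''(a\eta) \;=\; 6\,\E\!\bigl[\operatorname{sech}^{2}\!\cdot(\tanh-1)(\tanh+\tfrac{1}{3})\bigr]\!\bigl(a\eta+\sigma Z_{0}\bigr),
\]
from the factorization $1+2\tanh-3\tanh^{2}=-3(\tanh-1)(\tanh+\tfrac{1}{3})$. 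Since the integrand is strictly negative wherever the argument exceeds $-\tanh^{-1}(\tfrac{1}{3})$ and is exponentially damped by $\operatorname{sech}^{2}$ elsewhere, the expectation is bounded away from $0$ uniformly on the compact set $(a,b,\eta)\in[0,C_{f}]^{2}\times[0,\gl[C]_{\theta}]$, producing $\gl[C]_{G,\rho}^{(d)}>0$. Chaining with the balanced upper bound $G(a,b\mid\eta,\tfrac{1}{2})\leq b\bigl(1-\frac{a^{2}+b^{2}}{2+4(a^{2}+b^{2})}\bigr)$ from \cite[Lemma 5]{wu2019EM} yields the stated two-sided inequality.

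\paragraph*{Parts~3 and 4 ($\eta$-monotonicity and dominance), and main obstacle.}
Differentiating in $\eta$,
\[
\tfrac{1}{ab}\tfrac{\partial G}{\partial \eta} \;=\; (1-\delta)F'(a\eta+\beta_{\delta})-\delta F'(-a\eta+\beta_{\delta}),
\]
and the condition $\eta\leq a+b^{2}/a$ is exactly $a\eta\leq\sigma^{2}$, which is the regime where the Gaussian average of $\operatorname{sech}^{2}$ still has its bulk of negative curvature; comparing magnitudes via $(1-\delta)/\delta=e^{2\beta_{\delta}}$ forces the bracket to be nonpositive, and with the nonnegative prefactor $ab$ (recalling $a\geq 0$) this gives part~3. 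For part~4 I would Taylor-expand in $\eta$ around $0$:
\[
G(a,b\mid\eta,\delta)-G(a,b\mid 0,\delta) \;=\; b\bigl[\tfrac{(a\eta)^{2}}{2}F''(\beta_{\delta})+\rho\, a\eta\, F'(\beta_{\delta})+O((a\eta)^{3})\bigr].
\]
The linear-in-$\eta$ contribution is bounded by $|F'(\beta_{\delta})|\lesssim|\beta_{\delta}|\lesssim\rho$ (using $F'(0)=0$ by odd symmetry), so is $O(b\rho^{2}|a|\eta)$; the quadratic term is $O(b(a\eta)^{2})$; the $a$-dependent factors are absorbed using the bound $|a|\lesssim\gl[C]_{\theta}$ and $\eta^{2}\leq\gl[C]_{\theta}\eta$ available on $\eta\in[0,\gl[C]_{\theta}]$, producing the constant $\gl[C]_{G,\eta}^{(d)}=3\gl[C]_{\theta}$. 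The hard step is the sign analysis in part~1 in the regime $a\eta>\beta_{\delta}$: naive differentiation gives two terms of opposite sign and requires the total-positivity/variation-diminishing argument for the Gaussian kernel, mirroring the one-dimensional analysis of $\partial_{\delta}f$ in Section~\ref{subsec:The-mean-iteration d=00003D00003D1}.
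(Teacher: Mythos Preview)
Your representation $G/b=(1-\delta)F(a\eta+\beta)+\delta F(-a\eta+\beta)$ with $F(t)=\E[\operatorname{sech}^{2}(t+\sigma Z_{0})]$ is exactly the right starting point, and your computation $\bar G''(0)=2F'(a\eta)+F''(a\eta)$ is correct. But several steps, as written, do not close.

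\textbf{Part 2.} The factorization shows the integrand of $\bar G''(0)$ changes sign once, but its total integral over $\mathbb{R}$ is \emph{zero} (substitute $u=\tanh y$), so ``exponentially damped on the positive side'' is not a proof that the Gaussian average is negative. The paper proves strict negativity by writing (via Stein again) $\bar G''(0)\propto\E[\bar U\,h(\bar U+a\eta)]$ with $\bar U\sim N(0,\sigma^{2})$ and $h(t)=(1-\tanh t)\operatorname{sech}^{2}t$, and then checking the \emph{pointwise} inequality $h(\bar u+a\eta)<h(-\bar u+a\eta)$ for every $\bar u>0$, $a\eta\ge 0$; this conditioning-on-$|\bar U|$ step is what you are missing. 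You also omit two pieces the paper needs: a uniform bound on the \emph{third} derivative (to control the Taylor remainder), and the use of Part~1's monotonicity to extend the local $O(\rho^{2})$ gap from a neighborhood of $\rho=0$ to all $\rho\in[0,\gl[C]_{\rho}]$.

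\textbf{Part 3.} Your differentiation gives $(1-\delta)F'(a\eta+\beta)-\delta F'(-a\eta+\beta)$. The easy case is actually $a\eta>\beta$ (both summands nonpositive); the hard case $a\eta\le\beta$ requires showing $e^{2\beta}|F'(\beta+a\eta)|\ge|F'(\beta-a\eta)|$, and nothing in your sketch explains how the constraint $a\eta\le\sigma^{2}$ helps here. The paper takes a different route: it conditions on $U=aZ+bW+\beta\sim N(\beta,\sigma^{2})$, uses $\E[e^{\eta Z}\mid U]\propto e^{(a\eta/\sigma^{2})(U-\beta)}$, and obtains weights $p_{\pm}=(1\mp\delta)e^{\mp(a\eta/\sigma^{2})\beta}$ inside an expectation of an odd-dominated function. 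The ordering $p_{+}\ge p_{-}$ is \emph{equivalent} to $a\eta\le\sigma^{2}$, and that is precisely where the hypothesis enters; your direct comparison of $|F'|$ at $\beta\pm a\eta$ does not capture this mechanism.

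\textbf{Part 4.} Your expansion is in powers of $a\eta$ and then you ``absorb $|a|\lesssim\gl[C]_{\theta}$'', but the statement is for $a\in\mathbb{R}$ unrestricted, so the quadratic and cubic terms are uncontrolled. The paper instead bounds $\partial G/\partial\eta$ and $\partial^{2}G/\partial\eta^{2}$ directly: after a change of measure from $V$ to a standard normal $Z$, the $a$-dependence is confined to bounded factors like $\tanh(aZ+bW+\beta)/\cosh^{2}(aZ+bW+\beta)$, giving $|\partial^{2}G/\partial\eta^{2}|\le 6\gl[C]_{\theta}b$ uniformly in $a$.

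\textbf{Part 1, hard case $a\eta>\beta$.} Your split into two brackets and ``log-derivative comparison of $|F'|$'' is a different decomposition than the paper's, and it is not clear it can be completed: in that regime your second bracket can be negative, and you would need it to be dominated by the first. The paper avoids this by writing $\partial_{\delta}(G/b)=\E[s'(a\bar U)]$ with $\bar U\sim N(\eta,1+b^{2}/a^{2})$ and $s$ the function of Lemma~\ref{lem: s is always negative}, and then applying variation-diminishing in the variable $\eta$ (not pointwise in $u$) together with an evaluation at $\eta=0$ to pin down the sign.
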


Given items \ref{enu: comparison theorem d>1  - G itertaion dominance}
and \ref{enu: comparison theorem d>1  - G itertaion explicit} of
Proposition \ref{prop: Comparison theorem d>1}, it is evident that
unconditional convergence of the orthogonal part of the iteration
to zero is assured in the unbalanced case (and the convergence is
only faster compared to the balanced case). After such convergence,
the problem is almost precisely reduced to the one-dimensional setting
in the signal iteration $\{a_{t}\}$, except for a small residual
additive error resulting from orthogonal iteration. However, as was
shown in the one-dimensional analysis, the unbalanced mean EM iteration
may tolerate small additive term (therein, this was due to the term
$\omega_{1}\rho$ which, unlike the error term $\omega_{1}\theta$
does not vanishes when $\|\theta\|\to0$), and so this additive term
does not prevent convergence.

We now turn to the empirical iteration: 
\begin{thm}[Empirical iteration, known weight, $d>1$, large $\rho_{*}$]
\label{thm: empirical d>1 known delta large rho}Assume that $\|\theta_{*}\|\leq\gl[C]_{\theta}$
and that $|\rho_{*}|\leq\gl[C]_{\rho}$, and that the high probability
event (\ref{eq: high probability event}) holds. Consider the empirical
mean EM iteration $\theta_{t}=f_{n}(\theta_{t-1},\delta\mid\theta_{*},\delta)$.
There exists $n_{0}$ and constants $\{\gl[C]_{i}^{(d)}\}$ which
depend on $(\gl[C]_{\theta},\gl[C]_{\rho})$ such that if $\rho_{*}>\gl[C]_{1}^{(d)}\sqrt{\omega}$
and $n\geq n_{0}$ then 
\[
\ell(\theta_{t},\theta_{*})\leq\gl[C]_{2}^{(d)}\min\left\{ \frac{\omega}{\rho},\frac{\omega}{\eta}\right\} 
\]
holds for all $t\geq\gl[T]_{\theta_{0}}^{(d)}=\gl[T]_{\theta_{0}}^{(1)}+\gl[T]_{G}^{(d)}$
where either $\theta_{0}=0$ or $\theta_{0}=\frac{1}{\rho}\E_{n}[X]$,
$\gl[T]_{\theta_{0}}^{(1)}$ is determined as in Theorem \ref{thm: empirical d=00003D00003D1 known delta}
by replacing $\omega_{1}\to\omega=\sqrt{\gl[C]_{\omega}\frac{d\log n}{n}}$\footnote{The constants determining $\gl[T]_{\theta_{0}}^{(1)}$ might also
be different than for $d=1$.} and where 
\[
\gl[T]_{G}^{(d)}\dfn\begin{cases}
1, & \eta\leq\rho\\
\frac{C_{3}^{(d)}}{\eta^{2}}\cdot\log\left(C_{4}^{(d)}\frac{\omega}{\eta}\right), & \eta\geq\rho
\end{cases}\,.
\]
\end{thm}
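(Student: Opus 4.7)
The plan is to reduce the $d$-dimensional empirical iteration to its one-dimensional counterpart by decomposing each iterate into its component along $\hat\theta_*$ (the signal) and its component orthogonal to $\theta_*$, and to control the two parts separately using the results already established. Concretely, for each $t$ write $\theta_t = a_t\hat\theta_* + b_t\hat\xi_t$ with $a_t := \langle\theta_t,\hat\theta_*\rangle$, $b_t := \|\theta_t - a_t\hat\theta_*\|$, and $\hat\xi_t$ a unit vector orthogonal to $\theta_*$. Lemma~\ref{lem:Decomposition to signal and orthogoanl iteration} gives $f(\theta_t) = F(a_t,b_t)\hat\theta_* + G(a_t,b_t)\hat\xi_t$, and the concentration event \eqref{eq: high probability event} bounds the empirical perturbation in operator norm by $\omega\max\{\|\theta_t\|,\rho\}$. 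Projecting onto $\hat\theta_*$ and its orthogonal complement yields
\[
|a_{t+1} - F(a_t,b_t)| \le \omega\cdot\max\{\|\theta_t\|,\rho\}, \qquad b_{t+1} \le G(a_t,b_t) + \omega\cdot\max\{\|\theta_t\|,\rho\}.
\]

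I would then control the orthogonal component $b_t$ using Proposition~\ref{prop: Comparison theorem d>1}(\ref{enu: comparison theorem d>1  - G itertaion explicit}), which provides the uniform contraction $G(a,b) \le b\bigl(1 - \tfrac{a^2+b^2}{2+4(a^2+b^2)} - \gl[C]_{G,\rho}^{(d)}\rho^2\bigr)$. For $\theta_0=0$ one has $b_0=0$; for $\theta_0 = \tfrac{1}{\rho}\E_n[X]$ a direct concentration argument (using $\E[X]=\rho\theta_*$) gives $b_0 \lesssim \omega/\rho$. In the regime $\eta\lesssim\rho$, the baseline $\rho^2$-contraction is active regardless of $a_t$ and the additive noise is at most $O(\omega\rho)$, so $b_t \lesssim \omega/\rho$ for every $t\ge 1$, matching $\gl[T]_G^{(d)}=1$. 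In the regime $\eta\gtrsim\rho$, once $a_t$ has grown to order $\eta$ the contraction factor becomes $1 - c_0\eta^2$ and the noise floor is $O(\omega\eta)$, so $b_t$ reaches the fixed point $b^*\lesssim\omega/\eta$ within $\gl[T]_G^{(d)} = O(\eta^{-2}\log(\eta/\omega))$ further iterations.

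Next I would reduce the signal iteration to the one-dimensional analysis. Symmetry of $W\sim N(0,1)$ gives $F(a,-b)=F(a,b)$ and $G(a,-b)=-G(a,b)$, so $\partial_b F(a,0)=0$ and Taylor expansion yields $F(a_t,b_t) = f(a_t\mid\eta,\delta) + O(b_t^2)$ uniformly on the compact parameter range. Once $b_t$ sits at its statistical floor, the quadratic term $b_t^2$ is dominated by the statistical error $\omega\max\{a_t,\rho\}$ already admitted by the target rate, so the signal iteration differs from the one-dimensional empirical iteration by a perturbation of the same order as the 1D concentration error. The two allowed initializations produce matching 1D initial conditions: $\theta_0=0$ gives $a_0=0$, and $\theta_0=\tfrac{1}{\rho}\E_n[X]$ gives $a_0 = \eta + O(\omega/\rho)$, which is already within the target accuracy when $\eta\lesssim\rho$. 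Applying Theorem~\ref{thm: empirical d=00003D00003D1 known delta} (with $\omega_1$ replaced by $\omega$) then gives $|a_t - \eta| \lesssim \min\{\omega/\rho,\omega/\eta\}$ after at most $\gl[T]_{\theta_0}^{(1)}$ iterations, and assembling $\|\theta_t - \theta_*\|^2 = (a_t - \eta)^2 + b_t^2$ yields the claimed error bound for all $t \ge \gl[T]_G^{(d)} + \gl[T]_{\theta_0}^{(1)}$.

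The main obstacle is that the $a_t$ and $b_t$ iterations are coupled: the sharper $\eta^2$-contraction of the orthogonal component only switches on once the signal has grown, while reducing the signal iteration to the 1D analysis requires $b_t$ to already be small. I would resolve this by a joint induction carrying the invariant $b_t \le \max\{b_0\cdot(1-c_0\rho^2)^t,\; C\omega/\max\{a_t,\rho\}\}$, leveraging the $a$-independent baseline $\rho^2$-contraction of Proposition~\ref{prop: Comparison theorem d>1}(\ref{enu: comparison theorem d>1  - G itertaion explicit}) to keep $b_t$ bounded during the signal transient and thereby justify the 1D reduction at every step. This yields the additive convergence time $\gl[T]_G^{(d)} + \gl[T]_{\theta_0}^{(1)}$ in the conclusion, and the condition $\rho_*>\gl[C]_1^{(d)}\sqrt{\omega}$ is precisely what ensures that the $\rho^2$-contraction term dominates the additive noise floor throughout both phases.
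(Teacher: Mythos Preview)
Your overall architecture---decompose into signal and orthogonal components, control $b_t$ via the contraction of $G$, then reduce the $a_t$-iteration to the one-dimensional theorem---matches the paper's approach. However, there is a genuine gap in the orthogonal analysis, and your case split is not the one that makes the argument work.

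The gap is your repeated assertion that the $\rho^2$-contraction of Proposition~\ref{prop: Comparison theorem d>1}(\ref{enu: comparison theorem d>1  - G itertaion explicit}) is ``active regardless of $a_t$'' or is ``$a$-independent''. It is not: that item is stated for $(a,b)\in[0,C_f]^2$, and its proof goes through the monotonicity of item~(\ref{enu: comparison theorem d>1  - G itertaion dominance}), which explicitly requires $a\ge 0$. When $\eta$ is very small (specifically $\eta\lesssim\omega/\rho$), the empirical error $\omega\rho$ can overwhelm the drift $F(0,b)\asymp\rho^2\eta$, so $a_t<0$ is a real possibility and the bound you invoke is unavailable. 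The paper therefore splits not at $\eta\asymp\rho$ but at $\eta\asymp\omega/\rho$. In the large-signal case it runs a \emph{joint} induction maintaining $a_t\ge 0$ and $b_t\le c_1\omega/\rho$ simultaneously, where positivity of $a_{t+1}$ uses the strict lower bound $F(0,b)\ge \gl[C]_{F,0}\rho^2\eta$ from Lemma~\ref{lem: properties of F and G}(\ref{enu: simple properties d>1 strict positivity}), which beats the noise precisely when $\eta\gtrsim\omega/\rho$. In the small-signal case it abandons item~(\ref{enu: comparison theorem d>1  - G itertaion explicit}) and instead uses item~(\ref{enu: comparison theorem d>1  - G iteration dominance eta}) to transport to $\eta=0$: at $\eta=0$ the law of $V$ is symmetric, so the contraction bound holds for all $a\in[-C_f,C_f]$, and the $O(\eta^2 b)$ correction is absorbed by $\eta\lesssim\omega/\rho$ and $\rho\gtrsim\sqrt{\omega}$.

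A secondary issue: your claim that ``the additive noise is at most $O(\omega\rho)$'' in the orthogonal recursion is not justified, since the perturbation is $\omega(|a_t|+b_t+\rho)$ and you have not yet controlled $|a_t|$. The paper does not bound $|a_t|$ by $\rho$ at this stage; instead it keeps the full term $\omega a_t$ and absorbs it using the $a_t^2 b_t$ cross-term inside the contraction factor $1-\tfrac{a_t^2+b_t^2}{2+4(a_t^2+b_t^2)}$, optimizing $\omega a_t - a_t^2 b_t/C$ over $a_t$. Your proposed invariant does not capture this mechanism. Once these two points are fixed, the rest of your plan (Taylor-expanding $F(a,b)$ in $b$, reducing to Theorem~\ref{thm: empirical d=00003D00003D1 known delta}, and the refinement to $b_t\lesssim\omega/\eta$ once $a_t\in[\eta/2,2\eta]$) is exactly what the paper does.
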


The proof of this theorem is based on splitting the analysis into
three regimes $0<\eta\lesssim\frac{\omega}{\rho}$, $\frac{\omega}{\rho}\lesssim\eta\lesssim\rho$
and $\eta\gtrsim\rho$. First, it is shown that when initializing
at either $\theta_{0}=0$ or $\theta_{0}=\frac{1}{\rho}\E_{n}[X]$,
the orthogonal iteration satisfies $b_{t}=O(\frac{\omega}{\rho})$,
and remains so for all iterations. In the $\eta\lesssim\frac{\omega}{\rho}$
regime, this is shown by the local behavior of $G(a,b\mid\eta,\delta)$
around $\eta\approx0$ (Proposition \ref{prop: Comparison theorem d>1},
item \ref{enu: comparison theorem d>1  - G iteration dominance eta}).
In the $\eta\gtrsim\frac{\omega}{\rho}$ this is proved by the dominance
relation to the balanced orthogonal iteration (Proposition \ref{prop: Comparison theorem d>1},
items \ref{enu: comparison theorem d>1  - G itertaion dominance}
and \ref{enu: comparison theorem d>1  - G itertaion explicit}), along
with a verification that $a_{t}$ remains positive for all iterations
(so that these dominance relations are in fact valid). Given that
$b_{t}=O(\frac{\omega}{\rho})$, the effect of the orthogonal iteration
on the signal iteration is negligible, and it is essentially reduced
to the one-dimensional iteration. The convergence time for this is
$\gl[T]_{\theta_{0}}^{(1)}$ (when setting the specific initialization
for $\theta_{0}$ \footnote{In fact, Theorem \ref{thm: empirical d>1 known delta large rho} is
valid for any $\theta_{0}$ for which $b_{t}=O(\frac{\omega}{\rho})$. }), and the resulting error for the signal iteration is $|a_{t}-\eta|=O(\min\{\frac{\omega}{\rho},\frac{\omega}{\eta}\})$.
If $\eta<\rho$ then this is also the error rate for $\theta_{*}$
as $|\theta-\theta_{*}|=O(|a_{t}-\eta|+b_{t})$. If $\eta>\rho$,
then the orthogonal iteration can be shown to decrease to $O(\frac{\omega}{\eta})$
after additional $\gl[T]_{G}^{(d)}$ iterations.

We next consider the case of $\rho_{*}$ which is too small to satisfy
the condition of Theorem \ref{thm: empirical d>1 known delta large rho}. 
\begin{prop}[Empirical iteration, known weight, $d>1$, small $\rho_{*}$]
\label{prop: empirical d>1 known delta small rho}Assume that $\|\theta_{*}\|\leq\gl[C]_{\theta}$
and that the high probability event (\ref{eq: high probability event})
holds. Further assume that the balanced EM weight iteration is run
$\theta_{t}=f_{n}(\theta_{t-1},\delta=\frac{1}{2}\mid\theta_{*},\delta=\frac{1}{2})$
with random initialization as in Theorem \ref{thm:balanced case}.
Let $\tilde{\theta}_{t}=s_{t}\theta_{t}$ where $s_{t}=\sgn\langle\E_{n}[X],\theta_{t}\rangle$.
Then, there exists $\gl[C]_{\rho}^{(d)}(\gl[C]_{\theta})>0$ such
that if 
\[
\frac{\omega}{\eta}\leq\rho_{*}\leq\gl[C]_{1}^{(d)}\sqrt{\omega}\leq\gl[C]_{\eta}^{(d)}\eta\,,
\]
then 
\[
\ell(\theta_{t},\theta_{*})\leq\gl[C]_{2}^{(d)}\frac{\omega}{\eta}
\]
holds for all $t\geq\frac{\log n}{\|\theta_{*}\|^{2}}.$ 
\end{prop}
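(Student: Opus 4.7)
The strategy is to combine the balanced-case guarantee of Theorem \ref{thm:balanced case} with a single sign-correction step driven by $\E_n[X]$. The key observation is that, at $\delta=\tfrac12$, the population update
\[
f(\theta,\tfrac12\mid\theta_*,\rho_*)=\E_{X\sim P_{\theta_*,\rho_*}}\!\left[X\tanh(\langle\theta,X\rangle)\right]
\]
is independent of $\rho_*$: since $X\tanh(\langle\theta,X\rangle)$ is invariant under $X\mapsto-X$, decomposing $X$ according to its mixture component and substituting $Y'\mapsto-Y$ in the $N(-\theta_*,I)$ part shows that this expectation equals $\E_{Y\sim N(\theta_*,I)}[Y\tanh(\langle\theta,Y\rangle)]$, which is exactly the balanced-case population iteration. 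Combined with the empirical bound $\|f_n(\theta,\tfrac12)-f(\theta,\tfrac12)\|\le\|\theta\|\,\omega$ supplied by the event $\mathcal{E}$ at $\rho=0$, the proof of Theorem \ref{thm:balanced case} applies verbatim to samples from $P_{\theta_*,\rho_*}$. The hypothesis $\gl[C]_1^{(d)}\sqrt\omega\le\gl[C]_\eta^{(d)}\eta$ places us in the strong-signal regime $\eta\gtrsim\sqrt\omega$, so for all $t\gtrsim(\log n)/\eta^2$,
\[
\ell_0(\theta_*,\theta_t)=\min\{\|\theta_t-\theta_*\|,\|\theta_t+\theta_*\|\}\lesssim\omega/\eta.
\]

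Next I resolve the sign ambiguity. Specializing the event $\mathcal{E}$ to $\theta=0$ and any $\rho\in(0,\gl[C]_\rho]$, one has $f_n(0,\rho)=\rho\,\E_n[X]$ and $f(0,\rho)=\rho\cdot\rho_*\theta_*$, which immediately yields $\|\E_n[X]-\rho_*\theta_*\|\le\omega$. Consider the two branches of the ambiguity. If $\|\theta_t-\theta_*\|\le C\omega/\eta$, then $\langle\theta_*,\theta_t\rangle\ge\eta^2-C\omega$ and $\|\theta_t\|\le\eta+C\omega/\eta$, so
\[
\langle\E_n[X],\theta_t\rangle\;\ge\;\rho_*\eta^2-C\bigl(\rho_*\omega+\omega\eta+\omega^2/\eta\bigr).
\]
Under $\rho_*\ge\omega/\eta$ and $\eta\gtrsim\sqrt\omega$, each of the three error terms is at most $\tfrac14\rho_*\eta^2$ once the numerical constants $\gl[C]_1^{(d)},\gl[C]_\eta^{(d)}$ are chosen appropriately, so $\langle\E_n[X],\theta_t\rangle>0$, hence $s_t=+1$ and $\tilde\theta_t=\theta_t$ satisfies $\|\tilde\theta_t-\theta_*\|\lesssim\omega/\eta$. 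In the symmetric case $\|\theta_t+\theta_*\|\le C\omega/\eta$, the analogous bound gives $\langle\E_n[X],\theta_t\rangle<0$, so $s_t=-1$ and $\tilde\theta_t=-\theta_t$ again satisfies $\|\tilde\theta_t-\theta_*\|\lesssim\omega/\eta$. Either way $\ell(\tilde\theta_t,\theta_*)\lesssim\omega/\eta$, as claimed.

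The main obstacle is the constant bookkeeping in the sign-detection step: the signal $\rho_*\eta^2$ must simultaneously dominate three distinct noise contributions ($\rho_*\omega$, $\omega\eta$, and $\omega^2/\eta$), which imposes compatibility between the upper bound $\rho_*\le\gl[C]_1^{(d)}\sqrt\omega$, the implicit lower bound $\eta\ge(\gl[C]_1^{(d)}/\gl[C]_\eta^{(d)})\sqrt\omega$, and the hidden constant in $\rho_*\ge\omega/\eta$. A secondary subtlety is that the separation condition in Theorem \ref{thm:balanced case} is stated with the slightly stronger threshold $(d\log^3 n/n)^{1/4}$ rather than $\sqrt\omega\asymp(d\log n/n)^{1/4}$; this logarithmic gap is either absorbed into the constants $\gl[C]_i^{(d)}$ or handled by the sharper log exponents in the concentration event $\mathcal{E}$ used throughout this paper.
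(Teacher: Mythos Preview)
Your proof is correct and follows essentially the same approach as the paper: observe that the balanced iteration $\E_n[X\tanh(\langle\theta,X\rangle)]$ is invariant under sign flips of the samples so Theorem~\ref{thm:balanced case} applies to yield $\ell_0(\theta_t,\theta_*)\lesssim\omega/\eta$, then use the concentration event at $\theta=0$ to control $\E_n[X]-\rho_*\theta_*$ and verify that $\sgn\langle\E_n[X],\theta_t\rangle=\sgn\langle\theta_*,\theta_t\rangle$. The paper packages the same idea slightly differently by writing $\frac{1}{\rho_*}\E_n[X]=\theta_*+\varepsilon$ with $\|\varepsilon\|\le\omega/\rho_*$ and comparing $|\langle\theta_t,\theta_*\rangle|$ to $|\langle\theta_t,\varepsilon\rangle|$, but this is just your bound $\|\E_n[X]-\rho_*\theta_*\|\le\omega$ rescaled; your explicit two-branch case split and itemized comparison of the three error terms against $\rho_*\eta^2$ make the constant bookkeeping more transparent than the paper's terser argument.
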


Theorem \ref{thm: empirical d>1 known delta large rho} and Proposition
\ref{prop: empirical d>1 known delta small rho} together imply Theorem
\ref{thm:Main result}, which is the main result of the paper.

\subsection{The weight iteration for a fixed mean \label{subsec:The-weight-iteration}}

In previous sections, we have considered the mean iteration assuming
a known weight. In this section, we study the opposite extreme case,
and study the weight iteration assuming a fixed mean $\theta$, and
specifically, the case in which $\theta=\theta_{*}$ holds. In this
case, the log-likelihood is given by 
\[
\rho_{\text{MLE}}=\argmax_{\rho\in[0,1]}\sum_{i=1}^{n}\log\left(\frac{1+\rho}{2}e^{-\langle X_{i},\theta_{*}\rangle}+\frac{1-\rho}{2}e^{+\langle X_{i},\theta_{*}\rangle}\right).
\]
As apparent and also well-known, the log-likelihood is a concave function
of the unknown parameter $\rho$, and so, the EM algorithm is assured
to converge to the MLE \cite{wu1983convergence}. Alternatively, a
simple method-of-moments estimator $\rho_{\text{MoM}}=\frac{1}{\|\theta\|}\langle\hat{\theta},\E_{n}[X]\rangle$
can be readily shown to achieve the minimax error rate for this problem,
given roughly by $\min\{\frac{1}{\|\theta_{*}\|\sqrt{n}},1\}$ (see
Theorem \ref{thm: weight estimation minimax} in Appendix \ref{sec:Minimax-rates}).
Nonetheless, in this section we directly analyze the EM iteration
and provide statistical and computational guarantees similar to the
previous sections. Despite the favorable behavior mentioned above,
the analysis of the EM iteration is delicate, especially in the mismatched
case $\theta\neq\theta_{*}$. Understanding the EM iteration in this
setting may then further illuminate its basic features.

We thus assume in this section the model 
\[
P_{\rho}=\frac{1+\rho}{2}\cdot N(\theta_{*},1)+\frac{1-\rho}{2}\cdot N(-\theta_{*},1)
\]
where $\delta=\frac{1-\rho}{2}$ . As in Section \ref{subsec:The-mean-iteration d>1},
the weight iteration can be written as 
\begin{align*}
h(\rho,\theta\mid\theta_{*},\rho_{*}) & =\E\left[\frac{(1+\rho)e^{\|\theta\|V}-(1-\rho)e^{-\|\theta\|V}}{(1+\rho)e^{\|\theta\|V}+(1-\rho)e^{-\|\theta\|V}}\right]
\end{align*}
where $V\sim(\frac{1+\rho_{*}}{2})\cdot N(\langle\hat{\theta},\theta_{*}\rangle,1)+(\frac{1-\rho_{*}}{2})\cdot N(-\langle\hat{\theta},\theta_{*}\rangle,1)$
and $\hat{\theta}=\theta/\|\theta\|$. Similarly, the empirical iteration
will be denoted by $h_{n}(\rho,\theta)$. In addition, since $|\rho_{*}|\leq\gl[C]_{\rho}$
is assumed, we may also consider truncated iterations given by $[h(\rho,\theta\mid\theta_{*},\rho_{*})]_{\gl[C]_{\rho}}$where
\[
[t]_{\gl[C]_{\rho}}=\begin{cases}
-\gl[C]_{\rho}, & t<-\gl[C]_{\rho}\\
t, & -\gl[C]_{\rho}<t<\gl[C]_{\rho}\\
\gl[C]_{\rho}, & t>\gl[C]_{\rho}
\end{cases}\,.
\]
Fig. \ref{fig: weightiteration} illustrates EM iteration (based on
single runs of $n=10^{6}$ samples).\textbf{ } 
\begin{figure}
\centering{}\textbf{\includegraphics[scale=0.8]{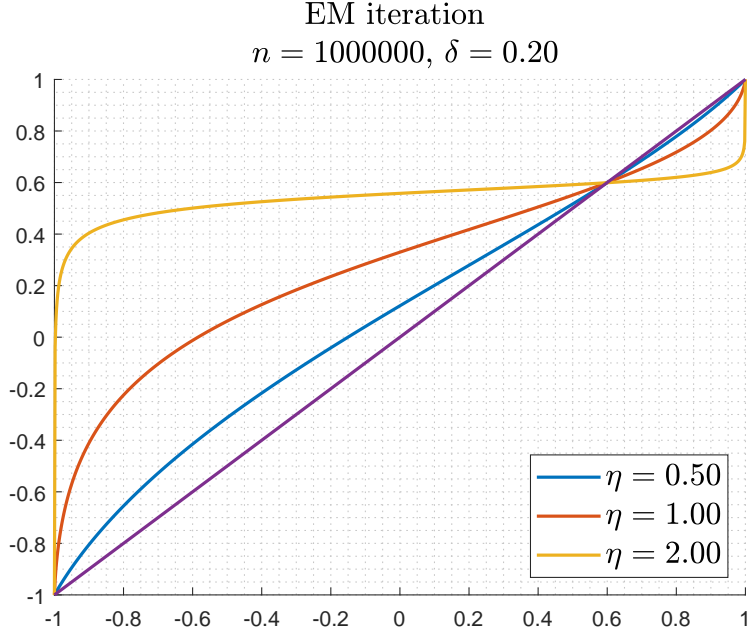}\caption{Illustration of $h(\rho,\eta\mid\eta,\rho_{*})$ for $\rho_{*}=0.6$
. \label{fig: weightiteration}}
} 
\end{figure}

\textbf{ }

As expected, for a given fixed $\theta$, the iteration is essentially
one-dimensional and does not depend on $d$. Note also that if $\langle\theta,\theta_{*}\rangle=0$
then $\rho_{*}$ is not identifiable, and, in accordance, the population
iteration is useless; indeed $h(\rho)=\rho$ for this case. Regarding
the population iteration, we have the following theorem: 
\begin{thm}[Population weight iteration, fixed mean]
\label{thm: population weight fixed mean}Assume that $\rho_{*}>0$
and that $\langle\theta,\theta_{*}\rangle\neq0$. The following holds: 
\begin{enumerate}
\item The iteration $h(\rho,\theta)$ has either two or three fixed points
in $[-1,1]$. The boundaries $\rho=\pm1$ are always fixed points.
There exists a third fixed point $\rho_{\#}\in(-1,1)$ if and only
if 
\begin{equation}
\left.\frac{\d}{\d\rho}h(\rho,\theta)\right|_{\rho=1}=e^{2\|\theta\|^{2}}\left[\left(\frac{1+\rho_{*}}{2}\right)e^{-2\langle\theta,\theta_{*}\rangle}+\left(\frac{1-\rho_{*}}{2}\right)\cdot e^{2\langle\theta,\theta_{*}\rangle}\right]>1\label{eq: condition on fixed point in weight iteration}
\end{equation}
and if it exists, it satisfies $\rho_{\#}\in(0,1)$ if $\langle\theta,\theta_{*}\rangle>0$
and $\rho_{\#}\in(-1,0)$ if $\langle\theta,\theta_{*}\rangle<0$.
Specifically, condition (\ref{eq: condition on fixed point in weight iteration})
holds if $\|\theta\|>|\langle\hat{\theta},\theta_{*}\rangle|$, and,
furthermore, if $\theta=\theta_{*}$ then $\rho_{\#}=\rho_{*}$. 
\item If $\rho_{\#}>0$ exists then the iteration $\rho_{t+1}=h(\rho_{t},\theta)$
converges monotonically upwards (resp. downwards) to $\rho_{\#}$
if $\rho_{0}\in(-1,\rho_{\#}]$ (resp. $\rho_{0}\in[\rho_{\#},1)$). 
\end{enumerate}
\end{thm}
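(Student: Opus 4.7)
The plan centers on a single algebraic reduction. First I would apply the tanh addition formula $\tanh(a+\tanh^{-1}\rho) = (\tanh a + \rho)/(1+\rho\tanh a)$ with $a = \|\theta\|V$ to rewrite
\[
h(\rho,\theta) = \E\!\left[\frac{T+\rho}{1+\rho T}\right], \qquad T := \tanh(\|\theta\|V),
\]
and then clear denominators to obtain the factorization
\[
h(\rho,\theta)-\rho \;=\; (1-\rho^{2})\,\Psi(\rho), \qquad \Psi(\rho) := \E\!\left[\frac{T}{1+\rho T}\right].
\]
The $(1-\rho^{2})$ factor makes $\rho = \pm 1$ fixed points automatically, and reduces the analysis of interior fixed points to the zeros of $\Psi$ on $(-1,1)$.

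Differentiating under the expectation gives $\Psi'(\rho) = -\,\E[T^{2}/(1+\rho T)^{2}]$, which is strictly negative because $V$ is absolutely continuous and hence $\P(T\neq 0) = 1$. Thus $\Psi$ is strictly decreasing on $(-1,1)$, which immediately gives uniqueness of any interior fixed point. For existence I would relate the endpoint values of $\Psi$ to $h'(\pm 1)$ using the elementary identities $\tanh(x)/(1\pm\tanh x) = \pm\tfrac{1}{2}(e^{\pm 2x}-1)$, yielding
\[
\Psi(1^{-}) = \tfrac{1}{2}\bigl(1-h'(1)\bigr), \qquad \Psi(-1^{+}) = \tfrac{1}{2}\bigl(h'(-1)-1\bigr),
\]
and compute $h'(1) = \E[e^{-2\|\theta\|V}]$ via the Gaussian MGF conditional on the latent sign, reproducing the explicit expression in~(\ref{eq: condition on fixed point in weight iteration}). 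Combined with strict monotonicity of $\Psi$, the condition $h'(1)>1$ is what produces a sign change of $\Psi$ on $(-1,1)$ and hence an interior zero.

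For the sign and the special cases, I would evaluate $\Psi(0) = \E[T]$. Collapsing the mixture using the oddness of $\tanh$ gives $\E[T] = \rho_{*}\cdot \E_{V\sim N(\gamma,1)}[\tanh(\|\theta\|V)]$ with $\gamma := \langle\hat\theta,\theta_{*}\rangle$, which has the sign of $\gamma$; strict monotonicity of $\Psi$ then places $\rho_{\#}$ on the same side of $0$ as $\langle\theta,\theta_{*}\rangle$. For the sufficient condition $\|\theta\|>|\gamma|$, rewriting
\[
h'(1) \;=\; \frac{1+\rho_{*}}{2}\,e^{2\|\theta\|(\|\theta\|-\gamma)} + \frac{1-\rho_{*}}{2}\,e^{2\|\theta\|(\|\theta\|+\gamma)}
\]
shows that both exponents are strictly positive under the hypothesis, so each exponential exceeds $1$ and hence so does the convex combination. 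For $\theta = \theta_{*}$, the consistency identity $\E_{\theta_{*},\rho_{*}}[S\mid X] = \tanh(\langle\theta_{*},X\rangle+\beta_{\rho_{*}})$ from~(\ref{eq: expected value of the sign given sample}) gives $h(\rho_{*},\theta_{*}) = \E[\E[S\mid X]] = \E[S] = \rho_{*}$, and uniqueness of the interior fixed point forces $\rho_{\#}=\rho_{*}$.

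For part (2), note that $h(\cdot,\theta)$ is strictly increasing on $(-1,1)$ since $h'(\rho) = \E[(1-T^{2})/(1+\rho T)^{2}] > 0$. Combined with the factorization from the first paragraph, $h(\rho)>\rho$ on $(-1,\rho_{\#})$ and $h(\rho)<\rho$ on $(\rho_{\#},1)$, while monotonicity together with $h(\pm 1)=\pm 1$ and $h(\rho_{\#})=\rho_{\#}$ shows each of these intervals is invariant under $h$. Hence from any $\rho_{0}\in(-1,\rho_{\#}]$ the iterates $\{\rho_{t}\}$ are non-decreasing and bounded above by $\rho_{\#}$, so converge to a fixed point in $(-1,\rho_{\#}]$ which by the uniqueness above must equal $\rho_{\#}$; the case $\rho_{0}\in[\rho_{\#},1)$ is symmetric. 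The main obstacle is the bookkeeping of paragraph two: relating the one-sided condition $h'(1)>1$ to existence of $\rho_{\#}$ requires carefully tracking which endpoint of $\Psi$ carries the sign change under each regime of $\operatorname{sgn}\langle\theta,\theta_{*}\rangle$, and verifying that the explicit MGF formula in~(\ref{eq: condition on fixed point in weight iteration}) is the correct translation of the sign-change criterion.
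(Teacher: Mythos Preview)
Your approach is correct and genuinely different from the paper's. The paper never writes down the factorization $h(\rho,\theta)-\rho=(1-\rho^{2})\,\Psi(\rho)$; instead it works directly with $h$ and establishes a \emph{shape} property: from $h'''(\rho)>0$ (Lemma~\ref{lem: weight simple properties}, item~\ref{enu: weight iteration simple properties - second order derivative}) it deduces that $h$ is concave on $[-1,\overline{\rho}]$ and convex on $[\overline{\rho},1]$, and then runs a somewhat intricate case analysis on the fixed points using this concave-then-convex structure together with $h'(-1)>1$ and the alternating-slope property of consecutive fixed points (Proposition~\ref{prop: Properties of one dimensional iterations}). Your factorization short-circuits all of this: strict monotonicity of $\Psi$ gives uniqueness of the interior fixed point in one line, and the endpoint identities $\Psi(\pm 1^{\mp})=\pm\tfrac{1}{2}(1-h'(\pm1))$ reduce existence to a sign check. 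What the paper's route buys is that the curvature information (contractivity of $h'$ on $[0,\rho_{*}]$, bounds on $h''$) is reused later in the empirical analysis (Theorem~\ref{thm: sample weight known mean}), so some of that machinery is needed anyway; your route is cleaner for the population statement alone. One point to make explicit in your write-up: the ``iff'' with the single condition $h'(1)>1$ relies on $\Psi(-1^{+})>0$, i.e.\ $h'(-1)>1$, which for $\langle\theta,\theta_{*}\rangle>0$ follows from the explicit formula (the bracket $\tfrac{1+\rho_{*}}{2}s+\tfrac{1-\rho_{*}}{2}s^{-1}$ with $s=e^{2\langle\theta,\theta_{*}\rangle}>1$ exceeds $1$); the case $\langle\theta,\theta_{*}\rangle<0$ then follows by the symmetry $h(\rho,-\theta)=-h(-\rho,\theta)$, which you should state rather than leave as ``bookkeeping''.
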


The proof mainly utilizes the following properties of $\rho\mapsto h(\rho,\theta)$:
It increases monotonically from $h(-1,\theta)=-1$ to $h(1,\theta)=1$,
and in case there is a fixed point $\rho_{\#}\in(-1,1)$, its uniqueness
follows from the fact that $h(\rho,\theta)$ changes its curvature
only once as $\rho$ traverse from $-1$ to $1$ (from concave to
convex).

A rough characterization of the influence of mismatched $\theta$
can be derived as follows. Note that for the method-of-moments estimator,
a mismatch in the knowledge of $\theta_{*}$ when assuming the true
vector is $\theta\neq\theta_{*}$ results in bias in estimation, such
that, on the population level 
\[
\rho_{\text{MoM}}=\frac{1}{\|\theta\|}\langle\hat{\theta},\E[X]\rangle=\frac{1}{\|\theta\|}\langle\hat{\theta},\theta_{*}\rangle\cdot\rho_{*}\,.
\]
Thus, $\rho_{\text{MoM}}<\rho_{*}$ if and only if $\langle\hat{\theta},\theta_{*}\rangle<\|\theta\|$.
The next proposition shows the same effect for the EM iteration: 
\begin{prop}
\label{prop: population weight wrong mean effect}Assume that $\rho_{*}>0$
and that $\langle\hat{\theta},\theta_{*}\rangle>0$. Let $\rho_{\#}$
be the fixed point of $\rho\mapsto h(\rho,\theta)$ which satisfies
$\rho_{\#}\in(-1,1)$ (if such exists). Then, $\rho_{\#}<\rho_{*}$
if and only if $\langle\hat{\theta},\theta_{*}\rangle<\|\theta\|$. 
\end{prop}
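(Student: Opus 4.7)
The plan is to reduce the claim to a sign computation for the derivative of the population log-likelihood at $\rho_*$. Since the expected log-likelihood $\ell(\rho) := \E_{X\sim P_{\theta_*,\rho_*}}[\log p_{\theta,\rho}(X)]$ is strictly concave in $\rho$ (as already noted in the text), any interior critical point is its unique maximizer. Moreover, a short computation shows $\ell'(\rho) = (h(\rho,\theta) - \rho)/(1-\rho^2)$, so interior fixed points of $\rho\mapsto h(\rho,\theta)$ are precisely the interior critical points of $\ell$; strict concavity then yields the dichotomy $\rho_\# < \rho_*$ iff $\ell'(\rho_*) < 0$. It therefore suffices to show that $\ell'(\rho_*)$ has the sign of $\alpha - t$, where $\alpha := \langle\hat\theta,\theta_*\rangle$ and $t := \|\theta\|$.

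To carry this out, I would first pass to the one-dimensional projection $Y := \langle\hat\theta,X\rangle$, which has distribution $Q_{\alpha,\rho_*}$ with $Q_{\mu,r}(y) := \tfrac{1+r}{2}\varphi(y-\mu) + \tfrac{1-r}{2}\varphi(y+\mu)$; up to an additive term independent of $\rho$, $\ell(\rho) = \E_{Y\sim Q_{\alpha,\rho_*}}[\log Q_{t,\rho}(Y)]$. The key algebraic identity is $Q_{t,\rho}(y) = Q_{t,0}(y)\cdot(1+\rho\tanh(ty))$, obtained by factoring out $\cosh(ty)$ after expanding the mixture; this gives $\partial_\rho \log Q_{t,\rho}(y) = \tanh(ty)/(1+\rho\tanh(ty))$ and, at the truth, $Q_{\alpha,\rho_*}(y) = Q_{\alpha,0}(y)(1+\rho_*\tanh(\alpha y))$.

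Plugging into $\ell'(\rho_*)$, I would write $(1+\rho_*\tanh(\alpha y))/(1+\rho_*\tanh(ty)) = 1 + \rho_*(\tanh(\alpha y)-\tanh(ty))/(1+\rho_*\tanh(ty))$. The ``$1$-piece'' of $\ell'(\rho_*)$ becomes $\int \tanh(ty)\, Q_{\alpha,0}(y)\,dy$, which vanishes since $Q_{\alpha,0}$ is even and $\tanh$ is odd. Pairing $y$ with $-y$ in the remaining integral combines the two conjugate denominators via $1/(1+\rho_* A) + 1/(1-\rho_* A) = 2/(1-\rho_*^2 A^2)$ with $A = \tanh(ty)$, yielding
\[
\ell'(\rho_*) \;=\; 2\rho_* \int_0^\infty \frac{\tanh(ty)\,[\tanh(\alpha y) - \tanh(ty)]}{1 - \rho_*^2\tanh^2(ty)}\,Q_{\alpha,0}(y)\,dy .
\]
For $y > 0$ the denominator is positive, $\tanh(ty) > 0$, and by strict monotonicity of $\tanh$ the bracket $\tanh(\alpha y) - \tanh(ty)$ has the sign of $\alpha - t$. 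Thus the integrand is pointwise of that sign, $\ell'(\rho_*)$ has the sign of $\alpha - t$ (using $\rho_*>0$), and the claim follows from the concavity dichotomy above.

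The hardest step, in my view, is spotting the correct base representation $Q_{t,\rho} = Q_{t,0}\,(1+\rho\tanh(ty))$: once both the score and the true density are rewritten over the common \emph{even} base $Q_{\alpha,0}$, the odd/even symmetrization forces the leading piece to vanish and funnels the derivative into a single, manifestly sign-definite integrand, making the $\alpha$-versus-$t$ dichotomy transparent.
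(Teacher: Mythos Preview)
Your argument is correct and takes a genuinely different route from the paper. The paper reduces to the one-dimensional setting and studies the sign of $h(\rho_{*},\theta)-\rho_{*}$ by viewing it as a function of the \emph{true} parameter $\eta=\langle\hat\theta,\theta_{*}\rangle$ with $\theta$ fixed; it expresses this difference as a Gaussian convolution $s(\theta\eta)\ast\varphi(\eta)$, invokes the variation-diminishing property of the Gaussian kernel to show the function of $\eta$ has at most one zero-crossing, and then pins that zero-crossing at $\eta=\theta$ via consistency. By contrast, you exploit the identity $\ell'(\rho)=(h(\rho,\theta)-\rho)/(1-\rho^{2})$ together with strict concavity of $\ell$ to reduce the claim to the sign of $\ell'(\rho_{*})$, and then the factorization $Q_{t,\rho}=Q_{t,0}(1+\rho\tanh(ty))$ plus even/odd symmetrization collapses $\ell'(\rho_{*})$ into a single integral whose integrand is pointwise of the sign of $\alpha-t$. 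Your approach is more elementary (no total-positivity machinery) and arguably more transparent, while the paper's approach is in keeping with the ``analyze as a function of the true parameter'' philosophy used elsewhere in the paper and reuses the variation-diminishing lemma already set up for the mean iteration.
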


The proof of the global property in Proposition \ref{prop: population weight wrong mean effect}
is again based on the variation diminishing property of the Gaussian
kernel, on consistency, and on exploring the location of the fixed
point as a function of the true parameter $\theta_{*}$ for a fixed
$\theta$.

The empirical weight iteration satisfies the following theorem: 
\begin{thm}[Empirical weight iteration, known mean]
\label{thm: sample weight known mean}Assume that $\|\theta_{*}\|\leq\gl[C]_{\theta}$
and that $|\rho_{*}|\leq\gl[C]_{\rho}$, and that the high probability
event (\ref{eq: high probability event}) holds. Consider the truncated
empirical weight iteration $\rho_{t}=[h_{n}(\rho_{t-1},\theta_{*}\mid\theta_{*},\rho_{*})]_{\gl[C]_{\rho}}$
when initialized with $\rho_{0}=0$. There exists $n_{0}(\gl[C]_{\theta},\gl[C]_{\rho})$
and constants $\{\gl[C]_{i}^{(\rho)}\}$ which depend on $(\gl[C]_{\theta},\gl[C]_{\rho})$
such that if $\|\theta_{*}\|>\gl[C]_{1}^{(\rho)}\frac{\omega_{1}}{\rho_{*}}$
and $n\geq n_{0}$ then 
\[
\ell(\rho_{t},\rho_{*})\leq\gl[C]_{2}^{(\rho)}\frac{\omega_{1}}{\|\theta_{*}\|}
\]
holds for all $t\geq\gl[T]^{(\rho)}=\frac{\gl[C]_{3}^{(\rho)}}{\|\theta_{*}\|^{2}}$. 
\end{thm}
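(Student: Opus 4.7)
The plan is to mirror the sandwiching strategy developed for the mean iteration in Theorems~\ref{thm: empirical d=00003D00003D1 known delta} and~\ref{thm: empirical d>1 known delta large rho}, adapted to the one-dimensional weight iteration. Under the high-probability event~(\ref{eq: high probability event}) the bound $|h_n(\rho,\theta_*)-h(\rho,\theta_*)|\leq\|\theta_*\|\,\omega_1$ holds uniformly, so defining the envelopes $h_\pm(\rho)\dfn h(\rho,\theta_*\mid\theta_*,\rho_*)\pm\|\theta_*\|\,\omega_1$ yields $h_-(\rho)\leq h_n(\rho,\theta_*)\leq h_+(\rho)$. Since $\rho\mapsto h(\rho,\theta_*)$ is strictly increasing (it is $\tanh$ applied to a $\beta_\rho$-shifted argument) and the truncation $[\,\cdot\,]_{\gl[C]_\rho}$ preserves monotonicity, the truncated envelope iterations $\rho_t^\pm=[h_\pm(\rho_{t-1}^\pm)]_{\gl[C]_\rho}$ with $\rho_0^\pm=0$ sandwich the empirical iteration, $\rho_t^-\leq\rho_t\leq\rho_t^+$, so it suffices to analyze these two deterministic iterations.

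Next I would locate the fixed points $\rho_\pm$ of the envelopes and show $|\rho_\pm-\rho_*|\lesssim\omega_1/\|\theta_*\|$. Theorem~\ref{thm: population weight fixed mean} applied to $\theta=\theta_*$ identifies the interior population fixed point $\rho_\#=\rho_*$, so $h(\rho_*,\theta_*)=\rho_*$. Combining the fixed-point equations $\rho_\pm=h(\rho_\pm,\theta_*)\pm\|\theta_*\|\omega_1$ with the mean value theorem,
\[
|\rho_\pm-\rho_*|\cdot\bigl(1-h'(\xi,\theta_*)\bigr)=\|\theta_*\|\,\omega_1
\]
for some $\xi$ between $\rho_\pm$ and $\rho_*$. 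Differentiating gives
\[
h'(\rho,\theta_*)=\frac{1}{1-\rho^2}\,\E\bigl[\mathrm{sech}^2(\langle\theta_*,X\rangle+\beta_\rho)\bigr],
\]
and under the global assumptions I claim $1-h'(\rho,\theta_*)\gtrsim\|\theta_*\|^2$: writing $\mathrm{sech}^2=1-\tanh^2$, the bound follows from $\V(\langle\theta_*,X\rangle)\geq\|\theta_*\|^2$ (the $\langle\theta_*,Z\rangle$ contribution) together with $\tanh^2(y)\gtrsim y^2$ on a constant interval, exploiting sub-Gaussianity of $\langle\theta_*,X\rangle$ (assumptions $\|\theta_*\|\leq\gl[C]_\theta$, $|\rho_*|\leq\gl[C]_\rho$) to truncate. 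Consequently $|\rho_\pm-\rho_*|\lesssim\omega_1/\|\theta_*\|$, and the separation hypothesis $\|\theta_*\|\gtrsim\omega_1/\rho_*$ places $\rho_\pm$ strictly inside $(-\gl[C]_\rho,\gl[C]_\rho)$, so the truncation is inactive in a neighborhood of the fixed points.

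The same slope bound shows each envelope is a geometric contraction with rate $1-c\|\theta_*\|^2$ on the truncation interval; combined with the monotone convergence guaranteed by item~2 of Theorem~\ref{thm: population weight fixed mean} (which applies verbatim to $h_\pm$ since they only differ from $h$ by a constant, preserving the curvature structure invoked in that proof), the envelope iterates started at~$0$ reach within $\omega_1/\|\theta_*\|$ of $\rho_\pm$ after $\gl[T]^{(\rho)}=O(1/\|\theta_*\|^2)$ iterations (absorbing the $\log(1/\omega_1)$ factor into $\gl[C]_3^{(\rho)}$ as in the analogous mean-iteration statements). Applying the sandwich $\rho_t^-\leq\rho_t\leq\rho_t^+$ then yields $\ell(\rho_t,\rho_*)\leq|\rho_t-\rho_\pm|+|\rho_\pm-\rho_*|\lesssim\omega_1/\|\theta_*\|$.

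The main technical obstacle will be the uniform derivative bound $1-h'(\rho,\theta_*)\gtrsim\|\theta_*\|^2$ on the entire truncation interval $[-\gl[C]_\rho,\gl[C]_\rho]$ rather than merely in a small neighborhood of $\rho_*$. Extracting the correct $\|\theta_*\|^2$ scaling requires balancing the truncation level~$M$ used to bound $\tanh^2(Y)\geq(\tanh(M)/M)^2Y^2\cdot\I\{|Y|\leq M\}$ against the Gaussian tail $\P[|Y|>M]\leq Ce^{-cM^2}$, so that $\E[Y^2\I\{|Y|\leq M\}]\geq\tfrac{1}{2}\V(\langle\theta_*,X\rangle)\geq\tfrac{1}{2}\|\theta_*\|^2$; this is possible because $M$ can be chosen as a constant depending only on $(\gl[C]_\theta,\gl[C]_\rho)$. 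A secondary subtlety is verifying that $\rho_\pm$ actually exist as interior fixed points of the truncated envelopes, which follows from continuity, monotonicity, and the boundary values $h_\pm(\pm\gl[C]_\rho)$ being on the correct sides of $\pm\gl[C]_\rho$ for $\omega_1$ small---guaranteed by the strict inequality $|\rho_*|<\gl[C]_\rho$ implicit in the global assumption together with $h(\pm\rho_*,\theta_*)\neq\pm\gl[C]_\rho$.
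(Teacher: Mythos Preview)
Your overall strategy---envelopes $h_\pm(\rho)=h(\rho,\theta_*)\pm\eta\omega_1$, sandwiching, fixed-point localization, and contraction---matches the paper exactly. The gap is precisely where you flag it: the uniform derivative bound $1-h'(\rho,\theta_*)\gtrsim\eta^2$ on the whole truncation interval.

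Your truncation argument does not deliver that bound. Writing $h'(\rho)=\frac{1}{1-\rho^2}\,\E[\mathrm{sech}^2 Y]$ with $Y=\langle\theta_*,X\rangle+\beta_\rho$ gives
\[
1-h'(\rho)=\frac{\E[\tanh^2 Y]-\rho^2}{1-\rho^2}\,,
\]
so what is needed is $\E[\tanh^2 Y]\geq\rho^2+c\eta^2$, not merely $\E[\tanh^2 Y]\gtrsim\eta^2$. The inequality $\tanh^2(y)\geq(\tanh M/M)^2\,y^2\cdot\I\{|y|\le M\}$ loses the factor $c_M\dfn(\tanh M/M)^2<1$; since $\E[Y^2]\approx\beta_\rho^2+\eta^2$ you obtain $\E[\tanh^2 Y]\geq c_M(\beta_\rho^2+\eta^2)$, and after subtracting $\rho^2$ the residual $c_M\beta_\rho^2-\rho^2+c_M\eta^2$ can be negative of order $\rho^2$ (because $\beta_\rho\sim\rho$ for small $\rho$ while $c_M<1$), swamping the $\eta^2$ gain whenever $\rho\gg\eta$. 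In fact the uniform bound is false on all of $[-\gl[C]_\rho,\gl[C]_\rho]$: by Lemma~\ref{lem: weight simple properties} item~\ref{enu: weight iteration simple properties - second order derivative} the map $\rho\mapsto h'(\rho)$ is U-shaped, and for $\theta=\theta_*$ one computes $h'(1)=\tfrac{1+\rho_*}{2}+\tfrac{1-\rho_*}{2}e^{4\eta^2}>1$, so $h'(\rho)>1$ for all $\rho$ above some threshold in $(\rho_*,1)$ that may well lie below $\gl[C]_\rho$.

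The paper sidesteps this by never claiming a uniform bound. For the lower envelope it invokes Lemma~\ref{lem: weight simple properties} item~\ref{enu: weight iteration simple properties - derivative from zero to rho_star}, which establishes $h'(\rho)\le e^{-\eta^2/2}\max\{5/6,1-\eta^2/6\}$ \emph{only} on $[0,\rho_*]$, via a change-of-measure argument and the symmetrized ratio inequality from Lemma~\ref{lem: d=00003D00003D1 simple properties}; that interval suffices because $\rho_-\in[0,\rho_*]$. For the upper envelope, whose fixed point $\rho_+$ exceeds $\rho_*$, the paper takes the same bound at $\rho_*$ and propagates it to $[\rho_*,\rho_+]$ by Taylor expansion, using the second-derivative control $h''(\rho)\le\gl[C]''_h\eta^2$ of item~\ref{enu: weight iteration simple properties - second derivative at theta_star}; this works because $\rho_+-\rho_*\to0$ as $n\to\infty$, so the extension is only local. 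The role of the truncation $[\cdot]_{\gl[C]_\rho}$ is just to keep the iterate inside the region where the second-derivative bound is stated, not to supply contractivity on its own.
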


The proof is based on bounding the empirical iteration with envelopes
of absolute error $\omega_{1}\|\theta_{*}\|$, and analyzing their
convergence. As might be expected, both the error bound and convergence
time diverge when $\|\theta_{*}\|\to0$; In the extreme case $\|\theta_{*}\|=0$,
$\rho_{*}$ is not identifiable at all. The error bound of the EM
iteration (and thus, also the MLE) matches that of the method-of-moments
estimator, and also the minimax error rate (Theorem \ref{thm: weight estimation minimax}
in Appendix \ref{sec:Minimax-rates}).

\subsection{An open problem: joint mean and weight estimation\label{subsec:Joint-mean-and}}

We have analyzed the EM algorithm for the model $P_{\theta,\delta}$
in case one of the parameters is known and the other is required to
be estimated. We next briefly discuss the more challenging scenario
in which both $\delta_{*}$ and $\theta_{*}$ are required to be jointly
estimated. In this case, each of the parameters serves as a nuisance
parameter for estimating the other one, and the exact statistical
and computational rates of EM remains an open problem. We nonetheless
briefly discuss several aspects of this problem.

For the idealized population version, it is straightforward to ensure
convergence, even far from the solution by a proper ``scheduling'',
i.e., not necessarily running both (\ref{eq: EM iteration mean})-(\ref{eq: EM iteration weight})
at each iteration. Specifically, a simple possible scheduling is ``freezing''
$\theta_{t}=\theta_{0}$ and running the weight iteration for $T_{0}$
steps until convergence, then freezing $\rho_{t}=\rho_{T_{0}}$ and
running the mean iteration until convergence, and so on. The initialization
and scheduling order will then affect convergence. If we set $\rho_{0}=0$
and run the balanced mean iteration $\theta_{t+1}=f(\theta_{t},\frac{1}{2}|\eta,\delta)$
it will converge to $\theta_{*}$. If we after this convergence we
will run the weight iteration $\rho_{t+1}=h(\rho_{t},\theta_{*})$
it will converge to $\rho_{*}$. Thus, this scheduling globally converges
to $(\theta_{*},\rho_{*})$. By contrast, while we have empirically
observed that initializing with a frozen $\theta_{0}$ also globally
converges, it is more challenging to establish via our methods. To
see this, suppose for simplicity that $d=1$. Note that Proposition
\ref{prop: population weight wrong mean effect} hints the importance
of assuring that $\theta_{0}>\theta_{*}\equiv\eta$ so that the weight
iteration $\rho_{t+1}=h(\rho_{t},\theta_{0})$ will have a fixed point
$\rho_{\#}<\rho_{*}$. If this condition does not hold, then the weight
iteration might not have a fixed point $\rho_{\#}$ in $(-1,1)$,
and the iteration will converge to the spurious fixed point of $\rho=1$.
Thus, we would like to initialize with a frozen $\theta_{0}$ such
that $|\theta_{0}|>\eta$. By our assumptions, this could be achieved,
by setting $\|\theta_{0}\|=\gl[C]_{\theta}.$ Next, we freeze $\rho_{t}$
at the obtained fixed point $\rho_{\#}$, and run the mean iteration.
The following property can be proved: Let $X\sim(1-\delta_{*})\cdot N(\eta,1)+\delta_{*}\cdot N(-\eta,1)$
with $\eta>0$ and assume that $\theta>0$. Then $f(\theta,\delta\mid\eta,\delta_{*})<f(\theta,\delta\mid\eta,\delta)$
if and only if $\delta_{*}>\delta$. Due to consistency and the convergence
properties of $f(\theta,\delta\mid\eta,\delta)$ (Theorem \ref{thm: population d=00003D00003D1 known delta}),
it can also be assured that $f(\theta,\delta\mid\eta,\delta_{*})$
has no fixed points in $(\eta,\infty)$, and has at least a single
fixed point in $(0,\eta]$ (which might not be unique). Upon convergence
to such a fixed point $\eta_{0}\leq\eta$, we may freeze it and run
the weight iteration $h(\rho,\eta_{0})$. At this phase, since $\eta_{0}<\eta$
it is not clear that the weight iteration will have a non-trivial
fixed point $\rho_{\#}\in(-1,1)$. A more delicate argument is required
to assure global convergence for a scheduling that begins with a phase
of frozen $\theta_{t}$. For the population iteration, we can always
choose to begin with $\rho=0$ that is provably globally converges,
but it is not clear if this scheduling is better in terms of the empirical
iteration.

For the empirical iteration, the result of running the balanced mean
EM iteration (or a spectral algorithm) can clearly be used to obtain
with high probability an initial guess $\theta_{0}$ with $\|\theta_{0}-\theta_{*}\|=\tilde{O}(\sqrt{\omega})$
if $\|\theta_{*}\|\lesssim\sqrt{\omega}$ and $\|\theta_{0}-\theta_{*}\|=\tilde{O}(\frac{\omega}{\|\theta_{*}\|})$
otherwise. In the former case, $\theta_{0}$ is not informative regarding
the direction of $\theta_{*}$, and so this initial guess is not expected
to be better than $\theta_{0}=0$. When $\|\theta_{*}\|\gtrsim\sqrt{\omega}$,
the initial guess has non-trivial angle with $\theta_{*}$, and so
it seems beneficial to initialize with that $\theta_{0}$. Furthermore,
the only possible case case in which EM algorithm can improve the
error rate is $\rho_{*}>\|\theta_{*}\|\gtrsim\sqrt{\omega}.$ A possible
direction to prove such a result, is to learn the stability of the
mean iteration w.r.t. error in the weight and vice-versa. Specifically,
using $\theta$ with $\|\theta-\theta_{*}\|\lesssim\frac{\omega}{\|\theta_{*}\|}$
in the iteration $\rho_{t+1}=h(\rho_{t},\theta\mid\theta_{*},\rho_{*})$
shifts the population fixed point by at most $O(\frac{\omega}{\|\theta_{*}\|^{2}})$,
but this is larger than the shift due to the empirical error which
is $O(\frac{\omega}{\|\theta_{*}\|})$. If, however, one can use $\rho$
with $|\rho-\rho_{*}|\lesssim\frac{\omega}{\|\theta_{*}\|}$ in the
empirical mean iteration $\theta_{t+1}=f_{n}(\theta_{t},\rho_{t}\mid\theta_{*},\rho_{*})$
then this mismatch can be shown to be negligible compared to the empirical
error. Thus, with this scheduling, the key point is how to finely
estimate $\theta_{*}$ so that its effect on the weight iteration
will be negligible. Nonetheless, if non trivial separation holds and
$\theta_{*}=\Omega(1)$ then running the balanced EM mean iteration
followed by the weight iteration leads to (nearly) optimal error rates.

\section{Proofs for Section \ref{subsec:The-mean-iteration d=00003D00003D1}}

\subsection{Population iteration}

The following lemma summarizes simple properties of the mean population
iteration for $d=1$. 
\begin{lem}
\label{lem: d=00003D00003D1 simple properties}Assume that $\eta\geq0$.
The following properties hold for $f(\theta)\equiv f(\theta,\delta\mid\eta,\delta)$: 
\begin{enumerate}
\item \label{enu: d=00003D00003D1 simple properties - iteration}Iteration:
$f(0)=(1-2\delta)^{2}\cdot\eta>0$, $f(\eta)=\eta$ (consistency)
and $\lim_{\theta\to\infty}f(\theta)\leq\eta+1<\gl[C]_{\theta}+1$. 
\item \label{enu: d=00003D00003D1 simple properties - first derivative}First
order derivative: $f(\theta)$ is increasing on $\mathbb{R}$ and
\[
f'(\theta)=\E\left[\frac{X^{2}}{\cosh^{2}(X\theta+\beta)}\right]=4\delta(1-\delta)\cdot\E\left[\frac{X^{2}}{\left((1-\delta)\cdot e^{X\theta}+\delta\cdot e^{-X\theta}\right)^{2}}\right]>0\,.
\]
At $\theta=0$ 
\[
f'(0)=4\delta(1-\delta)\cdot\left[\eta^{2}+1\right]\,,
\]
and furthermore, if $\theta\geq\eta$ then 
\[
f'(\theta)\leq2\sqrt{\delta(1-\delta)}e^{-\frac{1}{2}\eta^{2}}\leq1-\frac{1}{4}\cdot\max\{\min\{\eta^{2},1\},\rho^{2}\}\,.
\]
\item Second order derivative: 
\[
f''(\theta)=-2\E_{X}\left[\frac{X^{3}\tanh(X\theta+\beta)}{\cosh^{2}(X\theta+\beta)}\right]=-8\cdot\delta(1-\delta)\cdot\E\left[X^{3}\frac{\left((1-\delta)\cdot e^{X\theta}-\delta\cdot e^{-X\theta}\right)}{\left((1-\delta)\cdot e^{X\theta}+\delta\cdot e^{-X\theta}\right)^{3}}\right]\,.
\]
Furthermore, there exists $\gl[C]''(\gl[C]_{\theta},\gl[C]_{\rho})$
such that for all $\theta\in\mathbb{R}$ 
\[
\left|f''(\theta)\right|\leq\gl[C]''\cdot\max\{\theta,\rho\}\,.
\]
\end{enumerate}
\end{lem}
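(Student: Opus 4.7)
Each of the three items reduces to a direct computation. For item~1, $f(0) = \tanh(\beta_\delta)\,\E[X]$; using $\tanh(\beta_\delta) = 1-2\delta$ (from~\eqref{eq: beta definition as a function of rho}) together with $\E[X] = (1-2\delta)\eta$ (since $X = S\eta + Z$ with $\E[S] = 1-2\delta$) yields $f(0) = (1-2\delta)^2\eta$. The consistency $f(\eta) = \eta$ follows from the tower property: equation~\eqref{eq: expected value of the sign given sample} identifies $\tanh(X\eta + \beta_\delta)$ with the posterior $\E[S\mid X]$ under the true model, whence $f(\eta) = \E\bigl[X\,\E[S\mid X]\bigr] = \E[XS] = \eta\,\E[S^2] + \E[ZS] = \eta$. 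The asymptote is obtained by dominated convergence: $\tanh(X\theta + \beta)\to\mathrm{sgn}(X)$ pointwise on $\{X\neq 0\}$, dominated by $|X|$, so $\lim_{\theta\to\infty}f(\theta) = \E|X| \leq \eta + \E|Z| < \eta + 1$.

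For item~2, differentiating under the integral (justified by the dominating function $X^2$) gives $f'(\theta) = \E[X^2/\cosh^2(X\theta+\beta)]$, and the equivalent product form follows from the identity $(1-\delta)e^{y}+\delta e^{-y} = 2\sqrt{\delta(1-\delta)}\,\cosh(y+\beta_\delta)$. At $\theta = 0$, combine $\E[X^2] = 1+\eta^2$ (by conditioning on $S$) with $1/\cosh^2(\beta_\delta) = 1-\tanh^2(\beta_\delta) = 1-\rho^2 = 4\delta(1-\delta)$. For the upper bound when $\theta \geq \eta$, complete the square in each mixture component to obtain $p(x) = 2\sqrt{\delta(1-\delta)}\,e^{-\eta^2/2}\,\varphi(x)\,\cosh(x\eta+\beta)$, so that
\[
f'(\theta) \;=\; 2\sqrt{\delta(1-\delta)}\,e^{-\eta^2/2}\;\E_{Z\sim N(0,1)}\!\left[\frac{Z^2\cosh(Z\eta+\beta)}{\cosh^2(Z\theta+\beta)}\right],
\]
reducing the claim to showing the Gaussian expectation is at most $1$. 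This is the main technical step: on $\{Z\geq 0\}$, $0 \leq Z\eta+\beta \leq Z\theta+\beta$ immediately gives $\cosh(Z\eta+\beta)/\cosh^2(Z\theta+\beta) \leq 1/\cosh(Z\theta+\beta) \leq 1$, contributing at most $\tfrac12$; the $Z<0$ half is harder because the $\cosh$-comparison can reverse on the small set $\{|Z|\eta < \beta\}$, and I would close it either by a direct estimate using Gaussian decay with boundedness of $\eta,\beta$, or by a Gaussian integration-by-parts argument exploiting the addition formula $\cosh(Z\eta\pm\beta)=\cosh(Z\eta)\cosh\beta \pm \sinh(Z\eta)\sinh\beta$. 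The second form of the bound follows from $2\sqrt{\delta(1-\delta)} = \sqrt{1-\rho^2}\leq 1-\rho^2/2$, the elementary $e^{-\eta^2/2}\leq 1-\min\{\eta^2,1\}/4$, and $(1-a)(1-b)\leq 1-\max\{a,b\}$ for $a,b\in[0,1]$.

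For item~3, differentiating once more and using that the $\theta$-derivative of $1/\cosh^2(X\theta+\beta)$ equals $-2X\tanh(X\theta+\beta)/\cosh^2(X\theta+\beta)$ yields both displayed formulas for $f''(\theta)$. For the magnitude bound, set $g(y) := \tanh(y)/\cosh^2(y)$, a smooth odd function with $\|g\|_\infty,\|g'\|_\infty<\infty$. The Newton--Leibniz identity $g(X\theta+\beta) = g(\beta)+\int_0^\theta X\,g'(Xs+\beta)\,ds$ then gives
\[
|f''(\theta)| \;\leq\; 2\,|g(\beta)|\cdot|\E[X^3]| \;+\; 2\,|\theta|\cdot\|g'\|_\infty\cdot\E[X^4].
\]
Since $|g(\beta)| = (1-\rho^2)|\rho| \lesssim |\rho|$, $|\E[X^3]| = |\rho|(\eta^3+3\eta) = O(\rho)$ (by conditioning on $S$), and $\E[X^4] = 3+6\eta^2+\eta^4$ is bounded by a constant depending on $\gl[C]_\theta$, we deduce $|f''(\theta)| \leq \gl[C]''\max\{|\theta|,\rho\}$ with $\gl[C]''$ depending only on $(\gl[C]_\theta,\gl[C]_\rho)$. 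The only genuinely subtle part of the whole lemma is the Gaussian bound in item~2; the other computations are routine.
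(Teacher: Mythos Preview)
Your items 1 and 3 are correct and essentially routine. For consistency, your tower-property argument $f(\eta)=\E[X\,\E[S\mid X]]=\E[XS]=\eta$ is in fact cleaner than the paper's route via change of measure and Stein's identity. For item 3 the paper uses the one-line estimate $|\tanh t|\le |t|$, $\cosh t\ge 1$ to get $|f''(\theta)|\le 2\E\bigl[|X|^3\,|X\theta+\beta|\bigr]\le 2|\theta|\,\E X^4+2\beta\,\E|X|^3\lesssim |\theta|+\rho$; your Newton--Leibniz decomposition reaches the same conclusion with slightly more work.

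The genuine gap is the ``main technical step'' in item 2. Your reduction to
\[
\E_{Z}\!\left[\frac{Z^{2}\cosh(Z\eta+\beta)}{\cosh^{2}(Z\theta+\beta)}\right]\le 1
\]
is correct, but neither proposed closure is workable as stated. The ``Gaussian decay'' idea fails when $\eta$ and $\theta$ are both small: the bad set $\{Z<0,\ |Z|\lesssim\beta/\theta\}$ is then not small, and at $Z=-\beta/\theta$ the integrand equals $Z^{2}\cosh(\beta(1-\eta/\theta))$, which is unbounded in $\beta$ (the lemma places no lower bound on $\delta$). Since the target inequality is sharp (equality as $\theta,\eta,\beta\to 0$), there is no slack to absorb crude constants. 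The addition-formula suggestion is too vague to assess.

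The paper's device is to first sacrifice one $\cosh$ factor via AM--GM,
\[
\frac{4\delta(1-\delta)}{(1-\delta)e^{Z\theta}+\delta e^{-Z\theta}}\le 2\sqrt{\delta(1-\delta)},
\]
reducing the task to $\E_{Z}[Z^{2}\cosh(Z\eta+\beta)/\cosh(Z\theta+\beta)]\le 1$. After symmetrizing $Z\mapsto -Z$ this becomes the \emph{pointwise} inequality
\[
\frac{1}{2}\cdot\frac{(1-\delta)b+\delta b^{-1}}{(1-\delta)a+\delta a^{-1}}+\frac{1}{2}\cdot\frac{(1-\delta)b^{-1}+\delta b}{(1-\delta)a^{-1}+\delta a}\le 1,\qquad a\ge b\ge 1,
\]
which the paper checks by convexity in $b$ and evaluating the endpoints $b=1,\,b=a$. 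The AM--GM step is essential: with $\cosh^{2}$ still in the denominator the corresponding symmetrized pointwise inequality is \emph{false} (e.g.\ $\eta=0$, $z\theta=\beta$, $\beta=2$ gives value $\approx 2$), so symmetrization alone cannot close your original form.
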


\begin{proof}
We only explicitly prove non-trivial properties, or ones which are
non-trivial extensions of \cite[Lemma 3]{wu2019EM}. Let $Z\sim N(0,1)$. 
\begin{enumerate}
\item The consistency property is well-known, but can also be proved explicitly:
\begin{align*}
f(\eta) & \trre[=,a]e^{-\eta^{2}/2}\cdot\E\left[Z\cdot\left((1-\delta)\cdot e^{Z\eta}-\delta\cdot e^{-Z\eta}\right)\right]\\
 & =e^{-\eta^{2}/2}\cdot\E\left[Ze^{Z\eta}\right]-\delta e^{-\eta^{2}/2}\cdot\E\left[Z\cdot\left(e^{Z\eta}+e^{-Z\eta}\right)\right]\\
 & \trre[=,b]e^{-\eta^{2}/2}\cdot\E\left[Ze^{Z\eta}\right]\trre[=,c]e^{-\eta^{2}/2}\cdot\E\left[\eta e^{Z\eta}\right]=\eta\,,
\end{align*}
where $(a)$ is by change of measure (see (\ref{eq: change of measure})
in Appendix \ref{subsec:Usful-results}), $(b)$ is by oddness of
the argument in the second expectation, $(c)$ is by Stein's identity
(see (\ref{eq: Stein lemma}) in Appendix \ref{subsec:Usful-results}). 
\item The bound on $f'(\theta)$ for $\theta>\eta$ holds since 
\begin{align}
f'(\theta) & \trre[=,a]e^{-\frac{1}{2}\eta^{2}}\cdot\E\left[Z^{2}\cdot\frac{4\delta(1-\delta)}{\left((1-\delta)\cdot e^{Z\theta}+\delta\cdot e^{-Z\theta}\right)^{2}}\cdot\left((1-\delta)\cdot e^{Z\eta}+\delta\cdot e^{-Z\eta}\right)\right]\nonumber \\
 & \trre[\leq,b]e^{-\frac{1}{2}\eta^{2}}\cdot\E\left[Z^{2}\cdot2\sqrt{\delta(1-\delta)}\cdot\frac{(1-\delta)\cdot e^{Z\eta}+\delta\cdot e^{-Z\eta}}{(1-\delta)\cdot e^{Z\theta}+\delta\cdot e^{-Z\theta}}\right]\nonumber \\
 & \trre[=,c]2\sqrt{\delta(1-\delta)}e^{-\frac{1}{2}\eta^{2}}\cdot\E\left[\frac{1}{2}Z^{2}\cdot\frac{(1-\delta)\cdot e^{Z\eta}+\delta\cdot e^{-Z\eta}}{(1-\delta)\cdot e^{Z\theta}+\delta\cdot e^{-Z\theta}}+\frac{1}{2}Z^{2}\cdot\frac{(1-\delta)\cdot e^{-Z\eta}+\delta\cdot e^{Z\eta}}{(1-\delta)\cdot e^{-Z\theta}+\delta\cdot e^{Z\theta}}\right]\nonumber \\
 & \trre[\leq,d]e^{-\frac{1}{2}\eta^{2}}\cdot2\sqrt{\delta(1-\delta)}\cdot\E\left[Z^{2}\right]\label{eq: mean iteration d=00003D00003D1 simple properties derivative bound}
\end{align}
where: $(a)$ is proved again by a change of measure; $(b)$ holds
since by the inequality of arithmetic and geometric means, for any
$a\geq0$ 
\[
\frac{4\delta(1-\delta)}{(1-\delta)\cdot a+\delta\cdot a^{-1}}\leq2\sqrt{\delta(1-\delta)}\,,
\]
$(c)$ follows since $Z\eqd-Z$; $(d)$ holds since 
\begin{equation}
\max_{\delta\in[0,\frac{1}{2}],\;a>b>1}\frac{1}{2}\cdot\frac{(1-\delta)\cdot b+\delta\cdot b^{-1}}{(1-\delta)\cdot a+\delta\cdot a^{-1}}+\frac{1}{2}\cdot\frac{(1-\delta)\cdot b^{-1}+\delta\cdot b}{(1-\delta)\cdot a^{-1}+\delta\cdot a}=1\,.\label{eq: symmetrized ratio}
\end{equation}
To show that (\ref{eq: symmetrized ratio}) holds, note that objective
function on the l.h.s. is a convex function of $b$ for a given $(a,\delta)$,
hence it is maximized for either $b=a$ or $b=1$. At $b=a$ the value
of the objective is $1$. At $b=1$ we maximize over $a>1$: 
\begin{align*}
 & \max_{a>1}\frac{1}{2}\cdot\frac{1}{(1-\delta)\cdot a+\delta\cdot a^{-1}}+\frac{1}{2}\cdot\frac{1}{(1-\delta)\cdot a^{-1}+\delta\cdot a}\\
 & =\max_{a>1}\frac{1}{2}\cdot\frac{a+a^{-1}}{(1-\delta)^{2}+\delta^{2}+\delta(1-\delta)\cdot\left(a+a^{-1}\right)}\\
 & =\max_{c>2}\frac{1}{2}\cdot\frac{c}{(1-2\delta)^{2}+\delta(1-\delta)\cdot c}=1\,,
\end{align*}
where $c=a+a^{-1}$ was used, and the fact that the function to be
maximized has a single maximum in $\mathbb{R}_{+}$ at $c=\frac{1-2\delta}{\sqrt{\delta}(1-\delta)}\leq2$.
Using $\delta=\frac{1-\rho}{2}$, we thus have $f'(\theta)=\sqrt{1-\rho^{2}}\cdot e^{-\frac{1}{2}\eta^{2}}$.
The final bound is obtained from $\sqrt{1-\rho^{2}}\leq1-\frac{\rho^{2}}{2}$
and $e^{-\frac{1}{2}\eta^{2}}\leq\max\{e^{-1},1-\frac{\eta^{2}}{4}\}$. 
\item The bound on the second derivative follows from $|\tanh(t)|\leq t$
and $\cosh(t)>1$. 
\end{enumerate}
\end{proof}
We next turn to prove Proposition \ref{prop: Comparison theorem d=00003D00003D1}.
To this end, we need the following technical lemma: 
\begin{lem}
\label{lem: s is always negative} Let $\beta>0$ be given, and let
\begin{equation}
s(u)\dfn-\tanh(u+\beta)+\tanh(u-\beta)-\frac{1}{2\delta\cosh^{2}(u+\beta)}+\frac{1}{2(1-\delta)\cosh^{2}(u-\beta)}\,.\label{eq: s function}
\end{equation}
Then, $\frac{\d}{\d u}s(u)=0$ has a unique solution, this solution
is negative, and $s(u)<0$ for all $u\in\mathbb{R}.$ 
\end{lem}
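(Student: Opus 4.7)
The plan is to establish that $s$ has a unique critical point $u_{*}\in(-\infty,0)$ which is a global minimum, and that $s(u)\to 0$ from below at both tails; the three claims of the lemma then follow immediately.

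First I would compute boundary and interior data. At $u=0$, direct substitution using $\tanh(\pm\beta)=\pm\tanh\beta$ and $\cosh(\pm\beta)=\cosh\beta$ gives
\begin{align*}
s(0) &= -2\tanh\beta-\frac{\rho}{2\delta(1-\delta)\cosh^{2}\beta}<0,\\
s'(0) &= \frac{\sinh\beta}{\delta(1-\delta)\cosh^{3}\beta}>0,
\end{align*}
both using $\beta,\rho>0$. For the tails, the expansions $\tanh(x)=\mathrm{sgn}(x)(1-2e^{-2|x|}+2e^{-4|x|})+O(e^{-6|x|})$ and $1/\cosh^{2}(x)=4e^{-2|x|}-8e^{-4|x|}+O(e^{-6|x|})$ give $s(u)\to 0$; a short calculation shows that on the $u\to+\infty$ side the leading $e^{-2u}$ contributions cancel, and the next-order $e^{-4u}$ coefficient (using $1/(2\delta)=(B+1)/2$ and $1/(2(1-\delta))=(B+1)/(2B)$ with $B:=e^{2\beta}$) equals $-2(B+1)^{3}(B-1)/B^{2}<0$; on the $u\to-\infty$ side the analogous $e^{2u}$ coefficient has the same negative value. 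Thus $s(u)\to 0$ strictly from below at both ends, and in particular $s(u)<0$ for $|u|$ large.

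The main obstacle is to prove $s'(u)=0$ has a unique real solution. I would substitute $E=e^{2u}\in(0,\infty)$ and $B=e^{2\beta}>1$, under which $\tanh(u+\beta)=(EB-1)/(EB+1)$, $\tanh(u-\beta)=(E-B)/(E+B)$, $1/\cosh^{2}(u+\beta)=4EB/(EB+1)^{2}$, and $1/\cosh^{2}(u-\beta)=4EB/(E+B)^{2}$. Multiplying the equation $s'(u)=0$ by the strictly positive factor $\delta(1-\delta)(EB+1)^{3}(E+B)^{3}/(4EB)$ turns it into the polynomial equation
\[
P(E)\;:=\;(1-\delta)(E+B)^{3}\bigl[(1-\delta)EB-(1+\delta)\bigr]+\delta(EB+1)^{3}\bigl[(2-\delta)B-\delta E\bigr]\;=\;0.
\]
A direct expansion, crucially using the identity $(1-\delta)=\delta B$ (which forces the $E^{4}$ coefficient to vanish), yields the factored form
\[
P(E)=(B-1)(B+1)\bigl[2BE^{3}+3BE^{2}+(B-1)^{2}E-B\bigr].
\]
Since $(B-1)(B+1)>0$ for $B>1$, the positive real roots of $P$ coincide with those of the cubic $R(E):=2BE^{3}+3BE^{2}+(B-1)^{2}E-B$, whose coefficient signs are $(+,+,+,-)$. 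Descartes' rule of signs therefore yields exactly one positive real root of $R$, so $s'$ has exactly one zero $u_{*}\in\mathbb{R}$.

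Assembling the pieces: since $s$ is continuous with $s(\pm\infty)=0$ and $s(0)<0$, $s$ attains its global minimum at an interior critical point, which must be the unique $u_{*}$ from the previous step; the condition $s'(0)>0$ then forces $u_{*}<0$. Finally, $s$ is strictly decreasing on $(-\infty,u_{*})$ and strictly increasing on $(u_{*},\infty)$, with $s(u_{*})\le s(0)<0$ and both tails approaching $0$ from below, which forces $s(u)<0$ throughout $\mathbb{R}$.
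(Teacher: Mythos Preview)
Your proof is correct and takes essentially the same route as the paper: both reduce $s'(u)=0$ via the substitution $\psi=e^{2u}$ to the same cubic $2B\psi^{3}+3B\psi^{2}+(B-1)^{2}\psi-B$ (the paper writes it as $2\delta(1-\delta)\psi^{3}+3\delta(1-\delta)\psi^{2}+(1-2\delta)^{2}\psi-\delta(1-\delta)$, which is $\delta^{2}$ times your $R$) and show it has a unique positive root below $1$. The packaging differs only cosmetically---you use Descartes' rule and a unified global-minimum argument where the paper evaluates the cubic at $\psi=0,1$ and treats $u<0$ separately by a direct sign check; your detailed $e^{-4u}$ tail expansion is correct but in fact unnecessary once you have strict monotonicity on $(u_{*},\infty)$ together with $\lim_{u\to\infty}s(u)=0$.
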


\begin{proof}
For $u<0$ the claim that $s(u)<0$ holds since both $\tanh(u-\beta)<\tanh(u+\beta)$
and $(1-\delta)\cosh^{2}(u-\beta)>\delta\cosh^{2}(u+\beta)$ hold
when $\beta>0$. For $u\geq0$, we begin by analyzing $\frac{\d}{\d u}s(u)$
and show that the real solution of $\frac{\d}{\d u}s(u)=0$ is negative
and unique. Using the double-argument identities $1+\cosh(2t)=2\cosh^{2}(t)$,
$\sinh(2t)=2\sinh(t)\cosh(t)$, the half-argument identity $\tanh(\frac{t}{2})=\frac{\sinh(t)}{\cosh(t)+1)}$
we obtain that the derivative is 
\begin{equation}
\frac{\d s(u)}{\d u}=-\frac{1-\frac{1}{\delta}\tanh(u+\beta)}{\cosh^{2}(u+\beta)}+\frac{1-\frac{1}{1-\delta}\tanh(u-\beta)}{\cosh^{2}(u-\beta)}\,,\label{eq: s function derivative}
\end{equation}
and thus get that $\frac{\d}{\d u}s(u)=0$ is equivalent to 
\[
\left[\frac{1+\cosh(2u-2\beta)}{1+\cosh(2u+2\beta)}\right]^{2}=\frac{1+\cosh(2u-2\beta)-\frac{1}{1-\delta}\sinh(2u-2\beta)}{1+\cosh(2u+2\beta)-\frac{1}{\delta}\sinh(2u+2\beta)}\,,
\]
or, by using $\exp(2\beta)=\frac{1-\delta}{\delta}$ and denoting
$\psi\dfn e^{2u}$, equivalent to 
\[
\frac{\left[2+\frac{1-\delta}{\delta}\psi^{-1}+\frac{\delta}{(1-\delta)}\psi\right]^{2}}{\left[2+\frac{\delta}{1-\delta}\psi^{-1}+\frac{1-\delta}{\delta}\psi\right]^{2}}=\frac{2+\frac{2-\delta}{\delta}\psi^{-1}-\frac{\delta^{2}}{(1-\delta)^{2}}\psi}{2+\frac{1+\delta}{1-\delta}\psi^{-1}-\frac{(1-\delta)^{2}}{\delta^{2}}\psi}\,.
\]
After further algebraic manipulations, the last display can be shown
to be equivalent to 
\[
(2\delta-1)\cdot\left[\delta\psi+1-\delta\right]\cdot\left[(1-\delta)\psi+\delta\right]\cdot\left[2\delta(1-\delta)\psi^{3}+3\delta(1-\delta)\psi^{2}+(1-2\delta)^{2}\psi-\delta(1-\delta)\right]=0.
\]
As $\delta\in(0,\frac{1}{2})$ and $\psi>0$, the only real solution
to this equation is the solution to 
\[
2\delta(1-\delta)\psi^{3}+3\delta(1-\delta)\psi^{2}+(1-2\delta)^{2}\psi-\delta(1-\delta)=0.
\]
The l.h.s. of the last display is an increasing function of $\psi\in\mathbb{R}_{+}$
with value $-\delta(1-\delta)$ at $\psi=0$ and a strictly positive
value at $\psi=1$. Thus, the above equation has a single real solution
which belongs to $(0,1)$. Hence, $\frac{\d}{\d u}s(u)=0$ has a unique
solution, and this solution is negative.

We next use this property $\frac{\d}{\d u}s(u)$ to show that $s(u)<0$
for all $u\geq0$. As $s(0)=4(2\delta-1)<0$ and $\lim_{u\to\infty}s(u)=0$,
the mean value theorem implies that $\frac{\d}{\d u}s(u)$ must be
strictly positive for some $u>0$. Since $\frac{\d}{\d u}s(u)\neq0$
for $u>0$, $s(u)$ must be increasing for all $u>0$. Since $\lim_{u\to\infty}s(u)=0$
holds, $s(u)\geq0$ is impossible for $u>0$. 
\end{proof}
\begin{proof}[Proof of Proposition \ref{prop: Comparison theorem d=00003D00003D1}]
Let $U\sim N(\eta,1)$. To prove the first property, we write 
\begin{align*}
 & f(\theta\mid\eta,\delta)+f(-\theta\mid\eta,\delta)\\
 & =\E\left[X\cdot\tanh(X\theta+\beta)-X\cdot\tanh(X\theta-\beta)\right]\\
 & =(1-2\delta)\cdot\E\left[U\cdot\tanh(U\theta+\beta)-U\cdot\tanh(U\theta-\beta)\right]\\
 & \geq0
\end{align*}
which holds since $u\mapsto u\cdot\tanh(u\theta+\beta)-u\cdot\tanh(u\theta-\beta)$
is an odd function, that is positive on $\mathbb{R}_{+}$.

To prove the second property, we write the iteration as 
\[
f(\theta\mid\eta,\delta)=\E\left[(1-\delta)\cdot U\tanh(U\theta+\beta)+\delta\cdot U\tanh(U\theta-\beta)\right],
\]
and then analyze its derivative w.r.t. $\delta$ 
\begin{align*}
\frac{\dee f(\theta\mid\eta,\delta)}{\dee\delta} & =\E\left[-U\tanh(U\theta+\beta)+U\tanh(U\theta-\beta)\right]\\
 & \hphantom{=}-\frac{1}{2\delta(1-\delta)}\E\left[\frac{(1-\delta)U}{\cosh^{2}(U\theta+\beta)}-\frac{\delta U}{\cosh^{2}(U\theta-\beta)}\right]\,.
\end{align*}
To prove the required property, we will show that 
\[
\frac{\dee f(\theta\mid\eta,\delta)}{\dee\delta}\;\;\begin{cases}
<0, & \theta<\eta\\
=0, & \eta=\theta\\
>0, & \theta>\eta
\end{cases}
\]
and to this end we split the analysis to the cases $\theta=0$, $\theta<0$
and $\theta\geq0$.

\paragraph*{Case $\theta=0$ }

In this case, trivially, $\left.\frac{\dee f(\theta\mid\eta,\delta)}{\dee\delta}\right|{}_{\theta=0}=-4(1-2\delta)\eta<0$
for $\eta>0$ and $\delta\in(0,\frac{1}{2})$.

\paragraph*{Case $\theta<0$ }

Let 
\[
q_{1}(u)\dfn-u\tanh(u\theta+\beta)+u\tanh(u\theta-\beta)
\]
and note that since $\tanh$ is monotonically increasing and negative
for $t<0$, it holds that $q_{1}(u)$ is an odd function, and $q_{1}(u)<0$
for all $u>0$. Thus $\E[q_{1}(U)]<0$. Also let 
\[
q_{2}(u)\dfn\frac{(1-\delta)u}{\cosh^{2}(u\theta+\beta)}-\frac{\delta u}{\cosh^{2}(u\theta-\beta)}\,.
\]
Since $\cosh$ is an even function with a unique minimum at $t=0$,
it holds that 
\[
q_{2}(u)=u\left[\frac{(1-\delta)}{\cosh^{2}(u\theta+\beta)}-\frac{\delta}{\cosh^{2}(u\theta-\beta)}\right]>0\,,
\]
and 
\begin{align*}
q_{2}(u)+q_{2}(-u) & =\frac{(1-\delta)u}{\cosh^{2}(u\theta+\beta)}-\frac{\delta u}{\cosh^{2}(u\theta-\beta)}-\frac{(1-\delta)u}{\cosh^{2}(u\theta-\beta)}+\frac{\delta u}{\cosh^{2}(u\theta+\beta)}\\
 & =u\left(\frac{1}{\cosh^{2}(u\theta+\beta)}-\frac{1}{\cosh^{2}(u\theta-\beta)}\right)>0
\end{align*}
for any $u>0$. Thus $q_{2}(u)>-q_{2}(-u)$ for $u>0$, and $\E[q_{2}(U)]>0$\textbf{
}(see Appendix \ref{subsec:Usful-results}). The required property
then follows since $\frac{\dee f(\theta\mid\eta,\delta)}{\dee\delta}=\E\left[q_{1}(U)\right]-\frac{1}{2\delta(1-\delta)}\E\left[q_{2}(U)\right]<0$.

\paragraph*{Case $\theta>0$ }

We follow the ideas outlined in the discussion following the statement
of the proposition. We note that $\frac{\dee f(\theta\mid\eta,\delta)}{\dee\delta}=\E\left[U\cdot s(U)\right]$
where $s(u)$ is as defined in (\ref{eq: s function}), and so 
\begin{equation}
\frac{\dee f(\theta\mid\eta,\delta)}{\dee\delta}=\left[\eta\cdot s(\theta\eta)\right]*\varphi(\eta)\label{eq: iteration derivative as Gaussian convolution}
\end{equation}
where $\varphi(\eta)\dfn\frac{1}{\sqrt{2\pi}}\cdot e^{-\eta^{2}/2}$
is the Gaussian kernel, and the convolution is w.r.t. $\eta$.

We begin by proving that $\eta\mapsto\frac{\dee f(\theta\mid\eta,\delta)}{\dee\delta}$
has at least a single zero-crossing in $\mathbb{R}_{+}$ by showing
that for $\eta=0$ and as $\eta\to\infty$: 
\[
\left.\frac{\dee f(\theta\mid\eta,\delta)}{\dee\delta}\right|_{\eta=0}>0,\;\;\frac{\dee f(\theta\mid\eta,\delta)}{\dee\delta}\uparrow0\text{ as }\eta\to\infty.
\]
At $\eta=0$, using the definitions of $q_{1}(u)$ and $q_{2}(u)$,
and recalling that $Z\sim N(0,1)$ 
\begin{align*}
\left.\frac{\dee f(\theta\mid\eta,\delta)}{\dee\delta}\right|_{\eta=0} & =\E\left[q_{1}(Z)\right]-\frac{1}{2\delta(1-\delta)}\E\left[q_{2}(Z)\right]\\
 & \trre[=,a]-\frac{1}{2\delta(1-\delta)}\E\left[q_{2}(Z)\right]\\
 & =-\frac{1}{2\delta(1-\delta)}\E\left[q_{2}(Z)+q_{2}(-Z)\right]\\
 & =-\frac{1}{2\delta(1-\delta)}\E\left[Z\left(\frac{1}{\cosh^{2}(Z\theta+\beta)}-\frac{1}{\cosh^{2}(Z\theta-\beta)}\right)\right]\\
 & \trre[>,b]0
\end{align*}
where $(a)$ is since $q_{1}(u)$ is an odd function, and $(b)$ is
since for $\theta>0$ and any $u\in\mathbb{R}$ 
\[
u\left(\frac{1}{\cosh^{2}(u\theta+\beta)}-\frac{1}{\cosh^{2}(u\theta-\beta)}\right)<0\,.
\]

For $\eta\to\infty$, we note that the first term in the limit of
$\frac{\dee f(\theta\mid\eta,\delta)}{\dee\delta}$ is 
\begin{align*}
 & \lim_{\eta\to\infty}\E\left[-(Z+\eta)\tanh\left((Z+\eta)\theta+\beta\right)+(Z+\eta)\tanh\left((Z+\eta)\theta-\beta\right)\right]\\
 & =\E\left[\lim_{\eta\to\infty}\left(-(Z+\eta)\left[\tanh\left((Z+\eta)\theta+\beta\right)-\tanh\left((Z+\eta)\theta-\beta)\right)\right]\right)\right]
\end{align*}
by dominated convergence theorem. Then, by L'Hôpital 
\begin{align*}
 & \lim_{\eta\to\infty}(z+\eta)\left[\tanh\left((z+\eta)\theta+\beta\right)-\tanh\left((z+\eta)\theta-\beta\right)\right]\\
 & =\lim_{\eta\to\infty}\frac{\left[\tanh\left((z+\eta)\theta+\beta\right)-\tanh\left((z+\eta)\theta-\beta\right)\right]}{(z+\eta)^{-1}}\\
 & =\lim_{\eta\to\infty}\theta\frac{\left[\frac{1}{\cosh^{2}\left((z+\eta)\theta+\beta\right)}-\frac{1}{\cosh^{2}\left((z+\eta)\theta-\beta\right)}\right]}{-(z+\eta)^{-2}}=0
\end{align*}
since $\cosh(t)>e^{t}/2$ for $t>0$. The second term in the limit
of $\frac{\dee f(\theta\mid\eta,\delta)}{\dee\delta}$ can be analyzed
similarly and also equals zero. The fact that the limit of $\frac{\dee f(\theta\mid\eta,\delta)}{\dee\delta}$
to $0$ is from below, can be deduced from $\eta\cdot s(\theta\eta)<0$
for all $\eta>0$ (Lemma \ref{lem: s is always negative}) and the
convolution relation (\ref{eq: iteration derivative as Gaussian convolution}).

Next, we prove that the zero-crossing of $\eta\mapsto\frac{\dee f(\theta\mid\eta,\delta)}{\dee\delta}$
in $\mathbb{R}_{+}$ is unique. The function $\eta\mapsto\eta\cdot s(\theta\eta)$
has a unique zero-crossing at $\eta=0$ since Lemma \ref{lem: s is always negative}
implies that $s(\theta\eta)<0$ for all $\eta>0$ . Furthermore, for
any given $\theta$, $\eta\mapsto\eta\cdot s(\theta\eta)$ is a bounded
function. Indeed, $\eta\cdot s(\theta\eta)$ is clearly bounded for
$|\eta|\leq1$. For $|\eta|>1$, note that using $\tanh(t)=1-\frac{e^{-t}}{\cosh(t)}$,
it holds that for any $\eta>0$ 
\begin{align*}
 & \left|\tanh(\theta\eta+\beta)-\tanh(\theta\eta-\beta)\right|\\
 & =e^{-\theta\eta}\left|\frac{e^{-\beta}}{\cosh(\theta\eta+\beta)}-\frac{e^{\beta}}{\cosh(\theta\eta-\beta)}\right|\\
 & \leq e^{-\theta\eta}\cdot2e^{\beta},
\end{align*}
and analogous result holds for $\eta<0$. Also, using $\cosh(t)\geq1+\frac{t^{2}}{2}$,
it holds that 
\[
\left|\left(\frac{1}{2\delta\cosh^{2}(\theta\eta+\beta)}-\frac{1}{2(1-\delta)\cosh^{2}(\theta\eta-\beta)}\right)\right|\leq\frac{1}{2\delta\left(1+\frac{\theta\eta+\beta}{2}\right)^{2}}+\frac{1}{2(1-\delta)\left(1+\frac{\theta\eta-\beta}{2}\right)^{2}}\,.
\]
Hence, $\left|\eta\cdot s(\theta\eta)\right|\leq|\eta|\cdot[e^{-\theta|\eta|}\cdot2e^{\beta}+\frac{1}{\delta\theta^{2}\eta^{2}}]$
which is bounded for all $|\eta|>1$.

The variation diminishing property of the Gaussian kernel (Proposition
\ref{prop: Gaussian variation diminishing}, Appendix \ref{subsec:Totally-positive-kernels}),
and the convolution relation (\ref{eq: iteration derivative as Gaussian convolution})
imply that $\frac{\dee f(\theta\mid\eta,\delta)}{\dee\delta}$ has
at most a single zero-crossing as a function of $\eta$. From all
the above, $\frac{\dee f(\theta\mid\eta,\delta)}{\dee\delta}$ has
exactly a single zero-crossing for some $\eta>0$. The consistency
property implies that $\left.\frac{\dee f(\theta\mid\eta,\delta)}{\dee\delta}\right|_{\theta=\eta}=0$,
and so this zero-crossing must occur at $\eta=\theta$. From this,
(\ref{eq: derivative of the iteration wrt delta for positive theta})
follows. 
\end{proof}
\begin{proof}[Proof of theorem \ref{thm: population d=00003D00003D1 known delta}]
Recall that $\pm\eta$ and $0$ are the only fixed points of the
balanced iteration $f(\theta,\frac{1}{2}\mid\eta,\frac{1}{2})$, and
that $f(|\theta|,\frac{1}{2}\mid\eta,\frac{1}{2})>|\theta|$ for $0<|\theta|<\eta$,
and $f(|\theta|,\frac{1}{2}\mid\eta,\frac{1}{2})<|\theta|$ for $|\theta|>\eta$.
The claim then follows from Proposition \ref{prop: Comparison theorem d=00003D00003D1},
item \ref{enu: one-dimensioanl iteration delta derivative dominance}.
The last two claims follow from Proposition \ref{prop: Properties of one dimensional iterations},
items \ref{enu: general convergence in one-dim - convergence of monotonic}
and \ref{enu: general convergence in one-dim - convergence dominance}. 
\end{proof}

\subsection{Empirical iteration }
\begin{proof}[Proof of Theorem \ref{thm: empirical d=00003D00003D1 known delta}]
We analyze the empirical iteration $f_{n}(\theta)\equiv f_{n}(\theta,\delta\mid\eta,\delta)$.
From Lemma \ref{lem: d=00003D00003D1 simple properties} and assuming
the high probability event (\ref{eq: high probability event}), it
holds that 
\[
|f_{n}(\theta)|\leq\gl[C]_{\theta}+1+\omega\leq\gl[C]_{\theta}+2
\]
for all $n$ sufficiently large, and that 
\begin{equation}
f_{n}(\theta)\geq f_{-}(\theta)\dfn f(\theta)-\max\{|\theta|,\rho\}\cdot\omega\,,\label{eq: lower envelope d=00003D00003D1}
\end{equation}
and 
\begin{equation}
f_{n}(\theta)\leq f_{+}(\theta)\dfn f(\theta)+\max\{|\theta|,\rho\}\cdot\omega\,,\label{eq: upper envelope d=00003D00003D1}
\end{equation}
where we abbreviate here $\omega\equiv\omega_{1}=\sqrt{\gl[C]_{\omega}\frac{\log n}{n}}$
and $f_{\pm}(\theta)$ will be referred to as the lower ($-$) and
upper ($+$) envelopes. We consider the empirical iteration $\theta_{t+1}=f_{n}(\theta_{t})$
as well as the lower and upper envelopes iterations $\theta_{t+1}^{\pm}=f_{\pm}(\theta_{t}^{\pm})$,
all which are initialized at the same point, to wit, $\theta_{0}=\theta_{0}^{\pm}$.
We begin by thoroughly analyzing the initialization $\theta_{0}=0$
and then briefly discuss the initialization $\theta_{0}=\frac{1}{\rho}\E_{n}[X]$
(which is similar and simpler). In the first step of the proof, we
show that $\{\theta_{t}\}$ and $\{\theta_{t}^{\pm}\}$ all converge
monotonically to fixed points. In the second step, we analyze the
convergence time and the distance between the fixed points. We split
the analysis into three different regimes for $\eta$.

\paragraph*{Fixed points}

We show that $\{\theta_{t}\}$ and $\{\theta_{t}^{\pm}\}$ converge
monotonically to fixed points, which we denote, respectively, by $\eta_{n}$
and $\eta_{\pm}$. We use several intuitive properties of convergence
of one-dimensional iterations, which are formally stated and proved
in Proposition \ref{prop: Properties of one dimensional iterations},
items \ref{enu: general convergence in one-dim - existence of a fixed point}
and \ref{enu: general convergence in one-dim - convergence of monotonic}.

For the empirical iteration $f_{n}(\theta)$, since 
\[
f'_{n}(\theta)=\E_{n}\left[\frac{X^{2}}{\cosh^{2}(X\theta+\beta)}\right]>0
\]
and $f_{n}(\theta)$ is bounded by assumption, $\{\theta_{t}\}$ converges
monotonically to a fixed point $\eta_{n}$, and is either increasing
or decreasing according to the sign of $f_{n}(0)$.

For the upper envelope $f_{+}(\theta)$, recall from Lemma \ref{lem: d=00003D00003D1 simple properties}
that $f(\theta)$ is increasing and bounded. So $\lim_{\theta\to\infty}f'(\theta)=0$.
Hence, $f_{+}(\theta)$ is increasing, and for $n>n_{0}(\gl[C]_{\omega})$
it holds that $\lim_{\theta\to\infty}f'_{+}(\theta)<1$. Thus $\{\theta_{t}^{+}\}$
is increasing and converges to a fixed point $\eta_{+}$.

For the lower envelope $f_{-}(\theta)$, first note that there exists
$n_{1}(\gl[C]_{\theta},\gl[C]_{\rho})$ such that $f_{-}(\theta)$
is increasing for all $\theta\in[-\gl[C]_{\theta},\gl[C]_{\theta}]$
since 
\begin{equation}
f_{-}'(\theta)\geq f'(\theta)-\omega\geq\min_{0\leq\eta\leq\gl[C]_{\theta}}\E\left[\frac{X^{2}}{\cosh^{2}(|X|\gl[C]_{\theta}+\beta)}\right]-\omega>0\,.\label{eq: one dimensional lower envelops is increasing -- condition}
\end{equation}
If $f_{-}(0)>0$ then since $f(\theta)$ has a unique fixed point
$\eta$ in $[0,\infty)$, $f_{-}(\theta)$ must have a fixed point
in $[0,\eta]$, and no fixed points in $[\eta,\infty)$, and $\{\theta_{t}^{-}\}$
is increasing to one of the fixed points in $[0,\eta)$. If $f_{-}(0)<0$
then as the negative fixed points of $f(\theta)$ are confined to
$[-\eta,0]$ (Theorem \ref{thm: population d=00003D00003D1 known delta})
similar reasoning as for the upper envelope leads to the conclusion
that $\{\theta_{t}^{-}\}$ is decreasing and converges to some fixed
point $\eta_{-}<0$. Furthermore, since the negative fixed points
of $f(\theta)$ are confined to $[-\eta,0]$ (Theorem \ref{thm: population d=00003D00003D1 known delta})
and since $f(\theta)\geq-f(-\theta)$ for all $\theta>0$ (Proposition
\ref{prop: Comparison theorem d=00003D00003D1}, item \ref{enu: one-dimensioanl iteration oddness dominance})
the minimal negative fixed point $\underline{\eta}_{-}<0$ satisfies
$|\underline{\eta}_{-}|\leq|\eta_{+}|$.

\paragraph*{Stochastic error and convergence time}

We now prove bounds on the stochastic error and on the required number
of iterations for convergence. We will use constants $C_{1},C_{2},C_{3}>0$
which satisfy relations that will be specified throughout the proof.
Assume that $\rho\geq C_{1}\sqrt{\omega}$. We split the analysis
to three regimes for $\eta$ given by $[0,\frac{\omega}{\rho}]$,
$[\frac{\omega}{\rho},C_{2}\rho]$ and $[C_{2}\rho,\gl[C]_{\theta}]$
where $C_{1}\geq\sqrt{1/C_{2}}$ is assumed so that these are three
non-empty intervals. For simplicity, we assume that $C_{2}\leq1$
(its value will eventually be chosen to be sufficiently small).

\uline{Case 1 - }%
\mbox{%
$\eta\in[\frac{\omega}{\rho},C_{2}\rho]$%
}\uline{:} For $\theta\geq\eta$, Lemma \ref{lem: d=00003D00003D1 simple properties}
implies that 
\[
f'_{+}(\theta)\leq1-\frac{\rho^{2}}{4}+\omega\,.
\]
Thus, assuming\textbf{ $C_{1}\geq\sqrt{12}$ }then $f'_{+}(\theta)\leq1-\frac{\rho^{2}}{6}.$
For $0\leq\theta<\eta$, we have 
\[
\left|f''(\theta)\right|\leq\gl[C]''\cdot\max\{\eta,\rho\}\leq\gl[C]''\rho\,,
\]
and using $f'_{+}(\theta)=f'_{+}(\eta)-\int_{\theta}^{\eta}f''(\tilde{\theta})\d\tilde{\theta}$,
it holds that 
\begin{align*}
f'_{+}(\theta) & \leq1-\frac{\rho^{2}}{4}+\gl[C]''\rho(\eta-\theta)+\omega\\
 & \leq1-\frac{\rho^{2}}{4}+\gl[C]''\eta\rho+\omega\\
 & \leq1-\frac{\rho^{2}}{4}+\gl[C]''C_{2}\rho^{2}+\omega\,.
\end{align*}
Assuming that $C_{1}\geq\sqrt{12}$ and $C_{2}\leq\frac{1}{12\cdot\gl[C]''}$
we get that $f'_{+}(\theta)\leq1-\frac{\rho^{2}}{12}$. Thus, $f_{+}(\theta)$
is a contraction for $\theta\in[0,\infty)$. Hence, so is $f_{-}(\theta)$
(as $0\leq f'_{-}(\theta)\leq f'_{+}(\theta)$). Furthermore, it holds
that $f_{+}(0)>f_{-}(0)=f(0)-\rho\omega>0$ for all $n\geq n_{2}(\gl[C]_{\theta},\gl[C]_{\rho})$.
Thus both $\theta_{t}^{\pm}$ are increasing and converge to fixed
points $\eta_{\pm}>0$ where $\eta_{+}\geq\eta\geq\eta_{-}$ and satisfy
$\eta_{\pm}-\theta_{t}^{\pm}\leq\eta_{\pm}(1-\frac{\rho^{2}}{12})^{t}$
(Proposition \ref{prop: Properties of one dimensional iterations},
item \ref{enu: general convergence in one-dim - convergence time of contraction}).
We next analyze the errors $\epsilon_{-}=\eta-\eta_{-}>0$ and $\epsilon_{+}=\eta_{+}-\eta>0$.
For the error of the lower envelope, let $\theta\in[\eta_{-},\eta]$
and recall that $\eta_{-}\leq\eta\leq C_{2}\rho$. Then, $f''(\theta)\geq-\gl[C]''\rho$
and so 
\[
f'(\eta)\geq f'(\theta)-\gl[C]''\rho(\eta-\theta)\,,
\]
as well as $f_{-}(\theta)>f(\theta)-\rho\omega.$ Hence 
\begin{align*}
\eta-\rho\omega & =f_{-}(\eta)\\
 & =f_{-}(\eta_{-})+\int_{\eta_{-}}^{\eta}f_{-}'(\theta)\cdot\d\theta\\
 & \leq\eta_{-}+\int_{\eta_{-}}^{\eta}f'(\theta)\cdot\d\theta\\
 & \leq\eta_{-}+f'(\eta)\epsilon_{-}+\gl[C]''\rho\left(\eta\epsilon_{-}-\frac{\eta^{2}-\eta_{-}^{2}}{2}\right)\\
 & \leq\eta_{-}+f'(\eta)\epsilon_{-}+\gl[C]''C_{2}\rho^{2}\epsilon_{-}\,.
\end{align*}
The above implies $\epsilon_{-}(1-f'(\eta)-C_{2}\rho^{2})\leq\rho\omega$
and since $f'(\eta)\leq1-\frac{\rho^{2}}{4}$ then if $C_{2}\leq\frac{1}{8}$
we obtain $\eta-\eta_{-}\leq8\frac{\omega}{\rho}$. Since $\eta_{-}-\theta_{t}^{-}\leq\eta_{-}(1-\frac{\rho^{2}}{12})^{t}\leq\gl[C]_{\theta}(1-\frac{\rho^{2}}{12})^{t}$
it holds that $\eta_{-}-\theta_{t}^{-}\leq8\frac{\omega}{\rho}$ for
all $t\geq\frac{12}{\rho^{2}}\log\left(\frac{\gl[C]_{\theta}}{8}\cdot\frac{\rho}{\omega}\right)$
(Proposition \ref{prop: Properties of one dimensional iterations},
item \ref{enu: general convergence in one-dim - convergence time of contraction})
and so also $\eta-\theta_{t}^{-}\leq16\frac{\omega}{\rho}$. The analysis
for $\eta_{+}$ is similar: 
\begin{align*}
\eta_{+} & =f_{+}(\eta_{+})\\
 & =f_{+}(\eta)+\int_{\eta}^{\eta_{+}}f_{+}'(\theta)\cdot\d\theta\\
 & =\eta+\rho\omega+\epsilon_{+}\left(f'(\eta)+\omega\right)\,.
\end{align*}
Then, since $\eta\leq C_{2}\rho$ and $f'(\theta)\leq1-\frac{\rho^{2}}{4}$
it holds that $\epsilon_{+}(\frac{\rho^{2}}{4}-\omega)\leq\rho\omega$
for all $\theta>\eta$. Since $\rho\geq C_{1}\sqrt{\omega}$, assuming
$C_{1}\geq\sqrt{8}$ we get $\eta_{+}-\eta\leq8\frac{\omega}{\rho}$.
Furthermore, for all $n\geq n_{3}(\gl[C]_{\omega})$, $\eta_{+}\leq2\eta$
and so for all $t\geq\frac{12}{\rho^{2}}\log\left(\frac{\gl[C]_{\theta}}{4}\frac{\rho}{\omega}\right)$
it holds that $|\eta-\theta_{t}^{+}|\leq16\frac{\omega}{\rho}$.

\uline{Case 2 - }%
\mbox{%
$\eta\in[C_{2}\rho,\gl[C]_{\theta}]$%
}\uline{:} The convergence has two phases. The time spent in phase
$1$ (resp. phase $2$) until the required convergence is assured
will be denoted by $T_{1}$ (resp. $T_{2}$). 
\begin{enumerate}
\item We show that there exists \textbf{$C_{3}\leq\frac{1}{2}$} sufficiently
small and $C_{1}$ sufficiently large such that $f_{-}(\theta)>\theta+\frac{1}{6}\rho^{2}\eta$
holds for all $\theta\in[0,C_{3}\rho]$ (note that $\rho>2\theta$
is assured). In turn, this inequality is satisfied if 
\begin{equation}
f(\theta)>\theta+\omega(\theta+\rho)+\frac{1}{6}\rho^{2}\eta\label{eq: Case 2 one-dim empirical convergence, required inequality}
\end{equation}
holds. Since 
\begin{align*}
f'(\theta) & =f'(0)+\int_{0}^{\theta}f''(\tilde{\theta})\cdot\d\tilde{\theta}\\
 & \geq(1-\rho^{2})(1+\eta^{2})-\frac{\gl[C]''}{2}\theta^{2}-\gl[C]''\rho\theta\,,
\end{align*}
and as $f(0)=\rho^{2}\eta$, we get that 
\begin{align*}
f(\theta) & =f(0)+\int_{0}^{\theta}f'(\tilde{\theta})\cdot\d\tilde{\theta}\\
 & \geq\rho^{2}\eta+(1-\rho^{2})(1+\eta^{2})\theta-\frac{\gl[C]''}{6}\theta^{3}-\frac{\gl[C]''}{2}\rho\theta^{2}\,.
\end{align*}
Thus, (\ref{eq: Case 2 one-dim empirical convergence, required inequality})
is satisfied if the following inequality holds: 
\begin{equation}
\frac{5}{6}\rho^{2}\eta+(1-\rho^{2})\eta^{2}\theta>\rho^{2}\theta+\frac{\gl[C]''}{6}\theta^{3}+\frac{\gl[C]''}{2}\rho\theta^{2}+\omega\theta+\omega\rho.\label{eq: Case 2 one-dim empirical convergence, required inequality - derivation}
\end{equation}
Since clearly $(1-\rho^{2})\eta^{2}\theta>0$, this inequality can
be assured to hold for all $n>n_{4}(\gl[C]_{\omega},C_{1})$ large
enough, by proper choice of constants. Specifically, by ``allocating''
$\frac{1}{6}\rho^{2}\eta$ for each of the five additive terms on
the r.h.s. of (\ref{eq: Case 2 one-dim empirical convergence, required inequality - derivation}),
and using the assumption $\rho>C_{1}\sqrt{\omega}$, the inequality
(\ref{eq: Case 2 one-dim empirical convergence, required inequality - derivation})
is satisfied for $C_{3}\leq\min\left\{ \frac{C_{2}}{6},\frac{4C_{2}}{\gl[C]''},\frac{2C_{2}}{3\gl[C]''}\right\} $
(for the first three terms) and $C_{1}\geq\max\left\{ \sqrt{\frac{6C_{3}}{C_{2}}},\sqrt{\frac{6}{C_{2}}}\right\} $
(for the fourth and fifth terms). Therefore, as long as $\theta_{t}\leq C_{3}\rho$,
it holds that $\theta_{t+1}-\theta_{t}\geq\frac{1}{6}\rho^{2}\eta$.
So, initializing from $\theta_{0}=0$, it holds that $\theta_{t}^{-}\geq C_{3}\rho$
for all $t\geq T_{1}=\frac{6C_{3}}{\rho\eta}$ where $T_{1}\leq\frac{1}{\omega}$.
Naturally, this holds for $\theta_{t}^{+}$ too. 
\item At this phase, the convergence behaves similarly to the balanced case.
Specifically, as $C_{3}\leq1$ and since $\theta>C_{3}\rho$ then
\[
f_{+}(\theta)\leq f(\theta)+\frac{1}{C_{3}}\theta\omega
\]
and 
\[
f_{-}(\theta)\geq f(\theta)+\frac{1}{C_{3}}\theta\omega\,.
\]
Using Theorem \ref{thm: population d=00003D00003D1 known delta},
the convergence of the envelopes with $f(\theta)\pm\frac{1}{C_{3}}\theta\omega$
is faster than the convergence of the envelopes of the balanced iterations
$f(\theta,\frac{1}{2}\mid\eta,\frac{1}{2})\pm\frac{1}{C_{3}}\theta\omega$.
Thus, the one-dimensional analysis and result of \cite[Theorem 3]{wu2019EM}
holds. Specifically, \cite[Theorem 3]{wu2019EM} demonstrates the
existence of constants $\{c_{i}\}$ such that if the balanced EM is
initialized at $\theta_{0}=C_{3}\rho$, assuming that $\eta>c_{4}\sqrt{\omega}$
it holds that $|\eta_{\pm}-\eta|\leq c_{2}\frac{\omega}{\eta}$ for
all $t\geq T_{2}=\frac{c_{3}}{\eta^{2}}\log\left(\frac{1}{C_{3}\rho\omega}\right)$.
The condition $\eta>c_{4}\sqrt{\omega}$ is satisfied by requiring
that $C_{1}>\frac{c_{4}}{C_{2}}$. 
\end{enumerate}
\uline{Case 3 - }%
\mbox{%
$\eta\in[0,\frac{\omega}{\rho}]$%
}\uline{:} For the upper envelope, as in case 2, $f_{+}(\theta)$
is increasing and satisfies $f_{+}(0)>0$ and thus $\theta_{t}^{+}$
is increasing and converges to a fixed point $\eta_{+}>0$. In addition,
similar analysis shows that $\eta_{+}-\eta\leq8\frac{\omega}{\rho}$.
Thus, for all $t>1$ 
\[
|\theta_{t}^{+}-\eta|\leq|\theta_{t}^{+}|+|\eta|\leq|\eta_{+}|+|\eta|\leq10\frac{\omega}{\rho}\,.
\]
So, the error is $O(\frac{\omega}{\rho})$ for all iterations. For
the lower envelope, by the assumption on $n>n_{1}$ in (\ref{eq: one dimensional lower envelops is increasing -- condition}),
$f_{-}(\theta)$ is increasing for $\theta\in\mathbb{B}(\gl[C]_{\theta})$.
If $f_{-}(0)>0$ then $\theta_{t}^{-}$ will converge to a fixed point
$0<\eta_{-}\leq\eta_{+}$ and similar analysis as for the upper envelope
implies that $|\theta_{t}^{-}-\eta|\leq10\frac{\omega}{\rho}$ for
all $t>1$. Otherwise, if $f_{-}(0)<0$, $\{\theta_{t}^{-}\}$ is
decreasing and converges to a fixed point $\eta_{-}$. As was shown
in the analysis of the fixed points, it must hold that $|\eta_{-}|\leq|\eta_{+}|$
and so the last bound on $|\theta_{t}^{-}-\eta|$ is valid in this
case too.

The result for $\theta_{0}=0$ then follows from summarizing all three
cases, and determining the constants in the following order: $C_{2}$
to be sufficiently small, then $C_{3}$ sufficiently small, and finally
$C_{1}$ sufficiently large.

We now discuss the case $\theta_{0}=\frac{1}{\rho}\E_{n}[X]$. Since
$f(0)=\rho^{2}\eta$ and $f_{n}(0)=\rho\E_{n}[X]$, and under the
high probability event $|f_{n}(0)-f(0)|\leq\omega\rho$ we have that
$|\theta_{0}-\eta|\leq\frac{\omega}{\rho}$ which is in fact already
within the error rate obtained in Cases 1 and 3 above. By repeating
the same arguments in those cases, the error is $O(\frac{\omega}{\rho})$
for all subsequent iterations. In Case 2, the first phase is unnecessary
since in this case $\eta>C_{2}\rho$ and so $\eta-\frac{\omega}{\rho}>C_{3}\rho$
as long as $C_{3}<\frac{C_{2}}{2}$ and $C_{1}\geq\sqrt{\frac{1}{C_{2}}}$.
Thus, if $\theta_{0}\leq\eta$ only the second phase in the analysis
above occurs. If $\theta_{0}>\eta$ then the analysis of the balanced
iteration \cite[Theorem 3]{wu2019EM} is similarly intact. 
\end{proof}

\section{Proofs for Section \ref{subsec:The-mean-iteration d>1} }

\subsection{Population iteration}

The next lemma summarizes basic properties of $F(\cdot)$ and $G(\cdot)$
which are useful for the unbalanced iteration analysis. 
\begin{lem}[Properties of $F$ and $G$ as functions of $(a,b)$]
\label{lem: properties of F and G} Assume that $b\geq0$ and $\delta\in[0,\frac{1}{2})$.
Then: 
\begin{enumerate}
\item \label{enu: simple properties d>1 monotonicity}Monotonicity: $a\mapsto F(a,b)$
and $b\mapsto G(a,b)$ are monotonically increasing functions. 
\item \label{enu: simple properties d>1 positivity}Positivity: $F(a,b)>0$
for $a\geq0$, and $G(a,b)\geq0=G(a,0)$. 
\item \label{enu: simple properties d>1 strict positivity}Strict positivity
of $F(0,b)$: For any $C_{f}>0$ there exists $\gl[C]_{F,0}>0$ which
depends on $(C_{f},\gl[C]_{\beta})$ such that $\min_{b\in[0,C_{f}]}F(0,b)\geq\gl[C]_{F,0}\rho^{2}\eta$. 
\item \label{enu: simple properties d>1 boundedness}Boundedness: $|F(a,b)|\leq\eta+1$
and $G(a,b)\leq\eta+1$. 
\item \label{enu: simple properties d>1 upper bound on first derivatives}Upper
bounded first derivatives: 
\[
\left|\frac{\dee F(a,b)}{\dee a}\right|\leq1+\eta^{2},\;\left|\frac{\dee F(a,b)}{\dee b}\right|\leq\sqrt{1+\eta^{2}},\;\left|\frac{\dee G(a,b)}{\dee a}\right|\leq\sqrt{1+\eta^{2}},\;\left|\frac{\dee G(a,b)}{\dee b}\right|\leq1.
\]
\item \label{enu: simple properties d>1 lower bound on F first derivatives}Lower
bounded first derivative: Let $C_{f}>0$ be given. There exists $\gl[C]''_{F}(\gl[C]_{\theta},C_{f},\gl[C]_{\beta})>0$
such that 
\[
\min_{a,b\in[0,C_{f}]^{2}}\frac{\dee F(a,b)}{\dee a}\geq\gl[C]''_{F}\,.
\]
\item Derivative at $b=0$: For $a\in[0,\eta]$ 
\[
\left.\frac{\dee G(a,b)}{\dee b}\right|_{b=0}\leq4\delta(1-\delta)
\]
and for $a\geq\eta$ 
\[
\left.\frac{\dee G(a,b)}{\dee b}\right|_{b=0}\leq2\sqrt{\delta(1-\delta)}e^{-\frac{1}{2}\eta^{2}}\leq1-\frac{1}{4}\cdot\max\{\min\{\eta^{2},1\},\rho^{2}\}\,.
\]
\item \label{enu: simple properties d>1 F mixed derivative at b=00003D00003D0}Crossed
derivative at $b=0$: 
\[
\left.\frac{\dee F(a,b)}{\dee b}\right|_{b=0}=0\,.
\]
\item \label{enu: simple properties d>1 upperbound on F second mixed derivative at b=00003D00003D0}Upper
bounded crossed second order derivatives at $b=0$: 
\[
\left|\left.\frac{\dee^{2}F(a,b)}{\dee b^{2}}\right|_{b=0}\right|\leq a(1+\eta^{2})+\rho\gl[\overline{C}]_{\beta}(1+\eta)
\]
and the same bound holds for $\frac{\dee^{2}G(a,b)}{\dee b\dee a}=\frac{\dee^{2}F(a,b)}{\dee b^{2}}$. 
\item \label{enu: simple properties d>1 upperbound on F third mixed derivative at b=00003D00003D0}Upper
bounded crossed third order derivatives: There exists $\gl[C]'''_{F}(\gl[C]_{\theta})>0$
such that 
\[
\left|\frac{\dee^{3}F(a,b)}{\dee b^{3}}\right|\leq\gl[C]'''_{F}\,.
\]
\end{enumerate}
\end{lem}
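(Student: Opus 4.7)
The plan is to prove the ten items by differentiating $F$ and $G$ under the integral sign (justified since $\tanh$ and all its derivatives are uniformly bounded), and then estimating the resulting expectations via the elementary bounds $|\tanh(y)|\le\min(|y|,1)$ and $0<1/\cosh^2(y)\le 1$, standard moments ($\E V^2=1+\eta^2$, $\E|V|\le 1+\eta$, $\E W^k=O(1)$), \CS, and, crucially, the independence $V\perp W$, which lets expectations factor whenever $b=0$.

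\emph{Routine items.} Items 1 and 4 are immediate: $\dee_a F=\E[V^2/\cosh^2(\cdot)]\ge 0$ and $\dee_b G=\E[W^2/\cosh^2(\cdot)]\ge 0$, while $|F|\le\E|V|\le 1+\eta$ and $|G|\le\E|W|\le 1$. Item 5 applies \CS\ to the four first derivatives; for instance $|\dee_b F|=|\E[VW/\cosh^2(\cdot)]|\le\sqrt{\E V^2\cdot\E W^2}\le\sqrt{1+\eta^2}$. Items 7 and 8 exploit the independence factorization at $b=0$: $\dee_b F(a,0)=\E W\cdot\E[V/\cosh^2(aV+\beta)]=0$, and $\dee_b G(a,0)=\E W^2\cdot\E[1/\cosh^2(aV+\beta)]=\E[1/\cosh^2(aV+\beta)]$, which coincides exactly with the $d=1$ derivative $f'(a\mid\eta,\delta)$ of Lemma~\ref{lem: d=00003D00003D1 simple properties} item~\ref{enu: d=00003D00003D1 simple properties - first derivative}, and therefore inherits its two stated bounds via the same ``symmetrized-ratio'' argument used there. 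Item 9 differentiates once more and evaluates at $b=0$; the integrand factors via $\E W^2=1$, and applying $|\tanh(y)/\cosh^2(y)|\le|y|$ gives $|\dee^2_{bb}F(a,0)|\lesssim a(1+\eta^2)+\rho(1+\eta)$, matching the stated form. Item 10 uses that all third derivatives of $\tanh$ are $O(1)$, so $|\dee^3_b F|\le C\,\E[|V|\cdot|W|^3]\le C(1+\eta)$, a constant under $\eta\le\gl[C]_\theta$.

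\emph{Non-routine items.} Item 2 reduces to item 3 via the monotonicity of item 1: $F(a,b)\ge F(0,b)=\E V\cdot\E\tanh(bW+\beta)=\rho\eta\cdot\E\tanh(bW+\beta)$ by independence, and similarly $G(a,b)\ge G(a,0)=\E W\cdot\E\tanh(aV+\beta)=0$. Item 3 lower bounds $\E\tanh(bW+\beta)$: using $W\eqd -W$ to symmetrize gives $2\,\E\tanh(bW+\beta)=\E[\tanh(bW+\beta)-\tanh(bW-\beta)]=2\beta\,\E[1/\cosh^2(\xi_W)]$ for a random $\xi_W\in(bW-\beta,bW+\beta)$ by the mean value theorem; since $|b|\le C_f$ and $|\beta|\le\gl[C]_\beta$, we have $|\xi_W|\le C_f|W|+\gl[C]_\beta$, so on the positive-probability event $\{|W|\le 1\}$ the quantity $1/\cosh^2(\xi_W)$ is bounded below by a positive constant depending only on $(C_f,\gl[C]_\beta)$. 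This yields $\E\tanh(bW+\beta)\gtrsim\beta\asymp\rho$ and hence $F(0,b)\ge\gl[C]_{F,0}\rho^2\eta$. Item 6 is a compactness argument: on the event $\{|V|\ge 1,\,|W|\le 1\}$, whose probability is bounded below uniformly for $\eta\in[0,\gl[C]_\theta]$, the argument $aV+bW+\beta$ is bounded in absolute value by a constant depending only on $(\gl[C]_\theta,C_f,\gl[C]_\beta)$, so $V^2/\cosh^2(aV+bW+\beta)$ is bounded below there by a positive constant, and taking expectation yields the required $\gl[C]''_F>0$.

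\emph{Main obstacle.} The principal subtlety is item 7: the bound $\dee_b G(a,0)\le 4\delta(1-\delta)$ is tight at $a=0$ and hence cannot be obtained by crude \CS; one must invoke the exact symmetrized-ratio identity from the proof of Lemma~\ref{lem: d=00003D00003D1 simple properties} via the observation $\E[1/\cosh^2(aV+\beta)]=f'(a\mid\eta,\delta)$. The lower bound of item 6 is the other quantitatively delicate step, since $\gl[C]''_F$ must be uniform over a full compact box of parameters.
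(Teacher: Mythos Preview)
Your treatment of items 1, 2, 4, 5, 8, 9, 10 matches the paper. Your approach to item 3 (mean-value theorem plus restriction to $\{|W|\le 1\}$) is different from, and more elementary than, the paper's route (showing $b\mapsto\E\tanh(bW+\beta)$ is decreasing, then using concavity of $q(\beta)=\E\tanh(C_fW+\beta)$ in $\beta$); both give $\E\tanh(bW+\beta)\gtrsim\beta\asymp\rho$, so this difference is harmless.

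There are, however, two genuine gaps.

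\textbf{Item 6.} Your event $\{|V|\ge 1,\,|W|\le 1\}$ does not bound $|V|$ from above, so on this event $aV+bW+\beta$ is \emph{not} bounded by a constant (for $a>0$ and Gaussian $V$), and your lower bound on $1/\cosh^2(\cdot)$ fails. The fix is trivial---use $\{1\le|V|\le 2,\,|W|\le 1\}$ instead---but as written the argument is incorrect. The paper does something quite different: a chain of $\cosh$-identities and AM--GM manipulations yielding the explicit constant $\gl[C]''_F=\E[2U^2/(2\cosh(4C_fU)+e^{4C_f|W|+4\beta})]$.

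\textbf{Item 7.} Your claimed identification $\E[1/\cosh^2(aV+\beta)]=f'(a\mid\eta,\delta)$ is \emph{false}: the latter is $\E[V^2/\cosh^2(aV+\beta)]$, with an extra $V^2$. For the range $a\ge\eta$ this is harmless, since the change-of-measure plus symmetrized-ratio argument of Lemma~\ref{lem: d=00003D00003D1 simple properties} goes through verbatim with the $Z^2$ factor removed (and $\E[Z^2]=1$ replaced by $\E[1]=1$), giving $\le 2\sqrt{\delta(1-\delta)}\,e^{-\eta^2/2}$ as claimed. But for $a\in[0,\eta]$ the bound $\le 4\delta(1-\delta)$ is \emph{not} supplied by Lemma~\ref{lem: d=00003D00003D1 simple properties}: that lemma only gives the value $f'(0)=4\delta(1-\delta)(1+\eta^2)$ at a single point and the bound for $\theta\ge\eta$, neither of which yields what you need. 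The paper handles $a\in[0,\eta]$ with a \emph{new} pointwise inequality: after change of measure and symmetrization in $Z$ one must show
\[
\max_{\psi_\eta\ge\psi_a>1}\Bigl[\tfrac{(1-\delta)\psi_\eta+\delta\psi_\eta^{-1}}{((1-\delta)\psi_a+\delta\psi_a^{-1})^2}+\tfrac{(1-\delta)\psi_\eta^{-1}+\delta\psi_\eta}{((1-\delta)\psi_a^{-1}+\delta\psi_a)^2}-2\psi_\eta\Bigr]\le 0,
\]
which is similar in spirit to, but distinct from, the symmetrized-ratio inequality of Lemma~\ref{lem: d=00003D00003D1 simple properties}. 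You have correctly identified item 7 as the principal subtlety, but the reduction you propose does not cover the $a\in[0,\eta]$ regime and a separate argument is required there.
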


\begin{proof}
~ 
\begin{enumerate}
\item We have 
\begin{align*}
\frac{\dee F(a,b)}{\dee a} & =\E\left[\frac{V^{2}}{\cosh^{2}(aV+bW+\beta)}\right]>0\,,
\end{align*}
and similarly, 
\begin{align*}
\frac{\dee G(a,b)}{\dee b} & =\E\left[\frac{W^{2}}{\cosh^{2}(aV+bW+\beta)}\right]>0\,.
\end{align*}
\item $F(a,b)\geq0$ for $a\geq0$ since $a\mapsto F(a,b)$ is increasing
and 
\begin{equation}
F(0,b)=\E[V]\cdot\E[\tanh(bW+\beta)]>0\label{eq: F at zero a}
\end{equation}
where the inequality is because $\E[V]=(1-2\delta)\eta>0$ and since
$\tanh$ is odd and increasing and $W$ is symmetric. Similarly, $G(a,b)\geq0=G(a,0)=\E[W]\cdot\E[\tanh(aV+\beta)]=0$
because $b\mapsto G(a,b)$ is increasing. 
\item The minimal value of $\min_{b\in[0,C_{f}]}\E\left[\tanh(bW+\beta)\right]$
is obtained for $b=C_{f}$ as 
\begin{align}
\frac{\dee\E\left[\tanh(bW+\beta)\right]}{\dee b} & =\E\left[\frac{W}{\cosh(bW+\beta)^{2}}\right]\nonumber \\
 & =\E\left[\frac{W}{\cosh(bW+\beta)^{2}}-\frac{W}{\cosh(bW-\beta)^{2}}\mid W>0\right]\nonumber \\
 & <0\,.\label{eq: derivative of tanh wrt to b analysis}
\end{align}
We next analyze the minimal value $q(\beta)\dfn\E[\tanh(C_{f}W+\beta)]$.
It holds that $q(0)=\E[\tanh(C_{f}W)]=0$ and that 
\[
q'(\beta)\dfn\frac{\d q(\beta)}{\d\beta}=\E\left[\frac{1}{\cosh^{2}(C_{f}W+\beta)}\right]
\]
and so $q'(0)=\E[\frac{1}{\cosh^{2}(C_{f}W)}]>0$ and only depends
on $C_{f}$. Also, 
\begin{align*}
q''(\beta)\dfn\frac{\d^{2}q(\beta)}{\d\beta^{2}} & =\E\left[\frac{-2\tanh(C_{f}W+\beta)}{\cosh^{2}(C_{f}W+\beta)}\right]\\
 & =\frac{1}{C_{f}}\E\left[\frac{W}{\cosh^{2}(C_{f}W+\beta)}\right]\\
 & <0
\end{align*}
by Stein's identity (see (\ref{eq: Stein lemma}) in Appendix \ref{subsec:Usful-results}),
and where the inequality is as in (\ref{eq: derivative of tanh wrt to b analysis}).
Hence, $\beta\mapsto q(\beta)$ is a concave function at $\beta\in\mathbb{R}_{+}$,
and so for all $\beta\in[0,\gl[C]_{\beta}]$, $q(\beta)$ is lower
bounded by the straight line connecting $q(0)$ and $q(\gl[C]_{\beta})$,
to wit, 
\[
\E[\tanh(C_{f}W+\beta)]=q(\beta)\geq\frac{q(\gl[C]_{\beta})}{\gl[C]_{\beta}}\beta=C_{1}\beta\geq C_{1}\gl[\underline{C}]_{\beta}\rho\,.
\]
The claim then follows from (\ref{eq: F at zero a}) and $\E[V]=\rho\eta>0$. 
\item $|F(a,b)|\leq\E\left[|V\tanh(aV+bW+\beta)|\right]\leq\E|V|\leq\eta+\sqrt{2/\pi}$,
and $|G(a,b)|\leq\E\left[|W\tanh(aV+bW+\beta)|\right]\leq\E|W|\leq\sqrt{2/\pi}$. 
\item We only show 
\[
\left|\frac{\dee F(a,b)}{\dee b}\right|\leq\E\left[\frac{|WV|}{\cosh^{2}(aV+bW+\beta)}\right]\leq\E\left[|WV|\right]\leq\sqrt{\E\left[W^{2}\right]\E\left[V^{2}\right]}
\]
since $\cosh(t)\geq1$. The other bounds are proved similarly. 
\item Let $U\sim N(\eta,1)$, $U\ind W$. Then, for any $a,b\in[0,C_{f}]^{2}$
\begin{align*}
\frac{\dee F(a,b)}{\dee a} & =\E\left[\frac{V^{2}}{\cosh^{2}(aV+bW+\beta)}\right]\\
 & =\E\left[\frac{2V^{2}}{1+\cosh(2aV+2bW+2\beta)}\right]\\
 & =\E\left[\frac{2(1-\delta)U^{2}}{1+\cosh(2aU+2bW+2\beta)}+\frac{2\delta U^{2}}{1+\cosh(2aU-2bW-2\beta)}\right]\\
 & \trre[\geq,a]\E\left[\frac{(1-\delta)U^{2}}{\cosh(2aU+2bW+2\beta)}+\frac{\delta U^{2}}{\cosh(2aU-2bW-2\beta)}\right]\\
 & \trre[\geq,b]\E\left[\frac{U^{2}}{\cosh(2aU+2bW+2\beta)\cdot\cosh(2aU-2bW-2\beta)}\right]\\
 & =\E\left[\frac{2U^{2}}{\cosh(4aU)+\cosh(4bW+4\beta)}\right]\\
 & \trre[\geq,c]\E\left[\frac{2U^{2}}{\cosh(4C_{f}U)+\cosh(4bW+4\beta)}\right]\\
 & \trre[\geq,d]\E\left[\frac{4U^{2}}{2\cosh(4C_{f}U)+\exp\left(4b\left|W\right|+4\beta\right)}\right]\\
 & \geq\E\left[\frac{2U^{2}}{2\cosh(4C_{f}U)+\exp\left(4C_{f}\left|W\right|+4\beta\right)}\right]\\
 & \dfn\gl[C]''_{F}>0
\end{align*}
where $(a)$ and $(b)$ are since $\cosh(t)\geq1$, $(c)$ is since
$\cosh(t)$ is an even function, and increasing for $t\geq0$, $(d)$
is since $\cosh(t)\geq\frac{1}{2}e^{|t|}$. 
\item We have 
\[
\frac{\dee G(a,b)}{\dee b}=\E\left[\frac{W^{2}}{\cosh(aV+bW+\beta)^{2}}\right]
\]
and 
\begin{align*}
\left.\frac{\dee G(a,b)}{\dee b}\right|_{b=0} & =\E\left[\frac{W^{2}}{\cosh(aV+\beta)^{2}}\right]\\
 & =\E\left[\frac{1}{\cosh(aV+\beta)^{2}}\right]\\
 & =4\delta(1-\delta)\E\left[\frac{1}{\left((1-\delta)e^{aV}+\delta e^{-aV}\right)^{2}}\right]\,.
\end{align*}
At this point, for $a>\eta$, the proof follows the same steps of
the proof of Lemma \ref{lem: d=00003D00003D1 simple properties},
item \ref{enu: d=00003D00003D1 simple properties - first derivative}
and thus omitted. For $a\in[0,\eta]$ we let $Z\sim N(0,1)$ 
\begin{align*}
\left.\frac{\dee G(a,b)}{\dee b}\right|_{b=0} & \trre[=,a]4\delta(1-\delta)e^{-\eta^{2}/2}\cdot\E\left[\frac{(1-\delta)e^{\eta Z}+\delta e^{-\eta Z}}{\left((1-\delta)e^{aV}+\delta e^{-aV}\right)^{2}}\right]\\
 & \trre[=,b]4\delta(1-\delta)e^{-\eta^{2}/2}\E\left[\frac{1}{2}\frac{(1-\delta)e^{\eta Z}+\delta e^{-\eta Z}}{\left((1-\delta)e^{aV}+\delta e^{-aV}\right)^{2}}+\frac{1}{2}\frac{(1-\delta)e^{-\eta Z}+\delta e^{\eta Z}}{\left((1-\delta)e^{-aV}+\delta e^{aV}\right)^{2}}\right]\\
 & \trre[\leq,c]4\delta(1-\delta)e^{-\eta^{2}/2}\cdot\E\left[e^{\eta Z}\right]\\
 & =4\delta(1-\delta)\,,
\end{align*}
where $(a)$ is by a change of measure (see (\ref{eq: change of measure})
in Appendix \ref{subsec:Usful-results}) and $(b)$ is by the symmetry
of $N(0,1)$. The inequality $(c)$ can be proved pointwise as follows:
We denote $\psi_{\eta}=e^{\eta Z}$ and $\psi_{a}=e^{aZ}$, and may
assume that $Z>0$ if we show it holds for all $\delta\in[0,1]$.
Thus, it remains to show that 
\begin{equation}
\max_{\psi_{\eta}\geq\psi_{a}>1}\left[\frac{(1-\delta)\psi_{\eta}+\delta\psi_{\eta}^{-1}}{\left((1-\delta)\psi_{a}+\delta\psi_{a}^{-1}\right)^{2}}+\frac{(1-\delta)\psi_{\eta}^{-1}+\delta\psi_{\eta}}{\left((1-\delta)\psi_{a}^{-1}+\delta\psi_{a}\right)^{2}}-2\psi_{\eta}\right]\leq0\,.\label{eq: d>1 simple properties inequality to prove}
\end{equation}
We prove this inequality by showing that it holds for $\psi_{\eta}=\psi_{a}$
and then show that the term in the l.h.s. of (\ref{eq: d>1 simple properties inequality to prove})
is non-increasing in $\psi_{\eta}$ for $\psi_{\eta}\in(\psi_{a},\infty)$.
We first verify the inequality (\ref{eq: d>1 simple properties inequality to prove})
for $\psi_{\eta}=\psi_{a}$. In this case 
\begin{align*}
 & \max_{\psi_{a}>1}\left[\frac{1}{(1-\delta)\psi_{a}+\delta\psi_{a}^{-1}}+\frac{1}{(1-\delta)\psi_{a}^{-1}+\delta\psi_{a}}-2\psi_{a}\right]\\
 & =\max_{\psi_{a}>1}\psi_{a}\cdot\left[\frac{1}{(1-\delta)\psi_{a}^{2}+\delta}+\frac{1}{(1-\delta)+\delta\psi_{a}^{2}}-2\right]
\end{align*}
and the term in brackets is non-positive (its maximal value is $0$
obtained by $\psi_{a}=1$). To prove that the l.h.s. of (\ref{eq: d>1 simple properties inequality to prove})
is monotonic w.r.t. $\psi_{\eta}$, we next differentiate w.r.t. to
$\psi_{\eta}$: 
\begin{align}
 & \frac{\dee}{\dee\psi_{\eta}}\left[\frac{(1-\delta)\psi_{\eta}+\delta\psi_{\eta}^{-1}}{\left((1-\delta)\psi_{a}+\delta\psi_{a}^{-1}\right)^{2}}+\frac{(1-\delta)\psi_{\eta}^{-1}+\delta\psi_{\eta}}{\left((1-\delta)\psi_{a}^{-1}+\delta\psi_{a}\right)^{2}}-2\psi_{\eta}\right]=\nonumber \\
 & =\left(\frac{(1-\delta)-\frac{\delta}{\psi_{\eta}^{2}}}{\left[(1-\delta)\psi_{a}+\delta\psi_{a}^{-1}\right]^{2}}+\frac{\delta-\frac{1-\delta}{\psi_{\eta}^{2}}}{\left[(1-\delta)\psi_{a}^{-1}+\delta\psi_{a}\right]^{2}}\right)-2\nonumber \\
 & \leq\left(\frac{1-\delta}{\left[(1-\delta)\psi_{a}+\delta\psi_{a}^{-1}\right]^{2}}+\frac{\delta}{\left[(1-\delta)\psi_{a}^{-1}+\delta\psi_{a}\right]^{2}}\right)-2\,.\label{eq: d>1 simple properties inequality to prove - derivative}
\end{align}
This last term in (\ref{eq: d>1 simple properties inequality to prove - derivative})
is symmetric w.r.t. $\delta$ so we may return to assume $\delta\in[0,\frac{1}{2}]$,
which along $\psi_{a}\geq1$ satisfies $(1-\delta)\psi_{a}+\delta\psi_{a}^{-1}>1$
and $(1-\delta)\psi_{a}^{-1}+\delta\psi_{a}\geq2\sqrt{\delta(1-\delta)}$.
With these properties, we may further upper bound (\ref{eq: d>1 simple properties inequality to prove - derivative})
as 
\[
\left(1-\delta+\frac{1}{4(1-\delta)}\right)-2<0\,.
\]
\item We have 
\[
\left.\frac{\dee F(a,b)}{\dee b}\right|_{b=0}=\E\left[\frac{WV}{\cosh^{2}(aV+\beta)}\right]=0\,.
\]
\item We have 
\[
\frac{\dee^{2}G(a,b)}{\dee a\dee b}=\frac{\dee^{2}F(a,b)}{\dee b^{2}}=-2\E\left[\frac{W^{2}V\tanh(aV+bW+\beta)}{\cosh^{2}(aV+bW+\beta)}\right]
\]
and so using $|\tanh(t)|\leq t$ and $\cosh(t)\geq1$ 
\begin{align*}
\left|\left.\frac{\dee^{2}F(a,b)}{\dee b^{2}}\right|_{b=0}\right| & \leq\E\left[\frac{|V|\cdot|aV+\beta|}{\cosh^{2}(aV+\beta)}\right]\leq a\E\left[|V|^{2}\right]+\beta\E\left[|V|\right]\,.
\end{align*}
\item We have 
\begin{align*}
\left|\frac{\dee^{3}F(a,b)}{\dee b^{3}}\right| & =\left|-2\E\left[W^{3}V\frac{\left(1-2\sinh^{2}(aV+bW+\beta)\right)}{\cosh^{4}(aV+bW+\beta)}\right]\right|\\
 & \leq2\E\left[\left|W^{3}V\right|\right]\\
 & \leq2\sqrt{\E\left[W^{6}\right]\E\left[V^{2}\right]}\\
 & \leq2\sqrt{15(1+\eta^{2})}
\end{align*}
since $\left|\frac{1-2\sinh^{2}(t)}{\cosh^{4}(t)}\right|$ is maximized
at $t=0$ and its maximal value is $1$. 
\end{enumerate}
\end{proof}
We next turn to prove Proposition \ref{prop: Comparison theorem d>1}: 
\begin{proof}[Proof of Proposition \ref{prop: Comparison theorem d>1}]
~ 
\begin{enumerate}
\item By Stein's identity (see (\ref{eq: Stein lemma}) in Appendix \ref{subsec:Usful-results}),
for $U\sim N(a\eta,a^{2}+b^{2})$ 
\[
\frac{G(a,b\mid\eta,\delta)}{b}=\E\left[\frac{1-\delta}{\cosh^{2}(U+\beta)}+\frac{\delta}{\cosh^{2}(U+\beta)}\right]\,.
\]
Then, 
\begin{align*}
\frac{\dee\left[\frac{1}{b}G(a,b\mid\eta,\delta)\right]}{\dee\delta} & =\E\left[\frac{1-\frac{1}{1-\delta}\tanh(U-\beta)}{\cosh^{2}(U-\beta)}-\frac{1-\frac{1}{\delta}\tanh(U+\beta)}{\cosh^{2}(U+\beta)}\right]\\
 & =\E\left[s'(U)\right]\\
 & =\E\left[r\left(a\overline{U}\right)\right]\\
 & \dfn q(\eta)\,,
\end{align*}
where $s(u)$ was defined in (\ref{eq: s function}), and its derivative
is denoted by $s'(u)\equiv\frac{\d s}{\d u}$ (as in (\ref{eq: s function derivative})),
$r(u)\dfn s'(au)$ and $\overline{U}\sim N(\eta,1+\frac{b^{2}}{a^{2}})$.
Recall that Lemma \ref{lem: s is always negative} implies that $s'(u)$
has at most a single zero-crossing at some $u<0$. As $a>0$, $r(u)$
also has a single zero-crossing at some $u<0$. Hence, from Proposition
\ref{prop: Gaussian variation diminishing}, the total positivity
of the Gaussian kernel implies that $\eta\mapsto q(\eta)$ has at
most a single zero-crossing point as a function of $\eta\in\mathbb{R}$
(note that for the sake of the proof we allow $\eta<0$). We next
show that the zero crossing must occur for $\eta<0$. We do so by
evaluating $q(\eta)$ for $\eta=0$ and for large $\eta$. For $\eta=0$,
$\overline{U}\sim N(0,1+\frac{b^{2}}{a^{2}})\eqd-\overline{U}$ and
so 
\begin{equation}
q(0)=\left(\frac{1}{1-\delta}+\frac{1}{\delta}\right)\E\left[\frac{\tanh(a\overline{U}+\beta)}{\cosh^{2}(a\overline{U}+\beta)}\right]>0\label{eq: expectation of tanh over cosh squared}
\end{equation}
since $t\mapsto\frac{\tanh(t)}{\cosh^{2}(t)}$ is an odd function,
positive (resp. negative) on $\mathbb{R}_{+}$ (resp. $\mathbb{R}_{-}$).
Furthermore, noting that $r(0)=\frac{\frac{1}{\delta}\tanh(\beta)-\frac{1}{1-\delta}\tanh(\beta)}{\cosh^{2}(\beta)}>0$,
and using again the single zero-crossing of $r(u)$ at some $u<0$,
we have that $r(u)>0$ for all $u\in\mathbb{R}_{+}$. Since $q(\eta)=r(\eta)*\varphi(\eta\mid1+\frac{b^{2}}{a^{2}})$
where $\varphi(\eta\mid\sigma^{2})$ is the Gaussian kernel with variance
$\sigma^{2}$, i.e., $\varphi(\eta\mid\sigma^{2})\dfn(2\pi\sigma^{2})^{-1/2}\cdot e^{-\eta^{2}/(2\sigma^{2})}$,
there exists some $\eta_{0}>0$ such that $q(\eta)>0$ for all $\eta>\eta_{0}$
(note that $q(\eta)$ is bounded because $r(u)$ is). Since $q(\eta)>0$
for $\eta=0$ and all $\eta>\eta_{0}$, $q(\eta)$ must have an even
number of zero-crossing in $\mathbb{R}_{+}$. Since it cannot have
more than one single crossing, it does not have any. Hence, $q(\eta)=\frac{1}{b}\frac{\dee\left[G(a,b\mid\eta,\delta)\right]}{\dee\delta}>0$
for all $\eta>0$, and thus $G(a,b\mid\eta,\delta)\leq G(a,b\mid\eta,\frac{1}{2})$. 
\item Let $\tilde{V}\sim N(\eta,1)$. We analyze the partial derivatives
of 
\begin{align*}
G(a,b\mid\eta,\delta) & =\E\left[W\cdot\left((1-\delta)\tanh(a\tilde{V}+bW+\beta)+\delta\tanh(-a\tilde{V}+bW+\beta)\right)\right]\\
 & =\E\left[W\cdot\left((1-\delta)\tanh(a\tilde{V}+bW+\beta)+\delta\tanh(a\tilde{V}+bW-\beta)\right)\right]
\end{align*}
w.r.t. $\delta$ around $\delta=\frac{1}{2}$ (i.e., $\beta=0$) and
then use Taylor expansion for $\frac{1}{b}G(a,b\mid\eta,\delta)$.
For brevity, we denote $U=a\tilde{V}+bW\sim N(a\eta,a^{2}+b^{2})$. 
\begin{enumerate}
\item \uline{First derivative:} Taking partial derivative w.r.t. $\delta$\footnote{Note that this form is different from the form used in the previous
item, and is before applying Stein's identity.} 
\begin{align*}
\frac{\dee\left[G(a,b\mid\eta,\delta)\right]}{\dee\delta} & =\E\left[W\cdot\left(\tanh(U-\beta)-\tanh(U+\beta)\right)\right]\\
 & \hphantom{=}+\E\left[W\cdot\left(-\frac{1}{2\delta\cosh^{2}(U+\beta)}+\frac{1}{2(1-\delta)\cosh^{2}(U-\beta)}\right)\right]
\end{align*}
we get $\left.\frac{\dee\left[G(a,b\mid\eta,\delta)\right]}{\dee\delta}\right|_{\delta=\frac{1}{2}}=0$. 
\item \uline{Second derivative:} Taking the next partial derivative w.r.t.
$\delta$ 
\begin{equation}
\frac{\dee^{2}G(a,b\mid\eta,\delta)}{\dee\delta^{2}}=\frac{1}{2\delta^{2}(1-\delta)^{2}}\E\left[W\cdot\left(\frac{(1-\delta)\left[1-\tanh(U+\beta)\right]}{\cosh^{2}(U+\beta)}+\frac{\delta\left[1-\tanh(U-\beta)\right]}{\cosh^{2}(U-\beta)}\right)\right]\label{eq: second derivative of G wrt delta}
\end{equation}
and letting $\overline{U}=U-a\eta\sim N(0,a^{2}+b^{2})$, 
\begin{align*}
\left.\frac{\dee^{2}\left[\frac{1}{b}G(a,b\mid\eta,\delta)\right]}{\dee\delta^{2}}\right|_{\delta=\frac{1}{2}} & =\frac{4}{b}\cdot\E\left[W\cdot\frac{\left[1-\tanh(U)\right]}{\cosh^{2}(U)}\right]\\
 & =\frac{4}{b}\cdot\E\left[\frac{\left[1-\tanh(U)\right]}{\cosh^{2}(U)}\cdot\E\left[W\mid U\right]\right]\\
 & =\frac{4}{a^{2}+b^{2}}\cdot\E\left[\overline{U}\cdot\frac{\left[1-\tanh(\overline{U}+a\eta)\right]}{\cosh^{2}(\overline{U}+a\eta)}\right]\\
 & \dfn\frac{4}{a^{2}+b^{2}}\cdot A_{0}(a,b)\,,
\end{align*}
where the equality holds since $\E[W\mid U]=\frac{b}{a^{2}+b^{2}}(U-a\eta)$
and where $A_{0}(a,b)$ was implicitly defined. We next show that
$A_{0}(a,b)<0$. To this end, first assume that both $a>0$ and $b>0$,
and let $h(t)\dfn\frac{1-\tanh(t)}{\cosh^{2}(t)}.$ It holds that
$h(t)\geq0$ for all $t\in\mathbb{R}$, $h(t)\leq h(-t)$ for $t\geq0$,
and $h(t)$ has unique maximum at $t=-\log\sqrt{2}<0$. In addition,
for any $\overline{u}>0$, it holds that $h(-\overline{u}+a\eta)>h(\overline{u}+a\eta)$.
Indeed, if $-\overline{u}+a\eta\geq0$ then this is true since $h(t)$
is strictly decreasing for $t\geq0$, and if $-\overline{u}+a\eta<0$
then $h(-\overline{u}+a\eta)>h(\overline{u}-a\eta)>h(\overline{u}+a\eta)$.
Now, the conditional version of the expectation defining $A_{0}(a,b)$,
when conditioned on $|\overline{U}|=\overline{u}>0$ satisfies 
\[
\E\left[\overline{U}\cdot h(\overline{U}+a\eta)\mid|\overline{U}|=\overline{u}\right]=\frac{1}{2}\left[\overline{u}\cdot h(\overline{u}+a\eta)-\overline{u}\cdot h(-\overline{u}+a\eta)\right]<0\,,
\]
and so $A_{0}(a,b)<0$. Therefore, any $(a,b,\eta)\in(0,C_{f}]^{2}\times[0,\gl[C]_{\theta}]$
satisfies 
\[
\left.\frac{\dee^{2}\left[\frac{1}{b}G(a,b\mid\eta,\delta)\right]}{\dee\delta^{2}}\right|_{\delta=\frac{1}{2}}\dfn\Gamma(a,b,\eta)<0\,.
\]
We may now consider the cases $a=0$ or $b=0$. If $a=0$ but $b\neq0$
or vice-versa, similar analysis to before shows that $\Gamma(a,b,\eta)<0$.
For $(a,b)=(0,0)$ we use Stein's identity (see (\ref{eq: Stein lemma})
in Appendix \ref{subsec:Usful-results}) to obtain 
\[
\frac{4}{a^{2}+b^{2}}A_{0}(a,b)=4\cdot\E\left[\frac{e^{-2(\overline{U}+a\eta)}-2}{\cosh^{4}(\overline{U}+a\eta)}\right]
\]
and so $\left.\frac{1}{b}\frac{\dee^{2}\left[G(a,b\mid\eta,\delta)\right]}{\dee\delta^{2}}\right|_{\delta=\frac{1}{2}}=-4$
for $(a,b)=(0,0)$. Hence, there exists $C_{2}(C_{f},\gl[C]_{\theta},\gl[C]_{\beta})>0$
such that 
\[
\max_{(a,b,\eta)\in[0,C_{f}]^{2}\times[0,\gl[C]_{\theta}]}\Gamma(a,b,\eta)=-C_{2}<0\,.
\]
\item \uline{Third derivative:} We show that its absolute value is upper
bounded. As apparent from form (\ref{eq: second derivative of G wrt delta}),
the second derivative $\frac{\dee^{2}\left[G(a,b;\delta)\right]}{\dee\delta^{2}}$
can be written as the sum of two terms of the same form. We show how
to bound the derivative of the first, and as it is similar, omit the
bounding of the second. Recalling that $\overline{U}\sim N(0,a^{2}+b^{2})$,
the first term is, 
\[
\frac{1}{2\delta^{2}(1-\delta)}\E\left[W\frac{\left[1-\tanh(U+\beta)\right]}{\cosh^{2}(U+\beta)}\right]=\frac{1}{2\delta^{2}(1-\delta)}\frac{b}{a^{2}+b^{2}}\E\left[\overline{U}\cdot\frac{\left[1-\tanh(\overline{U}+a\eta+\beta)\right]}{\cosh^{2}(\overline{U}+a\eta+\beta)}\right]
\]
using again $\E[W\mid U]=\frac{b}{a^{2}+b^{2}}(U-a\eta)$. Hence $\frac{1}{b}\cdot\frac{\dee^{3}\left[G(a,b;\delta)\right]}{\dee\delta^{3}}$
has two terms of a similar form, the first of them is 
\begin{align*}
 & \frac{1}{a^{2}+b^{2}}\cdot\frac{\dee}{\dee\delta}\left\{ \frac{1}{2\delta^{2}(1-\delta)}\E\left[\overline{U}\cdot\frac{\left[1-\tanh(\overline{U}+a\eta+\beta)\right]}{\cosh^{2}(\overline{U}+a\eta+\beta)}\right]\right\} \\
 & =-\frac{1}{a^{2}+b^{2}}\cdot\frac{(1-\frac{3}{2}\delta)}{\delta^{3}(1-\delta)^{2}}\cdot\E\left[\overline{U}\cdot\frac{\left[1-\tanh(\overline{U}+a\eta+\beta)\right]}{\cosh^{2}(\overline{U}+a\eta+\beta)}\right]\\
 & -\frac{1}{a^{2}+b^{2}}\cdot\frac{1}{\delta^{3}(1-\delta)^{2}}\E\left[\overline{U}\cdot\frac{e^{-2\overline{U}-2a\eta-2\beta}-2}{\left[1+\cosh(2\overline{U}+2a\eta+2\beta)\right]^{2}}\right]\\
 & \dfn-\frac{(1-\frac{3}{2}\delta)}{\delta^{3}(1-\delta)^{2}}A_{1}(a,b)-\frac{1}{\delta^{3}(1-\delta)^{2}}A_{2}(a,b)\,,
\end{align*}
where $A_{1}(a,b),A_{2}(a,b)$ where implicitly defined. The multiplicative
factors $\frac{(1-\frac{3}{2}\delta)}{\delta^{3}(1-\delta)^{2}}$
and $\frac{1}{\delta^{3}(1-\delta)^{2}}$ are upper bounded since
$\delta$ is assume to be bounded away from zero ($\beta<\gl[C]_{\beta}$),
and so we focus on $A_{1}(a,b),A_{2}(a,b)$. Further, 
\[
A_{1}(a,b)=\frac{1}{a^{2}+b^{2}}\cdot\E\left[\left|\overline{U}\cdot\frac{\left[1-\tanh(\overline{U}+a\eta+\beta)\right]}{\cosh^{2}(\overline{U}+a\eta+\beta)}\right|\right]<2\frac{1}{a^{2}+b^{2}}\cdot\E\left|\overline{U}\right|<\frac{2}{\sqrt{a^{2}+b^{2}}}
\]
and 
\begin{align*}
A_{2}(a,b) & =\frac{1}{a^{2}+b^{2}}\cdot\E\left[\overline{U}\cdot\frac{e^{-2\overline{U}-2a\eta-2\beta}-2}{\left[1+\cosh(2\overline{U}+2a\eta+2\beta)\right]^{2}}\right]\\
 & \leq\frac{4}{a^{2}+b^{2}}\cdot\E\left[|\overline{U}|\cdot\frac{e^{-2\overline{U}-2a\eta-2\beta}}{\left[2+e^{2\overline{U}+2a\eta+2\beta}+e^{-2\overline{U}-2a\eta-2\beta}\right]^{2}}\right]\\
 & \hphantom{=}+\frac{1}{a^{2}+b^{2}}\cdot\E\left[\left|\overline{U}\right|\cdot\frac{2}{\left[1+\cosh(2\overline{U}+2a\eta+2\beta)\right]^{2}}\right]\\
 & \leq\frac{6}{a^{2}+b^{2}}\cdot\E\left|\overline{U}\right|<\frac{6}{\sqrt{a^{2}+b^{2}}}\,.
\end{align*}
Thus, if either $a>0$ or $b>0$ then both $A_{1}(a,b)<\infty$ and
$A_{2}(a,b)<\infty$. It remains to consider limits to $(a,b)=(0,0)$.
The limits for $A_{1}(a,b)$ can be shown to be finite by an analysis
similar to the one made for the second derivative $\frac{\dee^{2}G(a,b\mid\eta,\delta)}{\dee\delta^{2}}$.
For $A_{2}(a,b)$, using Stein's identity (see (\ref{eq: Stein lemma})
in Appendix \ref{subsec:Usful-results}) 
\begin{align*}
A_{2}(a,b) & =\frac{1}{a^{2}+b^{2}}\E\left[\overline{U}\frac{e^{-2\overline{U}-2a\eta-2\beta}-2}{\left[1+\cosh(2\overline{U}+2a\eta+2\beta)\right]^{2}}\right]\\
 & =-2\cdot\E\left[\frac{e^{-2\overline{U}-2a\eta-2\beta}}{\left[1+\cosh(2\overline{U}+2a\eta+2\beta)\right]^{2}}\right]\\
 & \hphantom{=}-4\cdot\E\left[\frac{\left(e^{-2\overline{U}-2a\eta-2\beta}-2\right)\sinh(2\overline{U}+2a\eta+2\beta)}{\left[1+\cosh(2\overline{U}+2a\eta+2\beta)\right]^{3}}\right]
\end{align*}
and so $|A_{2}(0,0)|<2e^{-2\beta}+4\left|e^{-2\beta}-2\right|\sinh(2\beta)<\infty$.
Hence, there exists $C_{3}(C_{f},\gl[C]_{\beta})$ such that 
\[
\sup_{(a,b,\eta)\in[0,C_{f}]^{2}\times[0,\gl[C]_{\theta}]}\left|\frac{\dee^{3}\left[\frac{1}{b}G(a,b\mid\eta,\delta)\right]}{\dee\delta^{3}}\right|\leq C_{3}\,.
\]
\end{enumerate}
From the analysis of the derivatives, and recalling that $\rho=2(\frac{1}{2}-\delta)$,
for any $(a,b,\eta)\in[0,C_{f}]^{2}\times[0,\gl[C]_{\theta}]$ it
holds that
\[
\frac{G(a,b\mid\eta,\delta)}{b}\leq\frac{G(a,b\mid\eta,\frac{1}{2})}{b}-\frac{C_{2}}{8}\cdot\rho^{2}+\frac{C_{3}}{48}\rho^{3}.
\]
If $\gl[C]_{\beta}$ is such that $\rho\leq\frac{C_{2}}{3C_{3}}=\overline{\rho}$
then $\frac{1}{b}G(a,b\mid\eta,\delta)\leq\frac{1}{b}G(a,b\mid\eta,\frac{1}{2})-\frac{C_{2}}{16}\cdot\rho^{2}$.
Otherwise, by dominance of $G$ w.r.t $\delta$ (item \ref{enu: comparison theorem d>1  - G itertaion dominance})\textbf{
} 
\begin{align*}
\frac{1}{b}G(a,b\mid\eta,\delta) & \leq\frac{1}{b}G\left(a,b\mid\eta,\tfrac{1-\overline{\rho}}{2}\right)\leq\frac{G(a,b\mid\eta,\frac{1}{2})}{b}-\frac{C_{2}}{8}\overline{\rho}^{2}\\
 & \leq\frac{1}{b}G(a,b\mid\eta,\tfrac{1}{2})-C_{4}\rho^{2}
\end{align*}
for some constant $C_{4}$ (which depends on $\gl[C]_{\beta}$). Taking
$\gl[C]_{1}^{(d)}=\min(\frac{1}{8}C_{2},C_{4})$ 
\begin{align*}
G(a,b\mid\eta,\delta) & \leq G(a,b\mid\eta,\tfrac{1}{2})-\gl[C]_{1}^{(d)}\rho^{2}b\\
 & \leq b\left(1-\frac{a^{2}+b^{2}}{2+4(a^{2}+b^{2})}-\gl[C]_{1}^{(d)}\rho^{2}\right)
\end{align*}
where the upper bound on $G(a,b\mid\eta,\frac{1}{2})$ was obtained
in the analysis of the balanced iteration \cite[Lemma 5, item 8]{wu2019EM}. 
\item Let $Z\sim N(0,1)$. By Stein's identity for $W$ (see (\ref{eq: Stein lemma})
in Appendix \ref{subsec:Usful-results}), and a change of measure
(see (\ref{eq: change of measure}) in Appendix \ref{subsec:Usful-results})
\begin{align*}
G(a,b\mid\eta,\delta) & =\E\left[\frac{b}{\cosh^{2}(aV+bW+\beta)}\right]\\
 & =e^{-\eta^{2}/2}\E\left[\frac{b}{\cosh^{2}(aZ+bW+\beta)}\left((1-\delta)e^{\eta Z}+\delta e^{-\eta Z}\right)\right]\,.
\end{align*}
Then, 
\begin{align}
\frac{\dee G(a,b\mid\eta,\delta)}{\dee\eta} & =-\eta\cdot G(a,b\mid\eta,\delta)+e^{-\eta^{2}/2}\cdot\E\left[\frac{bZ}{\cosh(aZ+bW+\beta)^{2}}\left((1-\delta)e^{\eta Z}-\delta e^{-\eta Z}\right)\right]\nonumber \\
 & =-2be^{-\eta^{2}/2}\cdot\E\left[\frac{\tanh(aZ+bW+\beta)}{\cosh^{2}(aZ+bW+\beta)}\left((1-\delta)e^{\eta Z}-\delta e^{-\eta Z}\right)\right]\,,\label{eq: derivative of G wrt to eta}
\end{align}
by Stein's identity for $Z$. Letting $U=aZ+bW+\beta\sim N(\beta,a^{2}+b^{2})$,
we have that $Z|U\sim N(\frac{a(U-\beta)}{a^{2}+b^{2}},\frac{b^{2}}{a^{2}+b^{2}})$,
and so 
\begin{multline*}
\frac{\dee G(a,b\mid\eta,\delta)}{\dee\eta}\\
=-2\exp\left(-\frac{\eta^{2}a^{2}}{2(a^{2}+b^{2})}\right)\E\left[\frac{b\tanh(U)}{\cosh^{2}(U)}\left((1-\delta)\exp\left(\frac{\eta a}{a^{2}+b^{2}}(U-\beta)\right)-\delta\exp\left(-\frac{\eta a}{a^{2}+b^{2}}(U-\beta)\right)\right)\right]\,.
\end{multline*}
Letting 
\[
p_{+}\dfn(1-\delta)\exp\left(-\frac{\eta a}{a^{2}+b^{2}}\beta\right),\;p_{-}\dfn\delta\exp\left(\frac{\eta a}{a^{2}+b^{2}}\beta\right)\,,
\]
then under the assumption $\frac{\eta a}{a^{2}+b^{2}}<1$ and using
$\exp(\beta)=\sqrt{(1-\delta)/\delta}$ it holds that $p_{+}\geq p_{-}$.
Now, 
\[
h(u)=\frac{\tanh(u)}{\cosh^{2}(u)}\left(p_{+}\exp\left(\frac{\eta au}{a^{2}+b^{2}}\right)-p_{-}\exp\left(-\frac{\eta au}{a^{2}+b^{2}}\right)\right)
\]
satisfies that for $u\geq0$, $h(u)\geq0$ and $|h(u)|\geq|h(-u)|$.
Thus, we deduce that 
\[
\frac{\dee G(a,b\mid\eta,\delta)}{\dee\eta}=-2b\cdot e^{-\frac{\eta^{2}a^{2}}{2(a^{2}+b^{2})}}\E\left[h(U)\right]\leq0
\]
(see the Gaussian average of odd function property in Appendix (\ref{subsec:Usful-results})). 
\item We show that $\left.\frac{1}{b}\frac{\dee G(a,b\mid\eta,\delta)}{\dee\eta}\right|_{\eta=0}<0$
and that $\frac{1}{b}\frac{\dee^{2}G(a,b\mid\eta,\delta)}{\dee\eta^{2}}$
is uniformly bounded (over all $(a,b,\eta$)), and the result then
follows from Taylor expansion. 
\begin{enumerate}
\item \uline{First derivative:} Let $\overline{U}\sim N(0,a^{2}+b^{2})$.
If $b=0$ then $\left.\frac{\dee G(a,b\mid\eta,\delta)}{\dee\eta}\right|_{\eta=0}=0$
and so we next assume $b>0$. From (\ref{eq: derivative of G wrt to eta})
and Stein's identity (see (\ref{eq: Stein lemma}) in Appendix \ref{subsec:Usful-results})
\begin{align*}
\left.\frac{\dee G(a,b\mid\eta,\delta)}{\dee\eta}\right|_{\eta=0} & =-2(1-2\delta)\cdot b\E\left[\frac{\tanh(\overline{U}+\beta)}{\cosh^{2}(\overline{U}+\beta)}\right]\\
 & =(1-2\delta)\cdot b\frac{1}{a^{2}+b^{2}}\E\left[\frac{\overline{U}}{\cosh^{2}(\overline{U}+\beta)}\right]\\
 & <0
\end{align*}
since for $u>0,\beta>0$ it holds that $\cosh(u+\beta)>\cosh(-u+\beta)>0$
and $\overline{U}\eqd-\overline{U}$. 
\item \uline{Second derivative:} Let $Z\sim N(0,1)$. Taking the next
partial derivative w.r.t. $\eta$ in (\ref{eq: derivative of G wrt to eta})
\begin{align*}
\frac{\dee^{2}G(a,b\mid\eta,\delta)}{\dee\eta^{2}} & =-\eta\frac{\dee G(a,b\mid\eta,\delta)}{\dee\eta}-2be^{-\eta^{2}/2}\cdot\E\left[Z\frac{\tanh(aZ+bW+\beta)}{\cosh^{2}(aZ+bW+\beta)}\left((1-\delta)e^{\eta Z}+\delta e^{-\eta Z}\right)\right]\,.
\end{align*}
The absolute value of the first term is bounded by $2\gl[C]_{\theta}b$
since 
\begin{align*}
\left|\frac{\dee G(a,b\mid\eta,\delta)}{\dee\eta}\right| & \leq2be^{-\eta^{2}/2}\E\left[(1-\delta)e^{\eta Z}+\delta e^{-\eta Z}\right]=2
\end{align*}
(using $\left|\frac{\tanh(t)}{\cosh^{2}(t)}\right|\leq1$). The absolute
value of the second term is bounded by $4\gl[C]_{\theta}b$ since
\begin{align*}
 & \E\left[\left|Z\frac{\tanh(aZ+bW+\beta)}{\cosh^{2}(aZ+bW+\beta)}\left((1-\delta)e^{\eta Z}+\delta e^{-\eta Z}\right)\right|\right]\\
 & \leq\E\left[\left|Z\right|\left((1-\delta)e^{\eta Z}+\delta e^{-\eta Z}\right)\right]\\
 & =\E\left[\left|Z\right|e^{\eta Z}\right]\\
 & \leq2\cdot\E\left[Ze^{\eta Z}\right]\\
 & \trre[=,a]2\cdot\eta\E\left[e^{\eta Z}\right]\\
 & =2\eta e^{\eta^{2}/2}
\end{align*}
where $(a)$ follows from Stein's identity. Thus, $\left|\frac{\dee^{2}G(a,b)}{\dee\eta^{2}}\right|\leq6\gl[C]_{\theta}b$. 
\end{enumerate}
\end{enumerate}
\end{proof}
We may now prove that the population mean iteration converges. 
\begin{proof}[Proof of Theorem \ref{thm: population d=00003D00003D1 known delta}]
If $a_{0}\geq0$ then $a_{t}\geq0$ for all $t>1$ (Lemma \ref{lem: properties of F and G},
item \ref{enu: simple properties d>1 positivity}).\textbf{ }Consider
the upper envelope iteration $b_{t+1}^{+}=G(a_{t},b_{t}^{+}\mid\eta,\frac{1}{2})$,
where $b_{0}^{+}=b_{0}.$ Since $b\mapsto G(a,b\mid\eta,\delta)$
is increasing for $a>0$ (Lemma \ref{lem: properties of F and G},
item \ref{enu: simple properties d>1 monotonicity}), Proposition
\ref{prop: Comparison theorem d>1}, item \ref{enu: comparison theorem d>1  - G itertaion dominance}
and induction imply that $b_{t}^{+}\geq b_{t}$ for all $t\geq1$.
It follows from the analysis of the balanced iteration \cite[Lemma 5, item 8]{wu2019EM}
that (see Proposition \ref{prop: Comparison theorem d>1}, item \ref{enu: comparison theorem d>1  - G itertaion explicit}
and its proof) that $b_{t}^{+}\to0$. Thus also $b_{t}\to0$ as $t\to\infty$.
As $\left|\frac{\dee F(a,b)}{\dee b}\right|\leq\sqrt{1+\eta^{2}}\leq\sqrt{1+\gl[C]_{\theta}}$
is uniformly bounded (Lemma \ref{lem: properties of F and G}, item
\ref{enu: simple properties d>1 upper bound on first derivatives}),
for any given $\epsilon>0$, there exists $t>0$ such that 
\[
\left|F(a_{t},b_{t})-F(a_{t},0)\right|\leq\epsilon\,,
\]
where $F(a_{t},0)=f(\theta\mid\eta,\delta)$, i.e., the population
mean iteration in $d=1$. Theorem \ref{thm: empirical d=00003D00003D1 known delta}
shows that convergence is assured for any given sufficiently small
absolute error, and that the error $|\theta_{t}-\eta|$ tends to zero
as $\epsilon\to0$. 
\end{proof}

\subsection{Empirical iteration}
\begin{proof}[Proof of Theorem \ref{thm: empirical d>1 known delta large rho}]
We analyze the empirical iteration $\theta_{t+1}=f_{n}(\theta_{t})\equiv f_{n}(\theta_{t},\delta\mid\theta_{*},\delta)$.
We will assume that $\rho\geq C_{1}\sqrt{\omega}$ and specify conditions
on $C_{1}$ along the proof. As for the population iteration (Lemma
\ref{lem:Decomposition to signal and orthogoanl iteration}), we may
write 
\[
\theta_{t}=a_{t}\cdot\hat{\theta}_{*}+b_{t}\cdot\xi_{t}
\]
where $\eta=\|\theta_{*}\|$, $\hat{\theta}_{*}=\frac{\theta_{*}}{\eta}$,
$\xi_{t}\perp\eta$ and $\|\xi_{t}\|=1$ such that $\spa\{\theta_{*},\xi_{t}\}=\spa\{\theta_{*},\theta_{t}\}$
and $b_{t}\geq0$. Assuming the high probability event (\ref{eq: high probability event})
holds, we have that 
\[
\|f_{n}(\theta)-f(\theta)\|\leq\max\{\eta,\rho\}\cdot\omega
\]
and so for the signal iteration 
\begin{align*}
a_{t+1} & =\langle\theta_{t+1},\hat{\theta}_{*}\rangle=\langle f_{n}(\theta_{t}),\hat{\theta}_{*}\rangle\\
 & \leq F(a_{t},b_{t})+\max\left\{ \left|a_{t}\right|+b_{t},\rho\right\} \cdot\omega\\
 & \leq F(a_{t},b_{t})+\left(\left|a_{t}\right|+b_{t}+\rho\right)\cdot\omega\\
 & \dfn F_{+}(a_{t},b_{t})\,,
\end{align*}
and, similarly, 
\begin{align*}
a_{t+1} & \geq F(a_{t},b_{t})-\max\left\{ \left|a_{t}\right|+b_{t},\rho\right\} \cdot\omega\\
 & \geq F(a_{t},b_{t})-\left(\left|a_{t}\right|+b_{t}+\rho\right)\cdot\omega\\
 & \dfn F_{-}(a_{t},b_{t})\,.
\end{align*}
In the same spirit, for the orthogonal iteration , it holds that 
\[
b_{t+1}\leq G(a_{t},b_{t})+\max\left\{ \left|a_{t}\right|+b_{t},\rho\right\} \cdot\omega\,.
\]

We split the analysis into two regimes of $\eta\lesssim\frac{\omega}{\rho}$
and $\eta\gtrsim\frac{\omega}{\rho}$. In the former regime, the iteration
dwells around $\|\theta_{t}\|\lesssim\frac{\omega}{\rho}$, though
the corresponding signal iteration $a_{t}$ might be negative. In
the later regime, it is assured that $a_{t}\geq0$ for all $t$ (given
that $a_{0}\geq0$), and so properties such as dominance of the orthogonal
iteration may be used. As a preliminary step, we show that the iteration
is bounded:

\uline{Step 0 (Boundedness):} We prove that for all $n$ sufficiently
large, it holds that $|a_{t}|,b_{t}\leq C_{f}$ for all $t\geq1$
if $C_{f}\geq\max\{2\gl[C]_{\theta},\rho/2,4(\gl[C]_{\theta}+1)\}$.
By induction, when the iteration is initialized with $\theta_{0}=0$
then $|a_{0}|=b_{0}=0$. When $\theta_{0}=\frac{1}{\rho}\E_{n}[X]$
then assuming the high probability event (\ref{eq: high probability event})
it holds that $\|\theta_{0}\|\leq\eta+\frac{\omega}{\rho}\leq2\gl[C]_{\theta}$
for all $n>n_{0}(\gl[C]_{\theta},\gl[C]_{\omega})$ (see the end of
the proof of Theorem \ref{thm: empirical d=00003D00003D1 known delta}).
Thus also $|a_{0}|,b_{0}\leq2\gl[C]_{\theta}$. Note that for all
$n>n_{1}(\gl[C]_{\omega})$ it holds that $\omega\leq1/4$. For the
induction step, assume that $|a_{t}|,b_{t}\leq C_{f}$ for some $t$.
Then, using Lemma \ref{lem: properties of F and G}, item \ref{enu: simple properties d>1 boundedness}
\begin{align*}
a_{t+1} & \leq F(a_{t},b_{t})+\max\left\{ \left|a_{t}\right|+b_{t},\rho\right\} \cdot\omega\\
 & \leq\gl[C]_{\theta}+1+2C_{f}\omega\\
 & \leq C_{f}\,,
\end{align*}
and a similar lower bound on $a_{t+1}$ holds, as well as a similar
upper bound $b_{t+1}$. We henceforth assume that $|a_{t}|,b_{t}\leq C_{f}$.

\paragraph*{Large signal case}

Assume that $\eta>C_{0}\frac{\omega}{\rho}$ where $C_{0}>0$ is to
be specified later on.

\uline{Step 1 (Orthogonal iteration and Positivity):} We prove
by induction that there exists $c_{1}>0$ to be specified (large enough)
such that if $a_{0}\geq0$ and $b_{0}\leq c_{1}\frac{\omega}{\rho}$
then $a_{t}\geq0$ and $b_{t}\leq c_{1}\frac{\omega}{\rho}$ for all
$t$. For $t=0$, this is satisfied when initializing with both $\theta_{0}=0$
since, trivially, $a_{0}=b_{0}=0$, and also when initializing with
$\theta_{0}=\frac{1}{\rho}\E_{n}[X]$, since under the high probability
event (\ref{eq: high probability event}) 
\[
\|\frac{1}{\rho}\E_{n}[X]-\theta_{*}\|=\frac{1}{\rho^{2}}\|f_{n}(0)-f(0)\|\leq\frac{\omega}{\rho}\,.
\]
Taking $C_{0}>2$ (say) implies that $a_{0}>0$ and $b_{0}\leq\frac{\omega}{\rho}$.
We assume that $a_{t}\geq0$ and that $b_{t}\leq c_{1}\frac{\omega}{\rho}$
and show that these properties continue to hold after iteration $t+1$.
We first consider the orthogonal iteration which is analyzed through
its upper bound. There are two differences compared to the balanced
case \cite[Theorem 5]{wu2019EM}: 1) The slope of the upper bound
on $b_{t}$ has additional $-\gl[C]_{G,\rho}^{(d)}\rho^{2}$ term
(which improves the bound on $b_{t}$ and improves convergence to
low values). 2) The empirical error has an additional $\omega\rho$
term which deteriorates the bound on $b_{t}$. Nonetheless, the first
effect dominates the second. Specifically, from Proposition \ref{prop: Comparison theorem d>1},
item \ref{enu: comparison theorem d>1  - G itertaion explicit},\footnote{For simplicity of later notation, the constant $\gl[C]_{G,\rho}^{(d)}$
was reduced to $\frac{\gl[C]_{G,\rho}^{(d)}}{2}$.} since $a_{t}>0$ and $\eta>C_{0}\frac{\omega}{\rho}$ was assumed,
\begin{align}
b_{t+1} & \leq b_{t}\left(1-\frac{a_{t}^{2}+b_{t}^{2}}{2+4(a_{t}^{2}+b_{t}^{2})}-\frac{\gl[C]_{G,\rho}^{(d)}}{2}\rho^{2}\right)+\omega\left(a_{t}+b_{t}+\rho\right).\label{eq: bound on G iteration empirical iteration proof}\\
 & \trre[\leq,a]b_{t}\left(1+\omega-\frac{\gl[C]_{G,\rho}^{(d)}}{2}\rho^{2}\right)-\frac{b_{t}^{3}}{C_{2}}+\sup_{0\leq a_{t}\leq C_{f}}\left(\omega a_{t}-\frac{a_{t}^{2}}{C_{2}}\right)+\omega\rho\nonumber \\
 & \trre[\leq,b]b_{t}\left(1-\frac{\gl[C]_{G,\rho}^{(d)}}{2}\rho^{2}\right)-\frac{b_{t}^{3}}{C_{2}}+\frac{C_{2}\omega^{2}}{4b_{t}}+\omega\rho\nonumber 
\end{align}
where in $(a)$ we have used $C_{2}=2+8C_{f}^{2}$, and $(b)$ holds
by requiring that $C_{1}\geq\sqrt{\frac{2}{\gl[C]_{G,\rho}^{(d)}}}$.
We assume w.l.o.g. that $\gl[C]_{G,\rho}^{(d)}\leq1$ as otherwise,
we may weaken the bound by setting $\gl[C]_{G,\rho}^{(d)}=1$. Now,
we may show that $b_{t+1}\leq c_{1}\frac{\omega}{\rho}$: Let $c_{2}>0$
be a constant to be specified. 
\begin{itemize}
\item If $0\leq b_{t}\leq c_{2}\omega$ then the bound 
\[
b_{t+1}\leq b_{t}\left(1-\frac{\gl[C]_{G,\rho}^{(d)}}{2}\rho^{2}\right)+\omega C_{f}+\omega\rho\leq c_{1}\frac{\omega}{\rho}
\]
holds as long as $c_{2}+\omega(C_{f}+\rho)\leq\frac{c_{1}}{\rho}$
(condition I). 
\item If $c_{2}\omega\leq b_{t}\leq c_{1}\frac{\omega}{\rho}$ we may use
the bound 
\[
b_{t+1}\leq b_{t}\left(1-\frac{\gl[C]_{G,\rho}^{(d)}}{2}\rho^{2}\right)-\frac{b_{t}^{3}}{C_{2}}+\frac{C_{2}\omega^{2}}{4b_{t}}+\omega\rho\dfn h(b_{t})\,.
\]
Under the assumption $\rho\geq C_{1}\sqrt{\omega}$ it holds that
$\frac{\omega^{2}}{\rho^{2}}\leq\frac{\omega}{C_{1}^{2}}\to0$ as
$n\to\infty$ Thus, we may assume that $c_{1}<\frac{\rho^{2}}{\omega^{2}}$
and $c_{2}\geq\sqrt{C_{2}}$ (condition II), so that for all $n>n_{2}(c_{1},C_{1},C_{2},\gl[C]_{\omega})$
\[
\frac{\d h}{\d b}=\left(1-\frac{\gl[C]_{G,\rho}^{(d)}}{2}\rho^{2}\right)-\frac{3b^{2}}{C_{2}}-\frac{C_{2}\omega^{2}}{4b^{2}}\geq\frac{1}{2}-\frac{3c_{1}^{2}\omega^{2}}{C_{2}\rho^{2}}-\frac{C_{2}}{4c_{2}^{2}}>0\,.
\]
In this event, $b\mapsto h(b)$ is increasing, and thus if $\frac{C_{2}}{4c_{1}}+1-c_{1}\frac{\gl[C]_{G,\rho}^{(d)}}{2}<0$
(condition III) then 
\begin{align*}
b_{t+1} & \leq\max_{c_{2}\omega\leq b\leq c_{1}\frac{\omega}{\rho}}h(b)=h(c_{1}\frac{\omega}{\rho})\\
 & \leq c_{1}\frac{\omega}{\rho}+\left(\frac{C_{2}}{4c_{1}}+1-c_{1}\frac{\gl[C]_{G,\rho}^{(d)}}{2}\right)\omega\rho\\
 & \leq c_{1}\frac{\omega}{\rho}
\end{align*}
holds. We choose $c_{2}$ such that condition II holds, and then choose
$c_{1}\geq1$ and large enough so that conditions I and III will hold
(note that $n_{2}$ may be affected by these choices). 
\end{itemize}
Thus, we have proved that $b_{t+1}\leq c_{1}\frac{\omega}{\rho}$
in the next iteration. We next show that $a_{t+1}>0$. Using the boundedness
from step 0 and Lemma \ref{lem: properties of F and G}, items \ref{enu: simple properties d>1 monotonicity}
and \ref{enu: simple properties d>1 strict positivity} 
\begin{align*}
a_{t+1} & \geq F(a_{t},b_{t})-\max\left\{ \left|a_{t}\right|+b_{t},\rho\right\} \cdot\omega\\
 & \trre[\geq,a]F(a_{t},b_{t})-\omega a_{t}-c_{1}\frac{\omega^{2}}{\rho}-\rho\omega\\
 & \trre[\geq,b]F(0,b_{t})+\gl[C]''_{F}a_{t}-\omega a_{t}-c_{1}\frac{\omega^{2}}{\rho}-\rho\omega\\
 & \trre[\geq,c](\gl[C]''_{F}-\omega)a_{t}+\rho\left(\gl[C]_{F,0}\rho\eta-c_{1}\frac{\omega^{2}}{\rho^{2}}-\omega\right)\\
 & >0\,,
\end{align*}
where $(a)$ is by the induction assumption, $(b)$ is by the uniform
bound on $\frac{\dee F}{\dee a}$ in Lemma \ref{lem: properties of F and G}
item \ref{enu: simple properties d>1 lower bound on F first derivatives}
and Taylor expansion, $(c)$ is by the lower bound on $F(0,b)$ in
Lemma \ref{lem: properties of F and G} item \ref{enu: simple properties d>1 strict positivity},
and the final inequality holds as long as $n$ is sufficiently large
so that $\omega\leq\gl[C]''_{F}$, and $\frac{\omega}{\rho}<1$, when
requiring that $C_{0}\geq\frac{c_{1}+1}{\gl[C]_{F,0}}$.

\uline{Step 2 (Signal iteration):} From the previous step, we may
assume that $a_{t}\geq0$ and $b_{t}\leq c_{1}\frac{\omega}{\rho}$
for all $t\geq0$. Lemma \ref{lem: properties of F and G} items \ref{enu: simple properties d>1 F mixed derivative at b=00003D00003D0},
\ref{enu: simple properties d>1 upperbound on F second mixed derivative at b=00003D00003D0}
and \ref{enu: simple properties d>1 upperbound on F third mixed derivative at b=00003D00003D0}
characterize the derivative of $F(a,b)$ w.r.t. $b$ around $b=0$.
Using these claims and the global assumptions $\eta\leq\gl[C]_{\theta}$
and $\beta\leq\gl[\overline{C}]_{\beta}\rho$, Taylor expansion of
$F(a,b)$ around $b=0$ results 
\[
F_{+}(a,b)\leq F(a,0)+\left[C_{3}\cdot(|a|+\rho)b^{2}+\gl[C]'''_{F}b^{3}+\omega(|a|+b+\rho)\right]\,,
\]
and 
\[
F_{-}(a,b)\geq F(a,0)-\left[C_{3}\cdot(|a|+\rho)b^{2}+\gl[C]'''_{F}b^{3}+\omega(|a|+b+\rho)\right]\,.
\]
Observe that for $b=0$ it holds that $F(a,0)=f(\theta,\delta\mid\eta,\delta)$
is simply the one-dimensional iteration with absolute mean $\eta$.
Furthermore, the envelopes $F_{\pm}(a,b)$ are the same as one-dimensional
envelopes $f_{\pm}(\theta)$ with $\theta\equiv a$ except for excess
error $C_{3}\cdot(|a|+\rho)b^{2}+\gl[C]'''_{F}b^{3}+\omega b$ which
results from the orthogonal error when $b\neq0$. From the previous
step, we have that $b_{t}\leq c_{1}\frac{\omega}{\rho}\leq\frac{c_{1}}{C_{1}}\sqrt{\omega}\leq\frac{c_{1}\rho}{C_{1}^{2}}$
for all $t$ and thus we may evaluate the terms of this excess error
under this assumption. Then, $C_{3}\cdot|a|b^{2}\leq C_{3}\frac{c_{1}}{C_{1}}\omega|a|$,
$C_{3}\cdot\rho b^{2}\leq C_{3}\frac{c_{1}^{2}}{C_{1}^{2}}\omega\rho$,
$\gl[C]'''_{F}b^{3}\leq\gl[C]'''_{F}\frac{c_{1}^{3}}{C_{1}^{2}}\omega\rho$,
and $\omega b\leq\frac{c_{1}}{C_{1}^{2}}\omega\rho$. Hence, the excess
error is no more than $C_{4}(|a|+\rho)\omega$ for some $C_{4}>0$.
Since the one-dimensional envelopes have error $\max\{|a|,\rho\}\omega\geq\frac{1}{2}(|a|+\rho)\omega$
and $\max\{|a|,\rho\}\omega\leq(|a|+\rho)\omega$ (see (\ref{eq: lower envelope d=00003D00003D1})
and (\ref{eq: upper envelope d=00003D00003D1})), the excess error
due to $b\neq0$ only contributes to increasing the factor multiplying
$(|a|+\rho)\omega$. Thus, taking $F_{\pm}(a_{t},b_{t})$ as envelopes
of one dimensional iteration for $a_{t}$ (with $b_{t}$ acting as
bounded disturbance), we obtain that, orderwise, the statistical error
and convergence time of $a_{t}$ are the same as for $\theta_{t}$
in the one-dimensional empirical iteration given in Theorem \ref{thm: empirical d=00003D00003D1 known delta}.
Thus, after the convergence time specified in Theorem \ref{thm: empirical d=00003D00003D1 known delta},
the error is $|a_{t}-\eta|\leq C_{5}\min\left\{ \frac{\omega}{\rho},\frac{\omega}{\eta}\right\} $
where $C_{5}\leq\gl[C]_{2}^{(1)}(1+C_{4})$ (Note, however, that $\omega=\sqrt{\gl[C]_{\omega}\frac{d\log n}{n}}$with
$d>1$). We denote this convergence time by $T_{a}$.

\uline{Step 3 (Refinement of the orthogonal iteration):} At the
end of the previous step, it was shown that the error in $|a_{t}-\eta|\lesssim\min\left\{ \frac{\omega}{\rho},\frac{\omega}{\eta}\right\} $,
whereas in the proceeding step it was shown that $b_{t}\lesssim\frac{\omega}{\rho}$.
We next refine the latter bound to $b_{t}\lesssim\frac{\omega}{\eta}$
in case $\eta>\rho$. Assume that $C_{f}\geq\frac{2}{3}C_{5}C_{l}$
so that after the previous step it holds that $a_{t}\in[\frac{\eta}{2},2\eta]$
for all $t\geq T_{a}$. Thus, it holds that 
\[
\frac{a^{2}+b^{2}}{2+4(a^{2}+b^{2})}\geq\frac{\eta^{2}/4}{2+4(4\gl[C]_{\theta}^{2}+c_{1}\frac{\omega^{2}}{\rho^{2}})}\geq C_{7}\eta^{2}
\]
for some $C_{7}>0$. Utilizing the bound of Proposition \ref{prop: Comparison theorem d>1}
item \ref{enu: comparison theorem d>1  - G itertaion explicit}, we
get 
\begin{align*}
b_{t+1} & \leq b_{t}\left(1-C_{7}\eta^{2}-\gl[C]_{1}^{(d)}\rho^{2}\right)+\max\left\{ \left|a_{t}\right|+b_{t},\rho\right\} \cdot\omega\\
 & \leq b_{t}\left(1-C_{8}(\eta^{2}+\rho^{2})\right)+C_{9}\eta\cdot\omega
\end{align*}
where $C_{8}\leq C_{7}+\gl[C]_{1}^{(d)}$ and $C_{9}\leq\max\{2\gl[C]_{\theta},1+\frac{c_{1}}{C_{1}^{2}}\}$.
From convergence properties of one-dimensional iterations, (Proposition
\ref{prop: Properties of one dimensional iterations}, item \ref{enu: general convergence in one-dim - convergence time of contraction plus constant})
$b_{t}\leq\frac{8C_{9}}{C_{8}}\cdot\frac{\omega}{\eta}$ for all $t\geq T_{a}+T_{b}$
where 
\[
T_{b}\leq\frac{2}{C_{8}\eta^{2}}\cdot\log\left(\frac{8C_{9}}{C_{8}}\frac{\omega}{\eta}\right)\,.
\]

\paragraph*{Small signal case}

Assume that $\eta<C_{0}\frac{\omega}{\rho}$. As the signal is small,
we show that the EM iteration remains small for all iterations. There
are only two steps - an analysis of the orthogonal iteration (without
assuming that $a_{t}$ is positive), and then the signal iteration.

\uline{Step 1 (Orthogonal iteration):}\textbf{ }In this case $a_{t}<0$
is possible, and so we cannot use the dominance relation to the balanced
iteration (Proposition \ref{prop: Comparison theorem d>1} item \ref{enu: comparison theorem d>1  - G itertaion dominance}).
However, since the signal is small, we may use Taylor expansion to
relate $G(a,b\mid\eta,\delta)$ to $G(a,b\mid0,\delta)$ (Proposition
\ref{prop: Comparison theorem d>1} item \ref{enu: comparison theorem d>1  - G iteration dominance eta}),
and then use the fact that for $\eta=0$, the weight $\delta$ does
not affect the iteration at all. Specifically, Proposition \ref{prop: Comparison theorem d>1},
items \ref{enu: comparison theorem d>1  - G itertaion dominance}
and \ref{enu: comparison theorem d>1  - G iteration dominance eta},
along with the assumption $\rho>C_{1}\sqrt{\omega}$ while requiring
that $C_{1}\geq\left(\frac{2\gl[C]_{G,\eta}^{(d)}}{\gl[C]_{1}^{(d)}}\right)^{1/4}\sqrt{C_{0}}$
imply that 
\begin{align*}
G(a,b\mid\eta,\delta) & \leq G(a,b\mid0,\delta)+b\gl[C]_{G,\eta}^{(d)}C_{0}^{2}\frac{\omega^{2}}{\rho^{2}}\\
 & \leq b\left(1-\frac{a^{2}+b^{2}}{2+4(a^{2}+b^{2})}-\gl[C]_{G,\rho}^{(d)}\rho^{2}+\gl[C]_{G,\eta}^{(d)}C_{0}^{2}\frac{\omega^{2}}{\rho^{2}}\right)\\
 & \leq b\left(1-\frac{a^{2}+b^{2}}{2+4(a^{2}+b^{2})}-\frac{\gl[C]_{G,\rho}^{(d)}}{2}\rho^{2}\right)
\end{align*}
for all $(a,b,\eta)\in[-C_{f},C_{f}]^{2}\times[0,\gl[C]_{\theta}]$.
Thus, the bound (\ref{eq: bound on G iteration empirical iteration proof})
holds for this case too (note that the error $\omega(a_{t}+b_{t}+\beta)$
with mixed signs for $a_{t}<0$ and $b_{t}>0$ is only lower than
both being positive). Hence, for all $(a,b,\eta)\in[-C_{f},C_{f}]^{2}\times[0,\gl[C]_{\theta}]$
it holds that 
\[
b_{t+1}\leq b_{t}\left(1-\frac{\gl[C]_{G,\rho}^{(d)}}{2}\rho^{2}\right)-\frac{b_{t}^{3}}{C_{2}}+\frac{C_{2}\omega^{2}}{4b_{t}}+\omega\rho.
\]
Similar analysis to the large signal case then yields $b_{t}\leq c_{1}\frac{\omega}{\rho}$
for all $t$.

\uline{Step 2 (Signal iteration):} The analysis is similar to the
large signal case, which shows that $|a_{t}-\eta|\leq C_{5}\frac{\omega}{\rho}$
for $t\geq T_{a}$. The conclusion then follows since for some $C_{6}>0$
\[
\left|\theta_{t}-\theta_{*}\right|\leq\left|a_{t}-\eta\right|+b_{t}\leq C_{6}\frac{\omega}{\rho}\,.
\]
\end{proof}
We complete the proof of Theorem \ref{thm:Main result} with the case
of $\rho\lesssim\sqrt{\omega}$: 
\begin{proof}[Proof of Proposition \ref{prop: empirical d>1 known delta small rho}]
The balanced iteration $f_{n}(\theta,\delta=\frac{1}{2}\mid\theta_{*},\delta=\frac{1}{2})=\E_{n}[X\cdot\tanh(X\theta))]$
is insensitive to the actual signs generating the samples $(X_{1},\ldots,X_{n})$,
and thus the convergence result of Theorem \ref{thm:balanced case}
holds, where we note that its error guarantee is w.r.t. $\ell_{0}$.
It is thus remain to show that $s_{t}$ correctly adjusts the sign
of $\theta_{t}$. Recall that $\rho=1-2\delta$, and let $\varepsilon=\frac{1}{\rho}\E_{n}[X]-\theta_{*}$.
Under the high probability event (\ref{eq: high probability event})
\[
\|\varepsilon\|=\|\frac{1}{\rho^{2}}\E_{n}[X]-\frac{1}{\rho^{2}}\E[X]\|=\frac{1}{\rho^{2}}\|f_{n}(\theta,\delta\mid\theta_{*},\delta)-f(\theta,\delta\mid\theta_{*},\delta)\|\leq\frac{\omega}{\rho}\,.
\]
By the guarantees of the balanced iteration, there exists $c_{0}$
such that for $t$ large enough (as specified in the theorem) $\|\theta_{t}-\theta_{*}\|\leq c_{0}\frac{\omega}{\eta}$.
Thus, if $\eta>c_{1}\sqrt{\omega}$ for properly large $c_{1}$, then
$|\langle\theta_{t},\theta_{*}\rangle|\geq\frac{\omega}{\rho}\|\theta_{t}\|\geq|\langle\theta_{t},\varepsilon_{n}\rangle|$.
Hence, 
\[
s_{t}=\sgn\langle\theta_{t},\frac{1}{\rho}\E_{n}[X]\rangle=\sgn\left(\langle\theta_{t},\theta_{*}\rangle+\langle\theta_{t},\varepsilon_{n}\rangle\right)=\sgn\langle\theta_{t},\theta_{*}\rangle\,.
\]
\end{proof}

\section{Proofs for Section \ref{subsec:The-weight-iteration}}

\subsection{Population iteration}

We begin with basic properties: 
\begin{lem}
\label{lem: weight simple properties} Assume that $\rho_{*}\geq0$
and that $\langle\theta,\theta_{*}\rangle>0$. Then: 
\begin{enumerate}
\item \label{enu: weight iteration simple properties - iteration}Iteration:
The iteration is consistent $h(\rho_{*},\theta_{*})=\rho_{*},$ at
the boundaries $\lim_{\rho\uparrow1}h(\rho,\theta)=1$ and $\lim_{\rho\downarrow-1}h(\rho,\theta)=-1$.
At $\rho=0$ 
\[
h(0,\theta)\geq\rho_{*}\left[1-e^{-\langle\theta,\theta_{*}\rangle/2}\right]\,.
\]
\item First order derivative: It holds that $\frac{\d}{\d\rho}h(\rho,\theta)>0$
and so $\rho\mapsto h(\rho,\theta)$ is increasing on $[-1,1]$. At
the left boundary $\lim_{\rho\downarrow-1}\frac{\dee}{\dee\rho}h(\rho,\theta)>1$,
and at the right boundary 
\[
\lim_{\rho\uparrow1}\frac{\dee}{\dee\rho}h(\rho,\theta)=e^{2\|\theta\|^{2}}\left[\left(\frac{1+\rho_{*}}{2}\right)e^{-2\langle\theta,\theta_{*}\rangle}+\left(\frac{1-\rho_{*}}{2}\right)\cdot e^{2\langle\theta,\theta_{*}\rangle}\right]
\]
for which $\lim_{\rho\uparrow1}\frac{\dee}{\dee\rho}h(\rho,\theta)>1$
if $\|\theta\|>|\langle\hat{\theta},\theta_{*}\rangle|$. 
\item \label{enu: weight iteration simple properties - second order derivative}Second
order derivative: There exists $\overline{\rho}\leq\rho_{*}$ such
that $h(\rho)$ is strictly concave on $[-1,\overline{\rho}]$ and
strictly convex on $[\overline{\rho},1]$. Consequently, $\frac{\dee h(\rho,\theta)}{\dee\rho}$
is strictly decreasing on $[-1,\overline{\rho}]$ and strictly increasing
on $[\overline{\rho},1]$. 
\item \label{enu: weight iteration simple properties - derivative from zero to rho_star}Contractivity
at $\rho\in[0,\rho_{*}]$: If $\|\theta\|>|\langle\hat{\theta},\theta_{*}\rangle|$
then 
\[
\max_{\rho\in[0,\rho_{*}]}\frac{\dee h(\rho,\theta)}{\dee\rho}\leq e^{-|\langle\hat{\theta},\theta_{*}\rangle|^{2}/2}\cdot\max\left\{ \frac{5}{6},1-\frac{\|\theta\|^{2}}{6}\right\} \,.
\]
\item \label{enu: weight iteration simple properties - second derivative at theta_star}Bounded
second derivative for $\theta=\theta_{*}$: $\max_{\rho\in(0,\gl[C]_{\rho})}\frac{\dee^{2}h(\rho,\theta_{*})}{\dee\rho^{2}}\leq\gl[C]''_{h}\cdot\eta^{2}$
for $\gl[C]''_{h}=\frac{32+36(4\gl[C]_{\theta}^{2}+1)}{(1-\gl[C]_{\rho}^{2})^{2}}$. 
\end{enumerate}
\end{lem}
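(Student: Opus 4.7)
The plan is to verify the five assertions directly from the closed-form representation $h(\rho,\theta)=\E[\tanh(\|\theta\|V+\beta_\rho)]$ with $V\sim\alpha N(\mu,1)+(1-\alpha)N(-\mu,1)$, $\alpha=(1+\rho_*)/2$, $\mu=\langle\hat\theta,\theta_*\rangle$. Repeated differentiation under the integral sign together with $d\beta_\rho/d\rho=1/(1-\rho^2)$ converts every derivative claim into an expectation of a hyperbolic function, which can then be signed or bounded; the Gaussian variation-diminishing principle (Proposition \ref{prop: Gaussian variation diminishing}) is invoked when counting sign changes.

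Items 1 and 2 are routine. Consistency $h(\rho_*,\theta_*)=\rho_*$ is the Bayes-rule identity (\ref{eq: expected value of the sign given sample}) combined with $\E[S]=\rho_*$; the boundary limits follow by dominated convergence; and the lower bound at $\rho=0$ follows by using oddness of $\tanh$ and $Z\eqd -Z$ to reduce $h(0,\theta)=\rho_*\,\E_Z[\tanh(\langle\theta,\theta_*\rangle+\|\theta\|Z)]$, then applying the elementary inequality $1-\tanh t\leq 2/(1+e^{2t})\leq 2e^{-t}$ together with a Gaussian moment estimate. For item 2, $h'(\rho)=\E[\mathrm{sech}^2(\|\theta\|V+\beta_\rho)]/(1-\rho^2)>0$; the boundary limits are obtained by writing $\mathrm{sech}^2(u)=4e^{-2u}/(1+e^{-2u})^2$ and using $e^{-2\beta_\rho}=(1-\rho)/(1+\rho)$ so that the singular factor $1/(1-\rho^2)$ cancels against the exponential decay of $\mathrm{sech}^2$, leaving the explicit expectation in the statement. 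The condition $\|\theta\|>|\mu|$ yielding $>1$ at $\rho\uparrow 1$ follows by Jensen: $\E[e^{-2\|\theta\|V}]\geq e^{-2\rho_*\|\theta\|\mu}$ gives a limit at least $e^{2\|\theta\|^2-2\rho_*\|\theta\|\mu}>1$ whenever $\|\theta\|>\rho_*|\mu|$, which is implied by $\|\theta\|>|\mu|$.

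Item 3 is the main obstacle. The computation yields $h''(\rho)\propto\E[\mathrm{sech}^2(U_\rho)(\rho-\tanh U_\rho)]$ with $U_\rho=\|\theta\|V+\beta_\rho$, and the identity $\rho-\tanh U_\rho=\tanh\beta_\rho-\tanh U_\rho=-\sinh W/[\cosh\beta_\rho\cosh U_\rho]$, where $W=\|\theta\|V$ is independent of $\rho$, reduces the sign analysis to $K(\beta):=\E[\sinh(W)\mathrm{sech}^3(W+\beta)]$; thus $\mathrm{sgn}\,h''(\rho)=-\mathrm{sgn}\,K(\beta_\rho)$, and since $\rho\mapsto\beta_\rho$ is an order-preserving bijection, it suffices to show $K$ changes sign exactly once on $\mathbb{R}$, from $+$ to $-$. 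Using $W=AS+\|\theta\|Z$ with $A=\langle\theta,\theta_*\rangle$, $h''_\beta=\tanh''*p$ where $p$ is the law of $W$, and factoring $p=p_S*\phi_{\|\theta\|}$ with $p_S$ supported on $\{\pm A\}$ gives $h''_\beta(\beta)=\alpha\Psi''(\beta+A)+(1-\alpha)\Psi''(\beta-A)$, where $\Psi=\tanh*\phi_{\|\theta\|}$. Since $\tanh''$ is odd with one sign change, Gaussian VDP forces $\Psi''$ to be odd with a single sign change at $0$, negative on $(0,\infty)$; consequently $h''_\beta>0$ for $\beta<-A$ and $h''_\beta<0$ for $\beta>A$, so the sign change lies in $(-A,A)$. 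The technical heart is ruling out multiple zeros inside $(-A,A)$: setting $\Phi:=-\Psi''>0$ on $(0,\infty)$, the zero equation becomes $\log\Phi(A+\beta)-\log\Phi(A-\beta)=\log\tfrac{1-\alpha}{\alpha}$, whose left-hand side tends to $\pm\infty$ at $\beta=\pm A$ and whose derivative $(\log\Phi)'(A+\beta)+(\log\Phi)'(A-\beta)$ is shown to be strictly positive on $(-A,A)$ by a direct analysis of $\Psi'''=\tanh'''*\phi$, using $\tanh'''(u)=2\mathrm{sech}^2(u)(3\tanh^2 u-1)$; this pins down the unique inflection point. The bound $\overline\rho\leq\rho_*$ then follows by evaluating $h''(\rho_*)$ at $\theta=\theta_*$: consistency gives $\E[\tanh U_{\rho_*}]=\rho_*$, and a Chebyshev-style coupling between $\mathrm{sech}^2 U_{\rho_*}$ and $\rho_*-\tanh U_{\rho_*}$ shows $h''(\rho_*)\geq 0$, placing the inflection at or below $\rho_*$.

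Items 4 and 5 reduce to elementary bounds. For contractivity on $[0,\rho_*]$, apply the change-of-measure technique of Lemma \ref{lem: d=00003D00003D1 simple properties}, item \ref{enu: d=00003D00003D1 simple properties - first derivative}, to shift the Gaussian part of $V$ to zero mean at the cost of the factor $e^{-\mu^2/2}$, and then upper-bound the resulting symmetrized integrand using inequality (\ref{eq: symmetrized ratio}); the maximum $\max\{5/6,1-\|\theta\|^2/6\}$ absorbs the two regimes $\|\theta\|\gtrsim 1$ and $\|\theta\|\lesssim 1$ via $e^{-\|\theta\|^2/2}\leq\max\{e^{-1},1-\|\theta\|^2/4\}$, combined with the fact that $\rho\in[0,\rho_*]$ limits the contribution of the $1/(1-\rho^2)$ factor in $h'$. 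For item 5, differentiating twice and bounding $|\tanh|\leq 1$, $\mathrm{sech}^2\leq 1$, $\E[V^2]\lesssim 1+\eta^2$, and $1/(1-\rho^2)^2\leq 1/(1-\gl[C]_\rho^2)^2$ produces the explicit constant $\gl[C]''_h=[32+36(4\gl[C]_\theta^2+1)]/(1-\gl[C]_\rho^2)^2$ after collecting the constants $32$ and $36$ from the two summands of the derived expression for $h''$.
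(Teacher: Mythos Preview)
Your handling of items 1 and 2 is essentially the paper's, and your item 4 uses the right change-of-measure and symmetrized-ratio ingredients for the endpoint bounds; however, you are missing the reduction of $\max_{\rho\in[0,\rho_*]}h'(\rho)$ to the endpoints, which the paper obtains \emph{from} item 3 (since $h'$ is first decreasing then increasing, its maximum on any interval is at an endpoint). Your remark that ``$\rho\in[0,\rho_*]$ limits the contribution of the $1/(1-\rho^2)$ factor'' is not the mechanism used and would not by itself give the stated bound uniformly in $\rho$.

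\textbf{Item 3: a genuine gap, and a much simpler route.} Your VDP decomposition $h''_\beta=\alpha\Psi''(\cdot+A)+(1-\alpha)\Psi''(\cdot-A)$ is correct, but the crux---that $\beta\mapsto\log\Phi(A+\beta)-\log\Phi(A-\beta)$ is strictly increasing on $(-A,A)$---is not established. You reduce it to $(\log\Phi)'(A+\beta)+(\log\Phi)'(A-\beta)>0$ and claim this follows from ``direct analysis of $\Psi'''=\tanh'''*\phi$''; but $\tanh'''(u)=2\,\mathrm{sech}^2u\,(3\tanh^2u-1)$ has \emph{two} sign changes, so $\Psi'''$ is even and in general sign-changing, and $(\log\Phi)'=-\Psi'''/\Phi$ has no evident sign or monotonicity on $(0,2A)$. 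Nothing in your outline forces the sum of the two logarithmic derivatives to be positive. The paper avoids all of this by differentiating once more in $\rho$:
\[
h'''(\rho)=\frac{6}{(1-\rho^2)^3}\,\E\!\left[\mathrm{sech}^2(U_\rho)\,(\rho-\tanh U_\rho)^2\right]
=\frac{3}{(1-\rho^2)^2}\,\E\!\left[\frac{\cosh(2\|\theta\|V)-1}{\cosh^4(\|\theta\|V+\beta_\rho)}\right]>0,
\]
which is manifestly positive. Hence $h''$ is strictly increasing in $\rho$ and has at most one zero; together with $h''(0)<0$ (your oddness argument, or the paper's), this gives the unique inflection point immediately. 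Your machinery is unnecessary here.

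\textbf{Item 5: the $\eta^2$ does not come from the bounds you list.} From $h''(\rho)=\tfrac{2}{(1-\rho^2)^2}\E[(\rho-\tanh U_\rho)\,\mathrm{sech}^2 U_\rho]$, the inequalities $|\tanh|\le 1$, $\mathrm{sech}^2\le 1$, $\E[V^2]\lesssim 1+\eta^2$ yield only $|h''(\rho)|=O(1)$; there is no $\eta^2$ factor in sight. The paper obtains the $\eta^2$ by a \emph{Taylor expansion in $\eta=\|\theta_*\|$} at fixed $\rho$: writing $a(\eta):=\tfrac{(1-\rho^2)^2}{2}h''(\rho)$, one checks $a(0)=0$ (because $\rho-\tanh\beta_\rho=0$) and $a'(0)=0$ (by oddness in the Gaussian variable), and then bounds $|a''(\eta)|\le 32+36(4\gl[C]_\theta^2+1)$ by direct estimation of the second $\eta$-derivative. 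The constants $32$ and $36$ you quote arise from \emph{that} bound on $a''(\eta)$, not from any crude estimate of $h''$ itself; your sketch does not contain this mechanism.
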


\begin{proof}
Let $Z\sim N(0,1)$ and $U\sim N(\eta,1)$ where $\eta=\langle\hat{\theta},\theta_{*}\rangle$. 
\begin{enumerate}
\item Consistency for $\theta=\theta_{*}$ is well-known and can be proved
as in the proof of Lemma \ref{lem: d=00003D00003D1 simple properties},
item \ref{enu: d=00003D00003D1 simple properties - iteration}.\textbf{
}The limits at the boundaries are immediate. At $\rho=0$ 
\begin{align*}
h(0,\theta) & =\E\left[\tanh\left(\|\theta\|V\right)\right]\\
 & =\left(\frac{1+\rho_{*}}{2}\right)\cdot\E\left[\tanh\left(\|\theta\|U\right)\right]+\left(\frac{1-\rho_{*}}{2}\right)\cdot\E\left[\tanh\left(-\|\theta\|U\right)\right]\\
 & =\rho_{*}\E\left[\tanh\left(\|\theta\|U\right)\right]\\
 & \geq\rho_{*}\left[1-e^{-\langle\theta,\theta_{*}\rangle/2}\right]\,.
\end{align*}
where the last inequality is from (\ref{eq: lower bound on expected tanh})
in Appendix \ref{subsec:Usful-results}. 
\item By direct computation 
\[
h'(\rho)\dfn\frac{\dee h(\rho,\theta)}{\dee\rho}=\frac{1}{1-\rho^{2}}\cdot\E\left[\frac{1}{\cosh^{2}\left(\|\theta\|V+\beta_{\rho}\right)}\right]=\E\left[\frac{1}{\left[(\frac{1+\rho}{2})e^{\|\theta\|V}+(\frac{1-\rho}{2})e^{-\|\theta\|V}\right]^{2}}\right]>0
\]
and so the iteration is monotonically increasing. The limit at $\rho=-1$
\begin{align*}
\lim_{\rho\downarrow-1}h'(\rho) & =\E\left[e^{2\|\theta\|V}\right]=\left(\frac{1+\rho_{*}}{2}\right)\cdot\E\left[e^{2\|\theta\|U}\right]+\left(\frac{1-\rho_{*}}{2}\right)\cdot\E\left[e^{-2\|\theta\|U}\right]\\
 & =e^{2\|\theta\|^{2}}\left[\left(\frac{1+\rho_{*}}{2}\right)e^{2\langle\theta,\theta_{*}\rangle}+\left(\frac{1-\rho_{*}}{2}\right)\cdot e^{-2\langle\theta,\theta_{*}\rangle}\right]\\
 & >1
\end{align*}
where the last inequality is since $\left(\frac{1+\rho_{*}}{2}\right)s+\left(\frac{1-\rho_{*}}{2}\right)s^{-1}$
for $s\in[1,\infty)$ is minimized for $s=1$ (assuming $\rho_{*}>0$).
Similarly, the limit at $\rho=1$ 
\begin{align}
\lim_{\rho\uparrow1}h'(\rho) & =\E\left[e^{-2\|\theta\|V}\right]=\left(\frac{1+\rho_{*}}{2}\right)\cdot\E\left[e^{-2\|\theta\|U}\right]+\left(\frac{1-\rho_{*}}{2}\right)\cdot\E\left[e^{2\|\theta\|U}\right]\nonumber \\
 & =e^{2\|\theta\|^{2}}\left[\left(\frac{1+\rho_{*}}{2}\right)e^{-2\langle\theta,\theta_{*}\rangle}+\left(\frac{1-\rho_{*}}{2}\right)\cdot e^{2\langle\theta,\theta_{*}\rangle}\right]\,.\label{eq: derivative of h at rho=00003D00003D1}
\end{align}
\item By direct computation 
\[
h''(\rho)\dfn\frac{\dee^{2}h(\rho,\theta)}{\dee\rho^{2}}=\frac{2}{(1-\rho^{2})^{2}}\cdot\E\left[\frac{\rho-\tanh\left(\|\theta\|V+\beta_{\rho}\right)}{\cosh^{2}\left(\|\theta\|V+\beta_{\rho}\right)}\right]\,,
\]
and evidently, $\lim_{\rho\uparrow1}h''(\rho)>0$. At $\rho=0$, 
\[
h''(0)=-2\cdot\E\left[\frac{\tanh\left(\|\theta\|V\right)}{\cosh^{2}\left(\|\theta\|V\right)}\right]<0\,,
\]
since $\P[V=v]>\P[V=-v]$ for any $v>0$ (see (\ref{eq: GM positive vs negative probability})
in Appendix \ref{subsec:Usful-results}) and $\tanh$ is an odd function.
To show that $h(\rho)$ changes its curvature from convex to concave
as $\rho$ increases from $-1$ to $1$ only a single time at some
$\overline{\rho}$, we note that: 
\[
h'''(\rho)\dfn\frac{\dee^{3}h(\rho,\theta)}{\dee\rho^{3}}=\frac{3}{(1-\rho^{2})^{2}}\cdot\E\left[\frac{\cosh\left(2\|\theta\|V\right)-1}{\cosh^{4}\left(\|\theta\|V+\beta_{\rho}\right)}\right]>0\,.
\]
Thus, $h''(\rho)$ is monotonically increasing. The fact that $\overline{\rho}\leq\rho_{*}$
follows from $h''(\rho_{*})>0$ but we omit the full proof since this
property is inconsequential for further analysis. 
\item We prove the claimed bound at the edge points of the interval $[0,\rho_{*}]$,
and then the same bound holds at the interior of the interval since
the property of $h''(\rho)$ stated in item \ref{enu: weight iteration simple properties - second order derivative}
implies that $\max_{\rho\in[0,\rho_{*}]}h'(\rho)$ is bounded by its
values at the edge points. At $\rho=0$, let $V_{\frac{1}{2}}\sim\frac{1}{2}\cdot N(\eta,1)+\frac{1}{2}\cdot N(-\eta,1)$
be a balanced version of $V$, where we recall that $\eta=\langle\hat{\theta},\theta_{*}\rangle$
here. Then, 
\begin{align}
h'(0) & =\E\left[\frac{1}{\cosh^{2}\left(\|\theta\|V\right)}\right]\nonumber \\
 & \trre[=,a]\E\left[\frac{1}{\cosh^{2}(\|\theta\|V_{\frac{1}{2}})}\right]\nonumber \\
 & \trre[=,b]e^{-\eta^{2}/2}\cdot\E\left[\frac{1}{\cosh(\|\theta\|Z)}\cdot\frac{e^{\eta Z}+e^{-\eta Z}}{e^{\|\theta\|Z}+e^{-\|\theta\|Z}}\mid Z>0\right]\nonumber \\
 & \trre[\leq,c]e^{-\eta^{2}/2}\cdot\E\left[\frac{1}{\cosh(\|\theta\|Z)}\right]\nonumber \\
 & \trre[\leq,d]e^{-\eta^{2}/2}\cdot\left(1-\frac{\|\theta\|^{2}}{2(1+2\|\theta\|^{2})}\right)\nonumber \\
 & \leq e^{-\eta^{2}/2}\cdot\begin{cases}
\frac{5}{6}, & \|\theta\|\leq1\\
1-\frac{\|\theta\|^{2}}{6}, & \|\theta\|>1
\end{cases}\,,\label{eq: weight iteration bound on first derivative at zero}
\end{align}
where $(a)$ is since $\cosh^{2}(t)$ is even, $(b)$ is by a change
of measure (see (\ref{eq: change of measure}) in Appendix \ref{subsec:Usful-results})
and symmetry, $(c)$ is since $\max_{a\geq b\geq1}\frac{b+b^{-1}}{a+a^{-1}}=\max\left\{ 1,\max_{a\geq1}\frac{2}{a+a^{-1}}\right\} \leq1$
(e.g., by the inequality of arithmetic and geometric means), and $(d)$
is by \cite[eq.(125) and Lemma 24]{wu2019EM}. At $\rho=\rho_{*}$
it holds that 
\begin{align*}
h'(\rho_{*}) & =\E\left[\frac{1}{\left[\left(\frac{1+\rho_{*}}{2}\right)e^{\|\theta\|V}+\left(\frac{1-\rho_{*}}{2}\right)e^{-\|\theta\|V}\right]^{2}}\right]\\
 & \trre[=,a]e^{-\eta^{2}/2}\cdot\E\left[\frac{1}{\left(\frac{1+\rho_{*}}{2}\right)e^{\|\theta\|Z}+\left(\frac{1-\rho_{*}}{2}\right)e^{-\|\theta\|Z}}\cdot\frac{\left(\frac{1+\rho_{*}}{2}\right)e^{\eta Z}+\left(\frac{1-\rho_{*}}{2}\right)e^{-\eta Z}}{\left(\frac{1+\rho_{*}}{2}\right)e^{\|\theta\|Z}+\left(\frac{1-\rho_{*}}{2}\right)e^{-\|\theta\|Z}}\right]\\
 & \trre[\leq,b]e^{-\eta^{2}/2}\cdot\E\left[\frac{1}{\left(\frac{1+\rho_{*}}{2}\right)e^{\|\theta\|Z}+\left(\frac{1-\rho_{*}}{2}\right)e^{-\|\theta\|Z}}\right]\\
 & =e^{-\eta^{2}/2}\cdot\E\left[\frac{1}{\cosh(\|\theta\|Z+\beta)}\right]\\
 & \trre[\leq,c]e^{-\eta^{2}/2}\cdot\E\left[\frac{1}{\cosh(\|\theta\|Z)}\right]\\
 & \trre[\leq,d]e^{-\eta^{2}/2}\cdot\begin{cases}
\frac{5}{6}, & \|\theta\|\leq1\\
1-\frac{\|\theta\|^{2}}{6}, & \|\theta\|>1
\end{cases}\,,
\end{align*}
where $(a)$ is again by a change of measure, $(b)$ is as in (\ref{eq: mean iteration d=00003D00003D1 simple properties derivative bound})
assuming $\|\theta\|\geq\eta=\langle\hat{\theta},\theta_{*}\rangle$,
$(c)$ is since the argument inside the expectation is an even function
of $\beta$, and using arguments similar to (\ref{eq: expectation of tanh over cosh squared})
to show that 
\[
\frac{\dee}{\dee\beta}\E\left[\frac{1}{\cosh(\|\theta\|Z+\beta)}\right]=-\E\left[\frac{\tanh(\|\theta\|Z+\beta)}{\cosh(\|\theta\|Z+\beta)}\right]<0\,,
\]
and $(d)$ is as in (\ref{eq: weight iteration bound on first derivative at zero}). 
\item To bound the second derivative let $\eta=\|\theta_{*}\|$ and $V\sim(1-\delta_{*})\cdot N(\eta,1)+\delta_{*}\cdot N(-\eta,1)$.
Then, for any $\rho\in[0,\gl[C]_{\rho})$ 
\begin{align*}
h''(\rho) & =\frac{2}{(1-\rho^{2})^{2}}\cdot\E\left[\frac{\rho-\tanh\left(\eta V+\beta_{\rho}\right)}{\cosh^{2}\left(\eta V+\beta_{\rho}\right)}\right]\dfn\frac{2}{(1-\rho^{2})^{2}}\cdot a(\eta)
\end{align*}
and using $\psi_{\pm}=\eta^{2}+\eta Z\pm\beta_{\rho}$ with $Z\sim N(0,1)$
we may write 
\[
a(\eta)=(1-\delta_{*})\cdot\E\left[\frac{\rho-\tanh(\psi_{+})}{\cosh^{2}(\psi_{+})}\right]+\delta_{*}\cdot\E\left[\frac{\rho+\tanh(\psi_{-})}{\cosh^{2}(\psi_{-})}\right]\,.
\]
We will bound $a(\eta)$ by its Taylor expansion around $\eta=0$.
Note that $a(0)=0$ (since $\rho=\tanh(\beta_{\rho})$), and that
the first derivative is 
\begin{align*}
a'(\eta)=\frac{\dee a(\eta)}{\dee\eta} & =-4(1-\delta_{*})\cdot\E\left[(2\eta+Z)\cdot\frac{2+\rho\sinh(2\psi_{+})-\cosh(2\psi_{+})}{\left[1+\cosh^{2}\left(2\psi_{+}\right)\right]^{2}}\right]\\
 & \hphantom{=}+4\delta_{*}\cdot\E\left[(2\eta+Z)\cdot\frac{2+\rho\sinh(2\psi_{-})-\cosh(2\psi_{-})}{\left[1+\cosh^{2}\left(2\psi_{-}\right)\right]^{2}}\right]
\end{align*}
and so $a'(0)=0$. Next we upper bound the second derivative $a''(\eta)=\frac{\dee a^{2}(\eta)}{\dee\eta^{2}}$.
As $a'(\eta)$ in the last display is comprised from a mixture of
two expectations, we only bound the first (and the second one can
be bounded similarly by the same bound). So, 
\begin{align*}
 & \frac{\dee}{\dee\eta}\E\left[(2\eta+Z)\cdot\frac{2+\rho\sinh(2\psi_{+})-\cosh(2\psi_{+})}{\left[1+\cosh^{2}\left(2\psi_{+}\right)\right]^{2}}\right]\\
 & =2\cdot\E\left[\frac{2+\rho\sinh(2\psi_{+})-\cosh(2\psi_{+})}{\left[1+\cosh^{2}\left(2\psi_{+}\right)\right]^{2}}\right]\\
 & \hphantom{=}+\E\left[(2\eta+Z)^{2}\cdot\frac{\rho+\rho\cosh(2\psi_{+})-5\sinh(2\psi_{+})+\cosh(2\psi_{+})\sinh(2\psi_{+})-\rho\sinh^{2}(2\psi_{+})}{\left[1+\cosh^{2}\left(2\psi_{+}\right)\right]^{3}}\right]\,.
\end{align*}
Using $|\sinh(t)|\leq|\cosh(t)|$ and the triangle inequality, the
absolute value of the above expression is bounded from above by 
\[
2(3+\rho)+(3\rho+6)\cdot\E\left[(2\eta+Z)^{2}\right]\leq8+9(4\eta^{2}+1).
\]
Hence, $a''(\eta)\leq32+36(4\gl[C]_{\theta}^{2}+1)$ for all $\eta\in\gl[C]_{\theta}$
and the result follows from Taylor expansion. 
\end{enumerate}
\end{proof}
We may now prove the convergence of the population iteration. 
\begin{proof}[Proof of Theorem \ref{thm: population weight fixed mean}]
For brevity, we denote the iteration by $h(\rho)$. Let $h'(\rho)=\frac{\dee}{\dee\rho}h(\rho,\theta)$
and recall that Lemma \ref{lem: weight simple properties} states
that: $h'(-1)>1$; that there exists a $\overline{\rho}$ such that
$h'(\rho)$ is strictly decreasing in $(-1,\overline{\rho})$ and
strictly increasing in $(\overline{\rho},1)$; that $\rho=\pm1$ are
fixed points of $h(\rho)=h(\rho,\theta)$. Note also that an explicit
expression for $h'(1)$ is given in (\ref{eq: derivative of h at rho=00003D00003D1}).
We show that: 
\begin{enumerate}
\item If $h'(1)\leq1$ then $h(\rho)$ has no fixed points in $(-1,1)$. 
\item If $h'(1)>1$ then $h(\rho)$ has a unique fixed point $\rho_{\#}\in(-1,1)$. 
\end{enumerate}
In the second case, we may deduce that $h(\rho)>\rho$ for $\rho\in(-1,\rho_{\#})$
and $h(\rho)<\rho$ for $\rho\in(\rho_{\#},1)$. By Lemma \ref{lem: weight simple properties}
item 2, $h(\rho)$ is increasing, and so Proposition \ref{prop: Properties of one dimensional iterations},
item \ref{enu: general convergence in one-dim - convergence of monotonic}\textbf{
}and analogous arguments imply that the iteration $\rho_{t+1}=h(\rho_{t})$
will converge monotonically upwards (resp. downwards) to $\rho_{\#}$
if $\rho_{0}\in(-1,\rho_{\#}]$ (resp. $\rho_{0}\in[\rho_{\#},1)$).
By consistency (Lemma \ref{lem: weight simple properties}) $\rho_{\#}=\rho_{*}$
for $\theta=\theta_{*}$.

\uline{Case }%
\mbox{%
$h'(1)<1$%
}\uline{:} By the properties mentioned above, it must be that there
exists $\tilde{\rho}\in(-1,1]$ such that 
\begin{equation}
h'(\rho)\;\begin{cases}
>1, & -1\leq\rho\leq\tilde{\rho}\\
=1, & \rho=\tilde{\rho}\\
<1, & \tilde{\rho}<\rho\leq1
\end{cases}\,.\label{eq: rho derivative options first case}
\end{equation}
Assume by contradiction that $\rho_{1}\in(-1,1)$ is a fixed point.
Further assume that there are no other fixed points in $(-1,\rho_{1})$.\footnote{The fixed points of $h(\rho)$ must be isolated; see Proposition \ref{prop: Properties of one dimensional iterations},
item \ref{enu: general convergence in one-dim - alternating slope}.} Since $h(-1)>1$, Proposition \ref{prop: Properties of one dimensional iterations},
item \ref{enu: general convergence in one-dim - alternating slope}
implies that $h'(\rho_{1})\leq1$. Hence, (\ref{eq: rho derivative options first case})
implies that $\rho_{1}\geq\tilde{\rho}$, and so $h'(\rho)<1$ for
all $\rho\in(\rho_{1},1)$. But this implies 
\begin{equation}
h(1)=h(\rho_{1})+\int_{\rho_{1}}^{1}h'(\rho)\d\rho\leq\rho_{1}+(1-\rho_{1})\cdot\max_{\rho\in[\rho_{1},1]}h'(\rho)<1\label{eq: weight population iteration proof contradiction with integral}
\end{equation}
which contradicts the property $h(1)=1$.

\uline{Case }%
\mbox{%
$h'(1)>1$%
}\uline{:} In this case $h(\rho)<\rho$ for $\rho$ close enough
to $\rho=1$ from below, and as $h'(-1)>1$ then $h(\rho)>\rho$ for
$\rho$ close enough to $\rho=-1$ from above. By the intermediate
value theorem for $h(\rho)-\rho$ for $\rho\in[-1,1]$, there must
exists at least a single fixed point $\rho_{\#}$ for $h(\rho)$ in
$(-1,1)$. We show that $\rho_{\#}$ is unique. By the properties
mentioned above, it must be that there exists $-1<\tilde{\rho}_{-}<\tilde{\rho}_{+}<1$
such that 
\begin{equation}
h'(\rho)\;\begin{cases}
>1, & -1\leq\rho<\tilde{\rho}_{-}\\
=1, & \rho=\tilde{\rho}_{-}\\
<1 & \tilde{\rho}_{-}<\rho<\tilde{\rho}_{-}\\
=1, & \rho=\tilde{\rho}_{+}\\
>1, & \tilde{\rho}_{+}<\rho\leq1
\end{cases}\label{eq: rho derivative options second case}
\end{equation}
(and also $h'(\rho)$ decreases in $(-1,\overline{\rho})$ and increases
in $(\overline{\rho},1)$ where $\overline{\rho}\in(\tilde{\rho}_{-},\tilde{\rho}_{+})$).
If there are multiple fixed points in $(-1,1)$ then we denote by
$\rho_{\#}$ the minimal one. Since $h'(-1)>1$ Proposition \ref{prop: Properties of one dimensional iterations},
item \ref{enu: general convergence in one-dim - alternating slope}
implies that $h'(\rho_{\#})\leq1$, and, in fact, a similar argument
to (\ref{eq: weight population iteration proof contradiction with integral})
together with (\ref{eq: rho derivative options second case}) show
that $h'(\rho_{\#})<1$. We next separately show that there are no
fixed points in $(-1,\rho_{\#})$ and in $(\rho_{\#},1)$: 
\begin{itemize}
\item Assume by contradiction that $\rho_{1}\in(\rho_{\#},1)$ is a fixed
point, and further assume that there are no other fixed points in
$(\rho_{\#},\rho_{1})$. By Proposition \ref{prop: Properties of one dimensional iterations},
item \ref{enu: general convergence in one-dim - alternating slope},
it holds that $h'(\rho_{1})\geq1$. Since $h'(\rho)$ has (strictly)
increased from $h'(\rho_{\#})<1$ to $h'(\rho_{1})\geq1$, (\ref{eq: rho derivative options second case})
implies that $h'(\rho)$ is strictly increasing on $(\rho_{1},1)$.
Hence, $h(\rho)>\rho$ for all $\rho\in(\rho_{1},1)$. However, as
$\rho=1$ is a fixed point, and $h'(1)>1$, continuity of $h(\rho)$
implies that there exists $\rho_{1}<\tilde{\rho}<1$ such that $h(\rho)<\rho$
for $\rho\in(\tilde{\rho},1)$; a contradiction. 
\item Assume by contradiction that $\rho_{1}\in(-1,\rho_{*})$ is a fixed
point, and further assume that $\rho_{1}$ is such that there are
no other fixed points in $(-1,\rho_{1})$. Since $h(-1)>1$, Proposition
\ref{prop: Properties of one dimensional iterations}, item \ref{enu: general convergence in one-dim - alternating slope},
implies that $h'(\rho_{1})\leq1$. We consider separately the cases
$h'(\rho_{1})<1$ and $h'(\rho_{1})=1$. First, if $h'(\rho_{1})<1$,
then there exists $\overline{\rho}$ such that $h(\rho)<\rho$ for
$(\rho_{1},\overline{\rho})$. Since $h'(\rho_{\#})<1$ it holds that
there exists $\grave{\rho}$ such that $h(\rho)>\rho$ for $\rho\in(\grave{\rho},\rho_{*})$.
By the mean value theorem, there must exist at least one more fixed
point $\rho_{2}\in(\rho_{1},\rho_{\#})$ for which $h'(\rho_{2})>1$
(Proposition \ref{prop: Properties of one dimensional iterations},
item \ref{enu: general convergence in one-dim - alternating slope}).
Since $h'(\rho)$ has increased from $h'(\rho_{1})<1$ to $h(\rho_{2})>1$,
(\ref{eq: rho derivative options second case}) implies that we must
have that $\tilde{\rho}_{+}\leq\rho_{2}$. But since $\rho_{\#}>\rho_{2}$
(\ref{eq: rho derivative options second case}) implies that $h'(\rho_{\#})>1$;
a contradiction since it holds $h'(\rho_{\#})<1$. Second, if $h'(\rho_{1})=1$,
then if, in addition, $\rho_{1}=\tilde{\rho}_{+}$ then $h'(\rho)$
is strictly increasing on $(\rho_{1},1)$ which will result $h'(\rho_{\#})>1$;
a contradiction. If $h'(\rho_{1})=1$ and $\rho_{1}=\tilde{\rho}_{-}$
then a similar proof to the case $h'(\rho_{1})<1$ holds verbatim. 
\end{itemize}
\end{proof}
\begin{proof}[Proof of Proposition \ref{prop: population weight wrong mean effect}]
As discussed in the beginning of Section \ref{subsec:The-weight-iteration},
we may assume $d=1$, with $\eta=\langle\hat{\theta},\theta_{*}\rangle$.
Per the statement of the theorem, we assume that $\theta>0$ and that
there exists a fixed point $\rho_{\#}\in(-1,1)$. According to Theorem
\ref{thm: population weight fixed mean}, this fixed point must be
unique and so $h(\rho,\theta\eta,\rho_{*})>\rho$ for $\rho\in(-1,\rho_{\#})$
and $h(\rho,\theta\mid\eta,\rho_{*})<\rho$ for $\rho\in(\rho_{\#},1)$.
Consequently, the location of $\rho_{\#}$ with respect to $\rho_{*}$
may be determined by comparing $h(\rho_{*},\theta\mid\eta,\rho_{*})$
to $=h(\rho_{*},\eta\mid\eta,\rho_{*})=\rho_{*}$. Specifically, it
suffices to show that $h(\rho_{*},\theta\mid\eta,\rho_{*})>\rho_{*}$
for $\theta\in(0,\eta)$ and $h(\rho_{*},\theta\mid\eta,\rho_{*})<\rho_{*}$
for $\theta>\eta$. In the former case, this implies that $\rho_{\#}>\rho_{*}$
and in the latter case, this implies that $\rho_{\#}<\rho_{*}$ (and
that such $\rho_{\#}\in(-1,1)$ exists). To show that property, we
take similar strategy as in the analysis of the mean iteration (Proposition
\ref{prop: Comparison theorem d=00003D00003D1}), and prove this ``global''
property by exploring $h(\rho_{*},\theta\mid\eta,\rho_{*})$ as a
function of $\eta$ for a fixed $\theta$. We thus denote it here
explicitly as $k(\theta\mid\eta)\dfn h(\rho_{*},\theta\mid\eta,\rho_{*})$.
Thus, it boils down to show that for $\theta>0$ 
\begin{equation}
\begin{cases}
k(\theta\mid\eta)<k(\theta\mid\theta)=\rho_{*}, & \theta>\eta\\
k(\theta\mid\eta)>k(\theta\mid\theta)=\rho_{*}, & \theta<\eta
\end{cases}\,.\label{eq: dominance relation weight iteration}
\end{equation}
To this end, note that 
\begin{align}
 & k(\theta\mid\eta)-k(\theta\mid\theta)\nonumber \\
 & =\E\left\{ (1-\delta_{*})\left[\tanh\left(\theta U+\beta_{\delta_{*}}\right)-1\right]-\delta_{*}\left[\tanh\left(\theta U-\beta_{\delta_{*}}\right)-1\right]\right\} \nonumber \\
 & =\E\left[s(\theta U)\right]=s(\theta\eta)*\varphi(\eta)\label{eq: convolution relation for weight iteration}
\end{align}
where $\varphi(\eta)=\frac{1}{\sqrt{2\pi}}e^{-\eta^{2}/2}$ is the
Gaussian kernel and 
\[
s(u)\dfn(1-\delta_{*})\left[\tanh\left(u+\beta_{\delta_{*}}\right)-1\right]-\delta_{*}\left[\tanh\left(u-\beta_{\delta_{*}}\right)-1\right]\,.
\]
Note that $k(\theta\mid\eta)-k(\theta\mid\theta)=0$ for $\eta=\theta$.
We will show that this is a unique zero-crossing point of $\eta\mapsto k(\theta\mid\eta)-k(\theta\mid\theta)$
by analyzing $s(u)$. The function $s(u)$ has a single zero-crossing
point at $u=0$ since: (a) It can be shown by some simple algebra
that the unique root of $s(u)=0$ is $u=0$, and (b) $s(u)$ changes
from negative to positive at $u=0$ since $\lim_{u\to-\infty}s(u)=-2\rho_{*}<0$
and 
\[
\left.\frac{\d s(u)}{\d u}\right|_{u=0}=(1-2\delta_{*})\frac{1}{\cosh^{2}(\beta_{\delta_{*}})}>0\,.
\]
Thus, $s=0$ is a unique zero-crossing point of $s(u)$. As in the
proof of Proposition \ref{prop: Comparison theorem d=00003D00003D1},
the convolution relation (\ref{eq: convolution relation for weight iteration})
and the variation diminishing property of the Gaussian kernel (Proposition
\ref{prop: Gaussian variation diminishing} in Appendix \ref{subsec:Totally-positive-kernels})
imply that $\eta\mapsto k(\theta\mid\eta)-k(\theta\mid\theta)$ has
at most a single zero-crossing point as a function of $\eta\in\mathbb{R}$
(note that for the sake of the proof we allow $\eta<0$). Clearly,
this zero-crossing point can only be at $\eta=\theta$. To show that
this is indeed a zero crossing point, we show that $k(\theta\mid\eta)-k(\theta\mid\theta)$
changes from negative to positive from $\eta=0$ to $\eta\to\infty$.
Indeed, for $\eta=0$ 
\[
k(\theta\mid\eta=0)-k(\theta\mid\theta)=\E\left[\tanh\left(\theta Z+\beta_{\delta_{*}}\right)\right]-\rho_{*}<0
\]
because $k(\theta\mid\eta=0)-k(\theta\mid\theta)|_{\theta=0}=\tanh(\beta_{\delta_{*}})-\rho_{*}=0$
and 
\begin{align*}
\frac{\dee\left[k(\theta\mid\eta=0)-k(\theta\mid\theta)\right]}{\dee\theta} & =\E\left[\frac{Z}{\cosh^{2}\left(\theta Z+\beta_{\delta_{*}}\right)}\right]<0
\end{align*}
(by conditioning on $|Z|$ and using $\delta_{*}\leq\frac{1}{2}$
and $\theta,\beta_{\delta_{*}}\geq0$ ). For $\eta\to\infty$, since
$s(u)\geq0$ for all $u>0$, (\ref{eq: convolution relation for weight iteration})
implies that $k(\theta\mid\eta)-k(\theta\mid\theta)>0$ for all $\eta$
large enough. Thus, $\eta=\theta$ is a unique zero-crossing point
of $k(\theta\mid\eta)-k(\theta\mid\theta)$ and (\ref{eq: dominance relation weight iteration})
holds. 
\end{proof}

\subsection{Empirical iteration}
\begin{proof}[Proof of Theorem \ref{thm: sample weight known mean}]
Let $h_{n}(\rho)\equiv h_{n}(\rho,\theta_{*})$. Under the high probability
event (\ref{eq: high probability event}), it holds that $h_{n}(\rho)$
is sandwiched between the envelopes 
\[
h_{n}(\rho)\leq h(\rho)+\eta\omega_{1}\dfn h_{+}(\rho)
\]
and 
\[
h_{n}(\rho)\geq h(\rho)-\eta\omega_{1}\dfn h_{-}(\rho)
\]
and where $\eta=\|\theta_{*}\|$. Thus, for the weight iteration,
the empirical error is absolute, i.e., comprised of an additive term
$\eta\omega_{1}$ which does not depend on $\rho$. Furthermore, the
truncated empirical iteration $[h_{n}(\rho)]_{\gl[C]_{\rho}}$ is
bounded by the truncated envelopes $[h_{\pm}(\rho)]_{\gl[C]_{\rho}}$.
The truncation does not affect the analysis of the lower envelope,
but will be used for the error analysis of the upper envelope.

By repeating the arguments in the proof of Theorem \ref{thm: population weight fixed mean},
it can be shown that $h_{-}(\rho)$ has only two fixed points in $[-1,1]$,
denoted here by $\rho_{-}$ and $\underline{\rho}$, such that $\rho_{-}\uparrow\rho_{*}$
and $\underline{\rho}\downarrow-1$ as $\omega_{1}\to0$ (or, $n\to\infty$).\footnote{Indeed, the proof Theorem \ref{thm: population weight fixed mean}
mostly uses the first order derivative $h'(\rho)$ which is not changed
by absolute errors. } Hence, $h(\rho)>\rho$ for $\rho\in(\underline{\rho},\rho_{-})$,
and if the iteration is initialized in $\rho_{0}\in(\underline{\rho},\rho_{-})$
it will converge to $\rho_{-}$. We next show that this holds for
initialization at $\rho_{0}=0$. It is readily verified that $\eta\mapsto\frac{1-e^{-\eta^{2}/2}}{\eta^{2}}$
is an even function of $\eta$ which is strictly decreasing for $\eta>0$.
Since $\eta\leq\gl[C]_{\theta}$ it holds that $1-e^{-\eta^{2}/2}\geq C_{1}\eta^{2}$
for $C_{1}=\frac{1-e^{-\gl[C]_{\theta}^{2}/2}}{\gl[C]_{\theta}^{2}}>0$.
Then, Lemma \ref{lem: weight simple properties}, item \ref{enu: weight iteration simple properties - iteration}
implies that if $\eta>\frac{2}{C_{1}}\cdot\frac{\omega_{1}}{\rho_{*}}$
then 
\[
h_{-}(0)\geq\rho_{*}\left[1-e^{-\eta^{2}/2}\right]-\eta\omega_{1}\geq C_{1}\rho_{*}\eta^{2}-\eta\omega_{1}>0\,.
\]
Hence the iteration $\rho_{t+1}=h_{-}(\rho_{t})$ with $\rho_{0}=0$
will converge to $\rho_{-}$. Analogous claims hold for the upper
envelope for which clearly $h_{+}(0)>0$. We next bound the errors
$\rho_{*}-\rho_{-}$ and $\rho_{+}-\rho_{*}$ and the convergence
times of the envelops. For the lower envelope, the truncation is inconsequential.
Lemma \ref{lem: weight simple properties}, item \ref{enu: weight iteration simple properties - derivative from zero to rho_star}
implies that $h_{-}'(\rho)\leq\min\{e^{-1},1-\frac{\eta^{2}}{12}\}$
for all $\rho\in[0,\rho_{*}]$, and so 
\begin{align*}
\rho_{-} & =h_{-}(\rho_{-})\\
 & =h(\rho_{-})-\eta\omega_{1}\\
 & =h(\rho_{*})-\int_{\rho_{-}}^{\rho_{*}}h'(\rho)\d\rho-\eta\omega_{1}\\
 & =\rho_{*}-\int_{\rho_{-}}^{\rho_{*}}h'(\rho)\d\rho-\eta\omega_{1}\\
 & \geq\rho_{*}-\min\{e^{-1},1-\frac{\eta^{2}}{12}\}\cdot(\rho_{*}-\rho_{-})-\eta\omega_{1}\,.
\end{align*}
Thus, the error is at most 
\[
\rho_{*}-\rho_{-}\leq\frac{\eta\omega_{1}}{1-\max\{e^{-1},1-\frac{\eta^{2}}{12}\}}\leq\max\left\{ 12\frac{\omega_{1}}{\eta},2\gl[C]_{\theta}\omega_{1}\right\} \leq12\gl[C]_{\theta}^{2}\cdot\frac{\omega_{1}}{\eta}\,.
\]
Further note that as $\rho_{*}>\frac{2}{C_{1}}\cdot\frac{\omega_{1}}{\eta}$
was assumed, this also implies that $\rho_{-}>\frac{\rho_{*}}{2}$.
We now turn to the convergence time. Again, since $h_{-}'(\rho)\leq\min\{e^{-1},1-\frac{\eta^{2}}{12}\}<1$
for all $\rho\in(0,\rho_{-})$, Proposition \ref{prop: Properties of one dimensional iterations},
item \ref{enu: general convergence in one-dim - convergence time of contraction},
implies that $|\rho_{t}-\rho_{-}|\leq\frac{\omega_{1}}{\eta}$ for
all 
\[
t\geq\frac{1}{1-\max\{e^{-1},1-\frac{\eta^{2}}{12}\}}\log\left[\frac{\omega_{1}}{\eta\cdot\rho_{-}}\right]\,.
\]
The r.h.s. is at most $12\gl[C]_{\theta}^{2}\eta^{-2}\log(C_{1})$.

For the upper envelope, we use again Lemma \ref{lem: weight simple properties},
item \ref{enu: weight iteration simple properties - derivative from zero to rho_star}
implies that $h'(\rho_{*})\leq\max\{e^{-1},1-\frac{\eta^{2}}{12}\}$.
Furthermore, by the truncation operation $\rho\leq\gl[C]_{\rho}$
and Lemma \ref{lem: weight simple properties}, item \ref{enu: weight iteration simple properties - second derivative at theta_star}
imply that $h''(\rho)\leq\gl[C]''_{h}\eta^{2}$. Since $\rho_{+}-\rho_{*}\to0$
as $n\to\infty$ there exists $n_{0}$ such that $\frac{\gl[C]''_{h}}{2}(\rho_{+}-\rho_{*})\leq\frac{1}{24}$
and so by Taylor expansion, for any $\rho\in[\rho_{*},\rho_{+}]$
\begin{align*}
h'(\rho) & \leq h'(\rho_{*})+\gl[C]''_{h}(\rho-\rho_{*})\eta^{2}\\
 & \leq\max\{e^{-1},1-\frac{\eta^{2}}{24}\}\,.
\end{align*}
With this bound on the first derivative, the analysis is similar to
the one made for the lower envelope. 
\end{proof}

\appendix

\section{A proof for the concentration inequality of Section \ref{subsec:Concentration-of-the}
\label{sec: concentration proof}}

The proof of Theorem \ref{thm: Error concentration} follows \cite[Proof of Theorem 4]{wu2019EM},
where here, uniform convergence of the relative error should be assured
for all possible $\beta_{\rho}$, and uniform convergence of the error
is also established for the weight iteration. For legibility of the
proof, we summarize the required bounds on the moments and the tail
bounds in the following lemma. 
\begin{lem}
\label{lem: concentration}Let $X\sim P_{\theta_{*},\rho_{*}}$ for
arbitrary $\rho_{*}\in(-1,1)$ and $\theta_{*}\in\mathbb{R}^{d}$.
There exists an absolute constant $c>0$ and $n_{0}\in\mathbb{N}$
such that the following holds: 
\begin{enumerate}
\item Population moments: 
\[
\E\|X\|^{2}=d+\|\theta_{*}\|^{2}\,,
\]
and 
\[
\E\|X\|^{3}\leq c(\|\theta_{*}\|+\sqrt{d})^{3}\,.
\]
\item Concentration of empirical moments: 
\[
\P\left[\E_{n}\left[\|X\|^{2}\right]>2\|\theta_{*}\|^{2}+10d\right]\leq e^{-dn}
\]
and for all $n>n_{0}$ 
\[
\P\left[\E_{n}\left[\|X\|^{3}\right]>4\|\theta_{*}\|^{3}+16d^{3/2}+16n^{3/2}\right]\leq e^{-cn}\,.
\]
\item Concentration of projections: For any $u,v\in\mathbb{S}^{d-1}$ and
$b>0$ and $n>0$ 
\[
\P\left[\E\left[\langle u,X\rangle\right]-\E_{n}\left[\langle u,X\rangle\right]>\sqrt{\left(1+\|\theta_{*}\|^{2}\right)\frac{bd\log n}{n}}\right]\leq2\exp\left(-cbd\log n\right)\,,
\]
and for any $n\geq bd\log n$ 
\[
\P\left[\left|\E\left[\langle u,X\rangle\langle v,X\rangle\right]-\E_{n}\left[\langle u,X\rangle\langle v,X\rangle\right]\right|>(1+\|\theta_{*}\|^{2})\sqrt{\frac{bd\log n}{n}}\right]\leq2\exp\left(-cbd\log n\right)\,.
\]
\item Concentration of empirical EM iterations at a single point: For any
$u,v\in\mathbb{S}^{d-1}$, $b>0$ and $n\geq bd\log n$ 
\[
\P\left[\left|\langle u,f_{n}(\theta,\rho)\rangle-\langle u,f(\theta,\rho)\rangle\right|\geq(\|\theta\|+\beta_{\rho})\cdot(1+\|\theta_{*}\|^{2})\sqrt{\frac{bd\log n}{n}}\right]\leq2\exp\left(-cbd\log n\right)\,.
\]
\end{enumerate}
\end{lem}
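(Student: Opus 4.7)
The plan is to exploit the representation $X = S\theta_* + Z$, where $Z \sim N(0, I_d)$ is independent of $S \in \{\pm 1\}$ with $\P[S=1] = (1+\rho_*)/2$, so that all four parts reduce to standard concentration statements for sub-Gaussian/sub-exponential functionals of a Gaussian plus a bounded sign. All constants may be absorbed into $c$ since the bounds are uniform in $(\theta_*,\rho_*)$.

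For part (1), independence of $S$ and $Z$ together with $\E S = \rho_*$ and $\E Z = 0$ gives $\E\|X\|^2 = \|\theta_*\|^2 + \E\|Z\|^2 = \|\theta_*\|^2 + d$. For the third moment I will use $\|X\| \leq \|\theta_*\| + \|Z\|$ and $\E\|Z\|^3 \leq (\E\|Z\|^4)^{3/4} \lesssim d^{3/2}$ (concentration of the chi distribution), and then $\E\|X\|^3 \lesssim \|\theta_*\|^3 + d^{3/2} \lesssim (\|\theta_*\|+\sqrt{d})^3$.

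For parts (2) and (3), the key observation is that $\|X\|^2$ and $\langle u,X\rangle\langle v,X\rangle$ are sub-exponential with norms on the order of $\|\theta_*\|^2 + d$ and $1 + \|\theta_*\|^2$ respectively: $\|X\|^2 = \|Z\|^2 + 2S\langle\theta_*,Z\rangle + \|\theta_*\|^2$ is a shift of a chi-squared plus a Gaussian, and $\langle u,X\rangle = S\langle u,\theta_*\rangle + \langle u,Z\rangle$ is a mixture of two Gaussians with variance $1$ hence sub-Gaussian with parameter $O(1 + \|\theta_*\|^2)$; the product of two such variables is sub-exponential by the standard sub-Gaussian $\times$ sub-Gaussian $\Rightarrow$ sub-exponential rule. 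The concentration bounds in (2)--(3) then follow from a direct application of Bernstein's inequality to the centered sums, with the sub-exponential parameter tracked carefully. For the $\|X\|^3$ tail bound, I will first truncate $\|X_i\|$ at level $O(\sqrt{n})$ (so each truncated cube is bounded by $O(n^{3/2})$) and then combine Bernstein on the truncated part with a union bound using the $\|X\|^2$ tail to control the tail mass, which gives the stated bound with $16n^{3/2}$ slack.

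For part (4), write the coordinate of the EM error as
\[
\langle u, f_n(\theta,\rho) - f(\theta,\rho)\rangle = \E_n[Y] - \E[Y], \qquad Y \dfn \langle u, X\rangle \tanh(\langle\theta,X\rangle + \beta_\rho),
\]
and bound $|Y| \leq |\langle u,X\rangle|\cdot(\|\theta\|\|X\|+|\beta_\rho|)$ using $|\tanh(t)|\leq |t|$. Because $\langle u,X\rangle$, $\|X\|$, and $\langle u,X\rangle\|X\|$ are all sub-exponential with norms controlled by $1+\|\theta_*\|^2$ (as in part (3)), the random variable $Y$ is sub-exponential with norm at most $C(\|\theta\|+\beta_\rho)(1+\|\theta_*\|^2)$. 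Applying Bernstein's inequality to $\E_n[Y] - \E[Y]$ at deviation level $t = (\|\theta\|+\beta_\rho)(1+\|\theta_*\|^2)\sqrt{bd\log n/n}$, and using $n\geq bd\log n$ so that the Gaussian term dominates the Poisson term in Bernstein, gives the claimed bound.

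The only delicate step is part (4): one must verify that the sub-exponential norm of $Y$ really scales linearly in $\|\theta\|+\beta_\rho$ rather than quadratically, which is where the pointwise bound $|\tanh(t)|\leq \min(1,|t|)$ (used in the linear form $|t|$) is essential. Everything else is a mechanical application of Bernstein and moment calculations for the Gaussian mixture, which is exactly what the referenced proof of \cite[Theorem 4]{wu2019EM} already does in the $\rho_*=0$ case; the only new input is carrying the $\beta_\rho$ term through the triangle inequality on $|\tanh(\langle\theta,X\rangle+\beta_\rho)|$.
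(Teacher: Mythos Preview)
Your overall plan matches the paper's proof almost exactly: sub-Gaussian bounds on one-dimensional projections of $X$, sub-exponential bounds on their products, and Bernstein's inequality throughout. Parts (1) and (3) are essentially identical to the paper. For part (2) the paper is a bit more direct than your Bernstein-plus-truncation scheme: it bounds $\E_n[\|X\|^2]\le 2\|\theta_*\|^2+2\E_n[\|Z\|^2]$ and applies the $\chi^2_{dn}$ tail bound, and for the third moment it simply uses $\E_n[\|Z\|^3]\le(\max_i\|Z_i\|)^3$ together with a union bound on the sub-Gaussian tail of $\|Z_i\|-\sqrt d$. Your route would also work, but the paper's is shorter and yields the stated constants without further tuning.

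There is, however, a real slip in your part (4). You bound $|\langle\theta,X\rangle|\le\|\theta\|\,\|X\|$ and then assert that $\|X\|$ and $\langle u,X\rangle\|X\|$ have sub-exponential norms controlled by $1+\|\theta_*\|^2$. That is false: $\|\,\|X\|\,\|_{\psi_2}\asymp\|\theta_*\|+\sqrt d$, so your pointwise bound would yield $\|Y\|_{\psi_1}\lesssim\|\theta\|\sqrt{1+\|\theta_*\|^2}\,(\|\theta_*\|+\sqrt d)+\beta_\rho\sqrt{1+\|\theta_*\|^2}$, carrying an extra $\sqrt d$ factor that the statement does not allow. The paper avoids this by writing $\langle\theta,X\rangle=\|\theta\|\langle\hat\theta,X\rangle$ as a \emph{one-dimensional} projection, so that $\|\langle\hat\theta,X\rangle\|_{\psi_2}\le\sqrt{1+\|\theta_*\|^2}$ is dimension-free; then
\[
\|\langle u,X\rangle\tanh(\|\theta\|\langle\hat\theta,X\rangle+\beta_\rho)\|_{\psi_1}
\le \|\theta\|\,\|\langle u,X\rangle\|_{\psi_2}\|\langle\hat\theta,X\rangle\|_{\psi_2}
+\beta_\rho\,\|\langle u,X\rangle\|_{\psi_2}
\le(\|\theta\|+\beta_\rho)(1+\|\theta_*\|^2),
\]
which is exactly what you want. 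The fix is one line---replace $\|X\|$ by $|\langle\hat\theta,X\rangle|$---but as written your argument does not deliver the claimed bound.
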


\begin{proof}
~ 
\begin{enumerate}
\item The second moment follows from direct computation. For the third moment,
we use $\|X\|\leq\|\theta_{*}\|+\|Z\|$ where $Z\sim N(0,I_{d})$.
By \cite[Theorem 3.1.1]{vershynin2018high} $\|\|Z\|-\sqrt{d}\|_{\psi_{2}}\lesssim1$
and so also $\|\|Z\|\|_{\psi_{2}}\lesssim\sqrt{d}.$ Hence, $\|\|X\|\|_{\psi_{2}}\lesssim\|\theta_{*}\|+\sqrt{d}.$
The results then follows the moment property of the sub-gaussian $\|X\|$
\cite[Proposition 2.5.2]{vershynin2018high}. 
\item Since $\E_{n}\left[\|X\|^{2}\right]\leq2\|\theta_{*}\|^{2}+2\E_{n}\left[\|Z\|^{2}\right]$,
where $n\E_{n}\left[\|Z\|^{2}\right]\sim\chi_{dn}^{2}$ using the
$\chi^{2}$ tail bound in (\ref{eq: chi-square tail bound}) (Appendix
\ref{subsec:Usful-results}) it holds that 
\[
\P\left[\E_{n}\left[\|Z\|^{2}\right]\geq5d\right]\leq e^{-dn}\,.
\]
Hence, 
\[
\P\left[\E_{n}\left[\|X\|^{2}\right]>2\|\theta_{*}\|^{2}+10d\right]\leq e^{-dn}\,.
\]
For the third moment, 
\[
\E_{n}\left[\|X\|^{3}\right]\leq4\|\theta_{*}\|^{3}+4\E_{n}\left[\|Z\|^{3}\right]\leq4\|\theta_{*}\|^{3}+4\left(\max_{i\in[n]}\|Z_{i}\|\right)^{3}\,.
\]
Since $\|\|Z_{i}\|-\sqrt{d}\|_{\psi_{2}}\lesssim1$ we have $\P[\|Z_{i}\|-\sqrt{d}>\sqrt{n}]\leq e^{-c_{0}n}$
for some $c_{0}>0$. By the union bound, there exists $n_{0}(c_{0})$
and $c_{1}>0$ such that for all $n>n_{0}$ 
\[
\P\left[\max_{i\in[n]}\|Z_{i}\|\geq\sqrt{d}+\sqrt{n}\right]\leq ne^{-c_{0}n}\leq e^{-c_{1}n}\,.
\]
Thus, $\E_{n}\left[\|Z\|^{3}\right]\leq(\sqrt{d}+\sqrt{n})^{3}\leq4d^{3/2}+4n^{3/2}$
with probability larger than $1-e^{-c_{1}n}$. 
\item We note that $\|\langle u,X\rangle\|_{\psi_{2}}\leq\sqrt{1+\|\theta_{*}\|^{2}}$
for any $u\in\mathbb{S}^{n-1}$, and so the first claim follows from
sub-gaussian concentration \cite[Proposition 2.6.1]{vershynin2018high}.
Next, from \cite[Lemma 2.7.7]{vershynin2018high} 
\[
\left\Vert \langle u,X\rangle\cdot\langle v,X\rangle\right\Vert _{\psi_{1}}\leq\left\Vert \langle u,X\rangle\right\Vert _{\psi_{2}}\left\Vert \langle v,X\rangle\right\Vert _{\psi_{2}}=1+\|\theta_{*}\|^{2}
\]
and Bernstein's inequality \cite[Corollary 2.8.3]{vershynin2018high}
implies the required inequality for any $b>0$ such that $n\geq bd\log n$. 
\item For the mean iteration using the fact that product of sub-gaussian
is sub-exponential \cite[Lemma 2.7.7]{vershynin2018high} (twice)
\begin{align*}
\left\Vert \langle u,X\rangle\cdot\tanh\left(\|\theta\|\langle\hat{\theta},X\rangle+\beta_{\rho}\right)\right\Vert _{\psi_{1}} & \leq\|\theta\|\left\Vert \langle u,X\rangle\cdot\langle\hat{\theta},X\rangle\right\Vert _{\psi_{1}}+\beta_{\rho}\left\Vert \langle u,X\rangle\right\Vert _{\psi_{1}}\\
 & \leq\|\theta\|\cdot\left\Vert \langle u,X\rangle\right\Vert _{\psi_{2}}\left\Vert \langle\hat{\theta},X\rangle\right\Vert _{\psi_{2}}+\beta_{\rho}\left\Vert \langle u,X\rangle\right\Vert _{\psi_{2}}\\
 & \leq(\|\theta\|+\beta_{\rho})\cdot(1+\|\theta_{*}\|^{2})\,.
\end{align*}
Bernstein's inequality \cite[Corollary 2.8.3]{vershynin2018high}
implies the required inequality for any $b>0$ such that $n\geq bd\log n$. 
\end{enumerate}
Define for some arbitrary nonnegative constants $\{C_{j}>0\}_{j\in[3]}$
the events 
\[
{\cal E}_{n}^{(1)}(C_{1})\dfn\left\{ f_{n}(\theta,\rho)\in\mathbb{B}^{d}(C_{1}),\;\;\forall\theta\in\mathbb{R}^{d},\;\forall\rho\in\mathbb{B}(\gl[C]_{\rho})\right\} \,,
\]
\[
{\cal E}_{n}^{(2)}(C_{2},C_{1})\dfn\left\{ \|f_{n}(\theta,\rho)-f(\theta,\rho)\|\leq C_{2}\left(\|\theta\|+\beta_{\rho}\right)\cdot\sqrt{\frac{d\log n}{n}},\;\;\forall\theta\in\mathbb{B}^{d}(C_{1}),\;\forall\rho\in\mathbb{B}(\gl[C]_{\rho})\right\} \,,
\]
\[
{\cal E}_{n}^{(3)}(C_{3})\dfn\left\{ \left|h_{n}(\rho,\theta)-h(\rho,\theta)\right|\leq C_{3}\|\theta\|\cdot\sqrt{\frac{\log n}{n}},\;\;\forall\theta\in\mathbb{R}^{d},\;\forall\rho\in(-1,1)\right\} \,.
\]
To prove the theorem we will show that there exist constants $c,C,C_{3}>0$
which depend on $(\gl[C]_{\theta},\gl[C]_{\rho})$ such that for all
$n\geq Cd\log n$ 
\[
\P\left[{\cal E}_{n}^{(1)}(C_{1})\cap{\cal E}_{n}^{(2)}(C_{2},C_{1})\cap{\cal E}_{n}^{(3)}(C_{3})\right]\geq1-\frac{1}{n^{cd}}
\]
with $C_{1}\dfn5(\sqrt{d}+\gl[C]_{\theta})$ and $C_{2}=C\left(1+\gl[C]_{\theta}^{2}\right)$.
The proof is then completed using the relation $\gl[\underline{C}]_{\beta}\rho\leq|\beta_{\rho}|\leq\gl[\overline{C}]_{\beta}\rho$
(with $\gl[\underline{C}]_{\beta}$ and $\gl[\overline{C}]_{\beta}$
depending on $\gl[C]_{\rho}$).

For ${\cal E}_{n}^{(1)}(C_{1})$ we note that 
\[
\|f_{n}(\theta,\rho)\|\leq\|\E_{n}[X\tanh(\langle\theta,X\rangle+\beta)]\|\leq\E_{n}[\|X\|]\leq\sqrt{\E_{n}[\|X\|^{2}]}
\]
and then use Lemma \ref{lem: concentration}.

For ${\cal E}_{n}^{(2)}(C_{2},C_{1})$, let $\epsilon\leq\frac{1}{2}$
be given, and let ${\cal C}\subset\mathbb{S}^{d-1}$ be an $\epsilon$-net
of $\mathbb{S}^{d-1}$ in Euclidean distance, whose size satisfies
$|{\cal C}|\leq(\frac{3}{\epsilon})^{d}$ (whose existence is assured
from \cite[Corollary 4.2.13]{vershynin2018high}). By a standard argument
(e.g., \cite[Exercise 4.4.2]{vershynin2018high}) 
\[
\|f_{n}(\theta,\rho)-f(\theta,\rho)\|\leq2\cdot\max_{u\in{\cal C}}\langle u,f_{n}(\theta,\rho)-f(\theta,\rho)\rangle\,.
\]
Furthermore, any $\hat{\theta}\in\mathbb{S}^{d-1}$ may be approximated
by $v\in{\cal C}$ such that $\|\theta\|\cdot\|v-\hat{\theta}\|\leq\epsilon\|\theta\|$.
As $\tanh$ is $1$-Lipschitz 
\[
\left|\E[\langle u,X\rangle\tanh(\|\theta\|\langle\hat{\theta},X\rangle+\beta_{\rho})]-\E[\langle u,X\rangle\tanh(\|\theta\|\langle v,X\rangle+\beta_{\rho})]\right|\leq\epsilon\|\theta\|\cdot\E\|X\|^{2}\,.
\]
Repeating the same argument for the empirical iteration we get 
\begin{align*}
\|f_{n}(\theta,\rho)-f(\theta,\rho)\| & \leq2\cdot\max_{(u,v)\in{\cal C}^{2}}\langle u,f_{n}(\|\theta\|\cdot v,\rho)-f(\|\theta\|\cdot v,\rho)\rangle+\epsilon\|\theta\|\cdot\left(\E\|X\|^{2}+\E_{n}\|X\|^{2}\right)\\
 & \dfn\max_{(u,v)\in{\cal C}^{2}}\Phi(u,v,\|\theta\|,\rho).
\end{align*}
Define the sets ${\cal A}=\mathbb{B}(\epsilon)\times\mathbb{B}(\epsilon)$
and $\overline{{\cal A}}=\mathbb{B}(C_{1})\times\mathbb{B}(C_{1})\backslash{\cal A}$.
By the union bound, the required probability is bounded as: 
\begin{align}
\P\left[{\cal E}_{n}^{(2)}(C_{2},C_{1})\right] & \leq\sum_{(u,v)\in{\cal C}^{2}}\P\left[\exists(\|\theta\|,\rho)\in{\cal A}\colon\Phi(u,v,\|\theta\|,\rho)>C_{2}\left(\|\theta\|+\beta_{\rho}\right)\cdot\sqrt{\frac{d}{n}\log n}\right]\label{eq: sample concentration first probability}\\
 & \phantom{===}+\P\left[\exists(\|\theta\|,\rho)\in\overline{{\cal A}}\colon\Phi(u,v,\|\theta\|,\rho)>C_{2}\left(\|\theta\|+\beta_{\rho}\right)\cdot\sqrt{\frac{d}{n}\log n}\right]\,.\label{eq: sample concentration second probability}
\end{align}
The probability pertaining to the set ${\cal A}$ in (\ref{eq: sample concentration first probability})
is analyzed as follows. Since $\tanh'\leq1$ and $|\tanh''|\leq1$
Taylor expansion of $\tanh$ around $0$ implies 
\begin{align*}
 & \left|\E\left[\langle u,X\rangle\tanh\left(\|\theta\|\langle v,X\rangle+\beta_{\rho}\right)\right]-\E\left[\langle u,X\rangle\left(\|\theta\|\langle v,X\rangle+\beta_{\rho}\right)\right]\right|\\
 & \leq\|\theta\|^{2}\cdot\E\left[\left|\langle u,X\rangle\right|\langle v,X\rangle^{2}\right]+\beta_{\rho}^{2}\cdot\E\left[\left|\langle u,X\rangle\right|\right]\,.
\end{align*}
Repeating the same argument for the empirical iteration, and then
using the triangle and Cauchy-Schwartz inequalities, we obtain 
\begin{align*}
\Phi(u,v,\|\theta\|,\rho) & \leq\|\theta\|\left|\E\left[\langle u,X\rangle\langle v,X\rangle\right]-\E_{n}\left[\langle u,X\rangle\langle v,X\rangle\right]\right|+\beta_{\rho}\left|\E\left[\langle u,X\rangle\right]-\E_{n}\left[\langle u,X\rangle\right]\right|\\
 & \hphantom{=}+\|\theta\|^{2}\E\left[\|X\|^{3}\right]+\beta_{\rho}^{2}\sqrt{\E\left[\|X\|^{2}\right]}+\|\theta\|^{2}\E_{n}\left[\|X\|^{3}\right]+\beta_{\rho}^{2}\sqrt{\E_{n}\left[\|X\|^{2}\right]}\\
 & \hphantom{=}+\epsilon\|\theta\|\cdot\left(\E\|X\|^{2}+\E_{n}\|X\|^{2}\right)\,.
\end{align*}
By Lemma \ref{lem: concentration}, for any given $(u,v,\|\theta\|,\beta_{\rho})\in{\cal C}^{2}\times{\cal A}$,
as long as $n\geq bd\log n$, there exists absolute constants $\{c_{i}\}$
such that 
\begin{multline*}
\P\left[\frac{\Phi(u,v,\|\theta\|,\rho)}{\|\theta\|+\beta_{\rho}}>c_{1}\left(1+\|\theta_{*}\|^{2}\right)\sqrt{\frac{bd\log n}{n}}+\epsilon\cdot\left(\|\theta_{*}\|^{3}+n^{3/2}\right)\right]\\
\leq4\exp\left(-c_{2}bd\log n\right)+\exp(-c_{2}n)+\exp(-dn)\leq\exp(-c_{3}bd\log n)\,.
\end{multline*}
We will choose $\epsilon\leq\frac{c_{4}}{n^{2}}$ with sufficiently
small $c_{4}$ and $b$ to be sufficiently large so that the probability
in (\ref{eq: sample concentration first probability}) is bounded
by $\exp(-c_{4}bd\log n)$ for $C_{2}=\left(1+\gl[C]_{\theta}^{2}\right)\sqrt{\frac{bd}{n}\log n}.$

The probability of ${\cal \overline{A}}$ in (\ref{eq: sample concentration second probability})
is analyzed as follows. Let ${\cal R}_{\beta}$ be an $\epsilon^{2}$-net
of $[-\gl[C]_{\beta},\gl[C]_{\beta}]$ of size $2\gl[C]_{\beta}\cdot\epsilon^{-2}$,
and let ${\cal R}_{\theta}$ be an $\epsilon^{2}$-net of $[0,C_{1}]$
of size $C_{1}\cdot\epsilon^{-2}$. As $\tanh$ is $1$-Lipschitz,
and by the triangle and Cauchy-Schwartz inequalities, for any $(u,v)\in{\cal C}^{2}$
and $(\|\theta\|,\beta_{\rho})\in\overline{{\cal A}}$ there exists
$(s,\gamma)\in{\cal R}_{\beta}\times{\cal R}_{\theta}$ such that
\begin{align*}
 & \left|\E[\langle u,X\rangle\tanh(\|\theta\|\langle v,X\rangle+\beta_{\rho})]-\E[\langle u,X\rangle\tanh(s\langle v,X\rangle+\gamma)]\right|\\
 & \leq\epsilon^{2}\left(\E\left[\|X\|^{2}\right]+\sqrt{\E\|X\|^{2}}\right)\\
 & \leq\epsilon(\|\theta\|+\beta_{\rho})\left(\E\left[\|X\|^{2}\right]+\sqrt{\E\|X\|^{2}}\right)
\end{align*}
where the first term in the r.h.s. (resp. second) corresponds to the
approximation of $\|\theta\|$ with $s$ (resp. $\beta$ with $\gamma$),
and the second inequality is since $(\|\theta\|,\rho)\in\overline{{\cal A}}$.
Repeating the same argument for the empirical iteration, we deduce
\begin{align*}
\Phi(u,v,\|\theta\|,\rho) & \leq\max_{(s,\gamma)\in{\cal R}_{\theta}\times{\cal R}_{\beta}}\left|\E\left[\langle u,X\rangle\cdot\tanh\left(s\langle v,X\rangle+\gamma\right)\right]-\E_{n}\left[\langle u,X\rangle\cdot\tanh\left(s\langle v,X\rangle+\gamma\right)\right]\right|\\
 & +2\epsilon(\|\theta\|+\beta_{\rho})\left(\E\left[\|X\|^{2}\right]+\E_{n}\left[\|X\|^{2}\right]+\sqrt{\E\|X\|^{2}}+\sqrt{\E_{n}\left[\|X\|^{2}\right]}\right)\\
 & \dfn\Psi(u,v,s,\rho_{\gamma})\,.
\end{align*}
By Lemma \ref{lem: concentration}, for any given $(u,v,s,\gamma)\in{\cal C}^{2}\times{\cal R}_{\theta}\times{\cal R}_{\beta}$,
and any $b>0$ such that $n\geq bd\log n$ there exists absolute constants
$\{c_{i}\}$ 
\begin{multline}
\P\left[\Psi(u,v,s,\rho_{\gamma})>(s+\gamma)\cdot(1+\|\theta_{*}\|^{2})\sqrt{\frac{bd\log n}{n}}+c_{1}\epsilon(s+\gamma)(d+\|\theta_{*}\|^{2})\right]\\
\leq2\exp\left(-c_{2}bd\log n\right)+\exp(-dn)\leq\exp(-c_{3}bd\log n)\,.\label{eq: sample concentration second probability - per norm and inv temp}
\end{multline}
We will choose $\epsilon\leq c_{4}\sqrt{\frac{d\log n}{n}}$ for sufficiently
small $c_{4}$ so that the probability in (\ref{eq: sample concentration second probability - per norm and inv temp})
is bounded by $\exp(-c_{3}bd\log n)$ for 
\[
C_{2}=c_{5}\cdot(\|\theta\|+\beta_{\rho})\cdot(1+\|\theta_{*}\|^{2})\sqrt{\frac{bd\log n}{n}}\,.
\]
By a union bound over ${\cal R}_{\theta}\times{\cal R}_{\beta}$ of
size $2\gl[C]_{\beta}C_{1}\epsilon^{-4}$, the probability in (\ref{eq: sample concentration second probability})
is upper bounded by $\exp(-c_{6}bd\log n).$ The proof is then completed
by another union bound over ${\cal C}^{2}$ whose size is $(\frac{3}{\epsilon})^{2d}$,
and taking $b$ to be large enough.

We next turn to the analysis for ${\cal E}_{n}^{(3)}(C_{3},C_{1})$,
which deals with the error of the weight iteration. Since this is,
in essence, a one-dimensional iteration, the analysis is somewhat
simpler. Since $\tanh$ is $1$-Lipschitz, for all $\theta\in\mathbb{R}^{d}$
\[
\left|\tanh(\|\theta\|u+\beta_{\rho})-\tanh(\|\theta\|v+\beta_{\rho})\right|\leq\|\theta\|\cdot|u-v|\,.
\]
Let $X^{(n)}$ be a random variable distributed according to the empirical
distribution of $\{X_{i}\}_{i=1}^{n}$. Hence, by coupling, 
\begin{align}
\left|h_{n}(\rho,\theta)-h(\rho,\theta)\right| & \leq\left|\E\left[\tanh(\|\theta\|\langle\hat{\theta},X\rangle+\beta_{\rho})-\tanh(\|\theta\|\langle\hat{\theta},X^{(n)}\rangle+\beta_{\rho})\right]\right|\nonumber \\
 & \leq\E\left[\left|\tanh(\|\theta\|\langle\hat{\theta},X\rangle+\beta_{\rho})-\tanh(\|\theta\|\langle\hat{\theta},X^{(n)}\rangle+\beta_{\rho})\right|\right]\nonumber \\
 & \leq\|\theta\|\E\left[\left|\langle\hat{\theta},X\rangle-\langle\hat{\theta},X^{(n)}\rangle\right|\right]\nonumber \\
 & \leq\|\theta\|\cdot W_{1}(\nu,\nu_{n})\label{eq: empirical error in weight iteration by Wasserstein}
\end{align}
where $W_{1}$ is the first order Wasserstein distance, $\nu={\cal L}(\langle\hat{\theta},X\rangle)$
and $\nu_{n}$ is the empirical law of $\{\langle\hat{\theta},X_{i}\rangle\}{}_{i=1}^{n}$.
Now, $\langle\hat{\theta},X\rangle\sim(1-\delta_{*})N(\langle\hat{\theta},\theta_{*}\rangle,1)+\delta_{*}N(-\langle\hat{\theta},\theta_{*}\rangle,1)$
and so $\|\langle\hat{\theta},X\rangle\|_{\psi_{2}}\leq\sqrt{1+\|\theta_{*}\|^{2}}$
for any $\hat{\theta}\in\mathbb{S}^{n-1}$. The concentration inequality
in \cite[Theorem 2, Case (1)]{fournier2015rate} with the choices
$d=1$ (the dimension of $\langle\hat{\theta},X\rangle$), $p=1$
(Wasserstein distance order) and $\alpha=2$ (for the $\psi_{\alpha}$
condition $\E[e^{\gamma|X|^{\alpha}}]<\infty)$ implies that for $x_{0}>0$
there exists $c,C>0$ such that 
\begin{equation}
\P\left[W_{1}(\nu,\nu_{n})>\sqrt{\frac{\log n}{n}}\right]\leq C\cdot\exp\left(-c\log n\right)\,.\label{eq: weight iteration error Wasserstein distance}
\end{equation}
The bound (\ref{eq: empirical error in weight iteration by Wasserstein})
and (\ref{eq: weight iteration error Wasserstein distance}) imply
that ${\cal E}_{n}^{(3)}(C_{3})$ has high probability as stated in
the theorem. 
\end{proof}

\section{Minimax rates \label{sec:Minimax-rates}}
\begin{thm}
\label{thm: mean estimation minimax}For any $d\geq2$, $n\in\mathbb{N}$
and $\eta\geq0$, let $\tilde{\theta}$ be any estimator of $\theta_{*}$
based on $\underline{X}=(X_{1},\ldots,X_{n})\stackrel{\tiny\mathrm{i.i.d.}}{\sim}P_{\theta_{*},\rho_{*}}$.
Then, for $d\leq n$ 
\begin{align}
\sup_{\tilde{\theta}(\rho_{*})}\inf_{\|\theta_{*}\|=\eta}\E_{\theta_{*},\rho_{*}}\left[\ell(\tilde{\theta},\theta_{*})\right] & \asymp\begin{cases}
\eta, & \eta\leq\frac{1}{\rho_{*}}\sqrt{\frac{d}{n}}\\
\frac{1}{\rho_{*}}\sqrt{\frac{d}{n}}, & \frac{1}{\rho_{*}}\sqrt{\frac{d}{n}}<\eta<\rho_{*}\\
\frac{1}{\eta}\sqrt{\frac{d}{n}}, & \rho_{*}<\eta<1\\
\sqrt{\frac{d}{n}}, & \eta>1
\end{cases}\label{eq: mean estimation minimax rate large rho}
\end{align}
if $\rho_{*}\geq(\frac{d}{n})^{1/4}$ and 
\begin{align*}
\sup_{\tilde{\theta}(\rho_{*})}\inf_{\|\theta_{*}\|=\eta}\E_{\theta_{*},\rho_{*}}\left[\ell(\tilde{\theta},\theta_{*})\right] & \asymp\begin{cases}
\eta, & \eta\leq\left(\frac{d}{n}\right)^{1/4}\\
\frac{1}{\eta}\sqrt{\frac{d}{n}}, & \left(\frac{d}{n}\right)^{1/4}<\eta<1\\
\sqrt{\frac{d}{n}}, & \eta>1
\end{cases}\,.
\end{align*}
if $\rho_{*}\leq(\frac{d}{n})^{1/4}$. 
\end{thm}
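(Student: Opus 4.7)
The plan is to establish matching upper and lower bounds on the minimax risk, regime by regime. The upper bounds will be achieved by combining (at most) three explicit estimators; the lower bounds will combine Le Cam's two-point method with a Fano inequality applied to an appropriate $d$-dimensional packing of the parameter space.

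For the upper bound I would take $\hat{\theta}$ to be the minimum-risk selector from three candidates. First, the trivial estimator $\hat{\theta}\equiv 0$ has worst-case risk $\eta$ and is optimal when $\eta$ is smaller than the other candidate rates. Second, the method-of-moments estimator $\hat{\theta}_{\mathrm{MoM}}\dfn\rho_{*}^{-1}\E_{n}[X]$ satisfies $\E\|\hat{\theta}_{\mathrm{MoM}}-\theta_{*}\|^{2}\lesssim (1+\eta^{2})d/(n\rho_{*}^{2})$, since $\E[X]=\rho_{*}\theta_{*}$ and $\mathrm{Cov}(X)\preceq (1+\eta^{2})I_{d}$, yielding the rate $\rho_{*}^{-1}\sqrt{d/n}$ whenever $\eta\lesssim 1$. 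Third, a spectral estimator $\hat{\theta}_{\mathrm{sp}}=\sqrt{\max(\lambda_{\max}(\hat{\Sigma})-1,0)}\cdot\hat{v}$ built from $\hat{\Sigma}\dfn\E_{n}[XX^{T}]$, whose expectation equals $I_{d}+\theta_{*}\theta_{*}^{T}$, achieves $\ell_{0}(\hat{\theta}_{\mathrm{sp}},\theta_{*})\lesssim\eta^{-1}\sqrt{d/n}$ whenever $\eta\gtrsim(d/n)^{1/4}$, using the deviation bound $\|\hat{\Sigma}-\E\hat{\Sigma}\|_{\mathrm{op}}=\tilde{O}(\sqrt{d/n})$ together with a Davis--Kahan argument. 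The sign ambiguity is resolved by choosing the sign of $\langle\hat{\theta}_{\mathrm{sp}},\hat{\theta}_{\mathrm{MoM}}\rangle$, which is reliable precisely when $\eta\rho_{*}\gtrsim\sqrt{d/n}$; for $\eta\gtrsim 1$ the components are well separated and a standard plug-in yields the parametric $\sqrt{d/n}$ rate.

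For the lower bound I would employ information-divergence techniques applied to carefully chosen perturbations. The main analytical ingredient is the uniform Hellinger estimate
\[
\dhels(P_{\theta_{0},\rho_{*}},P_{\theta_{1},\rho_{*}})\;\lesssim\;(\rho_{*}^{2}+\eta^{2})\,\|\theta_{0}-\theta_{1}\|^{2},
\]
valid for $\theta_{0},\theta_{1}\in\mathbb{B}^{d}(\eta)$ lying in a common sign-consistent region (the $\rho_{*}^{2}$ term comes from perturbations along the signal direction and the $\eta^{2}$ term from orthogonal ones, after accounting for the model's sign symmetry). Given this bound, the $\eta$ rate follows by a two-point test between $\theta_{0}=\eta e_{1}$ and $\theta_{1}=\eta v$ with $\|v-e_{1}\|\asymp 1$ whenever $n\eta^{2}(\rho_{*}^{2}+\eta^{2})\lesssim 1$; the $\sqrt{d/n}$ rate follows from a standard Fano argument over a packing of $\mathbb{B}^{d}(\eta)$ of size $\exp(cd)$; and the intermediate rates $\rho_{*}^{-1}\sqrt{d/n}$ and $\eta^{-1}\sqrt{d/n}$ come from choosing the Fano packing radius $r$ so that $n(\rho_{*}^{2}+\eta^{2})r^{2}\asymp d$, giving $r\asymp(\rho_{*}^{2}+\eta^{2})^{-1/2}\sqrt{d/n}$, which switches at $\eta\asymp\rho_{*}$. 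The $\rho_{*}\lesssim(d/n)^{1/4}$ regime can be reduced to the balanced case of \cite{wu2019EM} since the contribution of unbalancedness is then subleading in both directions.

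The main obstacle will be establishing the Hellinger estimate with the correct scaling in $\max\{\rho_{*}^{2},\eta^{2}\}$ uniformly over the relevant $\theta_{0},\theta_{1}$. This amounts to expanding the $\cosh$-form of the density (\ref{eq: density of observed samples}) to second order in $\theta$ and controlling the Fisher information $I(\theta_{*},\rho_{*})=\E[(\nabla_{\theta}\log p_{\theta_{*},\rho_{*}}(X))^{\otimes 2}]$ separately along $\hat{\theta}_{*}$ and in its orthogonal complement: the parallel direction produces a $\rho_{*}^{2}$-scaling while the orthogonal complement produces an $\eta^{2}$-scaling, the latter reflecting the residual sign non-identifiability that survives as $\rho_{*}\to 0$. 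A secondary, more bookkeeping technical point is cleanly patching the spectral and MoM estimators to eliminate sign ambiguity in the intermediate regime, which is where the case distinction $\rho_{*}\gtrless(d/n)^{1/4}$ in the statement of the theorem naturally emerges.
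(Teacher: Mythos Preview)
Your proposal is correct and follows essentially the same approach as the paper: identical upper-bound estimators (trivial, method-of-moments, and spectral with sign correction via $\E_n[X]$), and a Fano lower bound driven by a divergence estimate of the form $\text{div}(P_{\eta u,\rho_*},P_{\eta v,\rho_*})\lesssim \eta^2(\eta^2+\rho_*^2)\|u-v\|^2$. The only notable difference is that the paper works with KL (bounded via $\chi^2$) and obtains the key estimate by a chain-rule decomposition $X_1$ vs.\ $X_\perp\mid X_1$ with explicit Gaussian integrals and the Ingster--Suslina identity, rather than your Hellinger/Fisher-information route; in particular, both the parallel and orthogonal pieces in the paper's decomposition contribute the full $(\eta^2+\rho_*^2)$ factor, so your attribution of $\rho_*^2$ to the parallel direction and $\eta^2$ to the orthogonal one is a slight oversimplification of where the terms arise, though the resulting bound is the same.
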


\begin{proof}
~

\paragraph*{Upper bounds}

The error rates in all cases except for the second case in (\ref{eq: mean estimation minimax rate large rho})
were shown to be achieved by a spectral method \cite[Appendix B]{wu2019EM}.
Specifically, in case $\rho_{*}\leq(\frac{d}{n})^{1/4}$ then the
knowledge of the weight can be completely ignored by the estimator.
Furthermore, the same method achieves $\frac{1}{\eta}\sqrt{\frac{d}{n}}$
in the third case of (\ref{eq: mean estimation minimax rate large rho}).
We next show that an error rate of $\frac{1}{\rho_{*}}\sqrt{\frac{d}{n}}$
is also achievable by the estimator $\tilde{\theta}(\rho_{*})=\frac{1}{\rho_{*}}\E_{n}[X]$.
Indeed, let $X=S\theta_{*}+Z$ as in (\ref{eq: Guassian mixture model - random variables}).
Then, 
\begin{align*}
\E_{\theta_{*},\rho_{*}}\left[\ell(\tilde{\theta}(\rho_{*}),\theta_{*})\right] & \leq\E\|\tilde{\theta}(\rho_{*})-\theta_{*}\|\\
 & \leq\frac{\|\theta_{*}\|}{\rho_{*}}\E\left[\left|\E_{n}[S]-\rho_{*}\right|\right]+\frac{1}{\rho_{*}}\E\left[\left\Vert \E_{n}[Z]\right\Vert \right]\\
 & \lesssim\frac{\|\theta_{*}\|}{\rho_{*}\sqrt{n}}+\frac{1}{\rho_{*}}\sqrt{\frac{d}{n}}\\
 & \leq\frac{1}{\rho_{*}}\sqrt{\frac{d}{n}}
\end{align*}
where the penultimate asymptotic inequality follows from: (a) For
the first term, as $\|\E_{n}[\I\{S=-1\}]-\delta_{*}\|_{\psi_{2}}\leq\frac{c_{1}}{\sqrt{n}}$
for some universal constant $c_{1}>0$ \cite[Example 2.5.8. and Proposition 2.6.1]{vershynin2018high},
and so 
\[
\E\left[\left\Vert \E_{n}[S]-\rho_{*}\right\Vert \right]=2\E\left[\left|\E_{n}[\I\{S=-1\}]-\delta_{*})\right|\right]\leq\frac{c_{2}}{\sqrt{n}}
\]
for some universal constant $c_{2}>0$ \cite[Proposition 2.5.2]{vershynin2018high}.
(b) For the second term, similarly, $\|\|\E_{n}[Z]\|\|_{\psi_{2}}\leq c_{3}\sqrt{\frac{d}{n}}$
for some universal constant $c_{3}>0$ as in Lemma \ref{lem: concentration},
and using \cite[Proposition 2.6.1]{vershynin2018high}.

\paragraph*{Lower bounds}

The proof follows \cite[Appendix B]{wu2019EM}, which uses Fano's
method \cite{yang1999information} for all cases which are not lower
bounded by the $\ell_{2}$ error-rate of the standard Gaussian location
model $\min\{\eta,\sqrt{\frac{d}{n}}\}$. Thus, we mainly highlight
the main difference and omit all other details. First note that if
$|\rho_{*}|\geq\frac{1}{2}$ (say), then the lower bound (\ref{eq: mean estimation minimax rate large rho})
us again equivalent to the $\ell_{2}$ error-rate of the standard
Gaussian location model, and thus no proof is required. Thus, we may
henceforth only consider the case $|\rho_{*}|\leq\frac{1}{2}$. The
lower bound in \cite{wu2019EM} is based on a lemma which is here
generalized from $\rho=0$ to any $\rho\in(-1,1)$ as follows: 
\end{proof}
\begin{lem}[{{Generalization of \cite[Lemma 27]{wu2019EM}}}]
\label{lem:minimax rate -bounding KL divergence}Let $0\leq\eta\leq1$
and $|\rho|\leq\frac{1}{2}$. Then there exists a universal constant
$C$ such that for any $d\geq1$ and $u,v\in\mathbb{S}^{d-1}$ 
\[
\dkl(P_{\eta\cdot u,\rho}||P_{\eta\cdot v,\rho})\leq C\cdot\ell^{2}(u,v)\cdot\eta^{2}(\eta^{2}+\rho^{2})\,.
\]
\end{lem}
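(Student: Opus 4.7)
The plan is to fix $\theta_{0}\dfn\eta u$, $\theta_{1}\dfn\eta v$, view $\dkl(P_{\eta u,\rho}\|P_{\eta v,\rho})$ as $F(\theta_{1})$ with $F(\theta)\dfn\dkl(P_{\theta_{0},\rho}\|P_{\theta,\rho})$, and then Taylor-expand $F$ around $\theta_{0}$ along the chord $\theta_{s}\dfn(1-s)\theta_{0}+s\theta_{1}$. Using the explicit density \eqref{eq: density of observed samples} and writing $\phi(t)\dfn\log\cosh(t)$, one has
\[
F(\theta)=\tfrac{1}{2}(\|\theta\|^{2}-\|\theta_{0}\|^{2})+\E_{P_{\theta_{0},\rho}}\!\left[\phi(\langle\theta_{0},X\rangle+\beta_{\rho})-\phi(\langle\theta,X\rangle+\beta_{\rho})\right],
\]
so $\nabla F(\theta)=\theta-\E_{P_{\theta_{0}}}[X\tanh(\langle\theta,X\rangle+\beta_{\rho})]$ and $\nabla^{2}F(\theta)=I-\E_{P_{\theta_{0}}}[XX^{T}\operatorname{sech}^{2}(\langle\theta,X\rangle+\beta_{\rho})]$. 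EM consistency --- namely $\tanh(\langle\theta_{0},X\rangle+\beta_{\rho})=\E[S\mid X]$ under $P_{\theta_{0},\rho}$ from \eqref{eq: expected value of the sign given sample}, so $\E_{P_{\theta_{0}}}[X\tanh(\langle\theta_{0},X\rangle+\beta_{\rho})]=\E[X\E[S\mid X]]=\E[XS]=\theta_{0}$ using $X=S\theta_{0}+Z$, $S^{2}=1$, and $S\ind Z$ --- gives $\nabla F(\theta_{0})=0$. Since also $F(\theta_{0})=0$, Taylor's integral remainder yields
\[
F(\theta_{1})=\int_{0}^{1}(1-s)\,(\theta_{1}-\theta_{0})^{T}\nabla^{2}F(\theta_{s})(\theta_{1}-\theta_{0})\,ds,
\]
and since $\|\theta_{1}-\theta_{0}\|^{2}=\eta^{2}\ell^{2}(u,v)$, the lemma reduces to showing that $|w^{T}\nabla^{2}F(\theta_{s})w|\leq C(\eta^{2}+\rho^{2})$ uniformly over $w\in\mathbb{S}^{d-1}$ and $s\in[0,1]$.

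For this uniform Hessian bound, the lower direction is immediate from $\operatorname{sech}^{2}\leq1$ and $\E_{P_{\theta_{0}}}[\langle w,X\rangle^{2}]=1+\langle w,\theta_{0}\rangle^{2}$: namely $w^{T}\nabla^{2}F(\theta_{s})w\geq-\langle w,\theta_{0}\rangle^{2}\geq-\eta^{2}$. For the upper direction, the elementary inequality $\operatorname{sech}^{2}(t)\geq1-t^{2}$ together with $(a+b)^{2}\leq2a^{2}+2b^{2}$ yields
\[
w^{T}\nabla^{2}F(\theta_{s})w\leq-\langle w,\theta_{0}\rangle^{2}+2\E[\langle w,X\rangle^{2}\langle\theta_{s},X\rangle^{2}]+2\beta_{\rho}^{2}(1+\langle w,\theta_{0}\rangle^{2}).
\]
The critical fourth moment is evaluated by expanding under $X=S\theta_{0}+Z$ with $S\in\{\pm1\}$ independent of $Z\sim N(0,I_{d})$: after taking expectation only terms with \emph{even} powers of $S$ survive (the $S$ and $S^{3}$ terms carry odd moments of the centered jointly Gaussian pair $(\langle w,Z\rangle,\langle\theta_{s},Z\rangle)$, which vanish), so the fourth moment is in fact $\rho$-independent. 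Using $\|\theta_{s}\|\leq\eta$ along the chord, $|\langle w,\theta_{0}\rangle|\leq\eta$, $\eta\leq1$, and Isserlis to evaluate the purely Gaussian quartic $\E[\langle w,Z\rangle^{2}\langle\theta_{s},Z\rangle^{2}]=\|\theta_{s}\|^{2}+2\langle w,\theta_{s}\rangle^{2}$, each of the resulting terms is $O(\eta^{2})$. Combined with $\beta_{\rho}^{2}\leq C\rho^{2}$ on $|\rho|\leq1/2$ (since $\beta_{\rho}=\tanh^{-1}(\rho)\asymp\rho$ on this interval), one obtains $w^{T}\nabla^{2}F(\theta_{s})w\leq C(\eta^{2}+\rho^{2})$.

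Plugging these bounds into the Taylor remainder produces
\[
F(\theta_{1})\leq\tfrac{C}{2}(\eta^{2}+\rho^{2})\cdot\eta^{2}\ell^{2}(u,v),
\]
which is the claimed inequality. The main technical obstacle is the quartic moment computation under the unbalanced mixture, because $\theta_{s}$ along the chord is in general aligned with neither $\theta_{0}$ nor $w$; the structural feature that makes it close at $O(\eta^{2})$ is the $S^{2}=1$ cancellation, which forces the fourth moment to be $\rho$-independent, so that all $\rho$-dependence in the final bound enters solely through the additive $\beta_{\rho}^{2}\asymp\rho^{2}$ term and precisely produces the advertised $\eta^{2}+\rho^{2}$ factor.
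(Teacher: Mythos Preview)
Your proof is correct and takes a genuinely different route from the paper's. The paper proceeds by rotating so that $v=e_{1}$, applies the KL chain rule to split $\dkl$ into a one-dimensional marginal piece $(\mathrm{I})$ and a conditional piece $(\mathrm{II})$, and then upper-bounds each by the $\chi^{2}$ divergence; the conditional piece is handled via the Ingster--Suslina identity and the inequality $\tanh^{2}(x)\le x^{2}$. By contrast, you work coordinate-free: you recognize that $F(\theta)=\dkl(P_{\theta_{0},\rho}\|P_{\theta,\rho})$ has $F(\theta_{0})=0$ and, crucially, $\nabla F(\theta_{0})=0$ by EM consistency, so the entire KL is captured by the second-order Taylor remainder along the chord. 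Your Hessian bound uses the same elementary inequality $\operatorname{sech}^{2}(t)\ge 1-t^{2}$ that underlies the paper's $\tanh^{2}(x)\le x^{2}$ step, but you apply it directly to the full $d$-dimensional quadratic form rather than after decomposing. Your observation that the mixed fourth moment $\E[\langle w,X\rangle^{2}\langle\theta_{s},X\rangle^{2}]$ is $\rho$-independent --- because the odd-in-$S$ terms pair with odd centered-Gaussian moments and vanish --- is a clean structural point that has no direct analogue in the paper's argument, and it explains transparently why the $\rho$-dependence enters only additively through $\beta_{\rho}^{2}$. Your approach is shorter and avoids the $\chi^{2}$ and Ingster--Suslina machinery entirely; the paper's decomposition is more modular and hews closer to the balanced-case proof in \cite{wu2019EM}. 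One minor note: the lower bound $w^{T}\nabla^{2}F(\theta_{s})w\ge-\eta^{2}$ is correct but unnecessary, since the integral remainder has nonnegative weight $(1-s)$ and you only need the upper bound on the Hessian to bound $F(\theta_{1})$ from above.
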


\begin{proof}
By symmetry, it suffice to prove 
\[
\dkl(P_{\eta\cdot u,\rho}||P_{\eta\cdot v,\rho})\leq C\cdot\|\hat{\theta}_{1}-\hat{\theta}_{2}\|^{2}\cdot\eta^{2}(\eta^{2}+\rho^{2})\,,
\]
and by rotational invariance of the normal distribution it can be
assumed that $v=e_{1}=(1,0,\ldots,0)$. Let $\lambda=\max\left\{ 1-u_{1},\|u_{\perp}\|\right\} <1$
where $u_{\perp}=(u_{2},\ldots,u_{d})$ (and similar notation will
be used for any $d$-dimensional vector). Further, let $Q$ be the
distribution of $X=(X_{1},\ldots X_{d})\in\mathbb{R}^{d}$ under $\theta_{*}=\eta v=\eta e_{1}$,
to wit $Q=Q_{X_{1},\ldots X_{d}}=P_{\eta,\rho_{*}}\otimes N(0,I_{d-1})$
(which is a product distribution), and let $P$ be the corresponding
distribution under $\theta_{*}=\eta u$. From the chain rule of the
KL divergence 
\[
\dkl(P_{\eta\cdot u,\rho}||P_{\eta\cdot v,\rho})=\dkl(P_{X_{1}}||Q_{X_{1}})+\E_{P_{X_{1}}}\left[\dkl(P_{X_{\perp}\mid X_{1}}||N(0,I_{d-1}))\right]\dfn(\text{I})+(\text{II}).
\]
We bound the two KL divergence terms using the corresponding chi-square
divergence.

\paragraph*{Bounding $(\text{I})$}

In one dimension, 
\begin{align*}
p_{\eta,\rho}(x) & =e^{-\eta^{2}/2}\varphi(x)\left[\left(\frac{1+\rho}{2}\right)e^{\theta x}+\left(\frac{1-\rho}{2}\right)e^{-\theta x}\right]\\
 & =e^{-\eta^{2}/2}\varphi(x)\left[\cosh(\eta x)+\rho\sinh(\eta x)\right]\,.
\end{align*}
Hence, for $\epsilon=\eta\lambda$ 
\begin{align}
(\text{I}) & =\dkl(P_{X_{1}}||Q_{X_{1}})\nonumber \\
 & \leq\dchis(P_{\eta-\epsilon,\rho}||Q_{\eta,\rho})\nonumber \\
 & =e^{\eta^{2}/2}\cdot\int\varphi(x)\frac{\left[e^{-(\eta-\epsilon)^{2}/2}\left(\cosh\left((\eta-\epsilon)x\right)+\rho\sinh\left((\eta-\epsilon)x\right)\right)-e^{-\eta^{2}/2}\left(\cosh\left(\eta x\right)+\rho\sinh\left(\eta x\right)\right)\right]^{2}}{\cosh(\eta x)+\rho\sinh(\eta x)}\d x\nonumber \\
 & \trre[\leq,a]\sqrt{\frac{4e}{3}}\cdot\int\varphi(x)\left[e^{-(\eta-\epsilon)^{2}/2}\left(\cosh\left((\eta-\epsilon)x\right)+\rho\sinh\left((\eta-\epsilon)x\right)\right)-e^{-\eta^{2}/2}\left(\cosh\left(\eta x\right)+\rho\sinh\left(\eta x\right)\right)\right]^{2}\d x\nonumber \\
 & \trre[=,b]\sqrt{\frac{4e}{3}}\cdot e^{-(\eta-\epsilon)^{2}}\int\varphi(x)\left[\cosh^{2}\left((\eta-\epsilon)x\right)+\rho^{2}\sinh^{2}\left((\eta-\epsilon)x\right)\right]\d x\nonumber \\
 & \hphantom{-}-\sqrt{\frac{4e}{3}}\cdot2e^{-(\eta-\epsilon)^{2}/2-\eta^{2}/2}\int\varphi(x)\left[\cosh\left((\eta-\epsilon)x\right)\cosh\left(\eta x\right)+\rho^{2}\sinh\left((\eta-\epsilon)x\right)\sinh\left(\eta x\right)\right]\d x\nonumber \\
 & \hphantom{-}+\sqrt{\frac{4e}{3}}\cdot e^{-\eta^{2}/2}\int\varphi(x)\left[\cosh^{2}\left(\eta x\right)+\rho^{2}\sinh^{2}\left(\eta x\right)\right]^{2}\d x\nonumber \\
 & \trre[=,c]\sqrt{\frac{4e}{3}}\cdot\left[\cosh\left((\eta-\epsilon)^{2}\right)+\cosh\left(\eta^{2}\right)-2\cosh\left(\eta(\eta-\epsilon)\right)\right]\nonumber \\
 & \hphantom{=}+\sqrt{\frac{4e}{3}}\left[\sinh\left((\eta-\epsilon)^{2}\right)+\sinh\left(\eta^{2}\right)-2\sinh\left(\eta(\eta-\epsilon)\right)\right]\rho^{2}\nonumber \\
 & \leq C_{1}\epsilon^{2}(\eta^{2}+\rho^{2})=C_{1}\lambda^{2}\eta^{2}(\eta^{2}+\rho^{2})\,,\label{eq: minimax lemma bound on I}
\end{align}
where $(a)$ is since by the inequality of arithmetic and geometric
means $\cosh(t)+\rho\sinh(t)=\left(\frac{1+\rho}{2}\right)e^{\theta x}+\left(\frac{1-\rho}{2}\right)e^{-\theta x}\geq\sqrt{1-\rho^{2}}$
and using $0<\eta<1$ and $|\rho|<\frac{1}{2}$; $(b)$ is obtained
by expanding the square, and noting $\sinh$ is odd and that as $\varphi(x)\propto e^{-x^{2}/2}$
is an even function, $\int\varphi(x)f(x)\d x=0$ for any odd function
$f$; $(c)$ is obtained by the identities 
\[
\int\varphi(x)\cosh(\eta x)^{2}\d x=e^{\eta^{2}}\cosh(\eta^{2}),\;\int\varphi(x)\cosh(\eta x)^{2}\d x=e^{\eta^{2}}\sinh(\eta^{2})\,,
\]
\[
\int\varphi(x)\cosh(\eta_{1}x)\cosh(\eta_{2}x)\d x=\frac{1}{2}e^{\frac{(\eta_{1}+\eta_{2})^{2}}{2}}+\frac{1}{2}e^{\frac{(\eta_{1}-\eta_{2})^{2}}{2}}\,,
\]
\[
\int\varphi(x)\sinh(\eta_{1}x)\sinh(\eta_{2}x)\d x=\frac{1}{2}e^{\frac{(\eta_{1}+\eta_{2})^{2}}{2}}-\frac{1}{2}e^{\frac{(\eta_{1}-\eta_{2})^{2}}{2}}\,;
\]
(d) is by Taylor expansion of $\cosh$ and $\sinh$ around $\eta^{2}$,
since $|\epsilon|\leq\sqrt{2}\eta\leq\sqrt{2}$ and where $C_{1}>0$
is a universal constant.

\paragraph*{Bounding $(\text{II})$}

The proof follows the one in \cite{wu2019EM} up until almost the
very last step. Recall that under $P$ one can write $X=R_{i}+Z_{i}$
for $i\in[d]$ where $R_{i}=S\cdot\eta u_{i}$ where $S\in\{\pm1\}$
and $\P[S=-1]=\delta_{*}$. Then, 
\begin{align}
(\text{II}) & =\E_{P_{X_{1}}}\left[\dkl(P_{X_{\perp}\mid X_{1}}||N(0,I_{d-1}))\right]\nonumber \\
 & \trre[\leq,a]\E\left[\dchis(P_{X_{\perp}\mid X_{1}}||N(0,I_{d-1}))\right]\nonumber \\
 & \trre[\leq,b]\eta^{2}\cdot\sum_{i=2}^{d}u_{i}^{2}\E_{P_{X_{1}}}\left[\E^{2}[R\mid X_{1}]\right]+C_{2}(\eta\lambda)^{4}\nonumber \\
 & \trre[=,c]\eta^{2}\cdot\sum_{i=2}^{d}u_{i}^{2}\E_{P_{X_{1}}}\left[\tanh^{2}(u_{1}X_{1}+\beta_{\rho})\right]+C_{2}(\eta\lambda)^{4}\nonumber \\
 & \trre[\leq,d]\eta^{2}\cdot\sum_{i=2}^{d}u_{i}^{2}\E_{P_{X_{1}}}\left[2(u_{1}^{2}\eta^{2}X_{1}^{2}+\beta_{\rho}^{2})\right]+C_{2}(\eta\lambda)^{4}\nonumber \\
 & \trre[\leq,e]4\eta^{4}\lambda^{2}+2C_{3}\eta^{2}\rho^{2}\lambda^{2}+C_{2}(\eta\lambda)^{4}\nonumber \\
 & \trre[\leq,f]C_{4}\eta^{2}(\eta^{2}+\rho^{2})\lambda^{2}\,,\label{eq: minimax lemma bound on II}
\end{align}
where $(a)$ is by bounding the KL divergence using the chi-square
divergence; $(b)$ stems from the Ingster-Suslina identity \cite{ingster2012nonparametric}
along with Taylor expansion (see details in \cite[Appendix B]{wu2019EM});
(c) follows from $\E_{\eta,\rho}[S\mid X_{1}]=\tanh(\eta X_{1}+\beta_{\rho})$
(see (\ref{eq: expected value of the sign given sample})); $(d)$
follows from $\tanh^{2}(x)\leq x^{2}$; $(e)$ follows from $|u_{1}|\leq1$,
$\|u_{\perp}\|\leq\lambda$, $\E_{P_{X_{1}}}[X_{1}^{2}]=1+\eta^{2}\leq2$,
and $\beta_{\rho}\leq C_{3}\rho$ for all $|\rho|\leq\frac{1}{2}$
and $C_{3}>0$ is a universal constant; $(f)$ holds for a universal
constant $C_{4}>0$ since $\lambda<1$.

Combining (\ref{eq: minimax lemma bound on I}) and (\ref{eq: minimax lemma bound on II})
we complete the proof of the lemma.

For completeness, we outline the proof of the lower bound using Fano's
method. The method states that if there exists a set of $M$ parameters
$\Theta_{M}=\{\theta_{1},\ldots,\theta_{M}\}$ such that $I(\theta;\underline{X})\lesssim\log M$
and $\|\theta_{m}-\theta_{m'}\|\geq\epsilon\eta$ for all $m,m'\in[M],m'\neq m$
then the lower bound is of the order $\epsilon\eta$. This is shown
by bounding the mutual information with the KL radius of $\Theta_{M}$
as $I(\theta;\underline{X})\lesssim n\max_{m\in[m]}\dkl(P_{\theta_{m},\rho}||P_{\theta_{0},\rho})$
for some $\theta_{0}$. As constructed in \cite[Appendix B]{wu2019EM},
there exists a set $\{\theta_{0}\}\cup\Theta_{M}$ with $M\geq e^{C_{0}d}$
for some $C_{0}$, and a small constant $c_{0}>0$ such that: (a)
$\|\theta_{m}\|=\eta$ for all $m\in0\cup[m]$; (b) $\|\theta_{m}-\theta_{m'}\|\geq c_{0}\epsilon\eta$
for all $m,m'\in[M],m'\neq m$; (c) $\|\theta_{m}-\theta_{0}\|\leq2c_{0}\epsilon\eta$
for all $m\in[m]$. By Lemma \ref{lem:minimax rate -bounding KL divergence}
\[
\frac{I(\theta;\underline{X})}{\log M}\asymp\frac{I(\theta;\underline{X})}{d}\lesssim\frac{n}{d}\max_{m\in[m]}\dkl(P_{\theta_{m},\rho}||P_{\theta_{0},\rho})\lesssim\frac{n}{d}\epsilon\cdot\eta^{2}(\max\{\eta,\rho\})^{2}
\]
and so choosing $\epsilon=\min\{1,\frac{1}{\eta\max\{\eta,\rho\}}\sqrt{\frac{d}{n}}\}$
yields a minimax lower bound of rate $\min\{\eta,\frac{1}{\eta}\sqrt{\frac{d}{n}},\frac{1}{\rho}\sqrt{\frac{d}{n}}\}$. 
\end{proof}
\begin{thm}
\label{thm: weight estimation minimax}For any $d,n\in\mathbb{N}$
let $\tilde{\rho}$ be any estimator of $\rho_{*}$ based on $\underline{X}=(X_{1},\ldots,X_{n})\stackrel{\tiny\mathrm{i.i.d.}}{\sim}P_{\theta_{*},\rho_{*}}$.
Then, 
\begin{align*}
\sup_{\tilde{\rho}(\theta_{*})}\inf_{\rho_{*}\in\mathbb{B}(\overline{\rho})}\E_{\theta_{*},\rho_{*}}\left[\ell(\tilde{\rho},\rho_{*})\right] & \asymp\begin{cases}
\overline{\rho}, & \|\theta_{*}\|\leq\frac{\overline{\rho}}{\sqrt{n}}\\
\frac{1}{\|\theta_{*}\|\sqrt{n}}, & \frac{\overline{\rho}}{\sqrt{n}}<\|\theta_{*}\|<1\,\\
\frac{1}{\sqrt{n}}, & \|\theta_{*}\|>1
\end{cases}.
\end{align*}
\end{thm}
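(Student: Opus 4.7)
}

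The plan is to establish matching upper and lower bounds up to absolute constants. For the upper bound, I will analyze the clipped method-of-moments estimator
\[
\hat{\rho}_{\text{MoM}} \dfn \bigl[\tfrac{1}{\|\theta_{*}\|}\langle \hat{\theta}_{*}, \E_{n}[X]\rangle\bigr]_{\overline{\rho}},
\]
(with $\hat\rho=0$ when $\|\theta_*\|=0$). Writing $X_i = S_i\theta_* + Z_i$ as in \eqref{eq: Guassian mixture model - random variables}, the unclipped estimator equals $\frac{1}{n}\sum_i S_i + \frac{1}{\|\theta_*\|}\langle \hat\theta_*, \frac{1}{n}\sum_i Z_i\rangle$, so
\[
\hat{\rho}_{\text{MoM}} - \rho_{*} = (\bar S - \rho_*) + \tfrac{1}{\|\theta_*\|}\bar{Z}',
\]
where $\bar S-\rho_*$ is a centered bounded random variable with sub-Gaussian norm $\lesssim 1/\sqrt{n}$ and $\bar Z'\sim N(0,1/n)$. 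A standard moment bound gives $\E|\hat\rho_{\text{MoM}}-\rho_*|\lesssim 1/\sqrt{n}+1/(\|\theta_*\|\sqrt{n})$. Combined with the deterministic bound $|\hat\rho_{\text{MoM}}-\rho_*|\le 2\overline\rho$ from the clip, this yields the upper bound $\min\{\overline\rho,1/(\|\theta_*\|\sqrt n),1/\sqrt n\}$, matching all three cases in the theorem.

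For the lower bound, the first step is to note that $Y\dfn\langle\hat\theta_*,X\rangle$ is a sufficient statistic for $\rho_*$ when $\theta_*$ is known: the orthogonal component $X_\perp$ has law $N(0,I_{d-1})$ independent of $(S,\rho_*)$. Hence $\dkl(P_{\theta_*,\rho_1}\|P_{\theta_*,\rho_2})$ equals the KL divergence between the one-dimensional mixtures $p_{\eta,\rho_j}(y)=\tfrac{1+\rho_j}{2}\varphi(y-\eta)+\tfrac{1-\rho_j}{2}\varphi(y+\eta)$ with $\eta=\|\theta_*\|$. I then upper bound the chi-squared divergence: since $p_{\eta,\rho_1}-p_{\eta,\rho_2}=\tfrac{\rho_1-\rho_2}{2}(\varphi(y-\eta)-\varphi(y+\eta))$ and $p_{\eta,\rho_2}\ge\tfrac{1-\overline\rho}{2}(\varphi(y-\eta)+\varphi(y+\eta))$,
\[
\dchis(p_{\eta,\rho_1}\|p_{\eta,\rho_2})\;\le\;\frac{(\rho_1-\rho_2)^2}{2(1-\overline\rho)}\int\frac{(\varphi(y-\eta)-\varphi(y+\eta))^2}{\varphi(y-\eta)+\varphi(y+\eta)}\,dy.
\]
A direct computation using $(a-b)^2=(a+b)^2-4ab$ reduces the integral to $2-2e^{-\eta^2/2}\,\E[1/\cosh(Z\eta)]$ with $Z\sim N(0,1)$. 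Taylor-expanding in $\eta$ yields the bound $\lesssim\eta^2$ for $\eta\le 1$, while for $\eta\ge 1$ the integral is trivially $\le 2$. Thus $\dchis(p_{\eta,\rho_1}\|p_{\eta,\rho_2})\lesssim(\rho_1-\rho_2)^2\min(\eta^2,1)$, and tensorization gives
\[
\dkl(P_{\theta_*,\rho_1}^{\otimes n}\,\|\,P_{\theta_*,\rho_2}^{\otimes n})\;\lesssim\;n(\rho_1-\rho_2)^2\min(\|\theta_*\|^2,1).
\]

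With this KL bound in hand, the lower bound follows from Le Cam's two-point method. Choosing $\rho_1=-\rho_2=c\cdot\min\{\overline\rho,1/(\|\theta_*\|\sqrt n),1/\sqrt n\}$ for a sufficiently small constant $c$ makes the total variation between the two product distributions bounded away from $1$, and Le Cam's inequality yields the desired lower bound on the minimax risk. The three cases of the theorem arise simply from identifying which of the three quantities in the minimum is active, and the middle cutoff at $\|\theta_*\|=\overline\rho/\sqrt n$ reflects the transition (up to absolute constants, under the global assumption $\overline\rho\le\gl[C]_\rho<1$) between the $\overline\rho$-binding regime and the $1/(\|\theta_*\|\sqrt n)$-binding regime. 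The main technical obstacle is the uniform chi-squared estimate: the integrand's denominator degenerates near $y=\pm\eta$ in the large-$\eta$ regime, and care is needed to ensure the constants do not blow up. The symmetrization $(a-b)^2/(a+b)=a+b-4ab/(a+b)$ and the explicit evaluation via $\cosh$ deliver the required uniform-in-$\eta$ bound cleanly.
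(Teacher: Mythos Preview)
Your proof is correct and takes essentially the same approach as the paper: method-of-moments for the upper bound, sufficiency reduction to the one-dimensional projection $\langle\hat\theta_*,X\rangle$, a chi-square bound on the KL divergence scaling as $(\rho_1-\rho_2)^2\min(\eta^2,1)$, and Le Cam's two-point method for the lower bound. The only differences are cosmetic: the paper uses the trivial estimator $\tilde\rho=0$ (rather than clipping) for the small-$\|\theta_*\|$ upper bound, centers the chi-square at $\rho_2=0$ so the denominator is exactly $\cosh(\eta x)$, and handles the $\|\theta_*\|>1$ lower bound via a separate genie argument (giving the estimator access to $\{S_i\}$ and reducing to Bernoulli estimation) instead of your uniform-in-$\eta$ chi-square bound.
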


\begin{proof}
Given the measurements $\{X_{i}\}_{i=1}^{n}$ the projections $\{\langle\hat{\theta}_{*},X_{i}\rangle\}_{i=1}^{n}$
are sufficient statistics for the estimation of $\rho_{*}$. Hence
we may assume that $d=1$, and we may write $X_{i}=S_{i}\theta_{*}+Z_{i}\in\mathbb{R}^{d}$
for $i\in[n]$ where $S_{i}\in\{\pm1\}$ and $\P[S_{i}=-1]=\delta_{*}$

\paragraph*{Upper bound}

The first case can be achieved by the trivial estimator $\tilde{\rho}=0$.
For the other two cases, as in the proof of Theorem \ref{thm: mean estimation minimax},
the estimator $\tilde{\rho}(\theta_{*})=\frac{1}{\|\theta_{*}\|}\langle\hat{\theta}_{*},\E_{n}[X]\rangle$
can be shown to achieve an error rate of $\max\{\frac{1}{\sqrt{n}},\frac{1}{\|\theta_{*}\|\sqrt{n}}\}$
where the first term stems from the empirical error of $\E_{n}[S]$,
and the second term is due to the additive error $Z$.

\paragraph*{Lower bound}

If $\|\theta_{*}\|>1$ we may bound the error rate of the given estimator
by the error rate of an estimator which known the noise sequence $\{Z_{i}\}_{i=1}^{n}$,
which, equivalently, has direct access to $\{S_{i}\}_{i=1}^{n}$.
This is a simple Bernoulli model and the error rate is $\frac{1}{\sqrt{n}}$.
We thus next assume that $\|\theta_{*}\|=\eta<1$. As the calculation
in the bound of term $(\text{I})$ in Lemma \ref{lem:minimax rate -bounding KL divergence},
\begin{align*}
\dkl(P_{\eta,\rho}||P_{\eta,0}) & \leq\dchis(P_{\eta,\rho}||P_{\eta,0})\\
 & =e^{-\eta^{2}/2}\int\varphi(x)\frac{\rho^{2}\cdot\sinh^{2}(\eta x)}{\cosh(\eta x)}\d x\\
 & \leq e^{-\eta^{2}/2}\int\varphi(x)\rho^{2}\cdot\sinh^{2}(\eta x)\d x\\
 & =e^{\eta^{2}/2}\sinh(\eta^{2})\rho^{2}\\
 & \leq C\eta^{2}\rho^{2}\,,
\end{align*}
for some $C>0$ using $\sinh(t)\leq C|t|$ for $t\leq1$. Le-Cam's
two point argument with $\rho=c_{0}\min\{\overline{\rho},\frac{1}{\eta\sqrt{n}}\}$
and $c_{0}>0$ small enough then results a minimax error rate of $\frac{1}{\eta\sqrt{n}}$. 
\end{proof}

\section{Miscellaneous \label{sec:Miscellaneous}}

\subsection{Useful results\label{subsec:Usful-results}}

We collect here several useful results which are repeatedly used throughout
the paper: 
\begin{itemize}
\item Relations for inverse temperature parameter: For $\beta\dfn\frac{1}{2}\log\frac{1-\delta}{\delta}$
it holds that $\tanh(\beta)=1-2\delta$, $\cosh(\beta)=\frac{1}{\sqrt{4\delta(1-\delta)}}$,
and $\frac{\d\beta_{\delta}}{\d\delta}=-\frac{1}{2\delta(1-\delta)}$
and $\frac{\d\beta_{\rho}}{\d\rho}=\frac{1}{1-\rho^{2}}$. 
\item Change of measure: Let $V\sim(1-\delta)\cdot N(\theta,1)+(1-\delta)\cdot N(-\theta,1)$
and let $Z\sim N(0,1)$. Then, for any integrable function $f$ 
\begin{align}
\E\left[f(V)\right] & =e^{-\theta^{2}/2}\cdot\E\left[f(Z)\cdot\cosh(\theta Z+\beta_{\delta})\right]\nonumber \\
 & =e^{-\theta^{2}/2}\cdot\E\left[f(Z)\cdot\left((1-\delta)e^{\theta Z}+\delta e^{-\theta Z}\right)\right]\,.\label{eq: change of measure}
\end{align}
\item For $U\sim N(\eta,1)$ \cite[Lemma 2]{pmlr-v65-daskalakis17b} 
\begin{equation}
\E\left[\tanh(U\theta)\right]\geq1-e^{-\eta\theta/2}\,,\label{eq: lower bound on expected tanh}
\end{equation}
\item $(a+b)^{k}\leq2^{k-1}(a^{k}+b^{k})$ for $k\geq1$. 
\item Chi-square tail bound: \cite[Remark 2.11]{boucheron2013concentration}
\begin{equation}
\P\left[\chi_{k}^{2}\geq2k+3t\right]\leq\P\left[\chi_{k}^{2}-k\geq2\sqrt{kt}k+2t\right]\leq e^{-t}\label{eq: chi-square tail bound}
\end{equation}
\item Stein's identity: Let $U\sim N(\theta,\sigma^{2})$ and $Z\sim N(0,1)$.
Let $f$ be a differentiable function such that $\E|f'(Z)|<\infty$.
Then, 
\begin{equation}
\E\left[f(U)(U-\theta)\right]=\sigma^{2}\E\left[f'(U)\right].\label{eq: Stein lemma}
\end{equation}
\item Let $V\sim(1-\delta)N(\theta,1)+\delta N(-\theta,1)$ with $\theta>0$
and $\delta<\frac{1}{2}$. Then $\P[V=v\mid|V|=v]>\P[V=-v\mid|V|=v]$
for any $v>0$. This follows from Chebyshev's sum inequality 
\begin{equation}
\frac{\P[V=v\mid|V|=v]}{\P[V=-v\mid|V|=v]}=\frac{(1-\delta)\varphi(\eta-v)+\delta\varphi(\eta+v)}{(1-\delta)\varphi(\eta+v)+\delta\varphi(\eta-v)}>1\label{eq: GM positive vs negative probability}
\end{equation}
since $1-\delta>\delta$ and $\varphi(\eta+v)<\varphi(\eta-v)$. 
\item Gaussian average of odd function.\textbf{ }Let $f(u)$ be an odd function
which is positive on $\mathbb{R}_{+}$ and negative on $\mathbb{R}_{-}$.
Let $U$ be a continuous random variable such that $\P[U=u\mid|U|=u]\geq\P[U=-u\mid|U|=u]$.
Then, $\E[f(U)]\geq0$. This is satisfied for $U\sim N(\eta,\sigma^{2})$
with $\eta>0$. 
\end{itemize}

\subsection{Convergence properties of one-dimensional iterations}
\begin{prop}[Convergence properties of one-dimensional iterations]
\label{prop: Properties of one dimensional iterations}Let $\theta\mapsto h(\theta)$
be an analytic monotonically increasing function, and $h(\theta)-\theta$
is not identically $0$. Let $\theta_{0}$ be given, and suppose that
either $\sup_{\theta>\theta_{0}}h(\theta)<\infty$ or $\lim_{\theta\to\infty}h'(\theta)<1$.
Let $h_{+}(\theta)$ be another function which satisfies the same
properties as $h(\cdot)$. 
\begin{enumerate}
\item \label{enu: general convergence in one-dim - existence of a fixed point}If
$h(\theta_{0})>\theta_{0}$ for some $\theta_{0}$ then $h(\theta)$
has at least a single fixed point in $(\theta_{0},\infty)$. 
\item \label{enu: general convergence in one-dim - alternating slope}Let
$\{\tilde{\theta}_{k}\}$ be an enumeration of the fixed points of
$h(\theta)$. For all $k\geq1$, if $h'(\tilde{\theta}_{k})<1$ then
$h'(\tilde{\theta}_{k+1})\geq1$ and if $h'(\tilde{\theta}_{k})>1$
then $h'(\tilde{\theta}_{k+1})\leq1$. 
\item Assume that $h(\theta)$ is strictly concave on $[\theta_{0},\infty)$.
If $h(\theta_{0})>\theta_{0}$ then $h(\theta)$ has a single fixed
point in $(\theta_{0},\infty)$. If $h(\theta_{0})\leq\theta_{0}$
then $h(\theta)$ has at most two fixed points in $(\theta_{0},\infty)$. 
\item \label{enu: general convergence in one-dim - convergence of monotonic}Consider
the iteration $\theta_{t+1}=h(\theta_{t})$. If $\theta_{1}=h(\theta_{0})>\theta_{0}$
(resp. $\theta_{1}<\theta_{0}$) then $\{\theta_{t+1}\}$ is monotonically
increasing (resp. decreasing) and converges to a fixed point $\theta_{\infty}$.
It holds that $h'(\theta_{\infty})\leq1$ (resp. $h'(\theta_{\infty})\geq1$). 
\item \label{enu: general convergence in one-dim - convergence dominance}In
addition, consider the iteration $\theta_{t+1}^{+}=h_{+}(\theta_{t}^{+})$
such that $\theta_{0}=\theta_{0}^{+}$, and suppose that $h_{+}(\theta)>h(\theta)$
on $[\theta_{0},\infty)$. $\theta_{t}^{+}\geq\theta_{t}$ for all
$t\geq1$, and this holds specifically in the limit $t\to\infty$.
Hence, if, in addition, $\lim_{t\to\infty}\theta_{t}=\lim_{t\to\infty}\theta_{t}^{+}=\theta_{\infty}$
then $\theta_{\infty}-\theta_{t}^{+}\leq\theta_{\infty}-\theta_{t}$,
i.e., the convergence of $\{\theta_{t}^{+}\}$ to the fixed point
is faster than that of $\{\theta_{t}\}$. 
\item \label{enu: general convergence in one-dim - convergence time of contraction}Convergence
rate of a contraction: If $\max_{\theta\in[\theta_{0},\theta_{\infty}]}h'(\theta)=\zeta<1$
then $\theta_{\infty}-\theta_{t}\leq(\theta_{\infty}-\theta_{0})\cdot\zeta^{t}$,
and $\theta_{\infty}-\theta_{t}\leq c$ for all $t\geq\frac{1}{1-\zeta}\cdot\log\frac{c}{\theta_{\infty}-\theta_{0}}$
(assuming $c\geq\theta_{\infty}-\theta_{0}$, otherwise $t\geq1$
suffice). 
\item \label{enu: general convergence in one-dim - convergence time of contraction plus constant}Suppose
that $0\leq h(\theta)\leq(1-a)\theta+b$ for $a\in(0,1)$. Then $h(\theta_{t})\leq\frac{2b}{a}$
for all $t\geq\frac{1}{a}\log\frac{a}{b}$ 
\end{enumerate}
\end{prop}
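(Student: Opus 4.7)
The proposition collects seven standard facts about scalar monotone iterations, so my plan is to dispatch items (1)--(3) by elementary analysis of the auxiliary function $g(\theta) \dfn h(\theta)-\theta$, items (4)--(5) by monotone induction, and items (6)--(7) by direct contraction/affine bounds on the iteration. Throughout, the hypothesis that $h$ is analytic and $h(\theta)-\theta \not\equiv 0$ ensures the zeros of $g$ are isolated, which is what enables local sign analysis at fixed points.

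For item (1), $g(\theta_0)>0$ by assumption, while either $\sup_{\theta>\theta_0}h(\theta)<\infty$ or $\lim_{\theta\to\infty}h'(\theta)<1$ forces $g(\theta)<0$ for all $\theta$ large, so continuity and the intermediate value theorem yield a zero in $(\theta_0,\infty)$. For item (2), the isolated-zeros property lets me conclude that $g$ has a constant sign on any open interval between consecutive fixed points $\tilde\theta_k<\tilde\theta_{k+1}$; if $h'(\tilde\theta_k)<1$ then $g$ is strictly decreasing at $\tilde\theta_k$ and hence negative just to its right, forcing $g\leq 0$ on $(\tilde\theta_k,\tilde\theta_{k+1})$, which gives $g'(\tilde\theta_{k+1})\geq0$, i.e.\ $h'(\tilde\theta_{k+1})\geq 1$; the reverse implication is symmetric. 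For item (3), strict concavity of $h$ makes $g'=h'-1$ strictly decreasing, so $g$ has at most one local maximum and no local minimum on $(\theta_0,\infty)$, hence at most two zeros in total, and combined with item (1) exactly one when $h(\theta_0)>\theta_0$.

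For item (4), an induction using monotonicity of $h$ shows that $\{\theta_t\}$ is monotone in the direction dictated by $\theta_1-\theta_0$; boundedness from the hypothesis (or from the existence of a fixed point supplied by item (1)) yields a limit $\theta_\infty$, which is a fixed point by continuity of $h$. The side condition $h'(\theta_\infty)\leq 1$ in the increasing case follows because otherwise $g>0$ just to the right of $\theta_\infty$, contradicting the approach from below through points with $g(\theta_t)\geq 0$. Item (5) is again an induction: if $\theta_t^+\geq\theta_t$, then monotonicity of $h_+$ and the pointwise dominance $h_+>h$ give $\theta_{t+1}^+ = h_+(\theta_t^+)\geq h_+(\theta_t)> h(\theta_t)=\theta_{t+1}$.

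For item (6), the mean value theorem yields $\theta_\infty-\theta_{t+1}=h(\theta_\infty)-h(\theta_t)\leq\zeta(\theta_\infty-\theta_t)$ (monotone convergence keeps all intermediate points inside $[\theta_0,\theta_\infty]$), and iterating gives the geometric rate; solving $(\theta_\infty-\theta_0)\zeta^t\leq c$ with $\log(1/\zeta)\geq 1-\zeta$ produces the stated iteration count. For item (7), iterating the affine bound yields $\theta_t\leq(1-a)^t\theta_0 + b\sum_{k=0}^{t-1}(1-a)^k\leq(1-a)^t\theta_0+b/a$, and choosing $t$ so the first term is at most $b/a$ finishes it. The main subtlety I anticipate is item (2): the alternation I wrote above is ``non-strict'' at degenerate fixed points where $h'=1$, and establishing it in that boundary case requires peeling off the first non-vanishing derivative of $g$ at $\tilde\theta_k$ (using analyticity) to determine the correct one-sided sign before invoking the constant-sign argument on $(\tilde\theta_k,\tilde\theta_{k+1})$. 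All other items are routine once the sign bookkeeping for $g$ is in place.
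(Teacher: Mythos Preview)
Your proposal is correct and follows essentially the same approach as the paper: intermediate value theorem on $g=h-\theta$ for item (1), isolated-zeros plus one-sided sign analysis for item (2), monotone induction and continuity for items (4)--(5), and direct contraction/affine iteration bounds for items (6)--(7). The only noteworthy difference is item (3): the paper argues via item (2) (alternating slopes) combined with strict concavity of $h$, whereas you go more directly through the observation that $g'=h'-1$ is strictly decreasing, so $g$ is strictly concave and has at most two zeros; both arguments are equivalent in content, yours slightly more streamlined. Your remark that the degenerate case $h'(\tilde\theta_k)=1$ in item (2) needs analyticity to peel off the first non-vanishing derivative is a valid refinement the paper glosses over.
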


\begin{proof}
~ 
\begin{enumerate}
\item Under both conditions, there exists $\text{\ensuremath{\theta_{1}} such that \ensuremath{h(\theta_{1})<\theta_{1}}}$.
The claim follows from the intermediate value theorem for the function
$h(\theta)-\theta$. 
\item First, we note that such an enumeration is possible since $h(\theta)-\theta$
is analytic and not a constant, and so its zeros in $\mathbb{R}_{+}$
are isolated. Assume w.l.o.g. that $h'(\tilde{\theta}_{1})>1$. Thus,
$h(\theta)>\theta$ for all $\theta\in(\tilde{\theta}_{1},\tilde{\theta}_{2})$
and so $h'(\tilde{\theta}_{2})=\lim_{t\to0}\frac{h(\tilde{\theta}_{2})-h(\tilde{\theta}_{2}-t)}{t}=\frac{\tilde{\theta}_{2}-h(\tilde{\theta}_{2}-t)}{t}\leq1$.
The analogous property is proved similarly. 
\item Let $\theta_{1}$ be the minimal fixed point which is larger than
$\theta_{0}$. If $h(\theta_{0})>\theta_{0}$ then we must have $h'(\theta_{1})<1$
and by concavity $h'(\theta)<1$ for all $\theta\geq\theta_{1}$.
Thus there are no fixed points in $(\theta_{1},\infty)$. If $h(\theta_{0})\leq\theta_{0}$
then assume by contradiction that there are more than or three fixed
points $\{\tilde{\theta}_{k}\}$. By strict concavity, it is not possible
that $h'(\tilde{\theta}_{k})=1$ for any of the fixed point since
this fixed point would be unique. By the previous item, the signs
$h'(\tilde{\theta}_{k})-1$ are alternating. Thus, there exists $k$
such that $h'(\tilde{\theta}_{k})>1$ and $h'(\tilde{\theta}_{k+1})<1$.
Strict concavity implies that no fixed points are possible in $(-\infty,\tilde{\theta}_{k})$,
$(\tilde{\theta}_{k},\tilde{\theta}_{k+1})$ and $(\tilde{\theta}_{k},\infty)$.
So $h(\theta)$ cannot be more than two fixed points in $(\theta_{0},\infty)$. 
\item Assume $\theta_{1}>\theta_{0}$. Since $h$ is increasing, then $\theta_{2}=h(\theta_{1})>h(\theta_{0})=\theta_{1}$.
By induction, $\{\theta_{t}\}$ is an increasing an bounded sequence,
and thus has a limit $\tilde{\theta}$. Since $h$ is continuous $\tilde{\theta}=\lim_{t\to\infty}\theta_{t+1}=\lim_{t\to\infty}h(\theta_{t})=h(\lim_{t\to\infty}\theta_{t})=h(\tilde{\theta})$,
and so $\tilde{\theta}$ is a fixed point. The proof for $\theta_{1}<\theta_{0}$
is similar. 
\item By induction $\theta_{t+1}^{+}=h^{+}(\theta_{t}^{+})\geq h^{+}(\theta_{t})\geq h(\theta_{t})=\theta_{t+1}$. 
\item By induction and $|\theta_{t+1}-\theta_{\infty}|=|h(\theta_{t})-h(\theta_{\infty})|\leq\zeta|\theta_{t}-\theta_{\infty}|$.
To achieve $\theta_{\infty}-\theta_{t}\leq c$ we may require $t\geq\frac{\log\frac{\theta_{\infty}-\theta_{0}}{c}}{\log(1/\zeta)}$,
and the claim holds since $\frac{1}{\log(1/\zeta)}\leq\frac{1}{1-\zeta}$. 
\item By induction $\theta_{t+1}\leq(1-a)^{t}+\sum_{j=1}^{t}b(1-a)^{j}\leq(1-a)^{t}+\frac{b}{a}.$
Using $-a\geq\log(1-a)$ we have $(1-a)^{t}\leq\frac{b}{a}$ if $t\geq\frac{1}{a}\log\frac{a}{b}$. 
\end{enumerate}
\end{proof}

\subsection{Totally positive kernels and variation diminishing property\label{subsec:Totally-positive-kernels}}

Let $A,B\subseteq\mathbb{R}$. A kernel $K:A\times B\mapsto\mathbb{R}$
is said to be \emph{totally positive of order }$k$, $\TP_{k}$ if
for all $m\in[k]$ and all $x_{1}<\cdots<x_{m}$ and $y_{1}<\cdots<y_{m}$
(with $x_{i}\in A$ and $y_{i}\in B$ for $i\in[k]$) it holds that
\[
K\left(\begin{array}{c}
x_{1},\cdots,x_{m}\\
y_{1},\cdots,y_{m}
\end{array}\right)=\det\left[\begin{array}{ccc}
K(x_{1},y_{1}) & \cdots & K(x_{1},y_{m})\\
\vdots &  & \vdots\\
K(x_{m},y_{1}) & \cdots & K(x_{m},y_{m})
\end{array}\right]\geq0\,.
\]
If $K$ is $\TP_{k}$ for all $k\in\mathbb{N}$ then the kernel is
said to be totally positive (resp. strictly totally positive), which
is written $\TP_{\infty}$.

An important consequence of totally positive property is its \emph{variation
diminishing property}. The \emph{number of zero-crossings }of a function
$f\colon B\mapsto\mathbb{R}$ is the supremum of the numbers of sign
changes in sequences of the form $f(x_{1}),\ldots f(x_{m})$, for
$m\in\mathbb{N}$, $x_{i}\in B$ for all $i\in[m]$ and $x_{1}<\cdots<x_{m}$,
where zero values in the sequence are discarded. The following is
a result by Karlin: 
\begin{thm}[{{Variation diminishing property of totally positive kernels \cite[Theorem A.5 p. 759]{marshall1979inequalities}}}]
\label{thm: variation diminishing property}Let $A,B\subseteq\mathbb{R}$,
and let $K\colon A\times B\mapsto\mathbb{R}$ be Borel-measurable
and $\TP_{k}$. Let $\sigma$ be a regular $\sigma$-finite measure
on $B$, and let $f\colon B\mapsto\mathbb{R}$ be a bounded measurable
function such that 
\[
g(x)=\int_{B}K(x,y)f(y)\d\sigma(y)
\]
converges absolutely. If $f$ changes sign at most $j\leq k-1$ times
on $B$, then $g$ changes signs at most $j$ times on $A$. 
\end{thm}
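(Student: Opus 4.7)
The plan is to argue by contradiction, reducing the variation-diminishing statement to a finite-dimensional property of totally non-negative matrices via the basic composition formula (the continuous analogue of the Cauchy-Binet identity). Suppose that $g$ has at least $j+1$ sign changes on $A$, so there exist $x_{1}<x_{2}<\cdots<x_{j+2}$ in $A$ with $(-1)^{i}g(x_{i})>0$ for every $i$. Since $f$ changes sign at most $j$ times on $B$, one may choose $y_{0}<y_{1}<\cdots<y_{j+1}$ (with $y_{0}$ and $y_{j+1}$ possibly equal to $\pm\infty$) and signs $\eta_{0},\ldots,\eta_{j}\in\{-1,+1\}$ with strictly alternating pattern such that $\eta_{p}f(y)\geq 0$ for every $y\in(y_{p},y_{p+1})$. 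Writing $f=\sum_{p}\eta_{p}\lvert f\rvert\mathbf{1}_{(y_{p},y_{p+1})}$ gives
\[
g(x_{i})=\sum_{p=0}^{j}\eta_{p}\,\psi_{p}(x_{i}),\qquad\psi_{p}(x)\dfn\int_{(y_{p},y_{p+1})}K(x,y)\lvert f(y)\rvert\,\d\sigma(y).
\]

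The key computation is to show that the matrix $M\in\mathbb{R}^{(j+2)\times(j+1)}$ with entries $M_{ip}=\psi_{p}(x_{i})$ is totally non-negative of order $j+1$. For any $1\leq i_{1}<\cdots<i_{j+1}\leq j+2$, the basic composition formula applied to the ``columns'' $\phi_{p}(y)=\lvert f(y)\rvert\mathbf{1}_{(y_{p-1},y_{p})}(y)$ (with $p$ shifted to $\{1,\ldots,j+1\}$) yields
\[
\det\bigl[\psi_{p}(x_{i_{k}})\bigr]_{k,p=1}^{j+1}=\int_{z_{1}\in(y_{0},y_{1})}\!\!\!\cdots\int_{z_{j+1}\in(y_{j},y_{j+1})}\det\bigl[K(x_{i_{k}},z_{l})\bigr]_{k,l}\prod_{l=1}^{j+1}\lvert f(z_{l})\rvert\,\d\sigma(z_{1})\cdots\d\sigma(z_{j+1}),
\]
where the restriction to the diagonal pairing $z_{l}\in(y_{l-1},y_{l})$ arises because only the identity permutation gives a non-vanishing contribution to $\det[\phi_{p}(z_{l})]$ once $z_{1}<\cdots<z_{j+1}$ is imposed. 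Since $K$ is $\TP_{k}$ with $k\geq j+1$, the integrand is pointwise non-negative and hence so is the minor.

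To finish, I would invoke the finite-dimensional variation-diminishing property for totally non-negative rectangular matrices: if $M\in\mathbb{R}^{(j+2)\times(j+1)}$ is totally non-negative and $\eta\in\mathbb{R}^{j+1}$ has at most $j$ sign changes, then $M\eta$ has at most $j$ sign changes. A direct route is to augment $N=[M\mid M\eta]\in\mathbb{R}^{(j+2)\times(j+2)}$, observe $\operatorname{rank}(N)\leq j+1$ so $\det N=0$, and expand along the last column to obtain
\[
0=\det N=(-1)^{j}\sum_{i=1}^{j+2}(-1)^{i}(M\eta)_{i}\det M^{(i)},
\]
where $M^{(i)}$ is $M$ with row $i$ removed and $\det M^{(i)}\geq 0$ by the preceding step. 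Under the contradiction hypothesis the quantities $(-1)^{i}(M\eta)_{i}=(-1)^{i}g(x_{i})$ have a common strict sign, so the sum vanishes only if every $\det M^{(i)}=0$. The main obstacle is precisely this last possibility: that $M$ fails to have full column rank. I would resolve it either by a perturbation argument, replacing $K$ with $K_{\varepsilon}=K\ast\varphi_{\varepsilon}$ for a narrow strictly-positive smoothing kernel so that $M$ becomes strictly column-regular, and then passing to the limit $\varepsilon\downarrow 0$; or by invoking the classical Gantmacher-Krein theorem on sign-regular matrices, which packages the required finite-dimensional conclusion without any non-degeneracy hypothesis on $M$.
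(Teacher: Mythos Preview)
The paper does not supply its own proof of this theorem: it is quoted verbatim as a known result from Marshall--Olkin (attributed there to Karlin), and is used as a black box to derive Proposition~\ref{prop: Gaussian variation diminishing}. So there is no ``paper's proof'' to compare against.

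That said, your sketch is the classical argument and is essentially correct. A couple of small points. First, your choice of a strictly alternating sign pattern $\eta_{0},\ldots,\eta_{j}$ presumes $f$ has \emph{exactly} $j$ sign changes; if it has $j'<j$ you should either reduce to $j'$ (since $g$ having $j+1$ sign changes certainly forces $j'+1$ sign changes) or pad with empty intervals, in which case the $\eta_{p}$ need not alternate and you must instead use that $\eta$ has at most $j$ sign changes. Second, the determinant identity you obtain from $\det N=0$ is correct, and you rightly flag the degeneracy $\det M^{(i)}=0$ for all $i$ as the only obstacle. Of your two proposed fixes, the perturbation route is delicate (you must preserve both the $\TP_{k}$ property of $K$ and the strict sign pattern of $g$ at the chosen points simultaneously), whereas the Gantmacher--Krein finite-dimensional variation-diminishing theorem for sign-regular matrices handles it cleanly and is the standard resolution in Karlin's development.
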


\begin{prop}
\label{prop: Gaussian variation diminishing}Let $f\colon\mathbb{R}\mapsto\mathbb{R}$
be a bounded and measurable function. If $f$ has at most $j$ sign-changes
on $\mathbb{R}$, then $g\colon\mathbb{R}\mapsto\mathbb{R}$ defined
by $g(\eta)=\E_{U\sim N(\eta,1)}[f(U)]$ has at most $j$ signs-changes
on $\mathbb{R}.$ 
\end{prop}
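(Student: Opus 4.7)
The plan is to recognize $g(\eta)$ as an integral transform of $f$ against the translation Gaussian kernel, verify that this kernel is totally positive of every order, and then invoke Karlin's variation diminishing theorem (Theorem \ref{thm: variation diminishing property}).

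First I would write
\[
g(\eta)=\int_{\mathbb{R}} K(\eta,u)\,f(u)\,\d u,\qquad K(\eta,u)\dfn \frac{1}{\sqrt{2\pi}}\,e^{-(u-\eta)^{2}/2},
\]
and observe that absolute convergence is immediate since $f$ is bounded and $\int K(\eta,u)\,\d u=1$ for every $\eta$. The main content is then to show that $K$ is $\TP_{\infty}$ on $\mathbb{R}\times\mathbb{R}$.

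The key step is the factorization
\[
K(\eta,u) = \frac{1}{\sqrt{2\pi}}\,e^{-\eta^{2}/2}\cdot e^{\eta u}\cdot e^{-u^{2}/2}.
\]
For any $\eta_{1}<\cdots<\eta_{m}$ and $u_{1}<\cdots<u_{m}$, the factors $\frac{1}{\sqrt{2\pi}}e^{-\eta_{i}^{2}/2}$ and $e^{-u_{j}^{2}/2}$ can be pulled out of the $i$th row and $j$th column of the matrix $[K(\eta_{i},u_{j})]$ respectively, contributing a strictly positive multiplicative factor to the determinant. It therefore suffices to show that the exponential kernel $\tilde{K}(\eta,u)\dfn e^{\eta u}$ is totally positive of every order. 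This is the classical fact that generalized Vandermonde determinants of the form $\det[e^{\eta_{i}u_{j}}]_{i,j=1}^{m}$ are strictly positive; one standard route is to notice that the functions $\{u\mapsto e^{\eta_{i}u}\}_{i=1}^{m}$ form an extended Chebyshev system on $\mathbb{R}$, so all their Wronskian-type determinants are positive, and another is to differentiate $\det[e^{\eta_{i}u_{j}}]$ in the $\eta$'s, reducing it by row operations to a standard Vandermonde determinant $\prod_{i<k}(\eta_{k}-\eta_{i})>0$ times a strictly positive function. Either argument is short and standard; I would cite Karlin's monograph on total positivity for the result rather than repeat the computation. From this and the factorization, $K$ inherits the $\TP_{\infty}$ property.

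Having established that $K$ is $\TP_{k}$ for every $k$, I would apply Theorem \ref{thm: variation diminishing property} with $A=B=\mathbb{R}$, $\sigma$ equal to Lebesgue measure, and $k=j+1$. Since $f$ changes sign at most $j\leq k-1$ times, the theorem gives that $g$ changes sign at most $j$ times on $\mathbb{R}$, completing the proof. There is no real obstacle here beyond verifying the total positivity of the Gaussian kernel; the absolute convergence and the reduction to the exponential kernel are routine, and everything else is delegated to Karlin's theorem as invoked above.
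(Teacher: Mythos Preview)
Your proposal is correct and follows essentially the same route as the paper: write $g$ as the integral of $f$ against the Gaussian translation kernel, note that this kernel is $\TP_\infty$, and apply Theorem~\ref{thm: variation diminishing property}. The only cosmetic difference is that the paper cites the $\TP_\infty$ property of $e^{-(x-y)^2}$ from the literature and rescales by $1/\sqrt{2}$ to match that form, whereas you work directly with $e^{-(u-\eta)^2/2}$ and sketch the total positivity via the factorization $e^{-\eta^2/2}e^{\eta u}e^{-u^2/2}$ and the classical positivity of generalized Vandermonde determinants; both arrive at the same invocation of Karlin's theorem.
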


\begin{proof}
The Gaussian kernel $K(x,y)=e^{-(x-y)^{2}}$ for $A=B=\mathbb{R}$
is $\TP_{\infty}$ \cite[Theorem A.6.B p. 759]{marshall1979inequalities}.
We use Theorem \ref{thm: variation diminishing property} and 
\begin{align}
g(\eta)=\E\left[f(U)\right] & =\int\varphi(u-\eta)f(u)\d u=\frac{1}{\sqrt{2\pi}}\int e^{-(u-\eta)^{2}/2}f(u)\d u=\frac{1}{\sqrt{\pi}}\int e^{-(\tilde{u}-\tilde{\eta})^{2}}f(\tilde{u})\d\tilde{u}=g(\sqrt{2}\tilde{\eta})
\end{align}
with $\tilde{u}=\frac{u}{\sqrt{2}}$ and $\tilde{\eta}=\frac{\eta}{\sqrt{2}}$
as well as the observation that $f(\frac{u}{\sqrt{2}})$ (resp. $g(\sqrt{2}\eta)$)
has the same zero-crossings as $f(u)$ (resp. $g(\eta)$). 
\end{proof}
\bibliographystyle{plain}
\bibliography{HM_GM}

\end{document}